\author{Erez Lapid}
\address{Department of Mathematics, Weizmann Institute of Science, Rehovot 76100 Israel}
\email{erez.m.lapid@gmail.com}
\author{Zhengyu Mao}
\address{Department of Mathematics and Computer Science, Rutgers University, Newark, NJ 07102, USA}
\email{zmao@rutgers.edu}
\title[Model Transition]{Model transition for representations of metaplectic type}
\date{\today}
\thanks{Authors partially supported by U.S.--Israel Binational Science Foundation Grant \# 057/2008}
\thanks{Second named author partially supported by NSF grant DMS 1000636 and by a fellowship from the Simons Foundation}
\keywords{Whittaker model, uniqueness, Langlands quotient}
\subjclass[2010]{11F70}
\newcommand{\rkn}{\mathbf{n}}                          
\newcommand{\model}{\mathfrak{M}}
\newcommand{\LQ}[1]{\operatorname{LQ}(#1)}
\newcommand{\tran}{{\mathcal T}}
\newcommand{\C}{\mathbb{C}}                            
\newcommand{\R}{\mathbb{R}}                            
\newcommand{\Z}{\mathbb{Z}}                            
\newcommand{\bs}{\backslash}
\newcommand{\Hom}{\operatorname{Hom}}
\newcommand{\vol}{\operatorname{vol}}
\newcommand{\temp}{\operatorname{temp}}
\newcommand{\OO}{\mathcal{O}}                         
\newcommand{\regint}{\int^{reg}}                         
\newcommand{\evencls}{\mathcal{C}}                                
\newcommand{\oddcls}{\mathcal{C}'}                                
\newcommand{\tclass}{\mathcal{C}}
\newcommand{\oddL}{\mathcal{U}}
\newcommand{\evenL}{\mathcal{V}}
\newcommand{\smth}{\operatorname{sm}}
\newcommand{\wgt}[1]{\nu(#1)}
\newcommand{\auxst}[1]{(\#,#1)}
\newcommand{\quot}[1]{#1_*}
\newcommand{\T}{{\tilde T}}
\newcommand{\orb}{\kappa}
\newcommand{\cc}[1]{C_c^\infty(#1)}
\newcommand{\ft}[1]{\widehat{#1}^*}
\newcommand{\st}{st}
\newcommand{\elemu}{{\epsilon}}
\newcommand{\elemv}{{\epsilon'}}
\newcommand{\alg}[1]{\mathbf{#1}}
\newcommand{\factor}{\Delta}
\newcommand{\udr}{\underline{u}}
\newcommand{\id}{\operatorname{id}}
\newcommand{\spcltrs}{S}
\newcommand{\dual}[1]{#1^{\vee}}
\newcommand{\FW}{\Upsilon}
\newcommand{\Mat}{\operatorname{Mat}}
\newcommand{\GL}{\operatorname{GL}}
\newcommand{\Sp}{\operatorname{Sp}}
\newcommand{\mira}{\mathcal{P}}                     
\newcommand{\Ind}{\operatorname{Ind}}                   
\newcommand{\ind}{\operatorname{ind}}                   
\newcommand{\Levi}{M}
\newcommand{\GLnn}{{\mathbb M}}
\newcommand{\eva}[1]{{\mathcal E}[#1]}
\newcommand{\avg}{{\mathcal L}}
\newcommand{\avgt}[1]{{\mathcal L}^{#1}}
\newcommand{\val}{\operatorname{val}}
\newcommand{\csgr}{\mathcal{CSGR}}                     
\newcommand{\Irr}{\operatorname{Irr}}                  
\newcommand{\rest}{\big|}                              
\newcommand{\meta}{\operatorname{meta}}                
\newcommand{\gen}{\operatorname{gen}}                  
\newcommand{\levi}{\varrho}                            
\newcommand{\toU}{\ell}
\newcommand{\toUbar}{\overline{\ell}}
\newcommand{\startran}[1]{\breve{#1}}             
\newcommand{\modulus}{\delta}
\newcommand{\diag}{\operatorname{diag}}
\newcommand{\Whit}{\mathbb{W}}                           
\newcommand{\WhitM}{\Whit^{\psi_{N_\GLnn}}}                  
\newcommand{\WhitML}{\Whit^{\psi_{N_\Levi}}}                  
\newcommand{\spclWM}{\cc{N_\GLnn\bs\mira_{\GLnn}',\psi_{N_\GLnn}}_{\natural}}
\newcommand{\fix}{{\flat}}
\newcommand{\fixx}{{\fix\fix}}
\newcommand{\symspace}{\mathfrak{s}}                 
\newcommand{\subUbar}{\bar U^\urcorner}
\newcommand{\subsubU}{U^\triangle}
\newcommand{\subsubUbar}{\bar U^\triangle}
\newcommand{\oldsubsubUbar}{\hat{\psi}_{\subsubUbar}}
\newcommand{\newchar}{\psi_{\bar U}}                
\newcommand{\altpsi}{\Psi}
\newcommand{\chixmt}{\altpsi_{X_\GLnn,t}}         
\newcommand{\chix}{\altpsi_{X_0}}       
\newcommand{\chinu}{\altpsi_{X_0,t}}      
\newcommand{\remu}{\bar{U}_{\triangle}}        
\newcommand{\remn}{N_{\GLnn,\triangle}}     
\newcommand{\lowub}{\bar{U}}                     
\newcommand{\upub}{U}                            
\newcommand{\N}{{\mathcal N}}    
\newcommand{\M}{{\mathcal M}} 
\newcommand{\LL}{{\mathcal X}} 
\newcommand{\Nc}{{\mathcal N_0}}   
\newcommand{\fel}{\epsilon_\pi}           
\newcommand{\few}{\hat{w}}              
\newcommand{\one}{\epsilon}        
\newcommand{\wnn}{{w_0^\GLnn}}
\newcommand{\wnnM}{{w_0^\Levi}}
\newcommand{\diagelemGLn}{E_0}
\newcommand{\sprod}[2]{\left\langle#1,#2\right\rangle}
\newcommand{\abs}[1]{\left|{#1}\right|}
\newcommand{\sm}[4]{\left(\begin{smallmatrix}{#1}&{#2}\\{#3}&{#4}\end{smallmatrix}\right)}
\newcommand{\acutee}{\tilde}
\newcommand{\der}{\operatorname{der}}                     
\newcommand{\MTF}{\tilde{\Upsilon}}
\newcommand{\per}[1]{\mathfrak{P}^{#1}}
\newcommand{\soment}{T''}
\newcommand{\Imk}{T}
\newcommand{\Etwon}{\mathfrak{E}}
\newcommand{\charEtwon}{\psi_{\Etwon}}
\newcommand{\auxf}{\mathfrak{F}}
\newcommand{\auxF}{{\otimes_i\cc{\Imk_i}}}
\newcommand{\vs}{{V_{\triangle}}}
\newcommand{\zigzag}{\mathcal{Z}}
\newcommand{\charzigzag}{\psi_{\zigzag}}
\newcommand{\alltriangle}{\mathcal{A}}
\newcommand{\charalltriangle}{\psi_{\alltriangle}}
\newcommand{\radunip}{{\mathfrak{V}}}
\newcommand{\radunipsgr}{{\radunip'}}
\newcommand{\centpsi}{C_T(\psi_\radunipsgr)}
\newcommand{\W}{M^*W}
\newcommand{\tranlw}{\tran_{(H_\GLnn,1)}^{(N_\GLnn,\psi_{N_\GLnn})}}
\newcommand{\tranwl}{\tran_{(N_\GLnn,\psi_{N_\GLnn})}^{(H_\GLnn,1)}}
\newcommand{\tranlwI}{\tran_{(H_M,1)}^{(N_M,\psi_{N_M})}}
\newcommand{\tranwlI}{\tran_{(N_M,\psi_{N_M})}^{(H_M,1)}}
\newcommand{\tranWL}{\tran_{(N,\psi_N)}^{*,(H,1)}}
\newcommand{\tranLW}{\tran_{(H,1)}^{(N,\psi_N)}}
\newcommand{\tranWW}{M^*}
\newcommand{\tranLA}{\tran_{(H,1)}^{(\alltriangle,\charalltriangle)} }
\newcommand{\tranWAu}{\tran_{(N,\psi_N)}^{(\alltriangle,\charalltriangle)}}
\newcommand{\tranWAv}{\tran_{(N,\psi_N)}}
\newcommand{\tranwa}{\tran_{(N_\GLnn,\psi_{N_\GLnn})}^{(\zigzag,\charzigzag)}}
\newcommand{\tranLV}{\tran_{(H,1)}^{(\radunip, \psi_\radunip)} }
\newcommand{\tranLE}{\tran_{(H,1)}^{(\Etwon, \charEtwon)}}
\newcommand{\tranMWL}{\tran_{(N,\psi_N)}^{(H,1)}}
\newcommand{\tranY}{\tran_{\auxf}}
\newcommand{\uder}{{N^{\der_*}}}
\newcommand{\num}{{\mathfrak{a}}}
\newtheorem{theorem}{Theorem}[section]
\newtheorem{lemma}[theorem]{Lemma}
\newtheorem{proposition}[theorem]{Proposition}
\newtheorem{remark}[theorem]{Remark}
\newtheorem{definition}[theorem]{Definition}
\newtheorem{corollary}[theorem]{Corollary}
\numberwithin{equation}{section}
\begin{document}

\begin{abstract}
We study the interplay between different models of the same
irreducible representation of the $F$-points of a reductive group
over a local field.
\end{abstract}

\maketitle

\setcounter{tocdepth}{1}
\tableofcontents

\section{Introduction}

Let $G$ be a locally compact group.
An important notion in representation theory is that of a \emph{model}.
Broadly speaking, a model is a representation space of $G$ (usually defined
geometrically)
which contains any irreducible constituent with multiplicity one.
Consider the case where $G$ is the $F$-points of a reductive group $\alg{G}$ defined over a local field $F$.
There are many examples of models which are realized on induces spaces $\Ind_H^G\chi$ where
$\chi$ is a character of a closed subgroup $H$ of $G$ -- i.e., on the space of smooth functions
on $G$ which are left $(H,\chi)$-equivariant with the right $G$-action.
(For simplicity we assume that $H$ is unimodular.)
In this case we say that the pair $(H,\chi)$ satisfies multiplicity one.
The representations which occur in this model are called distinguished
by $(H,\chi)$. They are characterized by the property that there exists
a non-zero functional on them which is $(H,\chi)$-equivariant.
If $\pi$ is such a representation then we denote by $\model^{(H,\chi)}\pi$
its realization in the model.
The most well-known case is the \emph{Whittaker model} in which $G$ is quasi-split,
$H$ is a maximal unipotent subgroup and $\chi$ is a non-degenreate character of $H$
(\cite{MR0404534}, \cite{MR0348047}).
The importance of the Whittaker model in representations theory and automorphic forms cannot
be overestimated.
For other instances of models, including recent progress cf.
\cite{MR1394521, MR1111204, MR2680495, MR2474319, MR2553879}.

In this paper we address the following question.
Let $G$ be a reductive group defined over a $p$-adic $F$,
$H_1$, $H_2$ two closed subgroups of $G$
defined over $F$ and $\chi_i$, $i=1,2$ characters of $H_i$.
Suppose that $(H_i,\chi_i)$, $i=1,2$ satisfy multiplicity one
and let $\pi$ be an irreducible representation of $G$ which is distinguished
by both $(H_i,\chi_i)$, $i=1,2$.
Then up to a scalar there is a unique isomorphism
\[
\model^{(H_1,\chi_1)}\pi\simeq\model^{(H_2,\chi_2)}\pi.
\]
Can one explicate such an isomorphism, as well as its inverse?
More precisely, can one find a $G$-equivariant integral transform from
$\model^{(H_1,\chi_1)}\pi$ to $\model^{(H_2,\chi_2)}\pi$
with an explicit inversion formula?

The question is meaningful for any irreducible representation $\pi$ of $G$ such that
\[
\dim\Hom_{H_1}(\pi,\chi_1)=\dim\Hom_{H_2}(\pi,\chi_2)=1
\]
(whether or not $(H_1,\chi_1)$ or $(H_2,\chi_2)$ satisfy multiplicity one in general).
We will call such an explicit map $\model^{(H_1,\chi_1)}\pi\rightarrow\model^{(H_2,\chi_2)}\pi$ a \emph{model transition}.
Assume for simplicity that $\chi_1$ and $\chi_2$ coincide on $H_1\cap H_2$. A first attempt would be to consider the map
\begin{equation} \label{eq: naive}
\varphi\mapsto\int_{H_1\cap H_2\bs H_2}\varphi(hg)\chi_2(h)^{-1}\ dh
\end{equation}
which \emph{formally} defines an intertwining operator $\Ind_{H_1}^G\chi_1\rightarrow\Ind_{H_2}^G\chi_2$.
(Often, this integral needs to be regularized.)
It turns out however that it is sometimes advantageous to replace the domain of integration by
$H_1\cap H_2\bs H_2'$ where $H_2'$ is a proper subgroup of $H_2$ containing $H_1\cap H_2$.
(This reflects the fact that we secretly work within a subgroup $G'$ of $G$ containing $H_1$ and consider maps
$\Ind_{H_1}^{G'}\chi_1\rightarrow\Ind_{H_2\cap G'}^{G'}\chi_2$.)
Of course this procedure does not give rise to a map from $\Ind_{H_1}^G\chi_1$ (or a suitable $G$-invariant subspace $X$ where the regularized
integral is defined) to $\Ind_{H_2}^G\chi_2$. However, one can hope that in certain cases for (suitable) $(H_1,\chi_1)$-distinguished
representations $\pi$ we have $\model^{(H_1,\chi_1)}\pi\subset X$ and $\tran(\model^{(H_1,\chi_1)}\pi)=\model^{(H_2,\chi_2)}\pi$.

We will give several examples of this phenomenon.

The first example is for the group $M=\GL_{2n}(F)$. Let $\pi$ be a tempered representation $\pi$ of $M$.
In particular, $\pi$ has a Whittaker model $\model^{(N_M,\psi_{N_M})}\pi$, where $N_M$ is the subgroup of upper unitriangular matrices
in $M$ and $\psi_{N_M}$ a non-degenerate character of $N_M$.
Let $H_M$ be the centralizer of $E=\diag(1,-1,\dots,1,-1)$ in $M$, isomorphic to $\GL_n\times\GL_n$.
We assume that $\pi$ has a model $\model^{(H_M,1)}\pi$ with respect to $H_M$.
(It is known that, for any $\pi$, this model is unique if it exists \cite{MR1394521}.)
Note that the character $\psi_{N_M}$ is trivial on $N_M\cap H_M$.

In one direction we define $\tranwlI:\model^{(N_M,\psi_{N_M})}\pi\rightarrow \model^{(H_M,1)}\pi$ by
\[
\tranwlI (W)(g):=\int_{N_M\cap H_M\bs \mira_{2n}\cap H_M} W(pg)\, dp,\,\,W\in \model^{(N_M,\psi_{N_M})}\pi
\]
where $\mira_{2n}$ is the mirabolic subgroup of $M$. The integral converges absolutely
(even if $\pi$ is only assumed to be unitarizable and generic).
Moreover, it is known that the image of $\tranwlI$ indeed lies in $\model^{(H_M,1)}\pi$  \cite{1211.1241}.
In the other direction when $\pi$ is tempered, define $\tranlwI: \model^{(H_M,1)}\pi\rightarrow \model^{(N_M,\psi_{N_M})}\pi$ by
\[
\tranlwI(L)(g):=\regint_{N_M\cap H_M\bs N_M}L(ng)\psi_{N_M}^{-1}(n) \,dn,\,\,L\in \model^{(H_M,1)}\pi.
\]
The meaning of the regularized integral (in a more general context) will be explained in \S\ref{sec: stable}.
It is a slight extension of the stable integrals studied in \cite{LMao5}.

We show that the maps $\tranwlI$
and $\tranlwI$%
are inverse to each other
(Proposition \ref{prop: inversionM}).

This result is a version of local Rankin--Selberg unfolding.
It is closely related to the material of \cite[\S18]{1203.0039}.
See also \cite[\S4]{LMao5} for a similar result in a different setup.

An immediate consequence is that we can define $\tranwlI$ with respect to the other mirabolic subgroup
without changing the result (see Proposition~\ref{prop: sametran} which is analogous to \cite[Corollary 7.2]{MR2787356}).

Next, we will `inflate' the $\GL_{2n}$ result to the group $G=\Sp_{2n}$ (a subgroup of $\GL_{4n}$ of rank $2n$).
We identify $M$ with the Levi subgroup of the Siegel parabolic subgroup $P=M\ltimes U$ of $G$ via $x\mapsto \diag(x,x^*)$.
Let $N$ be the subgroup of upper unitriangular matrices in $G$.
Let $H$ be the centralizer of $\diag(E,-E)$ in $G$; it is isomorphic to $\Sp_n\times\Sp_n$.

With $\pi$ as before, consider the induced representation $I(\pi,\frac12)=\Ind_P^G(\pi\abs{\det\cdot}^{\frac12})$
realized on the space of smooth functions $W:U\bs G\rightarrow\C$ such that for all $g\in G$ the function
$m\mapsto\modulus_P(m)^{-\frac12}\abs{\det m}^{-\frac12}W(mg)=\abs{\det m}^{-(n+1)}W(mg)$ on $M$ belongs to $\model^{(N_M,\psi_{N_M})}\pi$.
The representation $I(\pi,\frac12)$ admits a unique irreducible quotient (the Langlands quotient) which we denote by $\LQ{\pi}$.
The representation $\LQ{\pi}$ has a unique model $\model^{(H,1)}\LQ{\pi}$ (Theorem \ref{thm: MWL}).
Meanwhile let $\psi_N$ be the extension $\psi_{N_M}$ to $N$, trivial on $U$.
Then $\LQ{\pi}$ has also a unique model $\model^{(N,\psi_N)}\LQ{\pi}$ (Proposition \ref{prop: Tadic}).
It can be realized as the image of $I(\pi,\frac12)$ under the standard intertwining operator $\tranWW$ given by
\[
\W(g)=\int_U W(wug)\,du, \,\,W\in I(\pi,\frac12)
\]
for a suitable Weyl element $w$.
We define the map
\[
\tranMWL(W):=\int_{N\cap H\bs \mira_{4n}\cap H}W(h\cdot)\ dh, \ \ W\in \model^{(N,\psi_N)}\LQ{\pi}.
\]
Here $\mira_{4n}$ is the mirabolic subgroup of $\GL_{4n}$.
In the other direction we define
\[
\tranLW:\model^{(H,1)}\LQ{\pi}\rightarrow\model^{(N,\psi_N)}\LQ{\pi},
\]
by
\[
\tranLW(L)(g):=\regint_{H\cap N\bs N} L(ng)\psi_N^{-1}(n)\,dn,\,\,L\in \model^{(H,1)}\LQ{\pi}.
\]
Once again the regularized integral is defined in \S\ref{sec: stable}.
We show that $\tranMWL$ defines a model transition from $\model^{(N,\psi_N)}\LQ{\pi}$ to $\model^{(H,1)}\LQ{\pi}$,
whose inverse is $\tranLW$ (Theorem \ref{thm: MWL}).
In particular, the integral defining $\tranMWL$ is an $H$-invariant functional.

Note that in general $\Hom_N(\sigma,\psi_N)$ is finite-dimensional (but not necessarily zero or one-dimensional)
if $\sigma$ is an irreducible representation of $G$. (We do not know whether $\dim\Hom_H(\sigma,1)\le1$ for all $\sigma$.)

One can also realize $\model^{(H,1)}\LQ{\pi}$ by the map
\[
\tranWL:  I(\pi,\frac12)\rightarrow \model^{(H,1)}\LQ{\pi}
\]
given by the absolutely convergent integral
\[
\tranWL(W)(g):= \int_{P\cap H\bs H}\int_{N_M\cap H_M\bs\mira_{2n}\cap H_M}W(phg)\abs{\det p}^{-(n+1)}\,dp\,dh.
\]
We have the following commutative diagram (Proposition \ref{P: L}):\\
\begin{tikzcd}
I(\pi,\frac12)\arrow{rd}{\fel\tranWW} \arrow{r}{\tranWL} & \model^{(H,1)}\LQ{\pi} \arrow{d}{\tranLW}\\
                                                         & \model^{(N,\psi_N)}\LQ{\pi} \arrow{u}{\tranMWL}
\end{tikzcd}
\\where $\fel\in\{\pm1\}$ is the standard epsilon factor $\epsilon(\frac12,\pi,\psi)$ (which does not depend on $\psi$
since $\pi$ has a trivial central character).

This leads us to the second theme of this paper which is the compatibility of model transitions.
More precisely, given three models with respect to pairs $(H_i,\chi_i)$, $i=1,2,3$
with transition maps we will be interested in the commutativity of the diagram\\
\begin{tikzcd}
\model^{(H_1,\chi_1)}\pi\arrow{rr} \arrow{rd}& & \model^{(H_2,\chi_2)}\pi \arrow{dl}\\
 & \model^{(H_3,\chi_2)}\pi &
\end{tikzcd}

We will study this question for the representations $\LQ{\pi}$ above.
Following Ginzburg--Rallis--Soudry consider a certain character $\psi_\radunip$ on the unipotent radical $\radunip$ of the parabolic
subgroup of $G$ with Levi part $\GL_2\times\dots\times\GL_2$ ($n$ times) (see \S\ref{sec: Etwon}).
The representation $\LQ{\pi}$ admits a model $\model^{(\radunip, \psi_\radunip)}$ for $\LQ{\pi}$
(Proposition \ref{L: uniqueEtwon} which is based on \cite{MR1671452} and Proposition \ref{prop: Tadic}).
We also define its variants
$\model^{(\alltriangle, \charalltriangle)}$ and $\model^{(\Etwon, \charEtwon)}$
where $(\alltriangle, \charalltriangle)$ and $(\Etwon, \charEtwon)$ are obtained from $(\radunip, \psi_\radunip)$
by conjugation by certain elements of the normalizer of $H$ (see \S\ref{sec: Etwon}).
(Typically, $\Hom_{\radunip}(\sigma,\psi_\radunip)$ is infinite-dimensional for a general irreducible representation $\sigma$ of $G$.)

We will show the following commutative diagram of model transitions (Propositions \ref{prop: 14} and \ref{prop: main}):\\
\begin{tikzcd}
{} & \model^{(H,1)}\LQ{\pi} \arrow{ld}{\tranLA} \arrow{dd}{\tranLW} \arrow{rd}{\tranLE} \\
\model^{(\alltriangle, \charalltriangle)}\LQ{\pi} &
&  \model^{(\Etwon, \charEtwon)}\LQ{\pi}\\
& \model^{(N,\psi_N)}\LQ{\pi}  \arrow{ul}{\tranWAu} \arrow{uu}{\tranMWL} \arrow{ur}{\tranWAv^{(\Etwon, \charEtwon)}}
\end{tikzcd}
\\ The diagonal maps $\tran_{(H_1,\chi_1)}^{(H_2,\chi_2)}$, where $(H_1,\chi_1)$ is either $(H,1)$ or $(N,\psi_N)$
and $(H_2,\chi_2)$ is either $(\alltriangle, \charalltriangle)$ or $(\Etwon, \charEtwon)$, are given by a suitable regularization of
\eqref{eq: naive}.
Actually, the map $\tranWAv^{(\Etwon, \charEtwon)}$ on (a suitable subspace of) $\Ind_N^G\psi_N$ depends on some additional choices
but its restriction to $\model^{(N,\psi_N)}\LQ{\pi}$ turns out to be independent of these choices. See Proposition \ref{prop: main}
for the precise statement.

The main tool for proving the results is repeated applications of Fourier inversion (see Appendix \ref{a: int}).
This idea is of course not new. It was used by Jacquet--Shalika in the analysis of the exterior square $L$-function
\cite{MR1044830} and plays an important role in the descent construction of Ginzburg--Rallis--Soudry \cite{MR1671452}.
In fact, this paper owes a lot to their work.
Our point of view however is slightly different as we are interested in explicit computations in the various models.
We also mention recent work of Bernstein--Reznikov \cite{1312.2898} for a closely related theme.

The integral transforms $\tranLA$ and $\tranLE$ are related by conjugation by a certain element of the normalizer of $H$ (see \S\ref{sec: Etwon}).
It follows from the above diagram that on $\model^{(N,\psi_N)}\LQ{\pi}$, there is a functional equation relating the integral transforms
$\tranWAu$ and $\tranWAv^{(\Etwon, \charEtwon)}$. A version of this functional equation is stated in Corollary~\ref{cor: fornextpaper}.


The results of this paper will be used in our work on Whittaker-Fourier coefficients of
automorphic forms on the metaplectic double cover of $\Sp_n$.
More precisely, the results (and their variants proved in \S\ref{sec: Mfactor}) will be used in proving a conjectural local identity stated in
\cite{1401.0198}. This local identity is the local counterpart (in the case of the metaplectic group) of a global conjecture which
we proposed in \cite{LMao5}. (The conjecture in \cite{LMao5} is for any quasi-split group, as well as the metaplectic group.)

The paper is organized as follows.
In \S\ref{sec: stable} we introduce a rather general procedure for regularization of oscillatory integrals
involving generic characters.
In \S\ref{sec: invGLnn} we consider $\GL_{2n}$ and the model transition between the Whittaker model and the
$(\GL_n\times\GL_n,1)$-model (for distinguished tempered representations $\pi$).
The rest of the paper will be devoted to the case of Langlands quotient $\LQ{\pi}$ on $\Sp_{2n}$ (with $\pi$ as before).
In \S\ref{sec: LQ1} we prove the uniqueness of $(\Sp_n\times\Sp_n,1)$-model for $\LQ{\pi}$ and factor the intertwining operator
explicitly through a model transition $\tranLW$ between the $(\Sp_n\times\Sp_n,1)$-model and the degenerate Whittaker model
$(N,\psi_N)$ of $\LQ{\pi}$. The inverse of $\tranLW$ is constructed in \S\ref{sec: invtrans}.
In \S\ref{sec: Etwon} we introduce the additional models $\model^{(\radunip, \psi_\radunip)}$, $\model^{(\alltriangle, \charalltriangle)}$
and $\model^{(\Etwon, \charEtwon)}$ of $\LQ{\pi}$.
In \S\ref{sec: analytic} and \S\ref{sec: lemma} we introduce the (somewhat delicate) family of operators $\tranWAv^{t,\auxf}$
(closely related to $\tranWAv^{(\Etwon, \charEtwon)}$ mentioned above).
The compatibility of the various model transitions for the $(\Sp_n\times\Sp_n,1)$, $(N,\psi_N)$ and $(\Etwon, \charEtwon)$
models is proved in \S\ref{sec: whitdesc} and \S\ref{sec: prf of prop: main}.
Additional results which are variants of the above (and which will be used in a subsequent paper) are proved in \S\ref{sec: Mfactor}.

There are also three appendices. In the first one we provide the setup for the `root exchange' procedure
in the spirit of \cite[\S7.1]{MR2848523}.
The second appendix is a certain estimate which is a close variant of a result of Waldspurger in \cite[\S3]{WaldAst3461}.
The last appendix, due to Marko Tadi\'c, establishes the uniqueness of the degenerate Whittaker models $(N,\psi_N)$ for the
representations $\LQ{\pi}$. We take this opportunity to thank Tadi\'c for his contribution.
We also thank Joseph Bernstein, Patrick Delorme, Atsushi Ichino, Andre Reznikov, Yiannis Sakellaridis, David Soudry and Akshay Venkatesh for helpful correspondence.

\subsection{Notations}
Throughout the paper let $F$ be a $p$-adic field with ring of integer $\OO$.
We fix a (continuous) non-trivial character $\psi:F\rightarrow\C^*$.
Let $q$ be the size of residue field of $F$. Thus $\abs{x}=q^{-\val(x)}$ for $x\in F$.
We take the self-dual Haar measure $dx$ on $F$ with respect to $\psi$. We take the multiplicative measure of $F^*$ to be $d^*x=dx/\abs{x}$.

We typically denote algebraic varieties (or groups) over $F$ by boldface letters (e.g., $\alg{X}$) and denote their set (or group) of $F$-points
by the corresponding plain letter (e.g., $X$). (In most cases $\alg{X}$ will be clear from the context.)

For an $\ell$-group $Q$ let $\csgr(Q)$ be the set of compact open subgroups of $Q$.
We denote the group of continuous characters of $Q$ (with the compact-open topology) by $\widehat{Q}$.
In particular, if $Q$ is abelian then $\widehat{Q}$ is the Pontryagin dual of $Q$.
Let $Z_Q$ be the center of $Q$, and $e$ the identity element of $Q$.
If $Q$ acts on a vector space $W$ and $Q'$ is a subgroup of $Q$,
we denote by $W^{Q'}$ the subspace of $Q'$-fixed points.

If $A$ and $B$ are subgroups of $Q$ and $A$ normalizes $B$, we write $\modulus_{A;B}$ for the corresponding modulus function on $A$.
In particular, we write $\modulus_A$ for the modulus function of $A$ acting on itself by conjugation.

If $Q'$ is a closed subgroup of $Q$ and $\chi$ is a character of $Q'$, we denote by $C(Q'\bs Q,\chi)$
(resp., $C^{\smth}(Q'\bs Q,\chi)$, $C_c(Q'\bs Q,\chi)$, $\cc{Q'\bs Q,\chi}$)  the spaces of
continuous (resp. $Q$-smooth,\footnote{i.e., right-invariant under an open subgroup of $Q$}, compactly supported modulo $Q'$,
smooth and compactly supported modulo $Q'$)
complex-valued left $(Q',\chi)$-equivariant functions on $Q$. When $\chi=1$ we omit it from the notation.

We use the following bracket convention for iterated integrals:
$\iint \,( \iint\,\ldots)\ldots$ implies that the inner integrals converge
as a double integral and after evaluating them,
the outer double integral is absolutely convergent.

Let $f$ be a continuous function on $Q$ and $\Psi$ a function on a subgroup $A$ of $Q$. Let $Q'$ be a subgroup of $Q$.
We denote by $\avg_{A,\Psi}\circ f(g)$ the integral $\int_{A} f(ag)\Psi(a)\ da$ and by $\avg^{Q'}_{A,\Psi}\circ f(g)$
the integral $\int_{(A\cap Q')\bs A} f(ag)\Psi(a)\ da$. Implicit in the notation is that the
integral makes sense and converges. If $\Psi\equiv1$ then we suppress it from the notation.
We denote by $\avg^{reg}_{A,\Psi}\circ f(g)$ and $\avg^{reg,Q'}_{A,\Psi}\circ f(g)$ an integral that is regularized in a certain way.
We caution the reader that the regularization depends on the context. Hopefully this will not cause confusion.

Let $I_m$ \index{$I_m$} be the identity matrix in $\GL_m$, $w_m$ the $m\times m$-matrix with ones on
the non-principal diagonal and zeros elsewhere; $g^t$ is the transpose of a matrix $g$.

We use the notation $a\ll_d b$ to mean that $\abs{a}\leq cb$ with $c$ a constant depending on $d$.

We denote by $\Irr Q$ the set of irreducible smooth (complex) representations (up to equivalence) of a group $Q$. \index{$\Irr$}

More specific notation will be introduced in \S\ref{sec: notationGL2n} and \S\ref{sec: notation}. For the convenience of the reader
there is an index of symbols at the end of the paper.

\section{Some regularized integrals} \label{sec: stable}
\subsection{Stable integral}

We extend the definition of stable integral introduced in \cite{LMao5}.
Suppose that $\alg{U}$ is a unipotent group over $F$ and $A$ is a compact group which acts continuously on $U$.
Denote by $\csgr^{A}(U)$ the set of compact open $A$-invariant subgroups of $U$.
This is a directed set. Moreover, any relatively compact subset of $U$ is contained in an element of $\csgr^{A}(U)$.

\label{sec: stint}

Fix a Haar measure $du$ on $U$.
\begin{definition} \label{def: stablenormal}
Let $f$ be a continuous function on $U$.
We say that $f$ has a \emph{$\auxst{A}$-stable integral} over $U$ if there exists $U_1\in\csgr^{A}(U)$ such that for any
$U_2\in\csgr^{A}(U)$ containing $U_1$ we have
\begin{equation} \label{eq: comvalnormal}
\int_{U_2}f(u)\ du=\int_{U_1}f(u)\ du.
\end{equation}
In this case we write $\int^{\auxst{A}}_U f(u)\ du$ for the common value \eqref{eq: comvalnormal}. \index{$\int^{\auxst{A}}$}
In other words, $\int^{\auxst{A}}_U f(u)\ du$ is the limit (if it exists) of the net $(\int_{U_1}f(u)\ du)_{U_1\in\csgr^A(U)}$
with respect to the discrete topology of $\C$.
\end{definition}

The case where $A$ is the trivial group was considered in \cite[\S2]{LMao5} under the name \emph{stable integral}.

\begin{remark}\label{rem: largerK}
It is clear that if $A'\subset A$ and $f$ has a $\auxst{A'}$-stable integral over $U$, then it also has a $\auxst{A}$-stable integral (with the same value).
Similarly, any (right or left) translate of $f$ by an element of $U$ has an $A$-stable integral, with the same value.
\end{remark}

\begin{definition}
Suppose $A$ is an $\ell$-group acting on $U$ and let $f$ be as before.
We say that the $A$-stable integral of $f$ exists if $\int^{\auxst{A'}}_U f(u)\ du$ exists for any compact open subgroup $A'\subset A$
(or equivalently, for any sufficiently small $A'\in\csgr(A)$).
We write this value, which by the previous remark does not depend on the choice of $A'$, as
$\int^{\st,A}_U f(u)\ du$. \index{$\int^{\st,A}$}
\end{definition}

\begin{remark} \label{rem: aut}
It is clear that if $\alpha$ is an automorphism of $U$ which commutes with the action of $A$
and which multiplies the Haar measure on $U$ by $\Delta_\alpha$ then
\[
\int^{\auxst{A}}_U f(u)\ du=\Delta_\alpha\int^{\auxst{A}}_U f\circ\alpha(u)\ du
\]
whenever one side is defined. Similarly for the $A$-stable integral.
\end{remark}

\subsection{A regularization}
We consider the following setting.
Suppose that $U_0$ is a unipotent subgroup of a group $Q$ and $U_1$ is a normal subgroup of $U_0$ such that
$\quot{U}=U_1\bs U_0$ is abelian.
We identify $\widehat{\quot{U}}$ with the group of characters on $U_0$ that are trivial on $U_1$.
Let $\psi_{U_0}$ be a character of $U_0$ and let $\T$ be a torus of $Q$ normalizing $U_0$ and $U_1$.
Assume that the restriction $\psi_{U_1}$ of $\psi_{U_0}$ to $U_1$ is invariant under conjugation by $\T$.
(In the cases at hand $\psi_{U_1}$ is usually trivial.)
We say that $\psi_{U_0}$ is $(\T,U_0,U_1)$-generic (or simply
$\T$-generic if $U_0$ and $U_1$ are clear from the context) if the map
\[
\orb:\T\rightarrow\widehat{\quot{U}},\ \ \ \orb(t)=\psi_{U_0}^{-1}(t\cdot t^{-1})\psi_{U_0}
\]
is open.

The prototype is the case where $U_0$ is a maximal unipotent subgroup of a quasi-split group,
$\psi_{U_0}$ is a non-degenerate character in the usual sense, $U_1=U_0^{\der}$ the derived subgroup of $U_0$ and $\T$ is a maximal torus in the Borel
subgroup containing $U_0$.

Assume that $\psi_{U_0}$ is $(\T,U_0,U_1)$-generic. For any $\phi\in C_c(\T)$ we write $\orb_*\phi$ for the function on $\widehat{\quot{U}}$ given by
\[
\orb_*\phi(\chi)=\int_{\orb^{-1}(\chi)}\phi(t)\ dt
\]
where of course the integral over the empty set is interpreted as $0$.
Note that $\orb^{-1}(\chi)$, if not empty, is a coset of $C_{\T}(\psi_{U_0})$, the stabilizer of $\psi_{U_0}$
under the conjugation action of $\T$. We take the measure on $\orb^{-1}(\chi)$ to be the translate of the Haar measure on $C_T(\psi_{U_0})$.
Since $\orb$ is open, $\orb_*:\cc{\T}\rightarrow\cc{\widehat{\quot{U}}}$.

Observe that $\orb(tt_0)=\orb(t)^{t_0}\orb(t_0)$ (where the superscript denotes the conjugation action) and therefore,
if $\phi_{t_0}=\phi(\cdot t_0)$ then
$\orb_*\phi_{t_0}(\chi)=\orb_*\phi(\chi^{t_0}\orb(t_0))$. It follows that for suitable normalization of Haar measures we have
\[
\int_{\T}\modulus_{\T;\quot{U}}(t)\phi(t)\orb^*f(t)\ dt=\int_{\widehat{\quot{U}}}\orb_*\phi(\chi)f(\chi)\ d\chi
\]
for any $f\in C(\widehat{\quot{U}})$ where $\orb^*f=f\circ\orb$. (It suffices to check this for $f\equiv1$.
Then the right-hand side defines a $(\T,\modulus_{\T;\quot{U}}^{-1})$-equivariant linear form on $\cc{\T}$.)
In particular, for any $\phi\in C_c^\infty(\T)$ the Fourier transform of $\orb_*\phi$ is given by
\begin{equation} \label{eq: FTorb}
\widehat{\orb_*\phi}(u)=\int_{\T}\modulus_{\T;\quot{U}}(t)\phi(t)\orb(t)(u)\ dt.
\end{equation}

\begin{lemma} \label{lem: genstab}
Let $U_0,U_1,\T,\psi_{U_0}$ be as above and assume that $\psi_{U_0}$ is $(\T,U_0,U_1)$-generic.
Let $U_2$ be a subgroup of $U_1$ which is stable under $\T$ and such that $\psi_{U_1}\rest_{U_2}$ is trivial.
Let $f\in C^{\smth}((\T'\ltimes U_2)\bs Q)$ for some $\T'\in\csgr(\T)$.
Suppose that $f(\cdot g)\in L^1(U_2\bs U_1)$ for all $g\in Q$.
Let
\[
\varphi_f(g)=\int_{U_2\bs U_1}f(ug)\psi_{U_1}(u)^{-1}\ du, \ \ g\in Q.
\]
Then $\varphi_f\rest_{U_0}\psi_{U_0}^{-1}$ descends to a function on $\quot{U}$ which has a $\T$-stable integral.

Moreover if $\phi\in \cc{\T'}$, let $R(\phi)f=\int_{\T'}\phi(t)f(\cdot t)\ dt$, then
\begin{equation}\label{eq: stableconv}
\int_{\quot{U}}^{\st,\T}\varphi_{R(\phi)f}(u)\psi_{U_0}^{-1}(u)\ du=
\int_{\quot{U}}\ft{\phi}(u) \varphi_f(u)\psi_{U_0}(u)^{-1}\ du
\end{equation}
with $\ft{\phi}(v)=\int_{\T'}\phi(t)\orb(t)(v)\ dt\in \cc{\quot{U}}$.
\end{lemma}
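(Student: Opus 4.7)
The strategy is to reduce \eqref{eq: stableconv} to Fourier inversion on the abelian $\ell$-group $\quot{U}$, making essential use of the openness of $\orb$.

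First I would verify the descent claim. For $u_0\in U_1$, the substitution $u\mapsto u_0u$ in the integral defining $\varphi_f(u_0g)$ yields $\varphi_f(u_0g)=\psi_{U_1}(u_0)\varphi_f(g)$. Since by hypothesis $\psi_{U_1}=\psi_{U_0}\rest_{U_1}$, the function $F(v):=\varphi_f(v)\psi_{U_0}^{-1}(v)$ on $U_0$ is left $U_1$-invariant and descends to a function on $\quot{U}$.

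The heart of the argument is the intertwining identity
\[
\varphi_{R(\phi)f}(g)=\int_{\T'}\phi(t)\,\modulus_{\T;U_2\bs U_1}(t)\,\varphi_f(t^{-1}gt)\,dt,
\]
which I would prove by: (i) interchanging the $\T'$- and $U_2\bs U_1$-integrals, legitimate since $\T'$ is compact and $\phi\in\cc{\T'}$; (ii) rewriting $f(ugt)=f(t^{-1}ut\cdot t^{-1}gt)$ using the left $\T'$-invariance of $f$; and (iii) substituting $u\mapsto tut^{-1}$ in the $U_2\bs U_1$-integral, which produces the factor $\modulus_{\T;U_2\bs U_1}(t)$ and, by the $\T$-invariance of $\psi_{U_1}$, preserves the character. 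Multiplying through by $\psi_{U_0}^{-1}(v)$ for $v\in U_0$ and using the direct rewrite $\psi_{U_0}^{-1}(v)=\psi_{U_0}^{-1}(t^{-1}vt)\,\orb(t)(t^{-1}vt)$ of the definition of $\orb$, one obtains
\[
F_\phi(v)=\int_{\T'}\phi(t)\,\modulus_{\T;U_2\bs U_1}(t)\,F(t^{-1}vt)\,\orb(t)(t^{-1}vt)\,dt,
\]
where $F_\phi(v):=\varphi_{R(\phi)f}(v)\psi_{U_0}^{-1}(v)$.

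To analyze the $\T$-stable integral I would fix $A'\in\csgr(\T)$ sufficiently small, and after a partition-of-unity argument on $\T'$ reduce to the case $\operatorname{supp}\phi\subset A'$. For any $U^\circ\in\csgr^{A'}(\quot{U})$, swapping the $U^\circ$- and $A'$-integrals and then substituting $v=twt^{-1}$ (which preserves $U^\circ$ by $A'$-invariance and produces the factor $\modulus_{\T;\quot{U}}(t)$) yields, after absorbing the combined modulus factors into the Haar normalization on $\T$ used in \eqref{eq: FTorb},
\[
\int_{U^\circ}F_\phi(v)\,dv=\int_{U^\circ}F(w)\,\ft{\phi}(w)\,dw.
\]
Finally, the openness of $\orb$ guarantees that $\orb_*\phi\in\cc{\widehat{\quot{U}}}$, so $\ft{\phi}$ is (up to that normalization) its Fourier transform and hence lies in $\cc{\quot{U}}$. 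As soon as $U^\circ$ contains $\operatorname{supp}\ft{\phi}$, the right-hand side equals $\int_{\quot{U}}F(w)\ft{\phi}(w)\,dw$ independently of $U^\circ$, proving at once the $\T$-stability of the left-hand side and the asserted formula \eqref{eq: stableconv}.

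The main obstacle is the bookkeeping of moduli through the successive changes of variable on the non-unimodular groups $U_2\bs U_1$ and $\quot{U}$: one must check that after the reduction on $\phi$ the accumulated modulus factors combine exactly to reproduce the definition of $\ft{\phi}$ through \eqref{eq: FTorb}, and that the partition-of-unity step introduces no spurious contributions when summed.
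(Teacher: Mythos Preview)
Your approach is essentially the same as the paper's: change variables by $\T$-conjugation to convert the action on $f$ into the factor $\orb(t)(v)$, thereby revealing $\ft{\phi}\in\cc{\quot{U}}$ and obtaining stability. A few points will simplify your write-up and close a small gap.

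First, your ``main obstacle'' evaporates: since $\T'$ and $A'$ are \emph{compact} open subgroups of the torus $\T$, the continuous homomorphisms $\modulus_{\T;U_2\bs U_1}$ and $\modulus_{\T;\quot{U}}$ into $\R_{>0}$ are identically $1$ on them. All the modulus factors you track are therefore trivial, and no delicate bookkeeping is required; the paper's proof simply does not write them. Second, the partition-of-unity reduction is unnecessary: given $\phi$, just take $\T_0\in\csgr(\T')$ containing $\operatorname{supp}\phi$ and work with $U^\circ\in\csgr^{\T_0}(\quot{U})$ directly.

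Finally, as written you only establish the $\T$-stable integral for $\varphi_{R(\phi)f}$, not for $\varphi_f$ itself. The missing observation (which the paper makes explicit) is that, because $f$ is smooth, for any $\T_0\in\csgr(\T)$ one can choose $\phi\in\cc{\T'}$ supported in $\T_0$ with $R(\phi)f=f$; this yields the $\auxst{\T_0}$-stability of $\varphi_f\psi_{U_0}^{-1}$ for every $\T_0$, hence the $\T$-stable integral.
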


\begin{proof}
Given $\phi$, let $\T_0\in\csgr(\T')$ containing the support of $\phi$. We take any $U_c\in \csgr^{\T_0}(\quot{U})$.
Then
\[
\int_{U_c}\varphi_{R(\phi)f}(v)\psi_{U_0}^{-1}(v)\ dv
=\int_{\T_0}\int_{U_c}\big(\int_{U_2\bs U_1}\phi(t)f(uvt)\psi_{U_1}(u)^{-1}\ du\big)\psi_{U_0}^{-1}(v)\ dv \ dt
\]
which by a change of variables $(u,v)\mapsto (tut^{-1},tvt^{-1})$ becomes
\begin{multline*}
\int_{\T_0}\int_{U_c}\big(\int_{U_2\bs U_1}\phi(t) f(uv)\psi_{U_1}(u)^{-1}\ du\big)\psi_{U_0}^{-1}(tvt^{-1})\ dv \ dt\\=
\int_{\T_0}\int_{U_c}\phi(t)\varphi_f(v)\psi_{U_0}^{-1}(tvt^{-1})\ dv \ dt=
\int_{U_c}\ft{\phi}(v) \varphi_f(v)\psi_{U_0}(v)^{-1}\ dv.
\end{multline*}
Note that $\varphi_f\rest_{U_0}\psi_{U_0}^{-1}$ descends to a function on $\quot{U}$.
Since by \eqref{eq: FTorb} $\ft{\phi}$ is the Fourier transform of $\orb_*(\phi)$, it is in $\cc{\quot{U}}$.
In particular, the above integral is independent of $U_c$ as long as $U_c$ contains the support of $\ft{\phi}$.
Thus, $\varphi_{R(\phi)f}\psi_{U_0}^{-1}$ has a $\auxst{\T_0}$-stable integral.

As for any $\T_0\in\csgr(\T)$ there is $\phi\in\cc{\T'}$ with support in $\T_0$ such that $R(\phi)f=f$, we proved that $\varphi_f\rest_{U_0}\psi_{U_0}^{-1}$ has $\auxst{\T_0}$-stable integral for any $\T_0\in\csgr(\T)$. Thus it has
$\T$-stable integral. The identity in the Lemma follows from the above calculation.
\end{proof}

In the setting of the Lemma, we will write \index{$\regint$}
\[
\regint_{U_2\bs U_0}f(ng)\psi_{U_0}^{-1}(n)\ dn=
\int^{st,\T}_{\quot{U}}\big(\int_{U_2\bs U_1}f(ung)\psi_{U_1}(u)^{-1}\,du\big)\psi_{U_0}^{-1}(n)\,dn.
\]
We record some straightforward properties of this regularized integral in the following Lemma.

\begin{lemma}\label{L: stableprop} Let $f$ be as in the previous Lemma. Then
\begin{enumerate}
\item \label{item: reginv}
$\regint_{U_2\bs U_0}f(ng)\psi_{U_0}^{-1}(n)\ dn\in C^{\smth}(U_0\bs Q,\psi_{U_0})$.
\item \label{item: extendint}
If $f\rest_{U_0}\in L^1(U_2\bs U_0)$, then $\regint_{U_2\bs U_0}f(ng)\psi_{U_0}^{-1}(n)\ dn=\int_{U_2\bs U_0}f(ng)\psi_{U_0}^{-1}(n)\ dn$.
\item
If $t\in \T$ stabilizes $\psi_{U_0}$, then $\regint_{U_2\bs U_0}f(nt)\psi_{U_0}^{-1}(n)\ dn=\modulus_{\T;U_2\bs U_0}(t)\regint_{U_2\bs U_0}f(tn)\psi_{U_0}^{-1}(n)\ dn$.
\end{enumerate}
\end{lemma}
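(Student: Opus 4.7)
All three items follow by unpacking the definitions and using the explicit formula \eqref{eq: stableconv} from the previous lemma.

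For part \eqref{item: reginv}, the $(U_0,\psi_{U_0})$-equivariance is immediate: for $n_0\in U_0$, a change of variable $n\mapsto nn_0^{-1}$ in the defining stable integral produces the factor $\psi_{U_0}(n_0)$, and the stable integral is insensitive to this translation by Remark \ref{rem: largerK}. For smoothness in $g$, I would fix $g_0\in Q$, pick $\phi\in \cc{\T'}$ with $R(\phi)f=f$ in a neighborhood of $g_0$ (which is possible since $f$ is $\T'$-smooth on the right for some $\T'\in\csgr(\T)$), and invoke \eqref{eq: stableconv}. This rewrites the regularized integral as $\int_{\quot{U}}\ft{\phi}(u)\varphi_f(ug)\psi_{U_0}(u)^{-1}\,du$, which is now a \emph{compactly supported} ordinary integral because $\ft{\phi}\in\cc{\quot{U}}$; smoothness in $g$ then follows from the smoothness of $\varphi_f$, which in turn follows from the $Q$-smoothness of $f$.

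For part \eqref{item: extendint}, assume $f(\cdot g)\in L^1(U_2\bs U_0)$. Then Fubini gives that $\varphi_f(ug)\psi_{U_0}^{-1}(u)$ is in $L^1(\quot{U})$, so for $U_c\in\csgr^{\T_0}(\quot{U})$ sufficiently large the net $\int_{U_c}\varphi_f(ug)\psi_{U_0}^{-1}(u)\,du$ converges to the ordinary integral over $\quot{U}$. On the other hand, by the definition of the stable integral this net stabilizes at the value $\regint_{U_2\bs U_0}f(ng)\psi_{U_0}^{-1}(n)\,dn$. The two must coincide, which is the claim.

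For part \eqref{item: stabeq}, I would apply Remark \ref{rem: aut} with $\alpha$ the automorphism of $U_0$ (preserving $U_1$ and $U_2$) given by conjugation by $t$. Since $t$ stabilizes $\psi_{U_0}$, we have $\psi_{U_0}^{-1}(tnt^{-1})=\psi_{U_0}^{-1}(n)$, and this conjugation commutes with the (residual) $\T$-action after shrinking $\T'$. Rewriting $f(nt)=f(t\cdot t^{-1}nt)$ and changing variables $n\mapsto tnt^{-1}$ in the outer stable integral (with Jacobian $\modulus_{\T;U_2\bs U_0}(t)$ by Remark \ref{rem: aut}) yields the stated identity; strictly speaking one must also redo the change of variables on the inner $U_2\bs U_1$ integral, but the two Jacobians combine correctly into $\modulus_{\T;U_2\bs U_0}(t)$ since $\psi_{U_1}$ is $\T$-invariant.

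The only mildly delicate point is part \eqref{item: reginv}: one must ensure that replacing $f$ by $R(\phi)f$ in a neighborhood of $g_0$ does not alter the value of the regularized integral there. This is built into the definition (the stable integral depends only on the germ of $f$ in the appropriate sense, once a large enough compact subgroup is taken) and follows formally from \eqref{eq: stableconv}. I expect no serious obstacle.
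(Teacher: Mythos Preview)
Your proposal is correct and fills in exactly what the paper leaves unstated: the paper records Lemma~\ref{L: stableprop} as ``straightforward properties'' and gives no proof at all, so there is nothing to compare against beyond the spirit of ``unwind the definitions using Lemma~\ref{lem: genstab}.'' One small simplification: your caveat at the end about needing $R(\phi)f=f$ only ``in a neighborhood of $g_0$'' is unnecessary, since (as used already in the proof of Lemma~\ref{lem: genstab}) for any $\T_0\in\csgr(\T)$ one can choose $\phi\in\cc{\T'}$ supported in $\T_0\cap K_0$ with $R(\phi)f=f$ \emph{globally}; this removes the ``mildly delicate point'' entirely.
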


We end this section with a couple of remarks
\begin{remark}
Let $Q$ be an $\ell$-group and $R\subset Q'$ closed subgroups.
Suppose that $f\in C^{\smth}(R\bs Q,\modulus_R\modulus_Q^{-1}\rest_R)$ and that the integral
$\int_{R\bs Q} f(q)\ dq$ converges absolutely. Then the same is true for $\int_{R\bs Q'}
f(q')\modulus_Q(q')\modulus_{Q'}^{-1}(q')\ dq'$. More precisely, (assuming for simplicity that
$\modulus_Q\rest_{Q'}=\modulus_{Q'}$) if $K_0\in\csgr(Q)$ is such that $f$ is right $K_0$-invariant then
\begin{equation} \label{eq: effsgrbnd}
\int_{R\bs Q'}\abs{f(q')}\ dq'\le\vol_{Q'\bs Q}(\overline{K_0})^{-1}\int_{R\bs Q}\abs{f(q)}\ dq
\end{equation}
where $\overline{K_0}$ is the image of $K_0$ in $Q'\bs Q$.
\end{remark}

\begin{remark} \label{rem: partint}
Let $\alg{U}$ be a unipotent group over $F$ and $\alg{U'}$, $\alg{V}$ closed subgroups.
Suppose that $f\in C^{\smth}(V\bs U)$ and that the integral
$\int_{V\bs U} f(u)\ du$ converges absolutely. Then the same is true for $\int_{V'\bs U'}f(u')\ du'$
where $V'=V\cap U'$.

Indeed, by induction on $\dim\alg{U}-\dim\alg{U'}$ we may assume that $\alg{U'}$ is a maximal proper subgroup of $\alg{U}$
and hence (since $\alg{U}$ is unipotent) $\dim\alg{U}-\dim\alg{U'}=1$. If $\alg{V}\subset\alg{U'}$ then we apply the previous remark.
Otherwise, $\int_{V'\bs U'}f(u')\ du'=\int_{V\bs U}f(u)\ du$ and the statement is obvious.
\end{remark}

Finally, we have the following elementary result.

\begin{lemma}\label{L: elemC}
Let $G_0$ be an $\ell$-group and $C$, $D$ closed subgroups with a closed embedding $\iota:C\rightarrow\widehat D$.
(We do not assume that either $C$ or $D$ is abelian, or that $\iota$ is a homomorphism.)
Assume that $f\in C^{\smth}(G_0)$ and $\chi\in\widehat D$ are such that
\begin{equation} \label{eq: fcd}
f(cd)=f(c)\chi(d)\sprod{\iota(c)}d\text{ for all }c\in C, d\in D.
\end{equation}
Then $f\rest_C$ is compactly supported.
Moreover, let $K_0\in \csgr(G_0)$, $\Omega_1$ a compact subset of $\widehat D$ and $\Omega_2$ a compact subset of
the space of closed embeddings from $C$ to $\widehat D$ (with the compact-open topology). Assume that the images (in $\widehat D$) of
all $\iota\in\Omega_2$ coincide.\footnote{This guarantees that $\cup_{\iota\in\Omega_2}\iota^{-1}(\Omega)$
is precompact for any compact $\Omega\subset\widehat D$.}
Then there exists a compact set $\Omega_C$ depending on $K_0$, $\Omega_1$ and $\Omega_2$ such that for any $f\in C(G_0)^{K_0}$
satisfying \eqref{eq: fcd}, the support of $f\rest_C$ is contained in $\Omega_C$.
\end{lemma}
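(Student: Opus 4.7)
The plan is to combine the right $K_0$-invariance of $f$ with the transformation rule \eqref{eq: fcd} to trap $\iota(c)$ in a compact region of $\widehat{D}$ whenever $f(c)\neq 0$, and then to use the closed embedding hypothesis on $\iota$ to pull this back to a compact subset of $C$.

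First, fix $K_0\in\csgr(G_0)$ under which $f$ is right-invariant and set $K_D:=K_0\cap D$, a compact open subgroup of $D$. Specializing \eqref{eq: fcd} to $d=k\in K_D$ and comparing with $f(ck)=f(c)$ yields $f(c)(1-\chi(k)\sprod{\iota(c)}{k})=0$ for every $k\in K_D$. Hence, wherever $f(c)\neq 0$, the character $d\mapsto \chi(d)\sprod{\iota(c)}{d}$ of $D$ is trivial on $K_D$; equivalently, $\iota(c)\in\chi^{-1}\cdot K_D^{\perp}$, where $K_D^{\perp}\subset\widehat{D}$ denotes the annihilator of $K_D$.

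The second step is to observe that $K_D^{\perp}$ is actually compact in $\widehat{D}$. Any continuous character of $D$ factors through the abelianization $D/\overline{[D,D]}$, and the image of $K_D$ there remains compact open; the corresponding quotient is therefore discrete, and its Pontryagin dual -- which is exactly $K_D^{\perp}$ -- is compact. So $\chi^{-1}K_D^{\perp}$ is a compact subset of $\widehat{D}$, and its preimage under the closed embedding $\iota$ is a compact subset of $C$ containing the support of $f\rest_C$. This proves the first assertion.

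For the uniform version, I would set $\Omega:=\Omega_1^{-1}\cdot K_D^{\perp}\subset\widehat{D}$, which is compact as the product of two compact sets. The argument above places the support of $f\rest_C$ inside $\iota^{-1}(\Omega)\subset \bigcup_{\iota'\in\Omega_2}(\iota')^{-1}(\Omega)$; the last union is precompact by the common-image hypothesis (recorded in the footnote), and its closure may be taken as the required $\Omega_C$. No real obstacle arises: the lemma reduces to the standard Pontryagin-duality fact that the annihilator of a compact open subgroup is compact, with the only minor point being to pass to $D^{\mathrm{ab}}$ in case $D$ is non-abelian.
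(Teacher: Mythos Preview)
Your argument is correct. The paper does not actually supply a proof of this lemma; it simply introduces it as an ``elementary result'' and states it without further justification, so there is nothing substantive to compare against. Your proof---using right $K_0$-invariance on $K_D=K_0\cap D$ to force $\chi\cdot\iota(c)\in K_D^{\perp}$ whenever $f(c)\neq0$, then invoking compactness of the annihilator $K_D^{\perp}$ and the closed-embedding hypothesis---is exactly the kind of direct argument the authors must have had in mind, and it handles both the pointwise and uniform statements cleanly.
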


\section{Model transition for representations of metaplectic type} \label{sec: invGLnn}

\subsection{Notations} \label{sec: notationGL2n}
\begin{itemize}
\item Throughout the paper we fix an integer $\rkn\ge1$ (not to be confused with a running variable $n$).

\item $\GLnn$ is the group $\GL_{2\rkn}$. \index{$\GLnn$}

\item $N_\GLnn$ is the standard maximal unipotent subgroup of  $\GLnn$ consisting of upper unitriangular matrices;
$T_\GLnn$ is the maximal torus of $\GLnn$ consisting of diagonal matrices;
$B_\GLnn=T_\GLnn\ltimes N_\GLnn$ is the Borel subgroup. $K_\GLnn$ is the standard maximal compact subgroup of $\GLnn$.
\index{$B_\GLnn$, $N_\GLnn$, $T_\GLnn$} \index{$K_\GLnn$}

\item $\psi_{N_\GLnn}$ is the non-degenerate character of $N_\GLnn$ given by \index{$\psi_{N_\GLnn}$}
\[
\psi_{N_\GLnn}(u)=\psi(u_{1,2}+\dots+u_{2\rkn-1,2\rkn}).
\]

\item $N_\GLnn^{\der}$ is the derived group of $N_\GLnn$.

\item $\wnn=w_{2\rkn}\in \GLnn$ \index{$\wnn$} represents the longest Weyl element of $\GLnn$.

\item $g\mapsto g^*$ is the outer automorphism of $\GLnn$ given by $g^*=(\wnn)^{-1}\,(g^t)^{-1} \wnn$.

\item \index{$H_\GLnn$} $H_\GLnn$ is the centralizer of $E=\diag(1,-1,\dots,1,-1)$ in $\GLnn$, isomorphic to $\GL_\rkn\times\GL_\rkn$.

\item $\Irr_{\meta}\GLnn$ is the set of irreducible representations of $\GLnn$ of metaplectic type,
i.e. those which admit a nontrivial $H_\GLnn$-invariant form. Such an invariant form is unique up to a scalar \cite{MR1394521}.
Any $\pi\in \Irr_{\meta}\GLnn$ is self-dual. \index{$\Irr_{\gen}\GLnn$, $\Irr_{\temp}\GLnn$, $\Irr_{\meta}\GLnn$}

\item We write $\Irr_{\gen}\GLnn$ and $\Irr_{\temp}\GLnn$ for the sets of irreducible generic and tempered representations
of $\GLnn$ respectively.

\item $\mira_m$ is the mirabolic subgroup of $\GL_m$ consisting of the elements $g$ whose last row is $\xi_m=(0,\ldots,0,1)$.
Let $\mira_{\GLnn}=\mira_{2\rkn}$.
\index{$\mira_m$, $\mira_{\GLnn}$}

\item The Lie algebra $\mathfrak{M}$ of $\GL_m$ consists of the $m\times m$-matrices $X$ over $F$.
Let $\mathfrak{M}_{\OO}$ be the lattice of integral matrices in $\mathfrak{M}$.
For any algebraic subgroup $\alg{Q}$ of $\GL_m$ defined over $F$ let $\mathfrak{q}\subset\mathfrak{M}$ be the Lie algebra of $\alg{Q}$.
The lattice $\mathfrak{q}\cap \mathfrak{M}_{\OO}$ of $\mathfrak{q}$ gives rise to a gauge form of $\alg{Q}$
(determined up to multiplication by an element of $\OO^*$) and we use it to define a Haar measure on $Q$ by the recipe of \cite{MR0217077}.

\end{itemize}

\subsection{}
For $\pi\in\Irr_{\gen, \meta}\GLnn$ we would like to realize the $H_\GLnn$-invariant form $\per{H_{\GLnn}}$ explicitly on the Whittaker  model
$\WhitM(\pi)=\model^{(N_\GLnn,\psi_{N_\GLnn})}\pi$.
Conversely, we would like to express the Whittaker function in terms of $\per{H_{\GLnn}}$.
This is analogous to the situation of the inner product considered in \cite{LMao4}.

\begin{proposition}\cite[Lemma 2.1]{1211.1241} \label{P: LM}
Assume $\pi\in\Irr_{\gen, \meta}\GLnn$ is unitarizable. Then for any $W\in\WhitM(\pi)$ the integral
\begin{equation} \label{def: LGLnn}
\per{H_{\GLnn}} (W):=\int_{N_\GLnn\cap H_\GLnn\bs \mira_{\GLnn}\cap H_\GLnn} W(p)\, dp
\end{equation}
\index{$\per{H_{\GLnn}}$} converges and defines a nontrivial $H_\GLnn$-invariant linear form on $\WhitM(\pi)$.
Thus $\tranwl(W)(g):=\per{H_{\GLnn}} (\pi(g)W)$
\index{$\tranwl$} defines a map from $\model^{(N_\GLnn,\psi_{N_\GLnn})}\pi$ to $\model^{(H_\GLnn,1)}\pi$.
\end{proposition}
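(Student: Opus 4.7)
The plan is to verify three properties of the integral \eqref{def: LGLnn}: absolute convergence, non-triviality, and $H_\GLnn$-invariance.

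For absolute convergence, the block decomposition $H_\GLnn\cap\mira_\GLnn\simeq\GL_\rkn\times\mira_\rkn$ (from the $\pm 1$-eigenspace decomposition of $E$) combined with the Iwasawa decomposition on each factor reduces the integral, up to a bounded integration over a maximal compact subgroup, to an iterated integral over the diagonal torus of $\GL_\rkn$ and the codimension-one subtorus of $\mira_\rkn$ (diagonal elements whose last entry is $1$), weighted by the modular Jacobian. For $\pi\in\Irr_{\gen}\GLnn$ unitarizable, the Jacquet--Shalika gauge estimate \cite{MR1044830} bounds $|W|$ on the diagonal torus of $\GLnn$ by a function supported in a cone defined by bounded simple-root quotients and decaying rapidly in the positive Weyl chamber thanks to unitarizability; combined with the Jacobian, this yields absolute convergence. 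Non-triviality then follows by choosing $W\in\WhitM(\pi)$ whose descent to $N_\GLnn\cap H_\GLnn\bs\mira_\GLnn\cap H_\GLnn$ is a positive bump function supported near the identity, which exists by the density of the Kirillov-type image of $\WhitM(\pi)$ in functions on the mirabolic.

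For $H_\GLnn$-invariance of $\per{H_\GLnn}$, invariance under the subgroup $H_\GLnn\cap\mira_\GLnn$ is immediate from the change of variables $p\mapsto ph^{-1}$, using the right-invariance of the measure on the quotient and the triviality of $\psi_{N_\GLnn}|_{N_\GLnn\cap H_\GLnn}$. To extend to all of $H_\GLnn$, I observe that $H_\GLnn$ is generated by $H_\GLnn\cap\mira_\GLnn$, $Z(\GLnn)$, and a single Weyl-type element, e.g.\ $(I_\rkn,w_\rkn)\in H_\GLnn$. Since any $H_\GLnn$-distinguished $\pi$ has trivial central character on $Z(\GLnn)$ (by Schur's lemma applied to the $H_\GLnn$-invariant form), $Z(\GLnn)$-invariance is automatic; in particular, the identity $(I_\rkn,aI_\rkn)=(a^{-1}I_\rkn,I_\rkn)\cdot(aI_{2\rkn})$ already reduces scalar invariance to the $H_\GLnn\cap\mira_\GLnn$-case. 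The remaining invariance under $(I_\rkn,w_\rkn)$ is the crux; my plan is to establish it via a Rankin--Selberg unfolding identity in the spirit of \cite[\S18]{1203.0039}, identifying $\per{H_\GLnn}(W)$ with a specialization of the Jacquet--Shalika exterior-square integral and exploiting its functional equation to produce the required symmetry.

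The main obstacle is this last Weyl-direction step, which does not follow from a direct change of variables and requires invoking either Rankin--Selberg unfolding or a structural uniqueness input such as that of \cite{MR1394521}. This is essentially the path followed in \cite[Lemma 2.1]{1211.1241}, which the present proposition cites directly.
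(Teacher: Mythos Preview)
Your convergence and non-triviality arguments are essentially the same as the paper's: Iwasawa decomposition reduces to a torus integral, and the Jacquet--Shalika/Whittaker bounds for unitarizable generic $\pi$ give absolute convergence (the paper cites \cite[Lemma~2.1]{LMao4} for the relevant estimate and records the modulus identity $\modulus_{\mira_\GLnn\cap B_\GLnn\cap H_\GLnn}\modulus_{\mira_\GLnn\cap H_\GLnn}^{-1}=\modulus_{B_\GLnn}^{1/2}\abs{\det\cdot}^{-1/2}$). Non-triviality via the Kirillov model is likewise the paper's argument.

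The difference is in how you pass from $\mira_\GLnn\cap H_\GLnn$-invariance to full $H_\GLnn$-invariance. You propose to decompose $H_\GLnn$ into generators and handle a Weyl direction by a Rankin--Selberg unfolding or functional-equation argument. The paper does not do this; instead it invokes the \emph{content} of Matringe's Lemma~2.1 in \cite{1211.1241}, namely that the space of $\mira_\GLnn\cap H_\GLnn$-invariant functionals on $\pi$ is one-dimensional. Since $\pi\in\Irr_{\meta}\GLnn$ by hypothesis admits a nonzero $H_\GLnn$-invariant functional, which is in particular $\mira_\GLnn\cap H_\GLnn$-invariant, uniqueness forces $\per{H_\GLnn}$ to be a scalar multiple of it, hence $H_\GLnn$-invariant. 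No generator-by-generator check or unfolding is needed.

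Your unfolding plan is not wrong in spirit, but it is underspecified: the exterior-square functional equation you allude to is itself typically deduced from uniqueness inputs of the Jacquet--Rallis type \cite{MR1394521}, so you risk circularity or at least a much longer detour. The paper's route --- observe $\mira_\GLnn\cap H_\GLnn$-invariance, then apply the multiplicity-one statement for that smaller group --- is both shorter and exactly what the cited reference provides. You do mention ``structural uniqueness'' as an alternative at the very end; that is the actual mechanism here, and it should be the primary argument rather than a fallback.
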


\begin{proof}
Writing (for a suitable Haar measure of $K_{\GLnn}\cap\mira_{\GLnn}\cap H_\GLnn$)
\begin{multline*}
\per{H_{\GLnn}}(W)=
\int_{\mira_\GLnn\cap B_\GLnn\cap H_\GLnn\bs \mira_{\GLnn}\cap H_\GLnn}
\int_{N_\GLnn\cap H_\GLnn\bs\mira_\GLnn\cap B_\GLnn\cap H_\GLnn}W(bp)\modulus_{\mira_\GLnn\cap B_\GLnn\cap H_\GLnn}(b)^{-1}
\modulus_{\mira_\GLnn\cap H_\GLnn}(b)\, db\, dp\\=
\int_{K_{\GLnn}\cap\mira_{\GLnn}\cap H_\GLnn}
\int_{T_\GLnn\cap\mira_\GLnn}W(tk)\modulus_{\mira_\GLnn\cap B_\GLnn\cap H_\GLnn}(t)^{-1}\modulus_{\mira_\GLnn\cap H_\GLnn}(t)\, dt\, dk
\end{multline*}
the convergence follows from the estimates of \cite[Lemma 2.1]{LMao4}
and the fact that on $\mira_{\GLnn}\cap B_\GLnn\cap H_\GLnn$ we have
\[
\modulus_{\mira_{\GLnn}\cap B_\GLnn\cap H_\GLnn}\modulus_{\mira_{\GLnn}\cap H_\GLnn}^{-1}=\modulus_{B_\GLnn}^{\frac12}\abs{\det\cdot}^{-\frac12}.
\]
Clearly, $\per{H_{\GLnn}}$ defines a $\mira_{\GLnn}\cap H_\GLnn$-invariant linear form which is non-trivial since
we can take $W\rest_{\mira_{\GLnn}}$ to be an arbitrary function in $C_c^\infty(N_\GLnn\bs\mira_{\GLnn},\psi_{N_\GLnn})$.
Lemma 2.1 of \cite{1211.1241} states that the space of $\mira_{\GLnn}\cap H_\GLnn$-invariant linear forms on $\pi$ is one-dimensional.
As $\pi$ possesses a nontrivial $H_\GLnn$-invariant linear form, $\per{H_{\GLnn}}$ must be $H_\GLnn$-invariant.
\end{proof}

We observe the following fact:

\begin{lemma}\label{L: cuspidal}
Suppose that $W\in C_c^\infty(N_\GLnn\bs\mira_{\GLnn},\psi_{N_\GLnn})$.
Then $\avg^{N_\GLnn}_{\mira_{\GLnn}\cap H_\GLnn} \circ W\in\cc{\mira_{\GLnn}\cap H_\GLnn\bs \mira_{\GLnn}}$.
In particular, if $\pi\in\Irr_{\gen, \meta}\GLnn$ and $W\in\WhitM(\pi)$
is such that $W\rest_{\mira_{\GLnn}}\in C_c^\infty(N_\GLnn\bs\mira_{\GLnn},\psi_{N_\GLnn})$
then the function $g\mapsto\per{H_{\GLnn}} (W(\cdot g))$ on $\mira_{\GLnn}$ is compactly supported
modulo $\mira_{\GLnn}\cap H_{\GLnn}$.
\end{lemma}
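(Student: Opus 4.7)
\emph{Plan.}  The function $F(g):=\avg^{N_\GLnn}_{\mira_\GLnn\cap H_\GLnn}\circ W(g)$ is smooth automatically: $W$ is right $K_0$-invariant for some $K_0\in\csgr(\mira_\GLnn)$, and the estimates in the proof of Proposition~\ref{P: LM} deliver absolute convergence of the defining integral uniform under right $K_0$-translations. Left $(\mira_\GLnn\cap H_\GLnn)$-invariance is by construction. The substance of the lemma is therefore compact support of $F$ modulo $\mira_\GLnn\cap H_\GLnn$.

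A crude support estimate gets us only part of the way: since $W$ is supported in $N_\GLnn\cdot S$ for some compact $S\subset\mira_\GLnn$, the support of $F$ is contained in $(\mira_\GLnn\cap H_\GLnn)\cdot N_\GLnn\cdot S$. Modulo the left factor, this collapses to the image of $N_\GLnn$ in $(\mira_\GLnn\cap H_\GLnn)\bs\mira_\GLnn$, which dominates the \emph{non-compact} quotient $(N_\GLnn\cap H_\GLnn)\bs N_\GLnn$. The missing compactness in these ``odd'' directions must come from the $\psi_{N_\GLnn}$-equivariance of $W$ via Fourier inversion on a torus.

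I would make this precise as follows. Use the Iwasawa decomposition $\mira_\GLnn\cap H_\GLnn=(N_\GLnn\cap H_\GLnn)\,(T_\GLnn\cap\mira_\GLnn)\,K''$ with $T_\GLnn\subset H_\GLnn$ and $K''$ compact, and similarly write $k_H g=n_0\, t_0\, y_0$ in $\mira_\GLnn$ (all three depending on $(k_H,g)$). Because $\psi_{N_\GLnn}$ is trivial on $N_\GLnn\cap H_\GLnn$ (whose elements all have vanishing superdiagonal), the integral collapses to
\[
F(g)=\int_{K''}\!\int_{T_\GLnn\cap\mira_\GLnn}\!\psi_{N_\GLnn}\bigl(t\,n_0\,t^{-1}\bigr)\,W\bigl(t\,t_0\, y_0\bigr)\,\modulus^{-1}(t)\,dt\,dk_H.
\]
The support constraint on $W$ forces $t\,t_0$ to lie in a compact subset of the torus, so the inner integrand has compact support in $t$. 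The character $t\mapsto\psi_{N_\GLnn}(t\,n\,t^{-1})=\psi\bigl(\sum_i(t_i/t_{i+1})n_{i,i+1}\bigr)$ pairs the torus non-degenerately against the superdiagonal of $n$, so the inner integral is a Fourier transform of a compactly-supported function on the torus, hence Bruhat--Schwartz in the superdiagonal of $n_0(k_H g)$. The outer compact $k_H$-integration, together with the action of $K''$ on the Iwasawa recomposition of $n_0$, propagates compact support to the remaining off-superdiagonal odd entries of $(N_\GLnn\cap H_\GLnn)\bs N_\GLnn$.

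The main obstacle is the combinatorial verification that every ``odd'' coordinate $n_{ij}$ ($i<j$, $i+j$ odd) parameterizing $(N_\GLnn\cap H_\GLnn)\bs N_\GLnn$ is indeed Fourier-dual to some torus direction accessible inside $\mira_\GLnn\cap H_\GLnn$, possibly after conjugation by an element of $K''$. The cleanest route seems to be an induction on $\rkn$ exploiting the semidirect decomposition $\mira_\GLnn\cap H_\GLnn\cong\GL_\rkn\times\mira_\rkn$, reducing to the base case $\rkn=1$ where $\mira_\GLnn\cap H_\GLnn=\{\diag(a,1)\}$ is one-dimensional and the argument is a single explicit Fourier transform. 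The second assertion of the lemma is then immediate: for $W\in\WhitM(\pi)$ whose restriction to $\mira_\GLnn$ lies in $\cc{N_\GLnn\bs\mira_\GLnn,\psi_{N_\GLnn}}$, the function $g\mapsto\per{H_\GLnn}(W(\cdot g))$ coincides with $F$ on $\mira_\GLnn$ and so is compactly supported modulo $\mira_\GLnn\cap H_\GLnn$.
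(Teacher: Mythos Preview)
Your proposal is a plan rather than a proof, and the step you flag as ``the main obstacle'' is a genuine gap: the torus integration controls only the $2\rkn-1$ superdiagonal entries $n_{0,i,i+1}$, whereas the quotient $(N_\GLnn\cap H_\GLnn)\bs N_\GLnn$ has $\rkn^2$ odd coordinates. Your suggestion that the compact $K''$-integration ``propagates compact support to the remaining off-superdiagonal odd entries'' is not justified; a compact integration by itself cannot produce compact support in non-compact directions, and you would need to exhibit, for each odd coordinate, an explicit Fourier-dual direction inside $\mira_\GLnn\cap H_\GLnn$. The induction on $\rkn$ you propose might be made to work (and indeed the paper carries out a closely related inductive Fourier argument in the proof of Lemma~\ref{L: feglnn}), but as written you have not executed it.

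The paper's proof is entirely different and much shorter. It exploits the existence of an $H_\GLnn$-\emph{relatively cuspidal} representation $\pi'$ of $\GLnn$ (referenced from \cite{1401.0198}). Since any $W\in\cc{N_\GLnn\bs\mira_\GLnn,\psi_{N_\GLnn}}$ extends to some $W'\in\WhitM(\pi')$, the averaged function $\avg^{N_\GLnn}_{\mira_\GLnn\cap H_\GLnn}\circ W$ coincides on $\mira_\GLnn$ with the generalized matrix coefficient $g\mapsto\per{H_\GLnn}(\pi'(g)W')$. Relative cuspidality says this is compactly supported on $H_\GLnn\bs\GLnn$; restricting to $\mira_\GLnn$ gives compact support on $(H_\GLnn\cap\mira_\GLnn)\bs\mira_\GLnn$ once one checks this subspace is closed in $H_\GLnn\bs\GLnn$ (an elementary orbit computation). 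This bypasses all the combinatorics you are wrestling with, at the cost of invoking an external existence result.
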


\begin{proof}
Let $\pi'$ be \emph{any} $H_{\GLnn}$-relatively cuspidal representation of $\GLnn$.
(See \cite[\S3]{1401.0198} for their existence.)
By the condition on $W$, there exists $W'\in \WhitM(\pi')$ such that $W'\rest_{\mira_{\GLnn}}=W$.
It is clear from the definition of $\per{H_\GLnn}$ that
$\avg^{N_\GLnn}_{\mira_{\GLnn}\cap H_\GLnn} W(g)=\avg^{N_\GLnn}_{\mira_{\GLnn}\cap H_\GLnn} W'(g)=\per{H_{\GLnn}} (\pi'(g)W')$ for all $g\in\mira_{\GLnn}$.
On the other hand, by our assumption on $\pi'$ the function $g\mapsto\per{H_{\GLnn}} (\pi'(g)W')$ is compactly supported on $H_{\GLnn}\bs\GLnn$.
The claim follows since $H_\GLnn\cap \mira_{\GLnn}\bs\mira_{\GLnn}$ is closed in $H_{\GLnn}\bs\GLnn$.
(Identifying $\mira_{\GLnn}\bs\GLnn$ with the non-zero vectors in $F^{2\rkn}$, the orbit of $H_{\GLnn}$
is given by the vanishing of all odd coordinates, hence closed.)
\end{proof}

We would like to apply Lemma~\ref{lem: genstab} to define the inverse transform of $\tranwl$, at least in the case that $\pi$ is tempered.
To that end we first provide a bound of the generalized matrix coefficient $\per{H_{\GLnn}}(\pi(g)W)$.

\subsection{A bound on generalized matrix coefficients} \label{sec: matcoefbnd}

Let $\Pi_0=\Ind_{B_\GLnn}^\GLnn 1$.
We construct an $H_\GLnn$-invariant form $L_0$ on the space of $\Pi_0$ by setting
\[
L_0(\phi)=\int_{\eta^{-1}B_\GLnn\eta\cap H_\GLnn\bs H_\GLnn}\phi(\eta h)\ dh
\]
where $\eta=\diag(\sm1{}11,\ldots,\sm1{}11)$. This is well defined.
In the case $\rkn=1$
$$L_0(\phi)=\int_{F^*}\phi(\sm1{}11 \sm{t}{}{}{1})\,d^*t$$
and the convergence follows since the integrand is $\ll\min(\abs{t},\abs{t}^{-1})^{\frac12}$.
In the general case let $P^\circ=M^\circ\ltimes U^\circ$ be the standard parabolic
subgroup of $\GLnn$ with Levi subgroup $M^\circ=\GL_2\times\cdots\times\GL_2$.
Then $\eta\in M^\circ$, $\eta^{-1} B_\GLnn\eta\cap H_\GLnn=Z_{M^\circ}\ltimes(U^\circ\cap H_\GLnn)$,
$P^\circ\cap H_\GLnn=B_{\GLnn}\cap H_\GLnn$ is a Borel subgroup of $H_\GLnn$ and
\[
\modulus_{\eta^{-1}B_\GLnn\eta\cap H_\GLnn}=\modulus_{P^\circ\cap H_\GLnn}\rest_{\eta^{-1}B_\GLnn\eta\cap H_\GLnn}=
\modulus_{P^\circ}^{\frac12}\rest_{\eta^{-1}B_\GLnn\eta\cap H_\GLnn}=
\modulus_{B_\GLnn}^{\frac12}\rest_{\eta^{-1}B_\GLnn\eta\cap H_\GLnn}.
\]
Thus,
\begin{multline} \label{eq: anthrL0}
L_0(\phi)=\int_{P^\circ\cap H_\GLnn\bs H_\GLnn}
\int_{Z_{M^\circ}\bs M^\circ\cap H_\GLnn}\phi(\eta mh)\modulus_{P^\circ\cap H_\GLnn}(m)^{-1}\ dm\ dh\\=
\int_{P^\circ\cap H_\GLnn\bs H_\GLnn}
\int_{(F^*)^\rkn}\phi(\eta\diag(t_1,1,\dots,t_{\rkn},1)h)\,\textstyle{\prod}_i\abs{t_i}^{-(\rkn+1-2i)}d^*t_i\,dh\\
= \int_{P^\circ\cap H_\GLnn\bs H_\GLnn}
\int_{F^\rkn}\phi(\udr(t_1,\dots,t_\rkn)h)\,\textstyle{\prod}_i\abs{t_i}^{-\frac12} dt_i\,dh.
\end{multline}
where $\udr(t_1,\dots,t_\rkn)=\diag(\sm{1}{}{t_1}{1},\ldots,\sm{1}{}{t_\rkn}{1})$.
Therefore, the convergence reduces to the case $\rkn=1$ considered above.

For $g\in \GLnn$ denote by $\|g\|$ the maximum of the absolute values of the entries of $g$ and $g^{-1}$, and let $\sigma(g)=\max(1,\log_q\|g\|)$.
\index{$\sigma$} We have $\sigma(g)\ge1$, $\sigma(gh)\le\sigma(g)+\sigma(h)$ and $\sigma$ is bi-$K_\GLnn$-invariant.
We define $\sigma_{H_\GLnn}(g)=\sigma(g^{-1}Eg)$ so that $\sigma_{H_\GLnn}\in C^{\smth}(H_\GLnn\bs\GLnn)$. Clearly
$\sigma_{H_\GLnn}(g)\le 2\sigma(g)$. \index{$\sigma_{H_\GLnn}$}

Let $\phi_0$ be the unramified vector in $\Pi_0$ such that $\phi_0(e)=1$.
Let $\Xi_{H_\GLnn}(m)=L_0(\Pi_0(m)\phi_0)$, so that $\Xi_{H_\GLnn}\in C^{\smth}(H_\GLnn\bs\GLnn)$.
\index{$\Xi_{H_\GLnn}$}

\begin{lemma} \label{lem: ellsigma}
For any $b\ge0$
\begin{equation} \label{eq: bnd135}
\int_{H_\GLnn\cap N_{\GLnn}\bs N_{\GLnn}^{\der}}\Xi_{H_\GLnn}(u)\sigma_{H_\GLnn}(u)^b\,du<\infty.
\end{equation}
\end{lemma}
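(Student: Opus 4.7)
The strategy is to substitute the explicit formula \eqref{eq: anthrL0} for $L_0$ into the identity $\Xi_{H_\GLnn}(u)=L_0(\Pi_0(u)\phi_0)$, interchange the order of integration, and reduce to a polynomial-weight Iwasawa-type estimate on the unipotent integral. Since the integrand
\[
\phi_0(\udr(t)hu)\,\sigma_{H_\GLnn}(u)^b\prod_{i=1}^\rkn\abs{t_i}^{-\frac12}
\]
is manifestly non-negative, Fubini allows us to permute freely the three integrations over $H_\GLnn\cap N_\GLnn\bs N_\GLnn^{\der}$, $F^\rkn$, and $P^\circ\cap H_\GLnn\bs H_\GLnn$.

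I will integrate in $u$ first. After a suitable change of variables, the quotient $H_\GLnn\cap N_\GLnn\bs N_\GLnn^{\der}$ is identified with an affine space parametrized by the matrix entries $u_{ij}$ with $j-i$ odd and $j-i\ge 3$ (the entries of $N_\GLnn^{\der}$ with $j-i$ even and positive already belong to $H_\GLnn\cap N_\GLnn$); in these coordinates one has $\sigma_{H_\GLnn}(u)\ll 1+\log^+\max_{i,j}\abs{u_{ij}}$. Since $\phi_0$ is right-$K_\GLnn$-invariant and satisfies $\phi_0(bg)=\modulus_{B_\GLnn}^{\frac12}(b)\phi_0(g)$ for $b\in B_\GLnn$, the Iwasawa decomposition of $g_0 u$ (where $g_0=\udr(t)h$), combined with the elementary $p$-adic bound $\int_F\max(1,\abs{x})^{-s}(1+\log^+\abs{x})^b\,dx<\infty$ for $s>1$, should yield a pointwise estimate of the form
\[
\int_{H_\GLnn\cap N_\GLnn\bs N_\GLnn^{\der}}\phi_0(g_0 u)\,\sigma_{H_\GLnn}(u)^b\,du\ll \phi_0(g_0)\,(1+\sigma(g_0))^{b'}
\]
for some $b'\ge 0$ depending only on $b$ and $\rkn$.

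Plugging this estimate back into the triple integral reduces the problem to a polylog-weighted version of the integral \eqref{eq: anthrL0} which already established $L_0(\phi_0)<\infty$. As explained in the paragraph following \eqref{eq: anthrL0}, the latter ultimately reduces to the one-variable estimate $\int_{F^*}\min(\abs{t},\abs{t}^{-1})^{\frac12}\,d^*t<\infty$; inserting the weight $(1+\abs{\log\abs{t}})^{b'}$ still gives a convergent integral, so the outer double integral converges. The crux of the argument, and where I expect the main obstacle, is the middle step: uniformly controlling the Iwasawa decomposition of $g_0 u$ as $u$ varies, with the logarithmic weight $\sigma_{H_\GLnn}(u)^b$. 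This is precisely the type of bound established by Waldspurger \cite[\S3]{WaldAst3461}, and is essentially the content of Appendix B of the paper, which I would invoke to finish the proof cleanly.
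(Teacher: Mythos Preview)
Your proposed pointwise bound
\[
\int_{H_\GLnn\cap N_\GLnn\bs N_\GLnn^{\der}}\phi_0(g_0 u)\,\sigma_{H_\GLnn}(u)^b\,du\ll \phi_0(g_0)\,(1+\sigma(g_0))^{b'}
\]
is false, and this is where the argument breaks. Since $\phi_0$ is \emph{left} $N_\GLnn$-invariant, for any $g_0\in B_\GLnn$ (in particular $g_0=e$) one has $\phi_0(g_0u)=\phi_0(g_0)$ for every $u\in N_\GLnn$, so the left-hand side equals $\phi_0(g_0)\cdot\vol(H_\GLnn\cap N_\GLnn\bs N_\GLnn^{\der})=\infty$. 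In your parametrization $g_0=\udr(t)h$ with $h$ ranging over $\overline{N}_\GLnn\cap H_\GLnn$, the identity element $g_0=e$ lies in the closure of the domain (take $t\to0$, $h\to e$), so no uniform estimate of this shape can hold. Appendix~B (Lemma~\ref{lem: mainbnd018}) does not give what you need either: it bounds $\int_{N_c}\phi_0(w_0ug)\sigma(u)^A\,du$ over a \emph{compact} region $N_c$ with the long Weyl element $w_0$ present, which is equivalent to integrating $\phi_0(\bar u g)$ over $\bar u$ in a compact piece of $\overline{N}_\GLnn$---a fundamentally different integral from yours.

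The missing idea is the $\wnn$-trick the paper uses: since $\wnn$ normalizes $H_\GLnn$ and $L_0(\Pi_0(\wnn)v)=L_0(v)$, one has $\Xi_{H_\GLnn}(u)=\Xi_{H_\GLnn}(\wnn u)$, so the integration over $H_\GLnn\cap N_\GLnn\bs N_\GLnn^{\der}$ becomes an integration over $H_\GLnn\cap\overline{N}_\GLnn\bs\overline{N}_\GLnn^{\der}$. Now everything is lower unitriangular: the variable $\bar u$, the inner variable $\bar u_1\in\overline{N}_\GLnn\cap H_\GLnn$ from the integration formula for $P^\circ\cap H_\GLnn\bs H_\GLnn$, and $\udr(t)$. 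The $\bar u$ and $\bar u_1$ integrations combine into a single integral over $\overline{N}_\GLnn^{\der}$, and after commuting $\udr(t)$ past $\bar u$ (at the cost of a logarithmic weight in $t$) one is exactly in the setting of Lemma~\ref{lem: mainbnd018}, with the $t_i$ playing the role of the parameter $c$. The point is that the three integrations cannot be decoupled via a pointwise bound; they must be merged into a single opposite-unipotent integral before the Waldspurger-type estimate applies.
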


\begin{proof}
We will use the integration formula
\[
\int_{P^\circ\cap H_\GLnn\bs H_\GLnn}f(g)\ dg=\int_{\overline{U^\circ}\cap H_\GLnn} f(u)\ du=\int_{\overline{N}_\GLnn\cap H_\GLnn} f(u)\ du.
\]
Here $\overline{U^\circ}$ and $\overline{N}_\GLnn$ are the images under transpose of $U^{\circ}$ and $N_\GLnn$ respectively. Let
$\overline{N}_\GLnn^{\der}=(N_\GLnn^{\der})^t$.
As $\wnn$ normalizes $H_\GLnn$, by \cite[Theorem~3.2]{1401.0198}\footnote{This can be checked directly at the case at hand.}
we have $L_0(\Pi_0(\wnn)v)=L_0(v)$. We can unwind the left-hand side of \eqref{eq: bnd135} to
\begin{multline}\label{bound: goal}
\int_{H_\GLnn\cap N_{\GLnn}\bs N_{\GLnn}^{\der}}L_0(\Pi_0(\wnn u)\phi_0)\sigma_{H_\GLnn}(\wnn u)^b\,du\\
=\int_{H_\GLnn\cap \overline{N}_\GLnn\bs \overline{N}_\GLnn^{\der}}\int_{\overline{N}_\GLnn\cap H_\GLnn}
\int_{F^\rkn}\phi_0(\udr(t_1,\dots,t_\rkn)\bar u_1\bar u_2)
\sigma_{H_\GLnn}(\bar u_2)^b\,\textstyle{\prod}_i\abs{t_i}^{-\frac12}dt_i\,d\bar u_1\, d\bar u_2
\\=\int_{ \overline{N}_\GLnn^{\der}}
\int_{F^\rkn}\phi_0(\udr(t_1,\dots,t_\rkn)\bar u)
\sigma_{H_\GLnn}(\bar u)^b\,\textstyle{\prod}_i\abs{t_i}^{-\frac12}dt_i\,d\bar u
\\ \ll_b\int_{ \overline{N}_\GLnn^{\der}}
\int_{F^\rkn} \phi_0(\udr(t_1,\dots,t_\rkn)\bar u)
\sigma(\bar u)^b\,\textstyle{\prod}_i\abs{t_i}^{-\frac12}dt_i\,d\bar u
\\ \ll_b\int_{ \overline{N}_\GLnn^{\der}}\int_{F^\rkn}\phi_0(\bar u\udr(t_1,\dots,t_\rkn))
\sigma(\bar u)^b\,\textstyle{\prod}_i\abs{t_i}^{-\frac12}\max(1,-\val (t_i))^b dt_i\,d\bar u.
\end{multline}
It follows from Lemma \ref{lem: mainbnd018} (with $g=e$) that the integral over $\bar u\in\overline{N}_\GLnn^{\der}$ and
$\abs{t_1},\dots,\abs{t_\rkn}\le1$ converges and that
there exists $a\ge0$ such that for any subset $\emptyset\ne I\subset\{1,\dots,\rkn\}$ and integers $n_i>0$, $i\in I$,
the integral over $\bar u\in\overline{N}_\GLnn^{\der}$,
$\val(t_i)=-n_i$, $i\in I$ and $\abs{t_i}\le1$, $i\notin I$ is $\ll_bq^{-\sum_{i\in I}n_i/2}\max_{i\in I}n_i^a$.
The lemma follows.
\end{proof}

We also need a lower bound on $\Xi_{H_\GLnn}$.
Let $H_\GLnn''$ be the centralizer of $\diag(\sm0110,\dots,\sm0110)$, which is conjugate to $H_\GLnn$
by $\kappa=\diag(\sm 111{-1},\dots,\sm 111{-1})\in M^\circ$.
Define $\Xi_{H_\GLnn''}(g)=\Xi_{H_\GLnn}(\kappa g)$.
It follows easily from \eqref{eq: anthrL0} and the relation $\sm{-2}1{}{-1}\sm 1{}11\sm{-1}{}{}1=\sm 111{-1}$
that for a suitable inessential constant $c$
\[
\Xi_{H_\GLnn''}(g)=c\int_{Z_{M^\circ}(U^\circ\cap H_\GLnn'')\bs H_\GLnn''}\phi_0(hg)\ dh
=c\int_{Z_{M^\circ}(N\cap H_\GLnn'')\bs H_\GLnn''}\phi_0(hg)\ dh
\]
where the integral converges.

Let
\[
A_0^+=\{a=\diag(a_1,\dots,a_{2\rkn})\in T_\GLnn:\abs{a_i/a_{i+1}}\le1,\ \ i=1,\dots,2\rkn-1\}.
\]
\begin{lemma} \label{lem: lwrbndXiH}
$\Xi_{H_\GLnn''}(ka)\gg\phi_0(a)$ for all $a\in A_0^+$ and $k\in K_\GLnn$.
\end{lemma}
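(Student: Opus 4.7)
The plan is to establish the estimate in two steps: first, rewriting $\Xi_{H_\GLnn''}$ as an integral over the compact group $K_\GLnn\cap H_\GLnn''$, and then proving a pointwise inequality $\phi_0(k_1 a)\geq\phi_0(a)$ uniformly in $k_1\in K_\GLnn$ and $a\in A_0^+$.

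For the first step, I would use the Iwasawa decomposition of $H_\GLnn''$. A direct check shows that a diagonal matrix commutes with $\diag(\sm{0}{1}{1}{0},\ldots,\sm{0}{1}{1}{0})$ exactly when its $(2i-1)$-th and $(2i)$-th entries agree for each $i$, so $T_\GLnn\cap H_\GLnn''=Z_{M^\circ}$. Consequently $B_\GLnn\cap H_\GLnn''=Z_{M^\circ}\cdot(N_\GLnn\cap H_\GLnn'')$, and the Iwasawa decomposition $H_\GLnn''=(B_\GLnn\cap H_\GLnn'')(K_\GLnn\cap H_\GLnn'')$ identifies the homogeneous space $Z_{M^\circ}(N_\GLnn\cap H_\GLnn'')\bs H_\GLnn''$ with $K_\GLnn\cap H_\GLnn''$. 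After absorbing the resulting modular factors into a positive constant, we obtain
\[
\Xi_{H_\GLnn''}(g)\asymp\int_{K_\GLnn\cap H_\GLnn''}\phi_0(k''g)\,dk''.
\]
Granting the pointwise bound $\phi_0(k_1 a)\geq\phi_0(a)$, applying it with $k_1=k''k$ and integrating over $k''\in K_\GLnn\cap H_\GLnn''$ immediately yields $\Xi_{H_\GLnn''}(ka)\gg\phi_0(a)$.

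To prove the pointwise bound, write the Iwasawa decomposition $k_1 a=u\,t(k_1a)\,k'$ so that $\phi_0(k_1 a)=\delta_{B_\GLnn}^{1/2}(t(k_1a))$. The key identity is
\[
|t_1(g)\cdots t_i(g)|=\bigl\|(\wedge^i g^{-1})(e_1\wedge\cdots\wedge e_i)\bigr\|_{\infty}^{-1},\qquad 1\leq i\leq 2\rkn,
\]
where $\|\cdot\|_{\infty}$ denotes the maximum of the absolute values of the coordinates in the standard basis of $\wedge^i F^{2\rkn}$. This follows because $\wedge^i u^{-1}(e_1\wedge\cdots\wedge e_i)=e_1\wedge\cdots\wedge e_i$ for $u\in N_\GLnn$, while $\wedge^i k^{-1}$ preserves this unit max norm. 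Applied to $g=k_1a$: the vector $v=\wedge^i k_1^{-1}(e_1\wedge\cdots\wedge e_i)$ lies in $\wedge^i\OO^{2\rkn}$ with $\|v\|_\infty=1$, and $\wedge^i a^{-1}$ scales $e_J$ by $(\prod_{j\in J}a_j)^{-1}$. Since $a\in A_0^+$ gives $|a_1|\leq\cdots\leq|a_{2\rkn}|$, the minimum of $\prod_{j\in J}|a_j|$ over $|J|=i$ is attained at $J=\{1,\ldots,i\}$, so $\|\wedge^i(k_1a)^{-1}(e_1\wedge\cdots\wedge e_i)\|_{\infty}\leq|a_1\cdots a_i|^{-1}$, and hence $|t_1(k_1a)\cdots t_i(k_1a)|\geq|a_1\cdots a_i|$ for every $i$. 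Combining with $|\det(k_1a)|=|\det a|$ and expressing $\log\delta_{B_\GLnn}^{1/2}$ telescopically as a sum of the $\log|t_1\cdots t_i|$ together with a multiple of $\log|\det t|$ yields $\delta_{B_\GLnn}^{1/2}(t(k_1a))\geq\delta_{B_\GLnn}^{1/2}(a)$, as desired.

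The main obstacle is this last pointwise inequality; it is essentially the $p$-adic analog of Kostant convexity restricted to the pairing with $\rho$, but the exterior-power computation above bypasses invoking the general convexity theorem and reduces everything to elementary linear algebra over $\OO$.
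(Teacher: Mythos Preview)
Your second step (the exterior-power argument for $\phi_0(k_1a)\ge\phi_0(a)$) is correct and is a pleasant self-contained proof of the inequality the paper simply cites from \cite[Lemme II.3.2]{MR1989693}.

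The first step, however, contains a genuine error. The decomposition $H_\GLnn''=(B_\GLnn\cap H_\GLnn'')(K_\GLnn\cap H_\GLnn'')$ does \emph{not} hold. You correctly compute $T_\GLnn\cap H_\GLnn''=Z_{M^\circ}$, but this torus has dimension $\rkn$, whereas a maximal torus of $H_\GLnn''\cong\GL_\rkn\times\GL_\rkn$ has dimension $2\rkn$; hence $B_\GLnn\cap H_\GLnn''$ is not a Borel subgroup of $H_\GLnn''$ and no Iwasawa decomposition is available. Concretely, for $\rkn=1$ the quotient $Z_{M^\circ}(N_\GLnn\cap H_\GLnn'')\bs H_\GLnn''$ is isomorphic to $F^*$, which is non-compact, so it cannot be identified with $K_\GLnn\cap H_\GLnn''$. (Under the isomorphism $H_\GLnn''\cong F^*\times F^*$ sending $\sm abba\mapsto(a+b,a-b)$, the subgroup $B_\GLnn\cap H_\GLnn''=\{cI_2\}$ becomes the diagonal, and an element such as $(1,\varpi)$ is not in $(B_\GLnn\cap H_\GLnn'')(K_\GLnn\cap H_\GLnn'')$ since that would force $|c|^2=|\varpi|$.) Thus your asserted $\asymp$ is false, not merely unproved.

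The repair is immediate and is exactly what the paper does: only the inequality $\Xi_{H_\GLnn''}(g)\gg\int_{K_\GLnn\cap H_\GLnn''}\phi_0(k''g)\,dk''$ is needed, and this follows because $\phi_0\ge0$ and restricting the (convergent) integral over the non-compact space $Z_{M^\circ}(N_\GLnn\cap H_\GLnn'')\bs H_\GLnn''$ to the image of the compact set $K_\GLnn\cap H_\GLnn''$ can only decrease it. With this correction in place, your argument goes through.
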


\begin{proof}
Clearly, for a suitable choice of Haar measure we have
\[
\Xi_{H_\GLnn''}(g)\ge\int_{K_\GLnn\cap H_\GLnn''}\phi_0(k'g)\ dk'.
\]
Here we used the fact that if $H_1$ is any locally compact group, $H_2$ is a closed subgroup of $H_1$
and $K_1$ is a compact open subgroup of $H_1$ then up to normalization of measures we have
\[
\int_{H_2\bs H_1}f(h)\ dh\ge\int_{H_2\bs H_1}f(h)1_{H_2K_1}(h)\ dh=\int_{H_2\cap K_1\bs K_1}f(k)\ dk=\int_{K_1}f(k)\ dk
\]
for any non-negative function $f\in C(H_2\bs H_1,\modulus_{H_2}\modulus_{H_1}^{-1})$.

If $g\in A_0^+$ then $\phi_0(kg)\gg\phi_0(g)$ for all $k\in K_\GLnn$ by \cite[Lemma II.3.2]{MR1989693}.
The Lemma follows.
\end{proof}

\begin{lemma} \label{lem: bndbytmpr}
Let $\pi\in \Irr_{\meta,\temp}\GLnn$ and let $L$ be an $H_\GLnn$-invariant form on the space of $\pi$.
Then there exists $b\ge0$ such that for any $v\in \pi$
\[
L(\pi(g)v)\ll_{L,v}\Xi_{H_{\GLnn}}(g)\sigma_{H_\GLnn}(g)^b, \ \ \ g\in\GLnn.
\]
Consequently,
\begin{equation} \label{eq: conv183}
\int_{H_\GLnn\cap N_{\GLnn}^{\der}\bs N_{\GLnn}^{\der}}\abs{L(\pi(u)v)}\,du<\infty.
\end{equation}
\end{lemma}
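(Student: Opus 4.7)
The plan is to realize $L$ concretely via the Whittaker model, majorize it using a tempered Whittaker estimate, and match the resulting integral against the defining formula for $\Xi_{H_\GLnn}$.

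By uniqueness of $H_\GLnn$-invariant forms on $\pi\in\Irr_{\meta}\GLnn$, we may assume $L=\per{H_\GLnn}$, and via $\pi\simeq\WhitM(\pi)$ identify $v$ with $W\in\WhitM(\pi)$. Unfolding as in the proof of Proposition~\ref{P: LM} (using the Iwasawa decomposition of $\mira_\GLnn\cap H_\GLnn$) gives
\[
L(\pi(g)v) = \int_{K_\GLnn\cap\mira_\GLnn\cap H_\GLnn}\int_{T_\GLnn\cap\mira_\GLnn} W(tkg)\,\modulus_{B_\GLnn}^{-1/2}(t)\abs{\det t}^{1/2}\,dt\,dk,
\]
presenting $L(\pi(g)v)$ as an iterated integral of the Whittaker function $W$.

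The key analytic input, where the hypothesis $\pi\in\Irr_{\temp}\GLnn$ enters crucially, is the tempered Whittaker function estimate of Jacquet--Piatetski-Shapiro--Shalika as sharpened by Waldspurger in \cite[\S3]{WaldAst3461}: for tempered generic $\pi$ and smooth $W$, there is a bound of the shape $\abs{W(tk')}\ll_W \modulus_{B_\GLnn}^{1/2}(t)\sigma(t)^{b_0}$ on the positive chamber of $T_\GLnn$, together with rapid decay in directions transverse to that chamber. A variant of this estimate is precisely the content of Lemma~\ref{lem: mainbnd018} in the appendix (which already appears in the proof of Lemma~\ref{lem: ellsigma}). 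Substituting this bound into the displayed expression and performing the torus integration against $\modulus_{B_\GLnn}^{-1/2}(t)\abs{\det t}^{1/2}$ reduces the estimate to an integral of the same shape as the right-hand side of~\eqref{eq: anthrL0} defining $\Xi_{H_\GLnn}$.

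The main obstacle is carrying out this comparison precisely: the torus $T_\GLnn\cap\mira_\GLnn$ of dimension $2\rkn-1$ in the integral for $L$ must be matched, after the rapid-decay portion of Waldspurger's bound eliminates the contributions transverse to $A^+\cap H_\GLnn$, with the $\rkn$-dimensional $\udr$-torus of \eqref{eq: anthrL0} combined with the flag variety $P^\circ\cap H_\GLnn\bs H_\GLnn$. This matching uses the block structure of $H_\GLnn$ relative to $P^\circ$ and mirrors the calculation already carried out in the proof of Lemma~\ref{lem: ellsigma}; the polynomial $\sigma$-correction from the tempered estimate is absorbed into the factor $\sigma_{H_\GLnn}(g)^b$.

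Finally, for the convergence statement~\eqref{eq: conv183}: any matrix in $H_\GLnn\cap N_\GLnn$ is upper unitriangular and commutes with $E=\diag(1,-1,\dots,1,-1)$, forcing its first off-diagonal entries (positions of opposite $E$-parity) to vanish, hence $H_\GLnn\cap N_\GLnn\subset N_\GLnn^{\der}$. Therefore $H_\GLnn\cap N_\GLnn^{\der}=H_\GLnn\cap N_\GLnn$, and combining the established bound with Lemma~\ref{lem: ellsigma} yields the absolute convergence of $\int_{H_\GLnn\cap N_\GLnn^{\der}\bs N_\GLnn^{\der}}\abs{L(\pi(u)v)}\,du$.
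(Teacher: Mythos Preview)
Your approach is genuinely different from the paper's, and the key step---the ``matching'' of the Whittaker integral with the defining formula for $\Xi_{H_\GLnn}$---is not carried out and does not obviously go through. After bounding $|W(tkg)|$ by the tempered Whittaker estimate and integrating, you obtain an expression of the shape
\[
\int_{K_\GLnn\cap\mira_\GLnn\cap H_\GLnn}\int_{T_\GLnn\cap\mira_\GLnn}\phi_0(tkg)\,(\text{support cutoff})\,\sigma(tkg)^{b_0}\,\modulus_{B_\GLnn}^{-1/2}(t)\abs{\det t}^{1/2}\,dt\,dk,
\]
and you assert this is dominated by $\Xi_{H_\GLnn}(g)\sigma_{H_\GLnn}(g)^b$. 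But $\Xi_{H_\GLnn}(g)=L_0(\Pi_0(g)\phi_0)$ is defined by the \emph{different} integral \eqref{eq: anthrL0}, taken over $P^\circ\cap H_\GLnn\bs H_\GLnn$ and an $\rkn$-dimensional torus, not via a mirabolic Whittaker-type integral. Your claim that ``rapid decay eliminates the contributions transverse to $A^+\cap H_\GLnn$'' has no content here since $T_\GLnn\subset H_\GLnn$ (diagonal matrices commute with $E$), so there is no such transversality to exploit. The reference to Lemma~\ref{lem: mainbnd018} is also misplaced: that lemma bounds integrals of $\phi_0$ over expanding unipotent sets, which is not a Whittaker estimate. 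Finally, the appeal to Lemma~\ref{lem: ellsigma} does not help: that lemma establishes convergence of a fixed integral, not a comparison between two integral expressions for varying $g$.

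The paper proceeds quite differently and avoids the Whittaker model altogether. It passes to the conjugate subgroup $H_\GLnn'$ (the centralizer of $\wnn$), for which $B_\GLnn$ is $\theta'$-split, and invokes Lagier's result \cite[Lemme~2]{MR2381204}: the $H_\GLnn'$-invariant functional agrees with an ordinary matrix coefficient on the positive chamber $A_0^+$. Temperedness then gives $|L'(\pi(a)v)|\ll\phi_0(a)\sigma(a)^b$ on $A_0^+$. The passage from $\phi_0$ to $\Xi_{H_\GLnn'}$ uses the \emph{lower} bound of Lemma~\ref{lem: lwrbndXiH} (which you do not invoke), and the polar decomposition $\GLnn=H_\GLnn'A_1^+K_\GLnn$ extends the estimate to all of $\GLnn$. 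Your final paragraph deducing \eqref{eq: conv183} from the pointwise bound together with Lemma~\ref{lem: ellsigma} is correct and matches the paper.
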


\begin{proof}
The second part of the lemma follows from the first one and Lemma \ref{lem: ellsigma}.
In order to prove the first part, it is convenient to work with the centralizer $H_\GLnn'$ of $\wnn$,
which is conjugate to $H_\GLnn''$ by a permutation matrix $w$.
Define $\sigma_{H_\GLnn'}(g)=\sigma(g^{-1}\wnn g)$ and $\Xi_{H_\GLnn'}(g)=\Xi_{H_\GLnn''}(wg)$.
Note that $B_\GLnn$ is a $\theta'$-split Borel group of $\GLnn$ where $\theta'(g)=\wnn g\wnn$.
Thus, by \cite[Lemme 2]{MR2381204}, if $L'$ is an $H'_\GLnn$-invariant form on $\pi$ then
there exists a vector $\hat v\in \dual\pi$ (the dual of $\pi$) such that $L'(\pi(g)v)=\sprod{\pi(g)v}{\hat v}$ for all $g\in A_0^+$.
Since $\pi$ is tempered it follows (cf.~\cite[Lemme II.1.1]{MR1989693}) that there exists $b\ge0$ such that\footnote{In fact,
$b$ can be chosen independently of $\pi$ by \cite{MR946351}}
\[
L'(\pi(g)v)\ll_{L',v}\phi_0(g)\sigma(g)^b, \ \ \ g\in A_0^+.
\]
Let $A_1^+=\{\diag(a_1,\dots,a_\rkn,1,\dots,1):\abs{a_1}\le\dots\le\abs{a_\rkn}\le1\}$. Then
\[
L'(\pi(g)v)\ll_{L',v}\phi_0(g)\sigma_{H_\GLnn'}(g)^b, \ \ \ g\in A_1^+
\]
and by Lemma \ref{lem: lwrbndXiH},
\[
L'(\pi(g)v)\ll_{L',v}\Xi_{H_{\GLnn}'}(g)\sigma_{H_\GLnn'}(g)^b, \ \ \ g\in A_1^+.
\]
Since $\GLnn=H_\GLnn'A_1^+K_{\GLnn}$ (see e.g., \cite[Propotion 3.1]{MR2060496}) it follows that
\[
L'(\pi(g)v)\ll_{L',v}\Xi_{H_{\GLnn}'}(g)\sigma_{H_\GLnn'}(g)^b, \ \ \ g\in\GLnn.
\]
The first part of the lemma follows immediately.
\end{proof}

\subsection{Model transition for $\Irr_{\temp, \meta}\GLnn$}

Let $\pi\in \Irr_{\temp, \meta}\GLnn$ and consider $f(g)=\per{H_{\GLnn}} (\pi(g)W)\in C^{\smth}(H_\GLnn\bs \GLnn)$.
The character $\psi_{N_\GLnn}$ is $(T_\GLnn,N_\GLnn,N_\GLnn^{\der})$-generic and is trivial on
$N_\GLnn\cap H_\GLnn\subset N_\GLnn^{\der}$.
By Lemma~\ref{lem: bndbytmpr}, the conditions of Lemma~\ref{lem: genstab} are satisfied and
from Lemma~\ref{L: stableprop} part \ref{item: reginv} we get a map from $\model^{(H_\GLnn,1)}\pi$ to
$\model^{(N_\GLnn,\psi_{N_\GLnn})}\pi$:
$$f(g)\mapsto \tranlw(f)=\regint_{N_\GLnn\cap H_\GLnn\bs N_\GLnn}f(ng)\psi_{N_\GLnn}^{-1}(n) \,dn.$$
\index{$\tranlw$}
\label{sec: regintNG}

The next proposition is analogous to \cite[Lemma 4.4]{LMao5}.

\begin{proposition} \label{prop: inversionM}
Assume that $\pi\in\Irr_{\meta, \temp}\GLnn$. Then
\begin{equation}\label{eq: invLM}
W(g)=\regint_{N_\GLnn\cap H_\GLnn\bs N_\GLnn}\per{H_{\GLnn}} (\pi(ng)W)\psi_{N_\GLnn}^{-1}(n) \,dn.
\end{equation}
Namely, $\tranlw$ is the inverse transform of $\tranwl$ between models of $\pi$.
\end{proposition}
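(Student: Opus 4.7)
The plan is to reduce the identity $\tranlw\circ\tranwl=\id$ to a scalar computation on a convenient test vector, and then evaluate that scalar by iterated Fourier inversion.

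First, I would verify that both $\tranwl$ and $\tranlw$ are well-defined and $G$-equivariant. For $\tranwl$ this is Proposition~\ref{P: LM}. For $\tranlw$, the temperedness of $\pi$ together with Lemma~\ref{lem: bndbytmpr} supplies the $L^1$-integrability required by Lemma~\ref{lem: genstab} (applied with $U_0=N_\GLnn$, $U_1=N_\GLnn^{\der}$, $U_2=N_\GLnn\cap H_\GLnn$, $\T=T_\GLnn$), and $G$-equivariance in the right action is built into the construction via Lemma~\ref{L: stableprop}. Since $\model^{(H_\GLnn,1)}\pi$ realizes the irreducible $\pi$, its image under $\tranlw$ is a $G$-quotient of $\pi$, hence either zero or the unique copy $\WhitM(\pi)$ inside $\Ind_{N_\GLnn}^{\GLnn}\psi_{N_\GLnn}$ (by uniqueness of the Whittaker model). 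Consequently $\tranlw\circ\tranwl$ is a $G$-endomorphism of $\WhitM(\pi)$, and Schur's lemma forces it to be a scalar $c\cdot\id$. The claim thus reduces to showing $c=1$.

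To compute $c$, I would evaluate on a test vector $W_0\in\WhitM(\pi)$ whose restriction to $\mira_\GLnn$ lies in $\cc{N_\GLnn\bs\mira_\GLnn,\psi_{N_\GLnn}}$ and with $W_0(e)\neq 0$; such a vector exists by the Bernstein--Zelevinsky description of the Kirillov model. For this $W_0$, Lemma~\ref{L: cuspidal} places $\tranwl(W_0)$ in $\cc{\mira_\GLnn\cap H_\GLnn\bs\mira_\GLnn}$, and hence its restriction to $N_\GLnn\subset\mira_\GLnn$ (using $N_\GLnn\cap(\mira_\GLnn\cap H_\GLnn)=N_\GLnn\cap H_\GLnn$) is compactly supported modulo $N_\GLnn\cap H_\GLnn$. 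By Lemma~\ref{L: stableprop}\eqref{item: extendint} the regularized integral at $g=e$ then reduces to the absolutely convergent iterated integral
\[
c\cdot W_0(e)=\int_{N_\GLnn\cap H_\GLnn\bs N_\GLnn}\int_{N_\GLnn\cap H_\GLnn\bs\mira_\GLnn\cap H_\GLnn}W_0(pn)\,dp\,\psi_{N_\GLnn}^{-1}(n)\,dn.
\]

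The remaining task is to unfold this double integral to $W_0(e)$. After swapping the order by Fubini, I would parameterize the coset space $N_\GLnn\cap H_\GLnn\bs N_\GLnn$ by the off-diagonal root coordinates $u_{ij}$ (those with $i<j$ and $i\not\equiv j\pmod 2$), and execute iterated Fourier inversion along these coordinates against $\psi_{N_\GLnn}^{-1}$. The non-degeneracy of $\psi_{N_\GLnn}$ in each simple-root direction produces a $\delta$-function at each stage that collapses a matching coordinate of $p\in\mira_\GLnn\cap H_\GLnn$; after all inversions, $p$ has been localized to the identity and the surviving term is $W_0(e)$, giving $c=1$. The main obstacle is this last unfolding: one must choose the order of the iterated Fourier inversions so that each $\delta$-function correctly pairs off with a matching coordinate of $p$, and keep careful track of the modulus factors. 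This is the local avatar of the Rankin--Selberg unfolding of Jacquet--Shalika \cite{MR1044830} cited in the introduction, and the parallel calculations in \cite[\S4]{LMao5} and \cite[\S18]{1203.0039} provide a template.
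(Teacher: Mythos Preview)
Your proposal is correct and follows essentially the same route as the paper: reduce to a scalar via uniqueness of the Whittaker functional, evaluate on a vector $W$ with $W\rest_{\mira_\GLnn}\in\cc{N_\GLnn\bs\mira_\GLnn,\psi_{N_\GLnn}}$ so that the regularized integral becomes absolutely convergent, and then perform iterated Fourier inversion. The paper packages that last step as a separate lemma (Lemma~\ref{L: feglnn}) organized inductively through the mirabolic filtration $\mira_i\subset\mira_{i+1}$, integrating over one column $C_i\cap H_\GLnn\bs C_i$ at a time; this gives a clean bookkeeping for the pairing of each Fourier variable with the corresponding $\GL_i\cap H_\GLnn$-coordinate that you flagged as the main obstacle.
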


\begin{proof}
To prove \eqref{eq: invLM} it is enough to consider $g=e$. By Lemma~\ref{L: stableprop}, the right-hand side of \eqref{eq: invLM} is
a  Whittaker functional.
By uniqueness of Whittaker functional, both sides are proportional and hence
we only need to prove the identity for $W$ whose restriction to $\mira_{\GLnn}$ is compactly supported modulo $N_{\GLnn}$.
This would follow from the following functional equation and Lemma~\ref{L: stableprop} part \ref{item: extendint}.
\end{proof}

\begin{lemma}\label{L: feglnn}
If $W\in \cc{N_\GLnn\bs\mira_{\GLnn},\psi_{N_\GLnn}}$ then $\avg^{N_\GLnn}_{\mira_{\GLnn}\cap H_\GLnn} \circ W
\in\cc{\mira_{\GLnn}\cap H_\GLnn\bs \mira_{\GLnn}}$ and $\avg_{N_\GLnn,\psi_{N_\GLnn}^{-1}}^{H_\GLnn}\circ
\avg^{N_\GLnn}_{\mira_{\GLnn}\cap H_\GLnn} \circ W=W$.
\end{lemma}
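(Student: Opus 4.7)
The first assertion, that $\avg^{N_\GLnn}_{\mira_{\GLnn}\cap H_\GLnn} \circ W\in\cc{\mira_{\GLnn}\cap H_\GLnn\bs \mira_{\GLnn}}$, is exactly Lemma~\ref{L: cuspidal} applied to our compactly supported $W$. For the identity $\avg_{N_\GLnn,\psi_{N_\GLnn}^{-1}}^{H_\GLnn}\circ \avg^{N_\GLnn}_{\mira_{\GLnn}\cap H_\GLnn} \circ W=W$, both sides are $\mira_\GLnn$-equivariant under right translation, so it suffices to verify the equality at $g=e$. The compact support of the inner integrand together with the compact support of $W$ modulo $N_\GLnn$ makes the resulting double integral absolutely convergent and legitimizes all Fubini interchanges below.

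The strategy is iterated Fourier inversion on root subgroups. I would parametrize the inner domain $(N_\GLnn\cap H_\GLnn)\bs(\mira_{\GLnn}\cap H_\GLnn)\cong (N_\rkn\bs\GL_\rkn)\times(N_\rkn\bs\mira_\rkn)$ on the open Bruhat cell, writing a generic $h$ as a product of an opposite-unipotent part with a torus part $\diag(\lambda_1,\ldots,\lambda_\rkn,a_1,\ldots,a_{\rkn-1},1)$. Representatives of $(N_\GLnn\cap H_\GLnn)\bs N_\GLnn$ are taken so that all parity-preserving entries $n_{ij}$ (those with $i+j$ even) vanish; the remaining $\rkn^2$ free entries are the parity-changing ones. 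A direct matrix computation of $hn$ in $\mira_{\GLnn}$ produces an Iwasawa-type factorization $hn = n^{*}\ell$, with $n^{*}\in N_\GLnn$ and $\ell$ lying on a lower-triangular section of $N_\GLnn\bs\mira_\GLnn$; the $(N_\GLnn,\psi_{N_\GLnn})$-equivariance of $W$ then yields $W(hn)=\psi_{N_\GLnn}(n^{*})W(\ell)$.

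After factoring out this character, the combined phase $\psi_{N_\GLnn}(n^{*})\psi_{N_\GLnn}^{-1}(n)$ decomposes as a sum of linear pairings matching the dimension counts $\rkn^2=(\rkn-1)^2+(2\rkn-1)$: each of the $(\rkn-1)^{2}$ non-simple parity-changing entries of $n$ is paired with an opposite-Bruhat coordinate of $h$, while each simple-root entry $n_{i,i+1}$ is paired with a ratio of two adjacent torus coordinates of $\ell$. I would integrate the non-simple parity-changing entries first, in an order refining the partial order on opposite-Bruhat roots; each such integration is a Fourier inversion that forces its partnered Bruhat coordinate to vanish, so after all of these steps $\ell$ has become diagonal. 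Then integrating the simple-root entries $n_{i,i+1}$ forces each consecutive torus ratio on the diagonal of $\ell$ to equal $1$, whence $\ell=e$ and the integral evaluates to $W(e)$.

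The main obstacle is the combinatorial bookkeeping: one must verify that the pairing between the $n$-variables and the $h$-variables is indeed triangular (so that the successive Fourier inversions make sense in the chosen order, and earlier steps do not reintroduce dependence on variables yet to be integrated) and that the Jacobians of the change of variables combine with the Haar measure conventions of \S\ref{sec: notationGL2n} to give no extra constant. Both points can be checked by induction on $\rkn$, anchored in the transparent case $\rkn=1$ where the identity reduces to a single Fourier inversion on $F^{*}$.
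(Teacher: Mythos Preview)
Your opening moves match the paper: the first claim is Lemma~\ref{L: cuspidal}, and right-equivariance reduces the identity to $g=e$. The underlying mechanism---iterated Fourier inversion---is also correct. But the sketch has a genuine gap, and the paper's organization is different.

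The gap is that your assertion that $\psi_{N_\GLnn}(n^{*})\psi_{N_\GLnn}^{-1}(n)$ ``decomposes as a sum of linear pairings'' is not true as stated. Already for $\rkn=2$, the entry $n^{*}_{1,2}$ of the factor $n^{*}$ in $hn=n^{*}\ell$ is a genuinely rational (not polynomial) function of the Bruhat coordinate of $h$ and the entries of $n$, so the phase is not jointly bilinear. What \emph{is} plausible is that if one integrates in a well-chosen order, then at each step the current $n$-variable enters the phase linearly and $W(\ell)$ is independent of it; but establishing this triangularity, and verifying the Jacobian cancellation, is precisely the substance of the lemma, and you defer both to an unspecified ``induction on $\rkn$''. (Minor slip: the $\rkn=1$ base case is additive Fourier inversion on $F$, not on $F^{*}$.)

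The paper avoids Bruhat coordinates entirely. Writing $N_{j+1}=N_j\rtimes C_j$ with $C_j\simeq F^j$ the column group, it expresses the outer integral as $\avg^{H_\GLnn}_{C_1,\psi_{N_\GLnn}^{-1}}\circ\cdots\circ\avg^{H_\GLnn}_{C_{2\rkn-1},\psi_{N_\GLnn}^{-1}}$ and proves by descending induction on $i$ that the composite with $\avg^{N_\GLnn}_{\mira_\GLnn\cap H_\GLnn}$ equals $\avg^{N_i}_{\mira_i\cap H_\GLnn,\,\prod_{i\le j<2\rkn}\abs{\det_j}^{-1}}$. The inductive step uses only the identification $N_{i+1}\bs\mira_{i+1}\cong N_i\bs\GL_i$ together with one Fourier inversion on the subspace $V_i\subset F^i$ dual to $(C_i\cap H_\GLnn)\bs C_i$; the Jacobian at that step is the visible factor $\abs{\det_i}^{-1}$, carried in the character. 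No open-cell parametrization and no factorization $hn=n^{*}\ell$ are needed, and each step is a short direct computation rather than a global combinatorial check.
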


\begin{proof}
The fact $\avg^{N_\GLnn}_{\mira_{\GLnn}\cap H_\GLnn} \circ W\in\cc{\mira_{\GLnn}\cap H_\GLnn\bs \mira_{\GLnn}}$ follows from Lemma~\ref{L: cuspidal}.

We identify $\mira_{\GLnn}\cap H_\GLnn$ with the product of $\GL_\rkn$ and the mirabolic subgroup of $\GL_\rkn$.
We write $\det_1$ for the character of $\mira_{\GLnn}\cap H_\GLnn$ given by the determinant of the $\GL_\rkn$ factor
and $\det_2$ for the determinant of the other factor.
For convenience we also set $\det_i=\det_{i-2}$ for $i>2$.

For any $i=1,\dots,2\rkn$ let $\mira_i$ (resp.~$N_i$) be the mirabolic subgroup of $\GL_i$
(resp.~the group of upper unitriangular matrices of $\GL_i$).
We embed $\GL_i$ (and its subgroups) in $\GLnn$ via $g\mapsto\sm g{}{}{I_{2\rkn-i}}$.
Note that $N_{j+1}=N_j\rtimes C_j$ where $C_j\simeq F^j$ is the subgroup
of unipotent matrices in $N_{j+1}$ whose upper left $j\times j$ corner is the identity matrix.
Then
\[
\avg_{N_\GLnn,\psi_{N_\GLnn}^{-1}}^{H_\GLnn}\circ
\avg^{N_\GLnn}_{\mira_{\GLnn}\cap H_\GLnn} \circ W=\avg^{H_\GLnn}_{C_1,\psi_{N_\GLnn}^{-1}}\circ \cdots \circ\avg^{H_\GLnn}_{C_{2\rkn-1},\psi_{N_\GLnn}^{-1}}\circ
\avg^{N_\GLnn}_{\mira_{\GLnn}\cap H_\GLnn} \circ W.
\]
We will prove by descending induction on $i$ that
\[
\avg^{H_\GLnn}_{C_i,\psi_{N_\GLnn}^{-1}}\circ \cdots \circ\avg^{H_\GLnn}_{C_{2\rkn-1},\psi_{N_\GLnn}^{-1}}\circ
\avg^{N_\GLnn}_{\mira_{\GLnn}\cap H_\GLnn} \circ W=\avg^{N_i}_{\mira_i\cap H_\GLnn,\prod_{i\le j<2\rkn}\abs{\det_j}^{-1}}\circ W
\]
on $\mira_{\GLnn}$. The case $i=1$ is the required identity \eqref{eq: invLM}.
The base of the induction, the case $i=2\rkn$, is a tautology.
For the induction step we prove that for any character $\chi$ of $\mira_{i+1}\cap H_\GLnn$
\begin{equation}\label{eq: iterateHM}
\avg^{H_\GLnn}_{C_i,\psi_{N_\GLnn}^{-1}}\circ \avg^{N_{i+1}}_{\mira_{i+1}\cap H_\GLnn,\chi}\circ W(g)=
\avg^{N_i}_{\mira_i\cap H_\GLnn,\chi\abs{\det_i}^{-1}}\circ W(g)
\end{equation}
for all $g\in\mira_{\GLnn}$.

As $N_{i+1}\bs \mira_{i+1}\cong N_i\bs \GL_i$ and $\modulus_{\mira_i\cap H_\GLnn}=\abs{\det_i}$ we have
\[
\avg^{N_{i+1}}_{\mira_{i+1}\cap H_\GLnn,\chi}=
\avg^{N_i}_{\GL_i\cap H_\GLnn,\chi}
=\avg^{\mira_i}_{\GL_i\cap H_\GLnn,\chi}\circ
\avg^{N_i}_{\mira_i\cap H_\GLnn,\chi\abs{\det_i}^{-1}}.
\]
Thus we can write the left-hand side of \eqref{eq: iterateHM} explicitly as
\begin{multline*}
\int_{C_i\cap H_{\GLnn}\bs C_i}\big(\int_{\mira_i\cap H_\GLnn\bs \GL_i\cap H_\GLnn}
\big(\int_{N_i\cap H_\GLnn\bs \mira_i\cap H_\GLnn} W(phng)
\\\chi(ph)\abs{{\det}_i(ph)}^{-1}\abs{{\det}_i(h)}\,dp\big)\,dh\big)\psi_{N_\GLnn}^{-1}(n)\,dn.
\end{multline*}
Let $\xi_i=(0,\ldots,0,1)\in F^i$. Then $h\mapsto \xi_i h$ identifies $\mira_i\bs \GL_i$ with $F^i\bs\{0\}$.
For $p\in \mira_i$, $h\in \GL_i$ and $n\in C_i$, we have $W(phng)=\psi(\sprod{\xi_i h}{\hat n}_i)W(phg)$
where $n\mapsto \hat n$ is the standard isomorphism between $C_i$ and $F^i$ and $\sprod{\cdot}{\cdot}_i$ is
the standard pairing on $F^i\times F^i$, which allows us to view $C_i$ as the dual of $F^i$.

The set $\{\xi_i h:h\in \mira_i\cap H_\GLnn\bs \GL_i\cap H_\GLnn\subset \mira_i\bs \GL_i\}$ is the non-zero vectors
in a subspace $V_i$ of $F^i$, and $\sprod{\cdot}{\cdot}_i$ restricts to a non-degenerate pairing between $V_i$ and
$C_i\cap H_\GLnn\bs C_i$. Also $\psi_{N_\GLnn}(n)=\psi(\sprod{\xi_i}{\hat n}_i)$ for $n\in C_i\cap H_\GLnn\bs C_i$.
Therefore the left-hand side of \eqref{eq: iterateHM} is:
\begin{multline*}
\int_{C_i\cap H_\GLnn\bs C_i}\big(\int_{\mira_i\cap H_\GLnn\bs \GL_i\cap H_\GLnn}\big(\int_{N_i\cap H_\GLnn\bs \mira_i\cap H_\GLnn}
W(phg)\psi(\sprod{\xi_i h}{\hat n}_i)
\\ \chi(ph)\abs{{\det}_i(ph)}^{-1}\,dp\big)\abs{{\det}_i(h)}\,dh\big)\psi^{-1}(\sprod{\xi_i}{\hat n}_i)\,dn.
\end{multline*}
By our support condition on $W$, the function
\[
f_g(\xi_i h)= \int_{N_i\cap H_\GLnn\bs \mira_i\cap H_\GLnn} W(phg)\chi(ph)\abs{{\det}_i(ph)}^{-1}\,dp,
\ \  h\in \mira_i\cap H_\GLnn\bs \GL_i\cap H_\GLnn
\]
extends to a Schwartz function on $V_i$. The right-hand side of \eqref{eq: iterateHM} is $f_g(\xi_i)$.
The left-hand side of \eqref{eq: iterateHM} becomes
\[
\int_{C_i\cap H_\GLnn\bs C_i}\big(\int_{V_i}f_g(\eta)\psi(\sprod{\eta}{\hat n}_i)d\eta \big)\psi^{-1}(\sprod{\xi_i}{\hat n}_i)\,dn.
\]
Thus, the relation \eqref{eq: iterateHM} follows by Fourier inversion.
\end{proof}

\subsection{A functional equation}

Let $\mira_{\GLnn}'$ be the second mirabolic subgroup of $\GLnn$ consisting of matrices whose first column is $(1,0,\dots,0)^t$.
Then if $W\in \WhitM(\pi)$ where $\pi\in\Irr_{\gen,\meta}\GLnn$ is unitarizable, the same argument as in Proposition~\ref{P: LM} shows that
\begin{equation}\label{eq: defper1}
\per{H_{\GLnn}}_1 (W):=\int_{N_\GLnn\cap H_\GLnn\bs \mira_{\GLnn}'\cap H_\GLnn} W(p)\, dp
\end{equation}
converges and defines another nontrivial $H_\GLnn$-invariant linear form on $\WhitM(\pi)$.
By uniqueness, $\per{H_{\GLnn}}_1$ is a scalar multiple of $\per{H_{\GLnn}}$.
The following functional equation is in the spirit of \cite[Corollary 7.2]{MR2787356}.
\begin{proposition}\label{prop: sametran}
For any unitarizable $\pi\in\Irr_{\gen,\meta}\GLnn$ and $W\in \WhitM(\pi)$ we have $\per{H_{\GLnn}}_1(W)=\per{H_{\GLnn}}(W)$.
\end{proposition}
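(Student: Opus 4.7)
The first step is essentially formal. By rerunning the Iwasawa-decomposition argument used in Proposition~\ref{P: LM}—replacing $\mira_{\GLnn}$ by $\mira_{\GLnn}'$ throughout, and noting that under $H_\GLnn\cong\GL_\rkn\times\GL_\rkn$ the subgroup $\mira_\GLnn'\cap H_\GLnn$ corresponds to $\mira_\rkn'\times\GL_\rkn$, symmetric to the role of $\GL_\rkn\times\mira_\rkn$ in the original case—the same bounds from \cite[Lemma 2.1]{LMao4} give absolute convergence of $\per{H_{\GLnn}}_1(W)$ and show that it defines a nonzero $H_\GLnn$-invariant linear form on $\WhitM(\pi)$. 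Together with uniqueness \cite{MR1394521}, we obtain $\per{H_{\GLnn}}_1=c_\pi\per{H_{\GLnn}}$ for some scalar $c_\pi\in\C^\times$, and the task is to pin down $c_\pi=1$.

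The main tool for this is the outer automorphism $\sigma(g)=g^*=\wnn(g^t)^{-1}\wnn$. One checks it is a group automorphism satisfying: (i) $\sigma(\mira_\GLnn)=\mira_\GLnn'$; (ii) $\sigma(H_\GLnn)=H_\GLnn$ (using $\wnn E\wnn=-E$, so that $gE=Eg$ implies $g^*E=Eg^*$); (iii) $\sigma(N_\GLnn)=N_\GLnn$ and $\psi_{N_\GLnn}\circ\sigma=\psi_{N_\GLnn}^{-1}$ (by computing that the map $u\mapsto u^*$ negates the sum of the superdiagonal); (iv) the induced map on the Lie algebra quotient $\mathrm{Lie}(N_\GLnn\cap H_\GLnn)\bs\mathrm{Lie}(\mira_\GLnn\cap H_\GLnn)\to\mathrm{Lie}(N_\GLnn\cap H_\GLnn)\bs\mathrm{Lie}(\mira_\GLnn'\cap H_\GLnn)$ preserves Haar measure. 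Substituting $p=\sigma(q)$ in the integral defining $\per{H_{\GLnn}}_1(W)$ therefore yields
\[
\per{H_{\GLnn}}_1(W)=\int_{N_\GLnn\cap H_\GLnn\bs\mira_\GLnn\cap H_\GLnn}W(\sigma(q))\,dq.
\]

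To interpret the right-hand side, recall that since $\pi\in\Irr_\meta\GLnn$ is self-dual and the Gelfand--Kazhdan theorem gives $\pi\circ\sigma\cong\tilde\pi$, we have $\pi\circ\sigma\cong\pi$. The function $W\circ\sigma$ is a Whittaker function for $\pi\circ\sigma$ with respect to $\psi_{N_\GLnn}^{-1}$; correcting by the diagonal element $E$ (which conjugates $\psi_{N_\GLnn}$ to $\psi_{N_\GLnn}^{-1}$) produces a vector $W'\in\WhitM(\pi,\psi_{N_\GLnn})$, and the right-hand side above is expressed as an $H_\GLnn$-invariant functional on $\pi$ evaluated at $W'$. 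By uniqueness of the $H_\GLnn$-invariant form, both sides are then proportional to $\per{H_{\GLnn}}(W)$, with scalar determined by the canonical normalization of the composite $\pi\cong\tilde\pi\cong\pi\circ\sigma$.

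The main obstacle is pinning the scalar down to exactly $1$. I would handle this by a complementary approach: establish the analog of Lemma~\ref{L: feglnn} for $\mira_\GLnn'$ (the Fourier-inversion argument carries over with the iterated subgroups running in the reverse order), use it to prove the inversion formula \eqref{eq: invLM} with $\per{H_{\GLnn}}_1$ in place of $\per{H_{\GLnn}}$ (for tempered $\pi$, via Proposition~\ref{prop: inversionM}), and then substitute to force $c_\pi=1$ on the tempered case. The extension from tempered to general unitarizable $\pi\in\Irr_{\gen,\meta}\GLnn$ is the most delicate point; one approach is a deformation/continuity argument in the Langlands parameter, or alternatively to verify the equality on a fixed test vector $W\in\cc{N_\GLnn\bs\mira_\GLnn\cap\mira_\GLnn',\psi_{N_\GLnn}}$ lifted to $\pi$ via Bernstein--Zelevinsky, for which both sides reduce to the same compactly supported integral independent of $\pi$.
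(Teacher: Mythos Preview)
Your approach is correct and shares the paper's overall strategy---reduce to the tempered case via the inversion formula, then extend---but the execution in the tempered case is less economical than the paper's. You set up the key observation (the outer automorphism $g\mapsto g^*$ interchanges $\mira_\GLnn$ and $\mira_\GLnn'$, fixes $H_\GLnn$ and $N_\GLnn$, and inverts $\psi_{N_\GLnn}$), then propose to reprove Lemma~\ref{L: feglnn} and Proposition~\ref{prop: inversionM} for $\mira_\GLnn'$. The paper avoids this duplication: having observed that $L_W'(g):=\per{H_\GLnn}_1(\pi(g)W)$ satisfies $L_W'(g)=L_{\dual W}(g^*)$ with $\dual W(g)=W(g^*)\in\model^{(N_\GLnn,\psi_{N_\GLnn}^{-1})}\dual\pi$, it computes $\tranlw(L_W')(e)$ directly by the change of variable $n\mapsto n^*$ and then applies the \emph{already proved} Proposition~\ref{prop: inversionM} to $\dual\pi$ and $\psi^{-1}$, obtaining $\dual W(e)=W(e)$. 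Since $\tranlw(L_W')=W$ and $\tranlw(L_W)=W$ with $\tranlw$ injective on the model, one gets $L_W'=L_W$ immediately. In other words, the automorphism transports the inversion formula itself, not just the functional, so nothing needs to be redone for $\mira_\GLnn'$.

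For the extension to unitarizable $\pi$, your deformation idea is on the right track; the paper invokes the classification of $\Irr_{\gen,\meta}\GLnn$ from \cite{1301.0350} together with a standard analytic-continuation argument (\cite[\S5]{LMao4}) to propagate the identity from the tempered locus.
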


\begin{proof}
We first assume that $\pi$ is tempered. Let $L_W'(g)=\per{H_{\GLnn}}_1(\pi(g)W)$.
We show that $\tranLW(L_W')=W$, which will imply that $\per{H_{\GLnn}}_1=\per{H_{\GLnn}}$ on $\WhitM(\pi)$ by
Proposition~\ref{prop: inversionM}.

Let $\dual W(g)=W(g^*)\in \model^{(N_\GLnn,\psi_{N_\GLnn}^{-1})}\dual\pi$. Then $L_W'(g)=\per{H_{\GLnn}}(\dual\pi(g^*)\dual W)=L_{\dual W}(g^*)$.
For $W$ as above, as $H_\GLnn^*=H_\GLnn$, $N_\GLnn^*=N_\GLnn$ and $\psi_{N_\GLnn}(n^*)=\psi_{N_\GLnn}^{-1}(n)$
\begin{multline*}
\int_{H_\GLnn\cap N_\GLnn\bs N_\GLnn} L_W'(n)\psi_{N_\GLnn}^{-1}(n)\ dn=
\int_{H_\GLnn\cap N_\GLnn\bs N_\GLnn} L_W'(n^*)\psi_{N_\GLnn}(n)\ dn
\\=\int_{H_\GLnn\cap N_\GLnn\bs N_\GLnn} L_{\dual W}(n)\psi_{N_\GLnn}(n)\ dn
\end{multline*}
which by Proposition~\ref{prop: inversionM} (applied to $\dual\pi$ and $\psi^{-1}$) is $\dual W(e)=W(e)$.

Using the classification of irreducible generic representation of metaplectic type (\cite{1301.0350}),
a standard argument (see \cite[\S5]{LMao4}) extends the result from the tempered representations to all unitarizable $\pi\in \Irr_{\gen,\meta}\GLnn$.
\end{proof}

\section{Model transition for Langlands quotient: I} \label{sec: LQ1}

\subsection{Notations}
\label{sec: notation}

We keep the notation of \S\ref{sec: notationGL2n} and introduce additional notation as follows.
\begin{itemize}

\item $\alg{G}=\Sp_{2\rkn}=\{g\in\GL_{4\rkn}:\, g^t\sm{}{w_{2\rkn}}{-w_{2\rkn}}{}g=\sm{}{w_{2\rkn}}{-w_{2\rkn}}{}\}$.
\index{$G$}

\item $P=\Levi\ltimes U$ is the Siegel parabolic subgroup of $G$, with its standard Levi decomposition.
$\bar P=P^t$ is the opposite parabolic of $P$, with unipotent radical $\bar U=U^t$. $\mira=\mira_{4\rkn}\cap G$.
\index{$P$, $\Levi$, $U$, $\bar P$, $\bar U$} \index{$\mira$}

\item We use the isomorphism $\levi(g)=\diag(g,g^*)$ to identify $\GLnn$ with $\Levi$.

\item $N$ is the standard maximal unipotent subgroup of $G$ consisting of upper unitriangular matrices;
$T$ is the maximal torus of $G$ consisting of diagonal matrices;
$B=T\ltimes N$ is the standard Borel subgroup of $G$. \index{$B$, $N$, $T$}

\item For a subgroup $X$ of $G$, $X_\Levi$ denotes $X\cap\Levi$. In particular $T=T_\Levi=\levi(T_\GLnn)$
and $N_\Levi=\levi(N_\GLnn)$.

\item $N_\Levi^{\der}=\levi(N_\GLnn^{\der})$. We set $\uder=N_\Levi^{\der}\ltimes U$. \index{$\uder$}

\item Let $K$ be the standard maximal compact subgroup of $G$. \index{$K$}

\item Let $\wgt{\cdot}$ be the character $\wgt{\levi(m)}=\abs{\det m}$ of $M$. \index{$\nu$}
Extend $\wgt{\cdot}$ to a left-$U$ right-$K$ invariant function on $G$ using the Iwasawa decomposition.

\item $H$ is the centralizer of $\levi(E)$ in $G$, isomorphic to $\Sp_\rkn\times\Sp_\rkn$. \index{$H$}

\item Write $H_X=H\cap X$ for any subgroup $X$ of $G$. In particular, $H_M=\levi(H_\GLnn)$
and $H_N\subset\uder$.

\item Let $x\mapsto \startran{x}$ be the twisted transpose map on the space $\Mat_{m,m}$ of $m\times m$ matrices  given by
$\startran{x}=w_m x^t w_m$. \index{$\startran{x}$}
Let $\symspace_m=\{x\in\Mat_{m,m}:\startran{x}=x\}$. \index{$\symspace_m$}

\item Define $\toU:\symspace_{2\rkn}\rightarrow U$ to be the isomorphism given by $\toU(x)=\sm{I_{2\rkn}}{x}{}{I_{2\rkn}}$.
Similarly $\toUbar(x)=\sm{I_{2\rkn}}{}x{I_{2\rkn}}$ is the isomorphism from $\symspace_{2\rkn}$ to $\bar U$.
\index{$\toU$, $\toUbar$}

\item $\wnnM=\levi(\wnn)$; \index{$\wnnM$}
$w_U=\sm{}{I_{2\rkn}}{-I_{2\rkn}}{}\in G$ represents the longest $\Levi$-reduced Weyl element of $G$.
\index{$w_U$}

\item Let $\one_{i,j}\in \Mat_{m,m}$ be the matrix with one at the $(i,j)$-entry and zeros elsewhere.
Let $\one^{\symspace_m}_{i,j}=\one_{i,j}+\startran{\one_{i,j}}\in \symspace_m$ when $i+j\not=m+1$ and
$\one^{\symspace_m}_{i,m+1-i}=\one_{i,m+1-i}$.

\item $N^\GLnn_{i,j}\subset \GLnn$ ($i\not=j$) is the one-parameter group $\{I_{2\rkn}+x\one_{i,j}:x\in F\}$. \index{$N^\GLnn_{i,j}$}
$N_{\alpha_i}$ is the simple root group $N^\GLnn_{i,i+1}$ and $N_{-\alpha_i}=N^\GLnn_{i+1,i}$. $N^{\Levi}_{\alpha_i}=\levi(N_{\alpha_i})$.

\item $\newchar$ is the character on $\bar U$ given by $\newchar(\bar u)=\psi(\bar u_{2\rkn+1,1})$. \index{$\newchar$}
$\psi_N$ is the \emph{degenerate} character on $N$ given by \index{$\psi_N$}
$\psi_N(\levi(n)u)=\psi_{N_\GLnn}(n)$ for any $n\in N_\GLnn$ and $u\in U$.
$\psi_{N_\Levi}$ is the restriction of $\psi_N$ to $\Levi$, i.e., $\psi_{N_\Levi}\circ\levi=\psi_{N_\GLnn}$.

\item $[x,y]=xyx^{-1}y^{-1}$ denotes the commutator of $x$ and $y$.

\item $\langle A_1,A_2,\ldots,A_k\rangle$ is the group generated by $A_1,\ldots,A_k\subset G$.

\item The convention of Haar measures will be as in \S\ref{sec: notationGL2n}.
Namely, we will use the lattice of $4\rkn\times4\rkn$-matrices integral matrices
to define a gauge form (up to an element of $\OO^*$) for any algebraic subgroup of $\GL_{4\rkn}$
(and in particular, any algebraic subgroup of $\alg{G}$) defined over $F$.
\end{itemize}

\subsection{}
Let $\pi\in\Irr\GLnn$. We consider $\pi$ as a representation of $\Levi$ via $\levi$.
For $s\in \C$, let $I(\pi,s)=\Ind_P^G \pi\nu^s$. \index{$I(\pi,s)$}

Note that $H_P$ is a parabolic subgroup of $H$ with Levi decomposition
$H_P=H_\Levi\ltimes H_U$. The opposite parabolic with respect to $H_\Levi$ is
$H_{\bar P}=H_\Levi\ltimes H_{\bar U}$.
We have
\begin{equation} \label{eq: modulusPH}
\modulus_P =\nu^{2\rkn+1}\ \text{ on }\Levi;
\ \ \modulus_{H_P}=\nu^{\rkn+1}\ \text{ on }H_{\Levi}.
\end{equation}

As was observed in the course of the proof of \cite[\S3.3, Theorem 2]{MR1671452},
if $\pi\in\Irr_{\meta}\GLnn$ then $I(\pi,\frac12)$ admits a non-trivial $H$-invariant functional.
More precisely, if $\ell$ is an $H_{\GLnn}$-invariant functional on $\pi$ then
\begin{equation} \label{eq: Hinv}
\varphi\rightarrow\int_{H_P\bs H}\ell(\varphi(h))\ dh\text{ defines an $H$-invariant functional on }I(\pi,\frac12).
\end{equation}
This is well defined since by \eqref{eq: modulusPH}
$\modulus_{H_P}(m)=\modulus_P^{\frac12}(m)\wgt{m}^{\frac12}$ for all $m\in H_\Levi$.

We will need the following complementary information.

\begin{proposition}\label{prop: distequi}
Let $\pi\in \Irr_{\temp}\GLnn$.
Then $I(\pi,\frac12)$ is $H$-distinguished if and only if $\pi$ is $H_\Levi$-distinguished.
In this case $I(\pi,\frac12)$ has a unique $H$-invariant form up to a constant multiple.
\end{proposition}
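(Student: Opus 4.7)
The ``if'' implication is immediate from the construction recalled just before the statement in \eqref{eq: Hinv}: an $H_\GLnn$-invariant form $\ell$ on $\pi$ produces the non-zero $H$-invariant functional $\varphi \mapsto \int_{H_P \bs H} \ell(\varphi(h))\,dh$ on $I(\pi,\tfrac12)$. My plan for the converse, together with the uniqueness assertion, is to establish
\[
\dim \Hom_H(I(\pi,\tfrac12), 1) \le \dim \Hom_{H_\Levi}(\pi, 1),
\]
which is at most one-dimensional by \cite{MR1394521}; combined with \eqref{eq: Hinv} this gives both halves of the proposition.

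The natural tool is the geometric lemma applied to $I(\pi,\tfrac12)|_H$. First I would use the (known) finiteness of the double coset space $P \bs G / H$ for the symmetric pair $(\Sp_{2\rkn},\Sp_\rkn\times\Sp_\rkn)$ with Siegel parabolic, in order to obtain an $H$-equivariant filtration of $I(\pi,\tfrac12)|_H$ whose successive quotients are of the form $\ind_{H \cap \xi^{-1}P\xi}^H \sigma_\xi$, with $\sigma_\xi$ the appropriate modulus-twisted restriction of the $\xi$-conjugate of $\pi\wgt{\cdot}^{1/2}\modulus_P^{1/2}$. Since $\Hom_H(-,1)$ is left exact, Frobenius reciprocity yields the bound
\[
\dim \Hom_H(I(\pi,\tfrac12),1) \le \sum_{\xi \in P\bs G / H} \dim \Hom_{H \cap \xi^{-1}P\xi}(\sigma_\xi, 1).
\]

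The open orbit is represented by $\xi = e$, with stabilizer $H_P = H_\Levi \ltimes H_U$. The modulus relation \eqref{eq: modulusPH} gives $\modulus_P^{1/2}\modulus_{H_P}^{-1} = \wgt{\cdot}^{-1/2}$ on $H_\Levi$, so at $s = \tfrac12$ the open-orbit contribution simplifies to exactly $\Hom_{H_\Levi}(\pi, 1)$. This is where the desired upper bound comes from; it also explains the matching lower bound supplied by \eqref{eq: Hinv}.

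The main obstacle will be showing that every non-open orbit contributes zero when $\pi$ is tempered. For $\xi \neq e$ the stabilizer $H \cap \xi^{-1}P\xi$ contains a nontrivial unipotent subgroup $V_\xi = H \cap \xi^{-1}U\xi$, so by Frobenius reciprocity the $\xi$-contribution is controlled by a character-twisted Jacquet module of $\pi$ along $V_\xi$, where the twist involves $\modulus_P^{1/2}$ restricted to $\xi^{-1}\Levi\xi \cap H$. The idea is to show that the exponent demanded by the $\modulus_P^{1/2}$-twist at $s = \tfrac12$ lies strictly on the wrong side of the cone allowed by Casselman's temperedness criterion for the Jacquet module $\pi_{V_\xi}$, and hence no such $(\xi^{-1}\Levi\xi \cap H)$-equivariant functional can exist. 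Carrying out the enumeration of the finitely many orbits and verifying this exponent inequality uniformly on each non-open orbit is the technical crux; once done, the upper bound is established and, combined with \eqref{eq: Hinv}, proves both the characterization of $H$-distinction and the one-dimensionality of the invariant functional.
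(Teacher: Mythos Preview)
Your overall strategy---applying the geometric lemma to the $H$-restriction of $I(\pi,\tfrac12)$ and identifying the open-orbit contribution with $\Hom_{H_\Levi}(\pi,1)$---is exactly the paper's. The modulus identity \eqref{eq: modulusPH} does make the open-orbit term come out untwisted at $s=\tfrac12$, as you say.

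However, your proposed mechanism for killing the non-open orbits has a genuine gap. After carrying out Frobenius reciprocity with the explicit orbit representatives of \cite{MR1740991}, the contribution of the orbit indexed by $d<\rkn$ reduces to $\Hom_{M_d}(J_{Q_d\cap\Levi}(\pi),\C)$, where $M_d\simeq\GL_d\times\GL_d\times\Sp_{\rkn-d}$ and $Q_d\cap\Levi$ is the standard parabolic of type $(2d,2\rkn-2d)$ in $\Levi$; crucially, the modulus characters cancel and the target character is \emph{trivial}. So there is no ``wrong-side-of-the-cone'' exponent to exploit. Worse, writing $\pi=\delta_1\times\dots\times\delta_k$ with $\delta_i$ square-integrable, the Jacquet module $J_{Q_d\cap\Levi}(\pi)$ has subquotients $\sigma_1\otimes\sigma_2$ in which $\sigma_2$ is itself a product of a subset of the $\delta_i$'s---hence $\sigma_2$ is tempered and generic. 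Casselman's criterion places no obstruction whatsoever on such a $\sigma_2$ admitting an $\Sp_{\rkn-d}$-invariant functional. The paper rules these out by invoking the Heumos--Rallis theorem \cite{MR1078382} that a generic representation of $\GL_{2m}$ is never $\Sp_m$-distinguished. (The remaining subquotients, those involving a partial Jacquet module of some $\delta_i$, are dispatched by noting that $\sigma_1$ then has central character $\abs{\det}^\alpha$ with $\alpha>0$, hence is not self-dual and so not of metaplectic type.) You will need this symplectic-versus-generic input; temperedness alone is not enough.
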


\begin{proof}
We argue using the geometric Lemma of Bernstein--Zelevinsky.
The orbits of $H$ on $P\bs G$, ordered by dimension, are indexed by integers $d=0,\dots,\rkn$.
Let $\eta_d$ be representatives of the orbits and let $H_{\eta_d}=H\cap\eta_d^{-1}P\eta_d$
be the stabilizer. Let $P_{\eta_d}=\eta_dH_{\eta_d}\eta_d^{-1}=P\cap\eta_dH\eta_d^{-1}$.
Correspondingly, the representation $I(\pi,\frac12)$ admits a filtration
$0=W_0\subset W_1\subset\dots\subset W_{\rkn+1}=I(\pi,\frac12)$ by $H$-invariant subspaces such that
\[
W_{d+1}/W_d=\ind_{H_{\eta_d}}^H(\pi\modulus_P^{\frac12}\nu^{\frac12}\rest_{P_{\eta_d}})^{\eta_d}.
\]
(Here $\ind$ denotes unnormalized compact induction and the superscript denotes conjugation by $\eta_d$.)
Let $Q_d=L_d\rtimes V_d$ be the standard parabolic subgroup of $G$ contained in $P$ such that
$L_d=\{\levi(\diag(a,b)):a\in\GL_{2d},b\in\GL_{2\rkn-2d}\}$.
As described in \cite[\S1.3]{MR1740991} we can choose $\eta_d$ such that $P_{\eta_d}=M_d\rtimes N_d$ where
\begin{enumerate}
\item $M_d=\{\levi(\diag(g_1,g_2,g_3)):g_1,g_2\in\GL_d,g_3\in\Sp_{\rkn-d}\}\subset L_d$.
\item The projection of $N_d$ to $\Levi$ is $V_d\cap \Levi$, the unipotent radical of $Q_d\cap \Levi$
(which is the parabolic subgroup of $\Levi$ type $(2d,2\rkn-2d)$).
\item $N_d\cap U=\toU(\{\left(\begin{smallmatrix}&&X\\&Y&\\0&&\end{smallmatrix}\right):X,Y\in\symspace_d\})$.
\end{enumerate}
It follows by a simple computation that $\modulus_{P_{\eta_d}}$ is the restriction of $\modulus_{Q_d}^{\frac12}\nu^{\frac12}$.
By a standard argument using Frobenius reciprocity (cf.~\cite[\S6]{MR2930996}) we conclude that
\[
\Hom_H(W_{d+1}/W_d,\C)=\Hom_{P_{\eta_d}}(\pi\modulus_P^{\frac12}\nu^{\frac12}\modulus_{P_{\eta_d}}^{-1},\C)=\Hom_{M_d}(J_{Q_d\cap \Levi}(\pi),\C).
\]
Here $J_{Q_d\cap \Levi}(\pi)$ is the Jacquet module of $\pi$ with respect to $Q_d\cap \Levi$.
We show below that $\Hom_{M_d}(J_{Q_d\cap \Levi}(\pi),\C)=0$ when $d<\rkn$.
This would imply that
\[
\Hom_H(I(\pi,\frac12),\C)=\Hom_H(W_{\rkn+1}/W_{\rkn},\C)=\Hom_{H_\Levi}(\pi,\C)
\]
from which the Proposition follows.

In fact, we will show that (for $d<\rkn$) $\Hom_{M_d}(\sigma_1\otimes\sigma_2,\C)=0$ for any irreducible subquotient $\sigma_1\otimes\sigma_2$ of $J_{Q_d\cap \Levi}(\pi)$.
Denote by $\omega_\rho$ the central character of an irreducible representation $\rho$.
As $\pi\in \Irr_{\temp}\GLnn$, by \cite{MR584084} $\pi=\delta_1\times\dots\times\delta_k$ where for
$i=1,\dots,k$, $\delta_i\in \Irr \GL_{d_i}$ is the unique irreducible quotient $\delta([-m_i,m_i]_{\rho_i})$ of
$\rho_i \abs{\cdot}^{-m_i}\times \rho_i \abs{\cdot}^{-m_i+1}\times\ldots\times \rho_i \abs{\cdot}^{m_i}$.
Here we use $\times$ to denote parabolic induction; $\rho_i$ are irreducible supercuspidal representations of $\GL_{d_i/(2m_i+1)}$
with $\abs{\omega_{\rho_i}}=1$; $m_i\in \frac12\Z_{\ge0}$.
Consider an irreducible subquotient $\sigma_1\otimes\sigma_2$ of $J_{Q_d\cap \Levi}(\pi)$. Then by the geometric lemma and the description
of the Jacquet modules of square-integrable representations,
there exist $l_i\in\{-m_i-1,-m_i,\ldots,m_i\}$, $i=1,\dots,k$ such that
$\sigma_1$ is a subquotient of $\delta_1'\times\dots\times\delta_k'$ and $\sigma_2$ is a subquotient of $\delta_1''\times\dots\times\delta_k''$
where $\delta_i'=\delta([l_i+1,m_i]_{\rho_i})$ and $\delta_i''=\delta([-m_i,l_i]_{\rho_i})$.
If $-m_i\le l_i< m_i$ for some $i$ then
$d>0$ and $\abs{\omega_{\sigma_1}}=\abs{\det\cdot}^\alpha$ with $\alpha>0$, in which case $\sigma_1$ is not of metaplectic type.
Otherwise $\sigma_2$ has the form $\delta_{i_1}\times\dots\times\delta_{i_l}$ with $1\le i_1<\dots<i_l\le k$.
In particular $\sigma_2$ is generic and hence does not admit a non-trivial functional invariant under $\Sp_{\rkn-d}$ by \cite{MR1078382}.
In both cases we have $\Hom_{M_d}(\sigma_1\otimes\sigma_2,\C)=0$ as required.
\end{proof}

\begin{remark}
It is conceivable that in general, any $\pi\in\Irr G$ admits at most one $H$-invariant functional up to a scalar.
However, this is an open problem except for $\rkn\le2$ (\cite{MR2678856}).
\end{remark}

\subsection{An integral transform}
For any $f\in C^\infty(G)$ and $s\in\C$ define $f_s(g)=f(g)\wgt{g}^s$, $g\in G$.

Let $\pi\in\Irr_{\gen}\GLnn$.
Let $\Ind(\WhitML(\pi))$ \index{$\Ind(\WhitML(\pi))$} be the space of $G$-smooth left $U$-invariant functions $W:G\rightarrow\C$ such that
for all $g\in G$, the function $m\mapsto\modulus_P(m)^{-\frac12}W(mg)$ on $\Levi$ belongs to $\WhitML(\pi)$.
For any $s\in\C$ we have a representation $\Ind(\WhitML(\pi),s)$ on the space $\Ind(\WhitML(\pi))$ given by
$(I(s,g)W)_s(x)=W_s(xg)$, $x,g\in G$. Then the representation $\Ind(\WhitML(\pi),s)$ is isomorphic to $I(\pi,s)$.
It will be convenient to set
\[
[W]_{s,g}(m)=\modulus_P(\levi(m))^{-\frac12}W(\levi(m)g)\wgt{g}^s, \ \ g\in G,\, m\in\GLnn,\, s\in\C
\]
so that $[W]_{s,g}\in\WhitM(\pi)$.
Thus, $[I(s,g_1)W]_{s,g_2}=[W]_{s,g_2g_1}$ and for $m\in \GLnn$, $u\in U$ and $g\in G$ we have
\begin{equation} \label{eq: W'trans}
[W]_{s,\levi(m)ug}=\modulus_P(\levi(m))^{\frac12}\abs{\det m}^s\pi(m)[W]_{s,g}.
\end{equation}

We can explicate \eqref{eq: Hinv} as follows.

\begin{lemma}\label{lem: WL}
Let $\pi\in \Irr_{\meta,\gen}\GLnn$, considered also as a representation of $\Levi$ through $\levi$.
Assume that $\pi$ is unitarizable.
Then a nontrivial $H$-invariant linear form on $\Ind(\WhitML(\pi),\frac12)$ is given by
\[
\per{H}(W):=\int_{H_P\bs H}\per{H_{\GLnn}}([W]_{\frac12,h})\ dh=
\int_{H_{\bar U}}\per{H_{\GLnn}}([W]_{\frac12,u})\ du
\]
where $\per{H_{\GLnn}}$ is defined in Proposition~\ref{P: LM}.
\index{$\per{H}$}
\end{lemma}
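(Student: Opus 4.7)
The proof breaks naturally into three tasks: (a) verify the integrand descends to a section on $H_P\bs H$ with the correct modulus; (b) establish convergence and the equality of the two displayed formulas; (c) deduce $H$-invariance and non-triviality.

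For (a), I would compute directly. Writing $p=\levi(m)u\in H_P$ with $m\in H_\GLnn$, $u\in H_U$, the identity \eqref{eq: W'trans} gives
\[
[W]_{\frac12,ph}=\modulus_P(\levi(m))^{\frac12}\abs{\det m}^{\frac12}\pi(m)[W]_{\frac12,h}.
\]
Applying $\per{H_\GLnn}$, using its $H_\GLnn$-invariance from Proposition~\ref{P: LM} to absorb $\pi(m)$, and combining the two prefactors into $\wgt{\levi(m)}^{\rkn+1}=\modulus_{H_P}(p)$ via \eqref{eq: modulusPH}, yields
\[
\per{H_\GLnn}([W]_{\frac12,ph})=\modulus_{H_P}(p)\per{H_\GLnn}([W]_{\frac12,h}),
\]
so the integrand is a well-defined $(H_P,\modulus_{H_P})$-equivariant function on $H$ and $\int_{H_P\bs H}$ is meaningful.

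For (b), the Iwasawa decomposition $H=H_P\cdot(K\cap H)$ identifies $H_P\bs H$ with the compact quotient $(K\cap H_P)\bs(K\cap H)$. Since $W$ is smooth, $k\mapsto[W]_{\frac12,k}$ is locally constant on this compact set, so the $H_P\bs H$-integral reduces to a finite sum of values of $\per{H_\GLnn}$, each finite by Proposition~\ref{P: LM}; absolute convergence is immediate. For the second formula, since $H_{\bar U}\cap H_P=\{e\}$, the multiplication map $H_{\bar U}\times H_P\to H$ is an open immersion with dense image (the big Bruhat cell relative to $H_P$); with the Haar measure conventions of \S\ref{sec: notation}, the quotient measure on $H_P\bs H$ pulls back along $H_{\bar U}\hookrightarrow H_P\bs H$ to Haar measure on $H_{\bar U}$, so the two integrals coincide and absolute convergence of the $H_{\bar U}$-form is inherited from the Iwasawa form.

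For (c), $H$-invariance under right translation is formal from (a) and the unimodularity of $H$. For non-triviality, by Proposition~\ref{P: LM} choose $W_0\in\WhitM(\pi)$ with $\per{H_\GLnn}(W_0)\ne0$, then pick $W\in\Ind(\WhitML(\pi))$ supported in a small right-$K_0$-invariant neighborhood of $P$ in $G$ such that $[W]_{\frac12,e}=W_0$; the Iwasawa form then makes $\per{H}(W)$ a nonzero multiple of $\per{H_\GLnn}(W_0)$. The most delicate point is the absolute convergence of the $H_{\bar U}$-integral, since $H_{\bar U}$ is non-compact; but this is really a consequence of matching the two descriptions of the $H_P\bs H$-quotient measure via the open Bruhat cell, which in turn rests only on the fact that the complement of $H_{\bar U}H_P$ in $H$ has measure zero.
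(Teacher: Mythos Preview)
Your proof is correct and follows the same path as the paper, which does not give a separate proof of this lemma but rather presents it as an explication of \eqref{eq: Hinv} with the specific choice $\ell=\per{H_\GLnn}$ from Proposition~\ref{P: LM}; the well-definedness was already justified there via the modulus identity \eqref{eq: modulusPH}, and non-triviality is attributed to \cite{MR1671452}. Your argument simply makes these points explicit (the modulus computation in (a), compactness of $H_P\bs H$ via Iwasawa for convergence, the big-cell identification for the $H_{\bar U}$-form, and a direct construction for non-triviality), so there is nothing to correct beyond a minor notational slip: the open immersion you want is $H_P\times H_{\bar U}\to H$ (or equivalently $H_{\bar U}\hookrightarrow H_P\bs H$, as you then write), not $H_{\bar U}\times H_P\to H$.
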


We define the integral transform $\tranWL:  \Ind(\WhitML(\pi),\frac12)\rightarrow C^{\smth}(H\bs G)$ by
\begin{equation}\label{eq: defL}
\tranWL(W)(g)=\per{H}(I(\frac12,g)W)=\int_{H_P\bs H}\per{H_{\GLnn}}([W]_{\frac12,hg})\ dh.
\end{equation}
\index{$\tranWL$}
We will show that if $\pi$ is tempered then $\tranWL$ factors through the Langlands quotient
$\LQ{\pi}$ of $I(\pi, \frac12)$, and hence gives rise to a model $\model^{(H,1)}$ of the Langlands quotient.
We will also define an integral transform $\tranLW$ from $\model^{(H,1)}\LQ{\pi}$ to $\model^{(N,\psi_N)}\LQ{\pi}$ as well as its inverse.
The fact that there is a unique model $\model^{(N,\psi_N)}\LQ{\pi}$ is proved in Appendix~\ref{sec: Tadic}.

\subsection{A class of functions on $H\bs G$} \label{sec: defclass}

For any $K_0\in\csgr(G)$ we denote by $\tclass(H\bs G)^{K_0}$ the space
of right-$K_0$-invariant functions $L$ on $H\bs G$ such that for all $g\in G$ the integral
\begin{equation}\label{eq: Ltclass}
\int_{H_N\bs \uder}L(ug)\ du=\int_{N_\Levi\cap H\bs N_\Levi^{\der}}\int_{H_U\bs U}L(ung)\ du\ dn
\end{equation}
converges absolutely.
We endow $\tclass(H\bs G)^{K_0}$ with the topology given by the seminorms
\[
\int_{H_N\bs \uder}\abs{L(ug)}\ du,\ \ g\in G.
\]
We define
\[
\tclass(H\bs G)=\cup_{K_0\in\csgr(G)}\tclass(H\bs G)^{K_0}
\]
with the inductive limit topology. \index{$\tclass(H\bs G)^{K_0}$, $\tclass(H\bs G)$, $\tclass^{\pm1}(H\bs G)$}

Note that $N_G(H)=H\cup\wnnM H$. Define $\tclass^{\pm1}(H\bs G)$
as the subspace of $\tclass(H\bs G)$ consisting of $f\in \tclass(H\bs G)$ such that $f(\wnnM g)=\pm f(g)$.

The character $\psi_N$ is $(T,N,\uder)$-generic and is trivial on $H_N$.
Thus, by Lemma \ref{lem: genstab} we can define for $L\in\tclass(H\bs G)$
\[
\regint_{H_N\bs N}L(ng)\psi_N^{-1}(n)\ dn=\avg^{H,reg}_{N,\psi_{N}^{-1}}\circ L(g)=
\int^{\st,T}_{N_\Levi^{\der}\bs N_\Levi}\big(\int_{H_N\bs \uder}L(ung)\,du\big)\psi_N^{-1}(n)\,dn.
\]
Note that
\begin{equation} \label{eq: HNNnHNM}
\regint_{H_N\bs N}L(ng)\psi_N^{-1}(n)\ dn=\regint_{H_{N_\Levi}\bs N_\Levi}\big(\int_{H_U\bs U}L(ung)\,du\big)\psi_N^{-1}(n)\,dn.
\end{equation}
when the left-hand side is well defined, where $\regint_{H_{N_\Levi}\bs N_\Levi}$ on the right-hand side was defined in \S\ref{sec: regintNG}.

\begin{lemma}\label{L: stablemodel}
\begin{enumerate}
\item \label{mod: dense}
The space $C_c(H\bs G)^{K_0}$ of compactly supported right $K_0$-invariant functions on $H\bs G$ is dense in $\tclass(H\bs G)^{K_0}$.
Hence $\cc{H\bs G}=\cup_{K_0}C_c(H\bs G)^{K_0}$ is dense in $\tclass(H\bs G)$.
\item The expression
\[
\regint_{H_N\bs N}L(ng)\psi_N^{-1}(n)\ dn
\]
is a continuous functional on $\tclass(H\bs G)^{K_0}$.
\item The function $g\mapsto\regint_{H_N\bs N}L(ng)\psi_N^{-1}(n)\ dn$ belongs to $C(Z_{\Levi} N\bs G,\modulus_P^{\frac12}\nu^{-1/2}\psi_N)^{K_0}$.
\footnote{Here and elsewhere we extend $\psi_N$ to $Z_{\Levi}N$, trivially on $Z_{\Levi}$.}
\end{enumerate}
\end{lemma}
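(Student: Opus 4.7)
The plan is to prove the three assertions in order, each a direct application of the infrastructure from \S\ref{sec: stable}.

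For part (1), I exhaust $H\bs G$ by an increasing sequence of $K_0$-invariant compact open subsets $X_n$, using that $H\bs G$ is a second-countable $\ell$-space on which $K_0$ acts continuously. For $L\in\tclass(H\bs G)^{K_0}$, the truncations $L_n := \chi_{X_n}L$ lie in $\cc{H\bs G}^{K_0}\subset\tclass(H\bs G)^{K_0}$. Since $|L_n(ug)|\le|L(ug)|$ pointwise and $u\mapsto|L(ug)|$ is integrable on $H_N\bs\uder$ by the defining condition of $\tclass^{K_0}$, dominated convergence yields $\int_{H_N\bs\uder}|L(ug)-L_n(ug)|\,du\to 0$ for every $g$, i.e.\ $L_n\to L$ in each seminorm, giving density.

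For part (2), I invoke Lemma \ref{lem: genstab} with $(U_0,U_1,U_2,\T,\psi_{U_0}) = (N,\uder,H_N,T,\psi_N)$. The hypotheses all hold: $\psi_N$ restricts to the non-degenerate $\psi_{N_\GLnn}$ on $N_\Levi$ and so is $(T,N,\uder)$-generic; it is trivial on $H_N\subset\uder$; and the $L^1$-condition on $H_N\bs\uder$ coincides with the definition of $\tclass^{K_0}$. Applying identity \eqref{eq: stableconv} with $\phi\in\cc{T'}$ chosen depending only on $K_0$ and $g$ rewrites
\begin{equation*}
\regint_{H_N\bs N}L(ng)\psi_N^{-1}(n)\,dn = \int_{\quot U} \ft\phi(u)\,\varphi_f(u)\,\psi_N^{-1}(u)\,du,
\end{equation*}
where $\varphi_f(n)=\int_{H_N\bs\uder}L(ung)\,du$ and $\ft\phi\in\cc{\quot U}$. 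Since $\ft\phi$ has compact support depending only on $K_0$ and $g$, this exhibits the functional as continuous in the seminorms defining $\tclass^{K_0}$. This is the conceptually nontrivial step: although $L$ itself need not be left $T'$-invariant, the stable-integral construction takes place on the abelian quotient $\quot U$ after integrating out $\uder$, so it is $\varphi_f$ rather than $L$ that is being regularized.

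For part (3), the right $K_0$-invariance is immediate from that of $L$, and the left $(N,\psi_N)$-equivariance is part \ref{item: reginv} of Lemma \ref{L: stableprop}. For $Z_\Levi$-equivariance I use that $Z_\Levi=\levi(Z_\GLnn)$ consists of scalar matrices under $\levi$, hence sits in $H_\Levi\subset H$ (scalars in $\GL_{2\rkn}$ trivially centralize $E$); so $L$ is left $Z_\Levi$-invariant. Moreover $Z_\Levi$ centralizes $N_\Levi$ and acts on $U$ by scaling, which preserves $\psi_N\rest_U\equiv 1$, so $Z_\Levi$ stabilizes $\psi_N$. Part 3 of Lemma \ref{L: stableprop} then gives
\begin{equation*}
\regint_{H_N\bs N} L(n\cdot zg)\psi_N^{-1}(n)\,dn = \modulus_{T;H_N\bs N}(z)\regint_{H_N\bs N} L(ng)\psi_N^{-1}(n)\,dn,
\end{equation*}
and a direct dimension count using \eqref{eq: modulusPH} together with the fact that $z=\levi(\lambda I_{2\rkn})$ scales $\operatorname{Lie} U$ (and the subspace $\operatorname{Lie} H_U$) by $\lambda^2$ yields $\modulus_{T;H_N\bs N}(z)=\modulus_P^{\frac12}(z)\,\nu(z)^{-\frac12}$ for $z\in Z_\Levi$, completing the proof.
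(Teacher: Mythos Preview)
Your proof is correct and follows essentially the same approach as the paper's (very terse) proof: part~(1) is the obvious truncation/dominated convergence argument, part~(2) is \eqref{eq: stableconv}, and part~(3) is Lemma~\ref{L: stableprop}.

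One small correction to your commentary in part~(2): your parenthetical that ``$L$ itself need not be left $T'$-invariant'' is actually false. Since $\levi(E)=\diag(E,-E)$ is diagonal, the torus $T$ centralizes it, so $T\subset H$; hence every $L\in C(H\bs G)$ is automatically left $T$-invariant, and the hypothesis $f\in C^{\smth}((\T'\ltimes U_2)\bs Q)$ of Lemma~\ref{lem: genstab} is satisfied directly with $f=L(\cdot g)$ for \emph{any} $\T'\in\csgr(T)$. This is why the paper can simply take $\phi=1_{K_0\cap T}$ (or $1_{gK_0g^{-1}\cap T}$ for general $g$): since $L$ is right $K_0$-invariant, $R(\phi)L(\cdot g)$ is a scalar multiple of $L(\cdot g)$, and \eqref{eq: stableconv} then exhibits the regularized integral as an integral of $\varphi_L$ against a fixed $\ft\phi\in\cc{\quot U}$ depending only on $K_0$ and $g$. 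Your detour through ``it is $\varphi_f$ rather than $L$ that is being regularized'' is unnecessary, though harmless. The modulus computation in part~(3) is correct.
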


\begin{proof}
The first part is clear.
The second part follows from \eqref{eq: stableconv} (applied with $\phi=1_{K_0\cap T}$).
The last part follows from Lemma~\ref{L: stableprop}.
\end{proof}

\subsection{Model transition -- inflated model}

We go back to the notation of \S\ref{sec: matcoefbnd}.
Let $\Xi_H$ be the function \index{$\Xi_H$}
\[
\Xi_H(umk)=\modulus_P(m)^{\frac12}\Xi_{H_\GLnn}(m),\ \ u\in U, m\in M, k\in K.
\]
Thus $\Xi_H(g)=L_0(\varphi(g))$ where $\varphi$ is the unramified section of $\Ind_P^G\Pi_0$.
As before let $\sigma(g)=\max(1,\log_q\|g_{i,j}\|)$, $g\in G$.

\begin{lemma} \label{lem: conv192}
For any $a\ge0$ and $s>0$
\[
\int_{H_\GLnn\cap N_{\GLnn}^{\der}\bs N_{\GLnn}^{\der}}\int_U(\Xi_H)_s(nw_Uu)(\sigma_{H_{\GLnn}}(n)+\sigma(u))^a\ dn\ du<\infty.
\]
\end{lemma}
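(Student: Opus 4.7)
The plan is to use the Bruhat-type structure of $G=\Sp_{2\rkn}$ to rewrite the integrand so that Lemma \ref{lem: ellsigma} can be applied to the $n$-variable, leaving a convergent intertwining-type integral in the $u$-variable.

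First, writing $u=\toU(x)$ for $x\in\symspace_{2\rkn}$ and using the identity $w_U\toU(x)w_U^{-1}=\toUbar(-x)$, we have $n w_U u=\levi(n)\bar u\, w_U$ with $\bar u=\toUbar(-x)\in\bar U$. Since $w_U\in K$, right-$K$-invariance of $\Xi_H$ and $\nu$ (and bi-$K$-invariance of $\sigma$) gives $(\Xi_H)_s(nw_Uu)=(\Xi_H)_s(\levi(n)\bar u)$ and $\sigma(u)=\sigma(\bar u)$, so it suffices to bound the transformed integral over $N_\GLnn^\der\times\bar U$.

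Next, for $\bar u\in\bar U$ with Iwasawa decomposition $\bar u=u_0 m_0 k_0$, the fact that $\levi(n)\in M$ normalizes $U$ and is unipotent yields the Iwasawa decomposition $\levi(n)\bar u=(\levi(n)u_0\levi(n)^{-1})(\levi(n)m_0)k_0$; in particular $\delta_P(\levi(n)m_0)=\delta_P(m_0)$ and $\nu(\levi(n)\bar u)=\nu(m_0)$. Writing $m_0=\levi(m_0^{\GLnn})$ and using the definition of $\Xi_H$, we get
\[
(\Xi_H)_s(\levi(n)\bar u)=\delta_P^{1/2}(m_0)\nu(m_0)^s\Xi_{H_\GLnn}(n\cdot m_0^{\GLnn}).
\]
An explicit form of $m_0$ comes from the Bruhat identity $\toUbar(y)=\toU(y^{-1})\levi(-y^{-1})\toUbar(-y^{-1})w_U$ valid for $y\in\symspace_{2\rkn}$ invertible (which uses $y^*=y^{-1}$ on the twisted-symmetric space), giving in particular $\nu(\toUbar(y))=|\det y|^{-1}$ on the region where $-y^{-1}\in\symspace_{2\rkn}(\OO)$ and decay of Cartan type in general.

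Third, I would apply a sub-multiplicative estimate of the form
\[
\Xi_{H_\GLnn}(nm)\le C\,\Xi_{H_\GLnn}(n)\Xi_{H_\GLnn}(m)(1+\sigma(n))^B(1+\sigma(m))^B
\]
together with $(\sigma_{H_\GLnn}(n)+\sigma(\bar u))^a\le 2^a(1+\sigma_{H_\GLnn}(n))^a(1+\sigma(\bar u))^a$ and $\sigma(m_0^{\GLnn})\ll 1+\sigma(\bar u)$. This factors the integrand so that the $n$-integral is dominated by
\[
\int_{H_\GLnn\cap N_\GLnn\bs N_\GLnn^\der}\Xi_{H_\GLnn}(n)(1+\sigma_{H_\GLnn}(n))^{a+B}\,dn,
\]
finite by Lemma \ref{lem: ellsigma}, while the remaining $\bar u$-integral
\[
\int_{\bar U}\delta_P^{1/2}(m_0)\nu(m_0)^s\Xi_{H_\GLnn}(m_0^{\GLnn})(1+\sigma(\bar u))^{a+B}\,d\bar u
\]
is convergent for $s>0$ since it is essentially the standard intertwining integral $\int_{\bar U}\nu(\bar u)^{(2\rkn+1)/2+s}d\bar u$ with polynomial and $\Xi_{H_\GLnn}$-type corrections, all absorbed by the positivity of $s$.

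The main obstacle is the rigorous justification of the sub-multiplicative estimate for $\Xi_{H_\GLnn}$ and the precise control of the growth of $\Xi_{H_\GLnn}(m_0^{\GLnn})$ under the $\bar u$-integral; handling the latter likely requires splitting $\bar U$ according to the Cartan decomposition of $y$ and invoking the explicit Iwasawa formula above, in the style of the proof of Lemma \ref{lem: ellsigma}.
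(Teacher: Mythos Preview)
Your factoring strategy runs into a genuine obstruction that the paper's proof avoids entirely.

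First, the sub-multiplicative estimate
\[
\Xi_{H_\GLnn}(nm)\ll \Xi_{H_\GLnn}(n)\,\Xi_{H_\GLnn}(m)\,(1+\sigma(n))^B(1+\sigma(m))^B
\]
is not established anywhere in the paper and is not a standard fact: $\Xi_{H_\GLnn}$ is a generalized matrix coefficient for the $H_\GLnn$-invariant functional, not the Harish--Chandra $\Xi$-function of $\GLnn$, so the usual sub-multiplicativity argument does not apply. Second, and more fatally, even if such a bound held it would produce a factor $(1+\sigma(n))^B$, whereas Lemma~\ref{lem: ellsigma} only controls $\int \Xi_{H_\GLnn}(n)\,\sigma_{H_\GLnn}(n)^b\,dn$. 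On the quotient $H_\GLnn\cap N_\GLnn\bs N_\GLnn^{\der}$ the functions $\sigma(n)$ and $\sigma_{H_\GLnn}(n)=\sigma(n^{-1}En)$ are not comparable (for $n\in H_\GLnn$ the latter is bounded while the former is not), so your passage from $(1+\sigma(n))^{a+B}$ to $(1+\sigma_{H_\GLnn}(n))^{a+B}$ in the $n$-integral is unjustified.

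The paper's argument bypasses all of this by integrating over $U$ first rather than trying to separate variables. The point is that $g\mapsto\Xi_H(g)$ is by construction the value at $g$ of the $K$-fixed section of $\Ind_P^G\Pi_0$ paired with $L_0$; hence the Gindikin--Karpelevich formula applies verbatim and gives, for $s>0$,
\[
\int_U(\Xi_H)_s(\levi(n)w_Uu)\,du=c(s)\,(\Xi_H)_{-s}(\levi(n))=c(s)\,\Xi_{H_\GLnn}(n),
\]
so the $n$-integral is handled directly by Lemma~\ref{lem: ellsigma}. The extra weight $(\sigma_{H_\GLnn}(n)+\sigma(u))^a$ is absorbed not by factoring but by Waldspurger's estimate \cite[Lemme~II.3.4]{MR1989693}, which for any $0<s'<s$ gives
\[
(\Xi_H)_s(mw_Uu)\,\sigma(u)^a\ll_{s,s',a}\nu(m)^{s-s'}(\Xi_H)_{s'}(mw_Uu),
\]
trading a little of the exponent $s$ for the polynomial in $\sigma(u)$; since $\nu(\levi(n))=1$ this costs nothing, and one is reduced to the same Gindikin--Karpelevich integral at parameter $s'>0$. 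No estimate on products $\Xi_{H_\GLnn}(nm)$ is ever needed.
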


\begin{proof}
Note that the integrand is non-negative.
By the Gindikin--Karpelevich formula, the integral $\int_U(\Xi_H)_s(mw_Uu)\ du$ converges for $s>0$ and is equal to $c(s)(\Xi_H)_{-s}(m)$
where $c(s)=\frac{\zeta_F(s)^{2\rkn}\zeta_F(2s)^{\rkn(2\rkn-1)}}{\zeta_F(s+1)^{2\rkn}\zeta_F(2s+1)^{\rkn(2\rkn-1)}}$
and as usual $\zeta_F(s)=(1-q_F^{-s})^{-1}$.
On the other hand, it follows immediately from \cite[Lemme II.3.4]{MR1989693} that for any $0<s'<s$ and $a\ge0$ we have
$(\Xi_H)_s(mw_Uu)\sigma(u)^a\ll_{s,s',a}\wgt{m}^{s-s'}(\Xi_H)_{s'}(mw_Uu)$.
Therefore the lemma follows from Lemma \ref{lem: ellsigma}.
\end{proof}

Let $\pi\in\Irr_{\gen}\GLnn$. Assume that $\pi$ is self-dual. We recall the intertwining operators
\[
M(s):\Ind(\WhitML(\pi),s)\rightarrow \Ind(\WhitML(\pi),-s)
\]
\index{$M(s)$} given by
\[
(M(s)W)_{-s}(g)=\int_UW_s(\levi(E)w_Uug)\ du
\]
in the range of convergence. (The element $\levi(E)$ is introduced in order to preserve the character $\psi_{N_\Levi}$.)
Thus,
\begin{equation} \label{eq: interell}
\ell([M(s)W]_{-s,g})=\int_U\ell([W]_{s,\levi(E)w_Uug})\ du
\end{equation}
for any $\ell$ in the smooth dual of $\pi$. In fact, \eqref{eq: interell} holds
for any functional $\ell$ provided that the right-hand side is absolutely convergent.
Indeed, both sides are unchanged if we replace $\ell$ by $\vol(K_0)^{-1}\int_{K_0}\ell\circ\pi(k)\ dk$
for sufficiently small $K_0\in\csgr(\GLnn)$.

We also set
\[
M^*W(g)=(M(\frac12)W)_{-\frac12}(g)=\int_UW_{\frac12}(\levi(E)w_Uug)\ du
\]
\index{$M^*W$} so that if $\pi$ is tempered then
$W\mapsto M^*W$ defines a $G$-intertwining map from $\Ind(\WhitML(\pi),\frac12)$ to $\model^{(N,\psi_N)}\LQ{\pi}$.

We denote by $\fel$ the sign such that $\ell\circ\pi(\wnn)=\fel\ell$ \index{$\fel$}
where $\ell$ is a non-trivial $H_\GLnn$-invariant linear form on $\pi$. (By uniqueness, $\fel$ exists
and does not depend on the choice of $\ell$.)
It is known that for $\pi\in \Irr_{\gen,\meta}\GLnn$ we have $\fel=\epsilon(\frac12,\pi,\psi)$
(the `standard' $\epsilon$-factor of $\pi$, which of course does not depend on $\psi$ since $\pi$
has a trivial central character) -- see \cite[Theorem 3.2]{1401.0198}. However, we will not use this fact.

\begin{proposition}\label{P: L}
Let $\pi\in \Irr_{\meta,\temp}\GLnn$ considered also as a representation of $\Levi$ through $\levi$.
Then $\tranWL(\Ind(\WhitML(\pi),\frac12))\subset \tclass^{\fel}(H\bs G)$ and $\tranLW\circ\tranWL=\fel M^*$
where
$$\tranLW(L)(g):=\regint_{H_N\bs N} L(ng)\psi_N^{-1}(n)\ dn.$$
\index{$\tranLW$}
\end{proposition}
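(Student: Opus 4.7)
The plan is to prove the three ingredients of the proposition in sequence: the left $H$- and $\wnnM$-equivariance of $\tranWL(W)$, the absolute convergence placing $\tranWL(W)$ in $\tclass^{\fel}(H\bs G)$, and the identity $\tranLW\circ\tranWL=\fel M^*$. The $H$-invariance of $\tranWL(W)$ is immediate from the definition. For the $\wnnM$-equivariance (with sign $\fel$), I substitute $h\mapsto\wnnM h\wnnM^{-1}$ in the defining integral (using $\wnnM\in N_G(H)$, which normalizes $H_P$), compute directly
\[
[W]_{\frac12,\wnnM hg}(m)=\modulus_P(\levi(m))^{-1/2}W(\levi(m\wnn)hg)\wgt{hg}^{1/2}=(\pi(\wnn)[W]_{\frac12,hg})(m),
\]
using $|\det\wnn|=1$ to kill the $\modulus_P$ and $\wgt{\cdot}$ factors, and invoke $\per{H_{\GLnn}}\circ\pi(\wnn)=\fel\per{H_{\GLnn}}$ from the definition of $\fel$. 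For the absolute convergence of $\int_{H_N\bs\uder}|\tranWL(W)(ug)|\,du$, I bound $|\per{H_{\GLnn}}([W]_{\frac12,hg})|\ll_W\Xi_H(hg)\sigma_H(hg)^b$ via Lemma~\ref{lem: bndbytmpr} (after an Iwasawa decomposition realizing $[W]_{\frac12,hg}$ as $\pi(m)$ acting on a bounded family of Whittaker functions), Fubini-switch the $H_P\bs H$ and $\uder$ integrals, parametrize $H_P\bs H$ by $H_{\bar U}$, and match the resulting estimate against the finite bound in Lemma~\ref{lem: conv192}.

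For the identity $\tranLW\circ\tranWL=\fel M^*$, I unfold both sides using \eqref{eq: HNNnHNM} and $H_P\bs H\simeq H_{\bar U}$:
\[
\tranLW(\tranWL(W))(g)=\regint_{H_{N_\Levi}\bs N_\Levi}\int_{H_U\bs U}\int_{H_{\bar U}}\per{H_{\GLnn}}([W]_{\frac12,\bar u u n g})\,d\bar u\,du\,\psi_{N_\Levi}^{-1}(n)\,dn.
\]
Moving $n\in N_\Levi$ leftward via $\bar u u n=\levi(n_\GLnn)\bar u''u''$ (with $\bar u''=\levi(n_\GLnn)^{-1}\bar u\levi(n_\GLnn)$ and $u''$ analogous) rewrites the integrand as $\per{H_{\GLnn}}(\pi(n_\GLnn)[W]_{\frac12,\bar u''u''g})$. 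Interchanging the regularized $N_\Levi$ integral with the absolutely convergent $H_{\bar U}$ and $H_U\bs U$ integrals (justified by smoothing against $\phi\in\cc{T}$ using the machinery of Lemma~\ref{lem: genstab} and passing to the limit) and then applying Proposition~\ref{prop: inversionM} in the $N_\Levi$ variable collapses the inner regularized integral to the evaluation $[W]_{\frac12,\bar u''u''g}(e)=W_{\frac12}(\bar u''u''g)$. After unwinding the conjugation, the result is an iterated integral of $W_{\frac12}$ over $H_{\bar U}\times H_U\bs U$-translates of $g$, which I identify with $\fel\int_U W_{\frac12}(\levi(E)w_U ug)\,du=\fel M^*W(g)$ via a Bruhat change of variable parametrizing the big cell of $G$ and matching measures. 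The sign $\fel$ reappears from the mismatch between the Weyl element naturally produced by the Bruhat rearrangement and the $\levi(E)w_U$ used in defining $M^*$, absorbed precisely by the $\wnn$-equivariance established in the first step.

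The main obstacle is the interchange of the regularized $N_\Levi$ integral with the $H_{\bar U}$ and $H_U\bs U$ integrals in the identity step: the $N_\Levi$ integral is only defined via the stable-integral regularization and does not converge absolutely, so the interchange must proceed by smoothing with $\phi\in\cc{T}$, verifying the identity on the smoothed data (where uniform absolute convergence holds), and then passing to the limit $\phi\to\delta_e$ using the definition of $\int^{\st,T}$. A subsidiary difficulty is the final Bruhat manipulation identifying the iterated $H_{\bar U}\times H_U\bs U$ integral of $W_{\frac12}$ with the single $U$-integral in $M^*W$: this hinges on a careful matching of Haar measures across the big-cell decomposition and the appearance of $\levi(E)$ (which carries the sign $\fel$ via the relation $\per{H_{\GLnn}}\circ\pi(\wnn)=\fel\per{H_{\GLnn}}$).
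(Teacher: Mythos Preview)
Your treatment of the $\wnnM$-equivariance and of the convergence estimate placing $\tranWL(W)$ in $\tclass^{\fel}(H\bs G)$ matches the paper's argument essentially verbatim.

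The identity $\tranLW\circ\tranWL=\fel M^*$ is where your route diverges, and it contains a genuine gap. You propose to write the triple integral over $H_{\bar U}\times (H_U\bs U)\times (H_{N_\Levi}\bs N_\Levi)$, push $n\in N_\Levi$ to the left by conjugation, apply Proposition~\ref{prop: inversionM} to collapse the $N_\Levi$-integral, and only then handle the remaining $H_{\bar U}\times(H_U\bs U)$ integral by a ``Bruhat change of variable''. The conjugation step already fails: $N_\Levi$ normalizes $\bar U$ but does \emph{not} normalize $H_{\bar U}$ (since a general $n\in N_\Levi$ does not commute with $\levi(E)$), so after the substitution $\bar u\mapsto n\bar u'' n^{-1}$ the domain of the $\bar u''$-integral becomes $n^{-1}H_{\bar U}n$, which depends on $n$. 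You therefore cannot isolate an inner expression of the form $\per{H_{\GLnn}}(\pi(n_\GLnn)[W]_{\frac12,\cdot})$ with $n$-independent outer data, and Proposition~\ref{prop: inversionM} does not apply as stated. Even granting that step, the residual integral $\int_{H_U\bs U}\int_{H_{\bar U}}W_{\frac12}(\bar u u g)\,d\bar u\,du$ is not related to $\int_U W_{\frac12}(\levi(E)w_U ug)\,du$ by any measure-preserving Bruhat parametrization; the sets $H_{\bar U}\cdot U$ and $w_U U$ are simply different subsets of $G$.

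The paper reverses the order of operations and thereby avoids both problems. It first exploits the equivariance already proved, in the form $L_W(w_U\cdot)=\fel L_W(\cdot)$ (since $w_U\in H\wnnM$), to rewrite $\fel\int_{H_U\bs U}L_W(ug)\,du$ as $\int_{H_U\bs U}\int_{H_U}\per{H_{\GLnn}}([W]_{\frac12,w_U v u g})\,dv\,du=\int_U\per{H_{\GLnn}}([W]_{\frac12,w_U u g})\,du$. This is now a full $U$-integral, which by \eqref{eq: interell} equals $\per{H_{\GLnn}}([M(\frac12)W]_{-\frac12,g})$. Only then is Proposition~\ref{prop: inversionM} applied, to the function $n\mapsto\per{H_{\GLnn}}(\pi(n)[M(\frac12)W]_{-\frac12,g})$, yielding $M^*W(g)$ directly. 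No interchange of a regularized integral with another integral is needed, and the sign $\fel$ appears transparently from the single use of $w_U\in H\wnnM$.
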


\begin{proof}

Let $L_W=\tranWL(W)$.
We show that $L_W\in\tclass^{\fel}(H\bs G)$.
First, by \eqref{eq: defL} and \eqref{eq: W'trans} for any $w\in H\wnnM$ we have
\begin{multline} \label{eq: felpi'}
L_W(wg)=L_W( \wnnM g)=\int_{H_{\bar U}}\per{H_{\GLnn}}([W]_{\frac12,\bar u \wnnM g})\,d\bar u
\\=\int_{H_{\bar U}}\per{H_{\GLnn}}(\pi(\wnn)([W]_{\frac12,\bar u g}))\,d\bar u
=\fel\int_{H_{\bar U}}\per{H_{\GLnn}}([W]_{\frac12,\bar u g})\,d\bar u=\fel L_W(g).
\end{multline}
Next we show the convergence of  \eqref{eq: Ltclass} for $L=L_W$. We have
\begin{equation} \label{eq: bndLW2}
\int_{H_U\bs U}\abs{L_W(ug)}\ du=
\int_{H_U\bs U}\abs{\int_{H_U}\per{H_{\GLnn}}([W]_{\frac12,w_Uvug})\ dv}\ du
\le\int_{U}\abs{\per{H_{\GLnn}}([W]_{\frac12,w_Uug})}\ du.
\end{equation}
Therefore
\begin{multline*}
\int_{N_\Levi\cap H\bs N_\Levi^{\der}}\int_{H_U\bs U}\abs{L_W(un)}\ du\ dn\le
\int_{N_\Levi\cap H\bs N_\Levi^{\der}}\int_U\abs{\per{H_{\GLnn}}([W]_{\frac12,w_Uun})}\ du\ dn\\=
\int_U\int_{N_\Levi\cap H\bs N_\Levi^{\der}}\abs{\per{H_{\GLnn}}(\pi(n)[W]_{\frac12,w_Uu})}\ dn\ du.
\end{multline*}
We write $w_Uu=u_1m_1k_1$ where $u_1\in U$, $m_1\in M$ and $k_1\in K$.
Since $[W]_{\frac12,k}$, $k\in K$ range over a finite set of vectors in $\WhitM(\pi)$
we conclude from Lemma \ref{lem: bndbytmpr} that there exists $a\ge0$ such that for any $m\in M$
with $\wgt{m}=1$
\begin{multline} \label{eq: bndper2}
\abs{\per{H_{\GLnn}}(\pi(m)[W]_{\frac12,w_Uu})}=
\modulus_P(m_1)^{\frac12}\wgt{m_1}^{\frac12}\abs{\per{H_{\GLnn}}(\pi(mm_1)[W]_{\frac12,k_1})}\\\ll_W
\modulus_P(m_1)^{\frac12}\wgt{m_1}^{\frac12}\Xi_{H_\GLnn}(mm_1)\sigma_{H_\GLnn}(mm_1)^a\\\le
(\Xi_H)_\frac12(mw_Uu)(\sigma_{H_\GLnn}(m)+2\sigma(m_1))^a\ll
(\Xi_H)_\frac12(mw_Uu)(\sigma_{H_\GLnn}(m)+\sigma(u))^a.
\end{multline}
The convergence of
\[
\int_{H_N\bs \uder}\abs{L_W(n)}\ dn
\]
therefore follows from Lemma \ref{lem: conv192}. We get $L_W\in\tclass^{\fel}(H\bs G)$.

Similarly, since $w_U\in H\wnnM$ we have
\begin{multline*}
\fel\int_{H_U\bs U}L_W(ug)\ du=
\int_{H_U\bs U}\big(\int_{H_U}\per{H_{\GLnn}}([W]_{\frac12,w_Uvug})\ dv\big)\ du
\\=\int_U \per{H_{\GLnn}}([W]_{\frac12,w_Uug})\ du=\int_U \per{H_{\GLnn}}([W]_{\frac12,\levi(E)w_Uug})\ du=
\per{H_{\GLnn}}  ([M(\frac12)W]_{-\frac12,g})
\end{multline*}
where the last equality follows from \eqref{eq: interell}.
It follows from \eqref{eq: HNNnHNM} that $\tranLW(L_W)(g)$ equals
\begin{multline*}
\regint_{H_N\bs N} L_W(ng)\psi_N^{-1}(n)\ dn=
\regint_{N_\Levi\cap H\bs N_\Levi}\big(\int_{H_U\bs U}  L_W(ung)\ du\big)\psi_N^{-1}(n)\,dn\\=
\fel\regint_{N_\GLnn\cap H_{\GLnn}\bs N_\GLnn}\per{H_{\GLnn}}
([M(\frac12)W]_{-\frac12,\levi(n)g})\psi_{N_\GLnn}^{-1}(n)\,dn\\=
\fel\regint_{N_\GLnn\cap H_{\GLnn}\bs N_\GLnn}\per{H_{\GLnn}}
(\pi(n)[M(\frac12)W]_{-\frac12,g})\psi_{N_\GLnn}^{-1}(n)\,dn=\fel[M(\frac12)W]_{-\frac12,g}(e)=\fel M^*W(g)
\end{multline*}
where the regularized integral $\regint_{N_\Levi\cap H\bs N_\Levi}$ is as defined in \S\ref{sec: regintNG} and in the last line we used
the relation \eqref{eq: invLM}.
\end{proof}

\section{Inverse transform} \label{sec: invtrans}

Next we show that $\per{H}$ defined above (and hence $\tranWL$) factors through the Langlands quotient $\LQ{\pi}$.
Indeed we will define an inverse transform of $\tranLW$.

Let $\mira=\mira_{4\rkn}\cap G$. \index{$\mira$}

\begin{theorem}\label{thm: MWL}
Let $\pi\in\Irr_{\meta,\temp}\GLnn$. Then
\begin{enumerate}
\item The integral
$$\tranMWL(\tilde W):=\int_{H_N\bs H_\mira}\tilde W(h\cdot)\ dh$$
converges for $\tilde W\in \model^{(N,\psi_N)}\LQ{\pi}$.
\item $\LQ{\pi}$ is $H$-distinguished with a unique $H$-invariant linear form up to scalar multiple.
\item $\tranLW$ is a model transition from $\model^{(H,1)}\LQ{\pi}$ to $\model^{(N,\psi_N)}\LQ{\pi}$ with the inverse being $\tranMWL$.
\end{enumerate}
\end{theorem}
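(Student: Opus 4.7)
The plan is to reduce all three parts of the theorem to a single identity,
\begin{equation} \label{eq: planmain}
\tranMWL(M^* W)(g) = \fel\, \tranWL(W)(g), \qquad W \in \Ind(\WhitML(\pi), \tfrac12),
\end{equation}
which, together with Proposition~\ref{P: L} (giving $\tranLW \circ \tranWL = \fel\, M^*$), will pin down everything.

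First I would establish \eqref{eq: planmain} together with absolute convergence of the defining integral of its left-hand side. Unwinding,
\[
\tranMWL(M^* W)(g) = \int_{H_N \bs H_\mira} \int_U W_{\frac12}(\levi(E) w_U u h g) \, du \, dh,
\]
while Lemma~\ref{lem: WL} expresses the right-hand side as
\[
\fel \int_{H_{\bar U}} \int_{N_\GLnn \cap H_\GLnn \bs \mira_\GLnn \cap H_\GLnn} [W]_{\frac12, \bar u g}(p) \, dp \, d\bar u.
\]
I would change variables in $U$, using that $w_U \in N_G(H)$ carries $H_U$ to $H_{\bar U}$, and split the $U$-integration along $H_U$ so that the $H_{\bar U}$-factor on the right matches a corresponding factor on the left. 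The leftover transverse integration should reduce, via Fourier inversion on the symmetric-matrix factors (in the spirit of Appendix~A), to the $\GLnn$-level identity of Lemma~\ref{L: feglnn} applied to $[W]_{\frac12, \cdot}$; the sign $\fel$ enters exactly as in \eqref{eq: felpi'}. The convergence, which is the delicate part, will follow from the tempered bound of Lemma~\ref{lem: bndbytmpr}, the Gindikin--Karpelevich-type estimate of Lemma~\ref{lem: conv192}, and the bounds \eqref{eq: bndLW2}--\eqref{eq: bndper2} already used in the proof of Proposition~\ref{P: L}.

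Given \eqref{eq: planmain}, parts (1) and (2) are immediate. Since $M^*$ is the standard intertwining operator whose image realises $\model^{(N,\psi_N)}\LQ{\pi}$ (Proposition~\ref{prop: Tadic}), the convergence of $\tranMWL(\tilde W)$ for every $\tilde W$ in the model, and hence (1), follows from the convergence in \eqref{eq: planmain}. Moreover \eqref{eq: planmain} shows $\ker M^* \subseteq \ker \tranWL$, so $\tranWL$ factors through $\LQ{\pi}$ and exhibits it as a non-zero $H$-distinguished quotient of $I(\pi, \tfrac12)$. Uniqueness in (2) then follows from the inclusion $\Hom_H(\LQ{\pi}, \C) \hookrightarrow \Hom_H(I(\pi, \tfrac12), \C)$ combined with the one-dimensionality of the latter (Proposition~\ref{prop: distequi}).

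Finally, (3) is obtained by composing \eqref{eq: planmain} with Proposition~\ref{P: L} and using $\fel^2 = 1$. For any $W \in \Ind(\WhitML(\pi), \tfrac12)$,
\[
\tranLW(\tranMWL(M^* W)) = \fel\, \tranLW(\tranWL(W)) = M^* W,
\]
so $\tranLW \circ \tranMWL = \id$ on $\model^{(N, \psi_N)}\LQ{\pi}$ by surjectivity of $M^*$; symmetrically,
\[
\tranMWL(\tranLW(\tranWL(W))) = \fel\, \tranMWL(M^* W) = \tranWL(W),
\]
so $\tranMWL \circ \tranLW = \id$ on the image of $\tranWL$, which by (2) exhausts $\model^{(H,1)}\LQ{\pi}$. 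The main obstacle throughout is thus the Fourier inversion and the convergence estimate underlying \eqref{eq: planmain}; everything else is purely formal.
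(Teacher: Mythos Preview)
Your high-level reduction is correct and is what the paper does: the identity $\tranMWL\circ M^{*}=\fel\,\tranWL$ (equivalently $\tranMWL\circ\tranLW(L_W)=L_W$, which is \eqref{eq: goalMWL}) together with Proposition~\ref{P: L} yields parts (2) and (3) exactly as you describe. The gap lies in how you propose to prove the identity itself, and in your treatment of convergence.

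The integral over $H_N\bs H_\mira$ is not handled by a single Fourier inversion on symmetric matrices. The paper writes it as an iterated integral \eqref{eq: goalMWLiter} alternating unipotent pieces $\bar V^k\cap H$ and one-dimensional \emph{torus} pieces $\tilde T_k$, and proves (Proposition~\ref{prop: iterinv}) that each pair of steps climbs from $J_k$ to $J_{k+1}$. The unipotent step is indeed an Appendix~A Fourier inversion, but the torus step is not: it requires the pointwise decay $h_k(x)\ll\abs{x}^{-1/2}(\log\abs{x})^d$ (Lemma~\ref{L: invinfprep}, whose proof is the analytic heart of the section and ultimately rests on the majorization Lemma~\ref{lem: mainbnd018}), fed into the ad hoc inversion Lemma~\ref{L: genftinv}. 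Your sketch (``split $U$ along $H_U$, Fourier invert on the transverse piece, reduce to Lemma~\ref{L: feglnn}'') misses this torus layer entirely; and the reduction to Lemma~\ref{L: feglnn} cannot work as stated, since that lemma applies to $W\in\cc{N_\GLnn\bs\mira_\GLnn,\psi_{N_\GLnn}}$, whereas the relevant functions here are not compactly supported. Separately, the paper proves convergence of $\tranMWL(\tilde W)$ \emph{first} and \emph{directly}, via the Iwasawa decomposition of $H_\mira$ and the standard bound $\tilde W(t)\ll\modulus_B^{1/2}(t)\prod\abs{t_i}^{-1/2+s}$ on degenerate Whittaker functions; the estimates you cite (Lemmas~\ref{lem: bndbytmpr}, \ref{lem: conv192}, \eqref{eq: bndLW2}--\eqref{eq: bndper2}) control $L_W$ on $\uder$, not $M^{*}W$ on $H_\mira$, so they do not give (1).
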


\begin{proof}
We first show convergence. Let $\tilde W=\W$. Since $H_\mira$ is unimodular and $\modulus_{H_{B\cap\mira}}=\modulus_{H_B}\rest_{H_{B\cap\mira}}$,
we can write $\int_{H_N\bs H_\mira}\tilde W(h)\ dh$ as
\[
\int_{H_{B\cap\mira}\bs H_\mira}\int_{H_N\bs H_{B\cap\mira}}\tilde W(bh)\modulus_{H_B}(b)^{-1}\ db\ dh=
\int_{K\cap H_\mira}\int_{T\cap\mira}\tilde W(tk)\modulus_{H_B}(t)^{-1}\ dt\ dk
\]
for a suitable Haar measure of $K\cap H_\mira$. We need to show the convergence of the inner integral.
Write $t\in T\cap H_\mira$ as $\levi(\diag(t_1,t_2,\ldots,t_{2\rkn}))$ with $t_1=1$. Then
$\modulus_{H_B}(t)=\prod_{i=1}^\rkn \abs{t_{2i-1}t_{2i}}^{2(\rkn+1-i)}$. On the other hand, for any $s>0$ we have
$$\tilde W(t)\ll_s \modulus_B^{\frac12}(t)\prod_{i=1}^{2\rkn}\abs{t_i}^{-\frac12+s}=\prod_{i=1}^{2\rkn}\abs{t_i}^{\frac12+s+2\rkn-i}
$$
and $\tilde W(t)$ is supported on $1\ll\abs{t_2}\ll\ldots\ll\abs{t_{2\rkn}}$.
Thus the convergence follows from \footnote{Indeed the convergence holds when $\pi$ is unitarizable since we can take $s<\frac12$
(\cite[Lemma 2.1]{LMao4}).}
\[
\int_{1\ll\abs{t_2}\ll\ldots\ll\abs{t_{2\rkn}}}\textstyle{\prod}_{i=1}^\rkn\abs{t_{2i}}^{-\frac32+s}
\textstyle{\prod}_{i=2}^\rkn\abs{t_{2i-1}}^{-\frac12+s}\ \textstyle{\prod}_{i=2}^{2\rkn} d^*t_i<\infty.
\]

We will show that for $W\in \Ind(\WhitML(\pi),\frac12)$:
\begin{equation}\label{eq: goalMWL}
\int_{H_N\bs H_\mira}\big(\regint_{H_N\bs N} L_W(nh)\psi_N^{-1}(n)\ dn\big)\ dh=L_W(e).
\end{equation}
This would imply the second and third part of the theorem.
Indeed, by Proposition \ref{P: L} and the equation above
\[
\tranMWL\circ M(\frac12)=\fel\tranMWL\circ \tranLW\circ\tranWL=\fel \tranWL
\]
on $\Ind(\WhitML(\pi),\frac12)$. In particular $\tranWL$ factors through $M(\frac12)$. This shows $\LQ{\pi}$ is $H$-distinguished.
By Proposition~\ref{prop: distequi}, the $H$-invariant linear form on $\LQ{\pi}$ has dimension one.
Thus we have a model $\model^{(H,1)}\LQ{\pi}$. The proposition follows.

In order to show \eqref{eq: goalMWL} let $\tilde T_k$ be the image of the co-character $t\mapsto\levi(\diag(I_{2\rkn-k},t,I_{k-1}))$,
$k=1,\dots,2\rkn$ and let $V^k$ (resp., $\bar V^k$)
be the unipotent group in $N$ (resp., $\bar N$), consisting of elements of the form $\diag(I_{2\rkn-k},\bar v,I_{2\rkn-k})$
where the middle $2(k-1)\times 2(k-1)$ block of $\bar v$ is the identity matrix. (It is a Heisenberg group of dimension $2k-1$.)
Then the left-hand side of \eqref{eq: goalMWL} is
\begin{equation}\label{eq: goalMWLiter}
\avg_{\tilde T_{2\rkn-1}}\circ\avg_{\bar V^{2\rkn-1}\cap H} \circ\dots\avg_{\tilde T_k,\nu^{\rkn-1-[\frac k2]}}\circ\avg_{\bar V^k\cap H}
\circ\dots\circ\avg_{\tilde T_1,\nu^{\rkn-1}}\circ\avg_{\bar V^1\cap H}\circ\avg_{N,\psi_N^{-1}}^{reg,H} \circ L_W(e).
\end{equation}

We introduce some auxiliary integrals. For $k=1,\dots,2\rkn$, let $N_k$ be the unipotent radical of the standard parabolic subgroup
with Levi part $\GL_1^{2\rkn-k}\times\Sp_k$
and let $N_k^\circ=N_k\cap\uder$.
If $L\in \tclass(H\bs G)$ then $L\in L^1((H\cap N_k^\circ)\bs N_k^\circ)$ for $k=1,\dots,\leq 2\rkn$ by Remark~\ref{rem: partint}.
Since $\psi_N\rest_{N_k}$ is $(T,N_k,N_K^\circ)$-generic we can define using Lemma~\ref{lem: genstab} the regularized integral
$$J_k(L;g)=\regint_{(H\cap N_k)\bs N_k} L(ng)\psi_N^{-1}(n)\ dn.$$
This is a continuous functional on $\tclass(H\bs G)$.
Similarly for $k=1,\dots,2\rkn-1$ let $N^{\Levi}_{\alpha_{2\rkn-k}}$ be the simple root group consisting of $\levi(I_{2\rkn}+x\one_{2\rkn-k,2\rkn-k+1})$.
Define a continuous functional on $\tclass(H\bs G)$:
$$J'_k(L;g)=\regint_{N^{\Levi}_{\alpha_{2\rkn-k}}}J_{k+1}(L;ng)\psi_N^{-1}(n)\ dn.$$
Here we use Lemma~\ref{lem: genstab} with the data $\T=\tilde T_k$, $U_0=N^{\Levi}_{\alpha_{2\rkn-k}}$, $U_1=U_2=1$,
$\psi_{U_0}=\psi_N\rest_{U_0}$.

As $J_1(L_W;\cdot)=\avg_{N,\psi_N^{-1}}^{reg,H} \circ L_W$ and $J_{2\rkn}(L_W;\cdot)=L_W$, \eqref{eq: goalMWL} follows from the following proposition.
\end{proof}

\begin{proposition}\label{prop: iterinv}
Let $k=1,\dots,2\rkn-1$. Then
\begin{enumerate}
\item For $L\in \tclass(H\bs G)$ and $g\in G$, $J_k(L;\cdot g)$ is compactly supported on $\bar V^k$ and
\begin{equation}\label{eq: iterinv1}
\int_{\bar V^k\cap H} J_k(L;\bar v g)\ d\bar v=J'_k(L;g).
\end{equation}
\item Let $\pi\in\Irr_{\temp,\meta}\GLnn$ and $W\in \Ind(\WhitML(\pi),\frac12)$. Then
\begin{equation}\label{eq: iterinv2}
\int_{\tilde T_k} J'_k(L_W;t)\wgt{t}^{\rkn-1-[\frac k2]}\ dt=J_{k+1}(L_W;e).
\end{equation}
\end{enumerate}
\end{proposition}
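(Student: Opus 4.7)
The plan is to handle the two parts separately: Part~(1) is a general root-exchange identity, valid for any $L\in\tclass(H\bs G)$, while Part~(2) is where the specific structure of $L_W$ (built from a Whittaker-induced section $W\in\Ind(\WhitML(\pi),\frac12)$) is essential. Both steps are Fourier-inversion/root-exchange manipulations in the same spirit as \S\ref{sec: stable} and \S\ref{sec: invGLnn}, but applied iteratively to peel away one rank at a time from the parabolic tower $N_1\supset N_2\supset\dots\supset N_{2\rkn}=\{e\}$.

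For Part~(1), I would first analyze the quotient $N_k/N_{k+1}$. Since the Levi of $N_k$ is $\GL_1^{2\rkn-k}\times\Sp_k$ and the Levi of $N_{k+1}$ is $\GL_1^{2\rkn-k-1}\times\Sp_{k+1}$, the successive quotient is a product of root groups; among them only $N^\Levi_{\alpha_{2\rkn-k}}$ carries a non-trivial $\psi_N$-value, because $\psi_N$ is trivial on $U$ and on all non-simple root groups of $N_\Levi$. This lets me factor the integration $\regint_{(H\cap N_k)\bs N_k}$ as (i) a regularized integral over $N^\Levi_{\alpha_{2\rkn-k}}$ against $\psi_N^{-1}$, (ii) the $J_{k+1}$-integration, and (iii) an integration over $\psi_N$-trivial pieces that are absorbed into $H\cap N_k$. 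Then I would perform root exchange between $N^\Levi_{\alpha_{2\rkn-k}}$ on the left and $\bar V^k\cap H$ on the right, the two being in Fourier-dual position via the commutation pairing inside the Heisenberg block of $\Sp_{k+1}$. The setup of Appendix~\ref{a: int} applies directly to yield \eqref{eq: iterinv1}. The compact support of $J_k(L;\cdot g)$ on $\bar V^k$ is exactly the content of Lemma~\ref{L: elemC}, applied with $C=\bar V^k\cap H$ and $D$ a suitable quotient of $N^\Levi_{\alpha_{2\rkn-k}}$: right-$K_0$ invariance of $L$ combined with the $(N_k,\psi_N)$-equivariance under the left action, together with the non-degenerate commutation pairing, forces the support to be compact.

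For Part~(2), the point is that $L_W(g)=\int_{H_{\bar U}}\per{H_\GLnn}([W]_{\frac12,\bar u g})\,d\bar u$ has a specific Whittaker-induced form that is not shared by a general $L\in\tclass(H\bs G)$. The one-parameter torus $\tilde T_k$ acts on $\WhitML(\pi)$ via the transformation rule \eqref{eq: W'trans}, and conjugation by $\tilde T_k$ produces a modulus factor on $N_{k+1}/(H\cap N_{k+1})$ that combines with $\wgt{t}^{\rkn-1-[k/2]}$ to reconstruct the correct Iwasawa measure on the remaining Heisenberg block. Once $L_W$ is unfolded and the $\tilde T_k$-integration is pushed inside $\per{H_\GLnn}$, the combined expression $\int_{\tilde T_k}J'_k(L_W;t)\wgt{t}^{\rkn-1-[k/2]}\,dt$ becomes a Fourier inversion identity in the spirit of Lemma~\ref{L: feglnn}, applied to one further root coordinate in the Heisenberg layer, and collapses to $J_{k+1}(L_W;e)$.

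The main obstacle will be the bookkeeping: identifying the precise root decomposition of $N_k/N_{k+1}$, matching it against the Heisenberg group $\bar V^k\cap H$ and the simple root $N^\Levi_{\alpha_{2\rkn-k}}$ via commutation, and verifying that the stable-integral regularization of \S\ref{sec: stable} is preserved at each step. The modulus $\nu^{\rkn-1-[k/2]}$ in Part~(2) encodes the shift in the modular character of the parabolic of $\Sp_k$ inside that of $\Sp_{k+1}$ and should be checked by a direct calculation on the diagonal of $\tilde T_k$; this is where the floor function $[k/2]$ enters, because the Heisenberg contribution alternates depending on whether the newly absorbed root is short or long.
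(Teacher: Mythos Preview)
Your treatment of Part~(1) is broadly correct and matches the paper's approach: the identity \eqref{eq: iterinv1} is indeed a root-exchange of the type in Appendix~\ref{a: int}, and compact support on $\bar V^k$ comes from Lemma~\ref{L: elemC}. Two small corrections: for the compact-support argument the paper takes $C=\bar V^k$ (not $\bar V^k\cap H$) and $D=V^{k+1}$, showing that $c\mapsto\psi_N([c,\cdot])$ identifies $\bar V^k$ with the dual of $Z_{V^{k+1}}N^\Levi_{\alpha_{2\rkn-k}}\bs V^{k+1}$; and in the application of Lemma~\ref{L: Fourier} the Fourier-dual pair is $C=\bar V^k\cap H$ versus $D=V^{k+1}\cap\uder$, not $N^\Levi_{\alpha_{2\rkn-k}}$ (the latter sits inside the group $B$). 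These are bookkeeping issues, not conceptual ones.

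Part~(2), however, has a real gap. You propose to unfold $L_W$, push the $\tilde T_k$-integration inside, and invoke a Fourier inversion ``in the spirit of Lemma~\ref{L: feglnn}''. But Lemma~\ref{L: feglnn} works because the Whittaker functions there are compactly supported on $N_\GLnn\bs\mira_\GLnn$, so all integrals are absolutely convergent and ordinary Fourier inversion applies. Here the situation is fundamentally different: the integral $\int_{\tilde T_k}J'_k(L_W;t)\wgt{t}^{\rkn-1-[k/2]}\,dt$ is over a non-compact torus, and there is no a priori reason for it to converge absolutely. The paper handles this by rewriting the left side as $\int_F\big(\lim_{c\to\infty}\int_{|x|<c}h_k(x)\psi^{-1}(xt)\,dx\big)\,dt$ where $h_k(x)=J_{k+1}(L_W;\lambda_{2\rkn-k}(x))$, and then proving two non-trivial ingredients: first, a variant of Fourier inversion (Lemma~\ref{L: genftinv}) valid under the weak hypothesis $\int_F|\varphi(x)|\max(1,|x|)^{-1}\,dx<\infty$; second, and this is the hard part, the growth estimate $h_k(x)\ll_W|x|^{-1/2}(\log|x|)^d$ as $|x|\to\infty$ (Lemma~\ref{L: invinfprep}). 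That estimate requires unfolding $L_W$ and feeding in the bounds on generalized matrix coefficients from \S\ref{sec: matcoefbnd} (specifically the $\Xi_{H_\GLnn}$-bound of Lemma~\ref{lem: bndbytmpr} and the integrability from Lemma~\ref{lem: ellsigma}), together with a careful analysis that distinguishes the cases $k$ even and $k$ odd. Your sketch does not identify this convergence obstacle, and the analogy with Lemma~\ref{L: feglnn} will not supply the missing analysis.
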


\begin{proof}
Let $L\in\tclass(H\bs G)^{K_0}$, $C=\bar V^k$, $D=V^{k+1}$ and $f=J_k$.
Then $[C,D]\subset D\subset N_k$ and therefore, since $f$ is $(N_k,\psi_N\rest_{N_k})$-equivariant, for $c\in C$, $d\in D$
$$f(cd)=f([c,d]dc)=\psi_N([c,d])\psi_N(d)f(c).$$
It is easy to check that $c\mapsto\psi_N([c,\cdot])$ defines a homeomorphism from $C$ to the Pontryagin dual of
$Z_DN^{\Levi}_{\alpha_{2\rkn-k}}\bs D$.
Thus we can apply Lemma~\ref{L: elemC} to conclude that $J_k(L;\bar v g)$
is supported on a compact set determined by $g$ and $K_0$. In particular the integration in \eqref{eq: iterinv1} is a continuous functional on $\tclass(H\bs G)$. Thus
we only need to prove the first part of the proposition for $L\in \cc{H\bs G}$.
The identity follows by applying Lemma~\ref{L: Fourier} for $f=L(\cdot g)$, $A=N_k$, $B=N^{\Levi}_{\alpha_{2\rkn-k}}Z_{V^{k+1}}\ltimes N_{k+1}$,
$C=\bar V^k\cap H$, $D=V^{k+1}\cap \uder$ and $\Psi(nn')=\psi_N^{-1}(n)$ for $n\in N$, $n'\in\bar N$.
We only need to note that the map $c\mapsto\psi_N([c,\cdot])$ defines a homeomorphism preserving Haar measures between
$\bar V^k\cap H$ and the Pontryagin dual of $H_D\bs D$.

We turn to the second part of the proposition.
First we need a form of Fourier inversion formula:
\begin{lemma}\label{L: genftinv}
Let $\varphi$ be a smooth function on $F$.
Suppose that there exists $\Omega_0\in \csgr(F^*)$ such that $\varphi(x\alpha)=\varphi(x)$ for all $x\in F$, $\alpha\in\Omega_0$.
Assume that $\int_F\abs{\varphi(x)}\max(1,\abs{x})^{-1}\ dx<\infty$.
Then
\begin{equation}\label{eq: stintF}
I:=\int_F\big(\lim_{c\rightarrow\infty}\int_{\abs{x}<c} \varphi(x)\psi^{-1}(xt)\ dx)\ dt
\end{equation}
is well defined and equal to $\varphi(0)$.
\end{lemma}

\begin{proof}
We first observe that for any $t\ne0$ the limit in $c$ (with respect to the discrete topology) exists.
Indeed, $\int_{\abs{x}=b}\varphi(x)\psi^{-1}(xt)=0$ whenever $b\gg_{\Omega_0}\abs{t}^{-1}$.

Consider
$$B(c):=\int_{F}\abs{\int_{\abs{x}=c} \varphi(x)\psi^{-1}(xt)\ dx}\ dt$$
for $c\ge1$. The above integrand in $t$ is supported on $\abs{t}\ll_{\Omega_0} c^{-1}$.
Thus $B(c)\ll_{\Omega_0} c^{-1} \int_{\abs{x}=c} \abs{\varphi(x)}\ dx$ and hence $\sum_{c\ge1}B(c)<\infty$
by assumption.

Let
$$I_c:=\int_{F}\big(\int_{\abs{x}\leq c} \varphi(x)\psi^{-1}(xt)\ dx\big)\ dt.$$
Clearly $I_c$ is absolutely convergent and equals $\varphi(0)$ by Fourier inversion. On the other hand, for any $c_0\ge1$
$$\int_F\abs{\lim_{c\rightarrow\infty}\int_{\abs{x}<c} \varphi(x)\psi^{-1}(xt)\ dx}\ dt\leq
\int_F\abs{\int_{\abs{x}<c_0} \varphi(x)\psi^{-1}(xt)\ dx}\ dt+\sum_{c\ge c_0}B(c)<\infty.$$
Thus, \eqref{eq: stintF} is absolutely convergent and moreover
$$\abs{I-\varphi(0)}=\abs{I-I_{c_0}}\leq\sum_{c\ge c_0}B(c)\rightarrow0\text{ as }c_0\rightarrow\infty.$$
The lemma follows.
\end{proof}

Let $h_k(x):=J_{k+1}(L_W;\lambda_{2\rkn-k}(x))$ where $\lambda_i(x)=\levi(I_{2\rkn}+x\one_{i,i+1})$.
\begin{lemma}\label{L: invinfprep}
There is $\Omega_0\in\csgr(F^*)$ such that $h_k(\alpha x)=h_k(x)$ when $\alpha\in \Omega_0$.
Moreover, for some $d\ge0$ we have $h_k(x)\ll_W\abs{x}^{-\frac12}(\log\abs{x})^d$ as $\abs{x}\rightarrow \infty$.
\end{lemma}

\begin{proof}
Let $\Omega_0$ be such that $L_W(\cdot t)=L_W(\cdot)$ for $t\in \tilde T_k$ of the form $\levi(\diag(I_{2\rkn-k},\alpha,I_{k-1}))$ with $\alpha\in \Omega_0$.
Then since $\tilde T_k$ stabilizes $(N_{k+1},\psi_N\rest_{N_{k+1}})$,
\begin{multline*}
h_k(\alpha x)=J_{k+1}(L_W;t^{-1}\lambda_{2\rkn-k}(x)t)=
\regint_{(H\cap N_{k+1})\bs N_{k+1}} L_W(nt^{-1}\lambda_{2\rkn-k}(x))\psi_N^{-1}(n)\ dn
\\= \regint_{(H\cap N_{k+1})\bs N_{k+1}} L_W(n\lambda_{2\rkn-k}(x))\psi_N^{-1}(n)\ dn
= h_k(x)
\end{multline*}
by a change of variable $n\mapsto t^{-1}nt$.

Next we show the bound. Recall
\begin{multline*}
h_k(x)=\regint_{(H\cap N_{k+1})\bs N_{k+1}} L_W(n\lambda_{2\rkn-k}(x))\psi_N^{-1}(n)\ dn
\\=\int^{st,\T}_{N_{k+1}^\circ\bs N_{k+1}}\big(\int_{(H\cap N_{k+1}^\circ)\bs N_{k+1}^\circ}L_W(nv\lambda_{2\rkn-k}(x))\,dn\big)\psi_N(v)^{-1}\,dv.
\end{multline*}
Here we can choose  $\T=\prod_{i=k+2}^{2\rkn}\tilde T_i$. In particular $\T$ commutes with $\lambda_{2\rkn-k}(x)$.
Thus by \eqref{eq: stableconv}, the integration of $v$ can be taken over a compact set which is independent of $x$.
Moreover $v$ and $\lambda_{2\rkn-k}(x)$ commute. Thus we may ignore the integration over $v$ and it suffices to bound
\[
\int_{(H\cap N_{k+1}^\circ)\bs N_{k+1}^\circ}\abs{L_W(n\lambda_{2\rkn-k}(x))}\,dn.
\]
We show that
\begin{equation} \label{eq: fnlbndU}
\int_{(H\cap N_{k+1}^\circ)\bs N_{k+1}^\circ}\abs{L_W(n\lambda_{2\rkn-k}(x))}\,dn
\ll_{W}\int_{(H\cap\uder)\bs \uder }\abs{L_W(u\lambda_{2\rkn-k}(x))}\,du.
\end{equation}
We argue as in Remark \ref{rem: partint}. Consider a sequence of subgroups
\[
V'_0=N^{\Levi}_{\alpha_{2\rkn-k}}\rtimes N_{k+1}^\circ\subset V'_1\subset\dots\subset V'_m=N^{\Levi}_{\alpha_{2\rkn-k}}\rtimes \uder
\]
such that $\dim V'_i-\dim V'_{i-1}=1$ for all $i=1,\dots,m$ and let $V_i=V'_i\cap\uder$. We show that for all $i$
\[
\int_{H_{V_{i-1}}\bs V_{i-1}}\abs{L_W(v\lambda_{2\rkn-k}(x))}\,dv\ll_{W}\int_{H_{V_i}\bs V_i}\abs{L_W(v\lambda_{2\rkn-k}(x))}\,dv.
\]

Altogether we get \eqref{eq: fnlbndU}.
If $H_{V_{i-1}}=H_{V_i}$, this follows from \eqref{eq: effsgrbnd} and the fact that
\[
\vol_{V_{i-1}\bs V_i}(V_{i-1}(\lambda_{2\rkn-k}(x)^{-1}K_0\lambda_{2\rkn-k}(x)\cap V_i))=
\vol_{V_{i-1}\bs V_i}(V_{i-1}\lambda_{2\rkn-k}(x)^{-1}(K_0\cap V_i)\lambda_{2\rkn-k}(x))
\]
is independent of $x$ for any $K_0\in\csgr(G)$.
On the other hand, if $H_{V_{i-1}}\subsetneq H_{V_i}$ then
\[
\int_{H_{V_{i-1}}\bs V_{i-1}}\abs{L_W(v\lambda_{2\rkn-k}(x))}\,dv=\int_{H_{V_i}\bs V_i}\abs{L_W(v\lambda_{2\rkn-k}(x))}\,dv.
\]

From the bound given in the proof of Proposition~\ref{P: L}
(combining \eqref{eq: bndLW2} and \eqref{eq: bndper2}) the right-hand side of \eqref{eq: fnlbndU} is:
$$\ll_W \int_{(H_\GLnn\cap  N_\GLnn^{\der})\bs N_\GLnn^{\der}}
\int_U (\Xi_H)_\frac12(\levi(n)\lambda_k(x)w_Uu)(\sigma_{H_\GLnn}(n)+\log_+\abs{x}+\sigma(u))^{A_1}\ du\ dn$$
for suitable $A_1\ge0$ where $\log_+\abs{x}=\max(0,\log\abs{x})$.
This by the argument of Lemma~\ref{lem: conv192} is
$$\ll \int_{(H_\GLnn\cap  N_\GLnn^{\der})\bs N_\GLnn^{\der}}
\Xi_{H_\GLnn}(n\lambda_k'(x)))(\sigma_{H_\GLnn}(n)+\log_+\abs{x})^{A_1}\ dn$$
where $\lambda'_k(x)=\levi^{-1}(\lambda_k(x))\in N_\GLnn$.
As in \eqref{bound: goal}, we can unwind the above integral using the definition of $\Xi_{H_\GLnn}$ to get
\begin{multline*}
\int_{(H_\GLnn\cap  N_\GLnn^{\der})\bs N_\GLnn^{\der}}L_0(\Pi_0(\wnn n\lambda'_k(x))\phi_0)(\sigma_{H_\GLnn}(n)+\log_+\abs{x})^{A_1}\,dn
\\=\int_{\overline{N}_\GLnn^{\der}}
\int_{F^\rkn}\phi_0(\udr(t_1,\dots,t_\rkn)\bar u \lambda''_k(x))
(\sigma_{H_\GLnn}(\bar u)+\log_+\abs{x})^{A_1}\textstyle{\prod}_{i=1}^\rkn \abs{t_i}^{-\frac12}\ dt_i\,d\bar u
\end{multline*}
where $\lambda''_k(x)=\wnn\lambda'_k(x)\wnn\in \overline{N}_\GLnn$.
We integrate the above over $\abs{x}<c$ and separate the two cases $k$ even and $k$ odd.
When $k$ is even, the same argument as in the proof of Lemma \ref{lem: ellsigma} shows that the integration of the above expression over
$\abs{x}<c$ is $\ll(\log c)^{A_2}$ for some $A_2>0$.
Thus $h_k(x)\ll\abs{x}^{-1}(\log\abs{x})^{A_2}$.
When $k$ is odd, setting $i_k=\rkn-\frac{k-1}{2}$ we use the change of variable
$\bar u\mapsto \lambda''_k(x)\bar u \lambda''_k(x)^{-1}$ to bound the integral over $\abs{x}<c$ by
\[
\int_{\overline{N}_\GLnn^{\der}}\int_{\abs{x}<c}\int_{F^\rkn}
\phi_0(\udr(t_1,\dots,t_{i_k}+x,\dots,t_\rkn)\bar u)
(\sigma(\bar u)+\log_+\abs{x})^{A_1}\ \textstyle{\prod}_{i=1}^\rkn \abs{t_i}^{-\frac12}dt_i\,dx\,d\bar u.
\]
Separate the integration over $t_{i_k}$ into two parts: $\abs{t_{i_k}}<c$ and $\abs{t_{i_k}}\geq c$.
By a change of variables the first part is
\begin{multline*}
\int_{\overline{N}_\GLnn^{\der}}
\int_{\abs{x}<c}\int_{F^\rkn, \abs{t_{i_k}}<c}\abs{x}^{-\frac12}\big(\textstyle{\prod}_{i=1, i\not=i_k}^\rkn \abs{t_i}^{-\frac12}\big)
\phi_0(\udr(t_1,\dots,t_\rkn)\bar u)
\\(\sigma(\bar u)+\log_+\abs{x+t_i})^{A_1}\ \textstyle{\prod}_idt_i\,dx\,d\bar u
\ll c^{\frac12}(\log c)^{A_1}\\\int_{\overline{N}_\GLnn^{\der}}
\int_{F^\rkn, \abs{t_{i_k}}<c}\big(\textstyle{\prod}_{i=1, i\not=i_k}^\rkn \abs{t_i}^{-\frac12}\big)
\phi_0(\udr(t_1,\dots,t_\rkn)\bar u)
(\sigma(\bar u)+\log_+\abs{t_i})^{A_1}\ \textstyle{\prod}_idt_i\,d\bar u
\end{multline*}
which once again as in Lemma \ref{lem: ellsigma} is $\ll c^{\frac12}(\log c)^{A_3}$ for some $A_3>0$.
On the other hand, the contribution of $\abs{t_{i_k}}\geq c$ is
\begin{multline*}
\int_{\overline{N}_\GLnn^{\der}}\int_{\abs{x}<c}\int_{F^\rkn, \abs{t_{i_k}}\geq c}
\phi_0(\udr(t_1,\dots,t_\rkn)\bar u)
(\sigma(\bar u)+\log_+\abs{x})^{A_1}\textstyle{\prod}_{i=1}^\rkn \abs{t_i}^{-\frac12}dt_i\,dx\,d\bar u
\\\ll c(\log c)^{A_1}\int_{\overline{N}_\GLnn^{\der}}\int_{F^\rkn, \abs{t_{i_k}}\geq c}
\phi_0(\udr(t_1,\dots,t_\rkn)\bar u)
\sigma(\bar u)^{A_1}\textstyle{\prod}_{i=1}^\rkn \abs{t_i}^{-\frac12}dt_i\,d\bar u
\end{multline*}
which again by the argument in Lemma \ref{lem: ellsigma} is $\ll c^{\frac12}(\log c)^{A_4}$ for some $A_4>0$.
The lemma follows.
\end{proof}

We can now finish the proof of \eqref{eq: iterinv2}.
Assume that $L_W$ is right invariant under $K_0\in\csgr(G)$.
Let $f_k(g)=J_{k+1}(L_W;g)\wgt{g}^{\rkn-1-[\frac k2]}$. Then $f_k\in C(\tilde T_k\bs G)^{K_0}$. Thus
\begin{multline*}
\int_{\tilde T_k} J'_k(L_W;t)\wgt{t}^{\rkn-1-[\frac k2]}\ dt=\int_{\tilde T_k}\big(\regint_F f_k(\lambda_{2\rkn-k}(x)t)\psi^{-1}(x)\ dx\big)\ dt
\\=\int_{F}\big(\regint_F f_k(\lambda_{2\rkn-k}(x))\psi^{-1}(xt)\ dx\big)\ dt.
\end{multline*}
Here the regularized integration over $F$ is a stable integral which equals $\lim_{c\rightarrow\infty}\int_{\abs{\cdot}<c}$.
Let $h_k(x)=f_k(\lambda_{2\rkn-k}(x))$. Then  by Lemma~\ref{L: invinfprep}, $h_k$ satisfies the conditions in Lemma~\ref{L: genftinv}.
By Lemma~\ref{L: genftinv}, the above integral is well defined and equal to $h_k(0)=f_k(e)=J_{k+1}(L_W;e)$ as required.
\end{proof}

\section{Model transition for Langlands quotient: II} \label{sec: Etwon}

Motivated by the results of Ginzburg--Rallis--Sodury \cite{MR1671452} we will consider another model for the Langlands quotient.

Let $\radunip$ \index{$\radunip$, $\psi_{\radunip}$} be the unipotent radical of the standard parabolic subgroup of $G$ with Levi subgroup $\GL(2)\times\dots\times\GL(2)$
($\rkn$ times) and let $\psi_{\radunip}$ be a character of $\radunip$ of the form
\begin{equation}\label{eq: defpsiradunip}
\psi_{\radunip}(u)=\psi( (a_1 u_{1,4}+a_2 u_{2,3})+\ldots+(a_{2\rkn-3}u_{2\rkn-3,2\rkn}+a_{2\rkn-2}u_{2\rkn-2,2\rkn-1})+a_{2\rkn-1}u_{2\rkn-1,2\rkn+1})
\end{equation}
where all $a_i\not=0$. Note that $T$ acts transitively on the characters of this form.
The restriction $\psi_{\radunipsgr}$ of $\psi_{\radunip}$ to $\radunipsgr:=\radunip\cap \uder$ is given by
\[
\psi_{\radunipsgr}(u)=\psi( a_1 u_{1,4}+a_3 u_{3,6}+\ldots+a_{2\rkn-3}u_{2\rkn-3,2\rkn}+a_{2\rkn-1}u_{2\rkn-1,2\rkn+1}).
\]
Let $\centpsi$ be the stabilizer of $\psi_{\radunipsgr}$ under the conjugation action of $T$ on the characters of $\radunipsgr$.
Explicitly,
\[
\centpsi=\{\levi(\diag(t_1,\dots,t_{2\rkn})):t_{2i-1}=t_{2i+2},i=1,\dots,\rkn-1, t_{2\rkn-1}=t_{2\rkn}^{-1}\}.
\]
The character $\psi_{\radunip}$ is $(\centpsi,\radunip,\radunipsgr)$-generic and trivial on $H_\radunip$ (which is contained in $\radunipsgr$).
Using Lemma \ref{lem: genstab} we can therefore define
\begin{multline*}
\tranLV(L)(g):=\avg^{H,reg}_{\radunip,\psi_\radunip^{-1}}\circ L(g)=\regint_{H_\radunip\bs\radunip}L(ng)\psi_\radunip^{-1}(n)\ dn
\\=\int^{\st,\centpsi}_{\radunipsgr\bs\radunip}\big(\int_{H_\radunip\bs\radunipsgr}L(ung)\psi_{\radunipsgr}(u)^{-1}\,du\big)\psi_\radunip^{-1}(n)\,dn
\end{multline*}
\index{$\tranLV$} for any $L\in \tclass(H\bs G)$.

The following is a consequence of the results of Ginzburg--Rallis--Soudry and the uniqueness of $(N,\psi_N)$-model of $\LQ{\pi}$
(Proposition \ref{prop: Tadic}).

\begin{lemma}\label{L: uniqueEtwon}
If $\pi\in \Irr_{\meta,\temp}\GLnn$, then $\LQ{\pi}$ has a unique $\model^{(\radunip, \psi_\radunip)}$ model.
\end{lemma}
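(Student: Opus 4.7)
The plan is to combine the Ginzburg--Rallis--Soudry descent construction for existence with a root-exchange reduction to the $(N, \psi_N)$-model (Proposition \ref{prop: Tadic}) for uniqueness.

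For existence, I would apply the regularized integral $\tranLV$ to a non-zero element of $\model^{(H,1)}\LQ{\pi}$ (which exists by Theorem \ref{thm: MWL}). By Lemma \ref{lem: genstab} and Lemma \ref{L: stableprop}, this produces a $(\radunip, \psi_\radunip)$-equivariant smooth function on $G$, and hence a non-zero map into $\Ind_\radunip^G \psi_\radunip$ provided we verify non-vanishing. Non-vanishing can be checked by computing $\tranLV(\tranWL(W))(e)$ for a section $W \in \Ind(\WhitML(\pi), \frac12)$ with prescribed support on the big Bruhat cell; after unfolding, the value reduces to an explicit pairing of the $H_\GLnn$-invariant form on $\pi$ against a Whittaker datum, which is non-zero for an appropriate $W$. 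This is essentially the local computation underlying the descent of \cite[\S3]{MR1671452}.

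For uniqueness, I would show that $\dim \Hom_\radunip(\LQ{\pi}, \psi_\radunip) \le 1$ by an iterated root-exchange argument, using the setup of the first appendix (in the spirit of \cite[\S7.1]{MR2848523}). The positive roots lying in $N$ but not in $\radunip$ are exactly the simple roots inside the $\GL_2$-blocks of the Levi of $\radunip$; for each such root subgroup I would exchange it with the opposite negative root subgroup, extending the character accordingly and applying Fourier inversion to identify the space of equivariant functionals. Iterating over these roots yields an identification $\Hom_\radunip(\LQ{\pi}, \psi_\radunip) \cong \Hom_N(\LQ{\pi}, \psi_N')$ for some $T$-translate $\psi_N'$ of $\psi_N$, and the right-hand side is one-dimensional by Proposition \ref{prop: Tadic}.

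The main obstacle is that each root-exchange step requires the vanishing of a specific twisted Jacquet module of $\LQ{\pi}$ in order to preserve the dimension. Since $\LQ{\pi}$ is a Langlands quotient of $I(\pi, \frac12)$, its Jacquet modules are quotients of those of $I(\pi, \frac12)$ and can be analyzed via the geometric lemma in terms of Jacquet modules of the tempered representation $\pi$. This parallels (and can be bootstrapped from) the computation in the proof of Proposition \ref{prop: distequi}, where the Zelevinsky classification of tempered representations of $\GLnn$ was used to rule out unwanted subquotients; executing the bookkeeping through the full chain of exchanges is the most substantial part of the argument.
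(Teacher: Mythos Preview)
Your strategy --- identify $\Hom_\radunip(\LQ{\pi}, \psi_\radunip)$ with $\Hom_N(\LQ{\pi}, \psi_N)$ and then invoke Proposition \ref{prop: Tadic} --- is exactly the paper's approach. The paper, however, does not carry out this identification but cites it: \cite[Theorem 4.3]{MR1671452}, shown to apply to $\LQ{\pi}$ via \cite[Theorem 5.7 and \S5.8]{MR1954940}, gives $\dim \Hom_\radunip(\LQ{\pi}, \psi_\radunip) = \dim \Hom_N(\LQ{\pi}, \psi_N)$ directly. Once this is in hand your separate existence argument via $\tranLV$ is superfluous, since Proposition \ref{prop: Tadic} already gives dimension exactly one; the paper in fact postpones the non-vanishing of $\tranLV$ to Proposition \ref{prop: 14} precisely because it is not needed here.

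If you want to reprove the cited identification rather than quote it, your description of the exchanges needs correction. Both $\radunip$ and $N$ lie inside the upper-triangular unipotent, so one does not swap $N_{\alpha_{2i-1}}$ with its \emph{negative} root group. Rather, the root-exchange steps (in the sense of \cite[\S7.1]{MR2848523}) pair $N_{\alpha_{2i-1}}$ against another \emph{positive} root subgroup already in $\radunip$, via a commutator landing on a root where $\psi_\radunip$ is non-trivial (for instance $[N^{\GLnn}_{1,2}, N^{\GLnn}_{2,4}] \subset N^{\GLnn}_{1,4}$). You are correct that the chain of exchanges alone does not yield the identification for an arbitrary smooth representation and that structural input on $\LQ{\pi}$ is required --- this is what \cite[Theorem 5.7]{MR1954940} provides --- though the mechanism there is not the $H$-orbit analysis of Proposition \ref{prop: distequi} but rather an analysis of the action of the stabilizer of $\psi_\radunip$ in the Levi $\GL_2\times\dots\times\GL_2$ on the twisted Jacquet module.
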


\begin{proof}
\cite[Theorem 5.7]{MR1954940} and its proof apply to $\LQ{\pi}$.
Therefore, as explained in [ibid.,\S5.8], \cite[Theorem 4.3]{MR1671452} also applies to $\LQ{\pi}$.
Hence, the lemma follows from Proposition \ref{prop: Tadic}.
\end{proof}

We get immediately from Lemmas~\ref{lem: genstab}, \ref{L: stableprop} and Proposition~\ref{P: L}:
\begin{lemma}
$\tranLV$ defines a continuous functional on $\tclass(H\bs G)$.
When $\pi\in\Irr_{\temp,\meta}\GLnn$, $\tranLV$ defines an intertwining map from $\model^{(H,1)}\LQ{\pi}$ to $\model^{(\radunip, \psi_\radunip)}\LQ{\pi}$.
\end{lemma}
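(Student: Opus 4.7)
The plan is to deduce both assertions by applying Lemma~\ref{lem: genstab} and Lemma~\ref{L: stableprop} with the data $U_0=\radunip$, $U_1=\radunipsgr$, $U_2=H_\radunip$, $\T=\centpsi$, and $\psi_{U_0}=\psi_\radunip$. Most prerequisites are immediate from the setup: $\T\subset T$ normalizes both $\radunip$ and $\radunipsgr$; $\psi_\radunip$ is trivial on $H_\radunip\subset\radunipsgr$; and the explicit form \eqref{eq: defpsiradunip}, together with the description of $\centpsi$, shows that $\psi_{\radunipsgr}$ is $\T$-invariant and that $\psi_\radunip$ is $(\T,\radunip,\radunipsgr)$-generic (the orbit map $\kappa$ has open image by direct inspection). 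The only nontrivial input needed to invoke Lemma~\ref{lem: genstab} is that $L(\cdot g)\in L^1(H_\radunip\bs\radunipsgr)$ for every $L\in\tclass(H\bs G)$ and every $g\in G$.

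For this, I would first observe that $\radunipsgr=\radunip\cap\uder\subset\uder$ by construction, and then verify the auxiliary inclusion $H_N\subset\uder$: any $h\in H\cap N$ decomposes in the Siegel parabolic as $h=\levi(m)u$ with $m\in N_\GLnn$ commuting with $E=\diag(1,-1,\dots,1,-1)$, and since adjacent diagonal entries of $E$ have opposite signs, this forces $m_{i,i+1}=0$ for all $i$, i.e.\ $m\in N_\GLnn^{\der}$ and $h\in\uder$. Consequently $H_\uder=H_N$ and $H_\radunip=H\cap\radunipsgr$, so Remark~\ref{rem: partint} applied with $U=\uder$, $V=H_N$, $U'=\radunipsgr$, $V'=H_\radunip$ converts the defining integrability of $\tclass(H\bs G)$ into exactly the convergence above. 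Continuity of $\tranLV$ as a functional on $\tclass(H\bs G)^{K_0}$ then follows from \eqref{eq: stableconv}: taking $\phi=\vol(\T\cap K_0)^{-1}\mathbf{1}_{\T\cap K_0}$ one has $R(\phi)L=L$, and the right-hand side of \eqref{eq: stableconv} expresses $\tranLV(L)(g)$ as an absolutely convergent integral of $\varphi_{L(\cdot g)}$ against $\ft{\phi}$; since $\ft{\phi}$ is compactly supported in $\radunipsgr\bs\radunip$ in a way depending only on $K_0$, the value $\tranLV(L)(g)$ is dominated by a finite combination of the defining seminorms of $\tclass(H\bs G)^{K_0}$.

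For the intertwining assertion, Lemma~\ref{L: stableprop}(\ref{item: reginv}) gives $\tranLV(L)\in C^{\smth}(\radunip\bs G,\psi_\radunip)$, and right-translation invariance in the definition makes $\tranLV$ manifestly $G$-equivariant. By Proposition~\ref{P: L}, the space $\model^{(H,1)}\LQ{\pi}$, being generated by $G$-translates of functions of the form $\tranWL(W)\in\tclass^\fel(H\bs G)$, is contained in $\tclass(H\bs G)$; hence $\tranLV$ restricts to a $G$-homomorphism from $\model^{(H,1)}\LQ{\pi}\simeq\LQ{\pi}$ into $\Ind_\radunip^G\psi_\radunip$, whose image is either zero or, by the irreducibility of $\LQ{\pi}$ together with the uniqueness statement of Lemma~\ref{L: uniqueEtwon}, equal to $\model^{(\radunip,\psi_\radunip)}\LQ{\pi}$. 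The main obstacle in the argument is the bookkeeping required to fit everything into the setup of Lemma~\ref{lem: genstab}, crucially the inclusion $H_N\subset\uder$; once that is established the rest is formal. Nontriviality of $\tranLV$ on $\model^{(H,1)}\LQ{\pi}$ is logically separate from the present lemma and will be settled through the explicit compatibility relations with $\tranLW$ and $\tranWL$ worked out in Propositions~\ref{prop: 14} and~\ref{prop: main}.
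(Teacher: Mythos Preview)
Your proposal is correct and follows exactly the route the paper indicates (``We get immediately from Lemmas~\ref{lem: genstab}, \ref{L: stableprop} and Proposition~\ref{P: L}''): you simply spell out the verifications that the paper leaves implicit, most notably the use of Remark~\ref{rem: partint} (with $U=\uder$, $V=H_N$, $U'=\radunipsgr$) to obtain the required $L^1$ bound, and the continuity argument via \eqref{eq: stableconv}. One tiny wording issue: the genericity condition requires the map $\orb$ to be \emph{open}, not merely to have open image; in the present case this is already recorded in the text preceding the lemma, so nothing further is needed.
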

Later on we will show that $\tranLV$ is non-zero (Proposition \ref{prop: 14}).

We introduce some variants of the above Lemma.

Let $\subsubU$ (resp., $\subsubUbar$) denote the image under $\toU$ (resp., $\toUbar$) \index{$\subsubU$, $\subsubUbar$}
of the space of strictly upper triangular matrices in $\symspace_{2n}$.

Let $\alltriangle$ be the unipotent subgroup $\langle N_\Levi,\subsubU,\subsubUbar\rangle$ of $G$. \index{$\alltriangle$, $\charalltriangle$}
For instance, for $\rkn=2$, $\alltriangle$ consists of the matrices in $G$ of the form
\[
\left(\begin{smallmatrix}
1&*&*&*& &*&*&*\\
 &1&*&*& & &*&*\\
 & &1&*& & & &*\\
 & & &1& & & & \\
 &*&*&*&1&*&*&*\\
 & &*&*& &1&*&*\\
 & & &*& & &1&*\\
 & & & & & & &1\\
\end{smallmatrix}\right).
\]
Clearly $\alltriangle=(N\cap\alltriangle)\rtimes(\bar U\cap\alltriangle)$. Let $\charalltriangle$ be the character on $\alltriangle$ given by
\[
\charalltriangle(n\bar u)=\psi_N(n),\,n\in N\cap\alltriangle,\bar u\in \bar U\cap\alltriangle.
\]
To verify that $\charalltriangle$ is indeed a character
let $\alpha$ be the Weyl element in $G$ such that $\alpha_{i,2i-1}=1$, $i=1,\dots,2\rkn$,
$\alpha_{2\rkn+i,2i}=-1$, $i=1,\dots,\rkn$, $\alpha_{2\rkn+i,2i}=1$, $i=\rkn+1,\dots,2\rkn$.
Let $\kappa$ be the Weyl element in $\GLnn$ such that for $i\leq 2\rkn$, $\kappa_{i,i}=\kappa_{i+1,i+1}=1$ when $i\equiv 1$ mod $4$ and
$\kappa_{i,i+1}=\kappa_{i+1,i}=1$ if $i\equiv 3$ mod $4$.
Let $x=\alpha\levi(\kappa)$. Then $x\in H$, $x^{-1}\alltriangle x=\alpha^{-1}\alltriangle\alpha=\radunip$
and the function $\psi_{\radunip}$ on $\radunip$ given by
$\psi_{\radunip}(x^{-1}ux)=\charalltriangle(u)$, $u\in\alltriangle$ is a character of the form \eqref{eq: defpsiradunip}
with $a_i=\pm1$.
We can define
\[
\tranLA(L)(g):=\regint_{H_\alltriangle\bs\alltriangle}L(ug)\charalltriangle(u)^{-1}\ du:=\regint_{H_\radunip\bs\radunip}L(ux^{-1}g)\psi_{\radunip}(u)^{-1}\ du
\]
for any $L\in\tclass(H\bs G)$. We get:
\begin{corollary}
$\tranLA$ defines a continuous functional on $\tclass(H\bs G)$. When $\pi\in\Irr_{\temp,\meta}\GLnn$, $\LQ{\pi}$ has
a unique $\model^{(\alltriangle, \charalltriangle)}$ model. $\tranLA$ defines an intertwining map from
$\model^{(H,1)}\LQ{\pi}$ to $\model^{(\alltriangle, \charalltriangle)}\LQ{\pi}$.
\end{corollary}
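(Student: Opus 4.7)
The plan is to reduce all three assertions to the preceding lemma concerning $\tranLV$, exploiting the element $x\in H$ that the authors have just constructed, which conjugates $(\alltriangle,\charalltriangle)$ to $(\radunip,\psi_\radunip)$. The substance of the corollary is that nothing new has to be proved: one only needs to transport the result for $\tranLV$ along this inner automorphism.

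First I would verify the identity
\[
\tranLA(L)(g)=\tranLV(L)(x^{-1}g), \qquad L\in\tclass(H\bs G),\ g\in G.
\]
Starting from the stated definition, the change of variable $u\mapsto xu'x^{-1}$ between $H_\alltriangle\bs\alltriangle$ and $H_\radunip\bs\radunip$ transforms the integrand $L(ug)\charalltriangle(u)^{-1}$ into $L(xu'x^{-1}g)\psi_\radunip(u')^{-1}$ by the character identity $\charalltriangle(xu'x^{-1})=\psi_\radunip(u')$ built into the construction of $x$; and $L(xu'x^{-1}g)=L(u'x^{-1}g)$ by left $H$-invariance of $L$ and the fact that $x\in H$. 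The small point worth checking here is that the regularization transports under conjugation by $x$: the torus $\centpsi$ governing the $\st$-stable integral for $\tranLV$ is intertwined by $x$-conjugation with the corresponding torus for $\tranLA$, and the genericity hypothesis in Lemma~\ref{lem: genstab} is preserved. Haar measures can be chosen compatibly so that no Jacobian appears.

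Given this identity the three assertions follow in turn. Continuity of $\tranLA$ as a functional on $\tclass(H\bs G)^{K_0}$ is inherited from continuity of $\tranLV$, since right translation by the fixed element $x^{-1}$ maps $\tclass(H\bs G)^{K_0}$ continuously into $\tclass(H\bs G)^{x^{-1}K_0 x}$. Existence and uniqueness of the $\model^{(\alltriangle,\charalltriangle)}$ model of $\LQ\pi$ follow from the isomorphism
\[
\Hom_\alltriangle(\LQ\pi,\charalltriangle)\xrightarrow{\ \sim\ }\Hom_\radunip(\LQ\pi,\psi_\radunip),\qquad \ell\longmapsto \ell\circ\LQ\pi(x),
\]
the latter space being one-dimensional by Lemma~\ref{L: uniqueEtwon}; non-triviality is guaranteed once one knows (via the $\tranLV$ route) that some non-zero $(\radunip,\psi_\radunip)$-equivariant functional exists on $\LQ\pi$. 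Finally, that $\tranLA$ intertwines $\model^{(H,1)}\LQ\pi$ with $\model^{(\alltriangle,\charalltriangle)}\LQ\pi$ is an immediate consequence of the corresponding statement for $\tranLV$ combined with the displayed identity, since right translation by $x^{-1}$ is $G$-equivariant.

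The only real obstacle is notational: one has to keep straight the direction of conjugation $u\mapsto xux^{-1}$ versus $u\mapsto x^{-1}ux$ and the placement of $x$ inside versus outside $L$, and to check that the character identification matches the convention used in the formula defining $\tranLA$. Once this bookkeeping is in order, the corollary is a formal transport of structure.
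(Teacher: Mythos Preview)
Your proposal is correct and is exactly the argument the paper intends: the corollary is stated without proof because the very definition of $\tranLA$ given immediately before it already writes $\tranLA(L)(g)=\tranLV(L)(x^{-1}g)$, so continuity, uniqueness of the model, and the intertwining property all transport from the corresponding statements for $\tranLV$ via conjugation by $x\in H$. Your bookkeeping with the direction of conjugation and the left $H$-invariance of $L$ is right; the only remark is that non-triviality of the model is already part of Lemma~\ref{L: uniqueEtwon} (which asserts existence and uniqueness of $\model^{(\radunip,\psi_\radunip)}\LQ{\pi}$), so you do not need to appeal separately to non-vanishing of $\tranLV$, which the paper in fact defers to Proposition~\ref{prop: 14}.
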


Let
\[
\few:=\diag(I_\rkn,\sm{}{I_\rkn}{-I_\rkn}{},I_\rkn)\levi(\sm{}{I_\rkn}{w_\rkn}{})=
\left(\begin{smallmatrix}&I_\rkn&&\\&&&w_\rkn\\-w_\rkn&&&\\&&I_\rkn&\end{smallmatrix}\right).
\]
Then $\few$ normalizes $H$.
Define $\Etwon=\few \alltriangle \few^{-1}$ and $\charEtwon=\charalltriangle(\few^{-1}\cdot\few)$.
Then $\bar U\subset\Etwon\subset\bar P$. Write $\Etwon=\levi(\zigzag)\ltimes\bar U$ with $\zigzag\subset\GLnn$.
\index{$\Etwon$, $\charEtwon$} \index{$\zigzag$, $\charzigzag$}
For instance, when $\rkn=4$ $\zigzag$ consists of the matrices of the form
\[
\left(\begin{smallmatrix}
1&*&*&*&*&*&*& \\
 &1&*&*&*&*&&\\
 & &1&*&*&&&\\
 & &&1&&&&\\
 &&&&1&&&\\
 &&&*&*&1&&\\
 &&*&*&*&*&1&\\
 &*&*&*&*&*&*&1
\end{smallmatrix}\right).
\]
Explicitly, $\charEtwon$ is given by
\begin{equation} \label{eq:charEtwon}
\charEtwon(\levi(m)\bar u)=\charzigzag(m)\newchar^{-1}(\bar u),\ \ m\in\zigzag, \bar u\in\bar U.
\end{equation}
where $\newchar(\bar u)=\psi(\bar u_{2\rkn+1,1})$ and
\begin{equation}\label{eq: defpsie}
\charzigzag(m)=\psi(m_{1,2}+\ldots+m_{\rkn-1,\rkn}-m_{\rkn+2,\rkn+1}-\ldots-m_{2\rkn,2\rkn-1}),\ \ m\in\zigzag.
\end{equation}
We can define
\begin{equation}\label{eq: defetwonreg}
\tranLE(L)(g):=\regint_{H_\Etwon\bs\Etwon}L(ug)\charEtwon(u)^{-1}\ du=
\regint_{H_\alltriangle\bs\alltriangle}L(\few u \few^{-1}g)\charalltriangle(u)^{-1}\ du
\end{equation}
\index{$\tranLE$} for any $L\in\tclass(H\bs G)$.
\begin{corollary} \label{cor: uniqueEtwon}
$\tranLE$ defines a continuous functional on $\tclass(H\bs G)$. When $\pi\in\Irr_{\temp,\meta}\GLnn$, $\LQ{\pi}$ has
a unique $\model^{(\Etwon, \charEtwon)}$ model. $\tranLE$ defines an intertwining map from
$\model^{(H,1)}\LQ{\pi}$ to $\model^{(\Etwon, \charEtwon)}\LQ{\pi}$.
\end{corollary}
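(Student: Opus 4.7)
The corollary is the $\few$-conjugate of the preceding statement for $\tranLA$, and this is the guiding principle of the proof. Since $\Etwon=\few\alltriangle\few^{-1}$, $\charEtwon=\charalltriangle(\few^{-1}\cdot\few)$ and $\few\in N_G(H)$, the element $y:=\few x$ (with $x\in H$ the element used to define $\tranLA$) satisfies $y\in N_G(H)$ and $\Etwon=y\radunip y^{-1}$, $\charEtwon(yny^{-1})=\psi_\radunip(n)$ for $n\in\radunip$. I will establish the three assertions in turn.

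For the continuity of $\tranLE$ on $\tclass(H\bs G)$, I plan to apply Lemma~\ref{lem: genstab} directly with $U_0=\Etwon$, $U_1=\Etwon^{\der}$, $U_2=H_\Etwon$ and $\T=y\centpsi y^{-1}$. The $\T$-genericity of $\charEtwon$, the inclusions $H_\Etwon\subset\Etwon^{\der}\subset\Etwon$, and the triviality of $\charEtwon$ on $H_\Etwon$ all transport from the corresponding properties of $(\radunip,\psi_\radunip,\centpsi)$ under the conjugation by $y$. The one genuinely new point to check is the integrability hypothesis of Lemma~\ref{lem: genstab}:
\[
\int_{H_\Etwon\bs\Etwon^{\der}}\abs{L(ug)}\,du<\infty\qquad\text{for all }g\in G,\ L\in\tclass(H\bs G).
\]
This is the main obstacle: since $\few$ does not preserve $\uder$, the $\tclass$-condition does not transfer naively through conjugation. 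I would handle it by the root-exchange technique of Appendix~A, using the explicit block structure of $\Etwon=\levi(\zigzag)\ltimes\bar U$ to reduce the integral step-by-step to an integral over the $\uder$-part of $\Etwon^{\der}$ (which is controlled by Remark~\ref{rem: partint} applied to the hypothesis $L\in\tclass(H\bs G)$), paying the Fourier-dual price on $\bar U\cap\Etwon^{\der}$ via the character $\newchar$. Continuity then follows from the continuity statement in Lemma~\ref{lem: genstab} (cf.\ Lemma~\ref{L: stablemodel}).

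For the uniqueness of the $\model^{(\Etwon,\charEtwon)}$-model, I would use the bijection
\[
\Hom_\Etwon(\LQ\pi,\charEtwon)\longrightarrow\Hom_\radunip(\LQ\pi,\psi_\radunip),\qquad \ell\longmapsto\ell\circ\LQ\pi(y),
\]
whose well-definedness follows from $y\in N_G(H)$ and $\charEtwon(yny^{-1})=\psi_\radunip(n)$. Since Lemma~\ref{L: uniqueEtwon} asserts that the right-hand side is one-dimensional, so is the left-hand side.

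Finally, the intertwining property is immediate: by Lemma~\ref{L: stableprop}(\ref{item: reginv}) applied to the data $(\Etwon,\charEtwon,\T)$, the function $\tranLE(L)$ is $G$-smooth and left $(\Etwon,\charEtwon)$-equivariant. When $L\in\model^{(H,1)}\LQ\pi$, the map $g\mapsto \tranLE(L)(g)$ therefore lies in the unique $(\Etwon,\charEtwon)$-model of $\LQ\pi$ constructed above, so $\tranLE$ intertwines $\model^{(H,1)}\LQ\pi$ with $\model^{(\Etwon,\charEtwon)}\LQ\pi$ as asserted. (Non-vanishing of $\tranLE$ on this space is not required here; it will be established later via Propositions~\ref{prop: 14} and~\ref{prop: main}.)
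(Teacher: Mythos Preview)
Your approach — transport everything from the $(\radunip,\psi_\radunip)$ (equivalently $(\alltriangle,\charalltriangle)$) case by conjugation via $y=\few x\in N_G(H)$ — is precisely the paper's implicit argument; the corollary is stated without proof. Your handling of uniqueness (via the bijection $\ell\mapsto\ell\circ\LQ\pi(y)$) and of the intertwining property is correct and matches this.

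Two remarks on the continuity claim. First, a small correction: the paper's $U_1$ in the regularization of $\tranLV$ is $\radunipsgr:=\radunip\cap\uder$, not $\radunip^{\der}$; since both $U$ and $N_M^{\der}$ lie in $\radunip$ one has in fact $\radunipsgr=\uder$, so the natural conjugate is $U_1=y\,\uder\,y^{-1}$ rather than $\Etwon^{\der}$ (these differ, as $\dim(\radunip/\radunip^{\der})\ge 2\rkn-1>\rkn-1=\dim(\radunip/\uder)$). Second, and more substantively, you are right that the $L^1$-hypothesis of Lemma~\ref{lem: genstab} does not transfer for free: unwinding the definition gives exactly the requirement $\int_{H_N\bs\uder}|L(\few vg')|\,dv<\infty$, i.e.\ that $L(\few\cdot)\in\tclass(H\bs G)$, and $\few$ does not normalize $\uder$. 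The paper does not spell this out. For the actual applications the difficulty disappears: every $L$ to which $\tranLE$ is applied in the sequel lies in $\tclass^{\fel}(H\bs G)$ (Proposition~\ref{P: L}), and then $L(\few\cdot)=\fel^{\rkn}L\in\tclass(H\bs G)$ by the observation preceding \eqref{eq: relationetwon}, so the corollary for $\tranLA$ applies verbatim. Your root-exchange sketch aiming at general $L\in\tclass(H\bs G)$ is a reasonable strategy, but it goes beyond what the paper supplies and would need to be carried out in detail.
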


Note that $\few\in H$ if and only if $\rkn$ is even.
Thus for $L\in\tclass^{\pm1}(H\bs G)$, $L(\few g)=(\pm1)^\rkn L(g)$, and
\begin{equation}\label{eq: relationetwon}
(\pm1)^\rkn\regint_{H_\Etwon\bs\Etwon}L(u\few)\charEtwon(u)^{-1}\ du=
\regint_{H_\alltriangle\bs\alltriangle}L( u )\charalltriangle(u)^{-1}\ du.
\end{equation}

\section{A family of integral transforms} \label{sec: analytic}

Our next goal is to describe explicit an model transition between $\model^{N,\psi_N}\LQ{\pi}$ and
$\model^{(\Etwon, \charEtwon)}\LQ{\pi}$ which will coincide with $\tranLE\circ\tranMWL$ but which is given
more directly. Basically we want to integrate over $\Etwon\cap N\bs\Etwon$, i.e. over $(\zigzag\cap\overline{N_\GLnn})\ltimes\bar U$.
This is a little more involved than the previous cases because of convergence issues.

We fix $K_0\in\csgr(G)$ and $Z_{\Levi}'=\levi(Z_{\GLnn}')\in\csgr(Z_\Levi)$ throughout. We will define
a family of integral transforms from $C^{\smth}(Z_{\Levi}'N\bs G,\psi_N)$ to $C^{\smth}(\Etwon\bs G, \charEtwon)$.
Eventually we will show in \S\ref{sec: prf of prop: main} that these transforms all equal to $\tranLE\circ\tranMWL$ when restricted to $\model^{N,\psi_N}\LQ{\pi}$.

\subsection{Regularization of an integration over $\bar U$}\label{sec: regU}

Let $\bar U^{\dag}\subset \bar U$ be the subgroup consisting of $\toUbar(v)$ where the last row of $v\in\symspace_{2\rkn}$
(and hence also the first column) is $0$. \index{$\bar U^{\dag}$}
Let $\bar U_{1,1}$ be the one-parameter subgroup $\{\toUbar(\lambda\one^{\symspace_{2\rkn}}_{1,1}):\lambda\in F\}$ of $\bar U$.
\index{$\bar U_{1,1}$}

The character $\newchar$ is clearly $(Z_\Levi,\bar U^{\dag}\bar U_{1,1},\bar U^{\dag})$-generic
and trivial on $\bar U^{\dag}$.
Using Lemma \ref{lem: genstab} we define
\[
\regint_{\bar U^{\dag}\bar U_{1,1}}f(\bar v)\newchar(\bar v)\,d\bar v=
\int_{\bar U^{\dag}\bs (\bar U^{\dag}\bar U_{1,1})}^{\st,Z_\Levi}\big(\int_{\bar U^{\dag}}f(\bar u\bar v)\,d\bar u\big)\newchar(\bar v)\,d\bar v
\]
for any $f\in C^{\smth}(Z_{\Levi}'\bs G)$ such that the integral $\int_{\bar U^{\dag}}\abs{f(\bar ug)}\ d\bar u$ converges
for all $g\in G$.
If furthermore, the integral
\begin{equation} \label{def: regint}
\int_{(\bar U_{1,1}\bar U^{\dag})\bs \bar U}\big(\regint_{\bar U^{\dag}\bar U_{1,1}}f(\bar v\bar u)\newchar(\bar v)\,d\bar v\big)
\newchar(\bar u)\ d\bar u
\end{equation}
converges, we denote it by $\regint_{\bar U} f(\bar v)\newchar(\bar v)\,d\bar v$.

It is clear that if $\regint_{\bar U} f(\bar v)\newchar(\bar v)\,d\bar v$ is defined then for any $\bar u\in\bar U$
\begin{equation} \label{eq: regUtrans}
\regint_{\bar U} f(\bar v\bar u)\newchar(\bar v)\,d\bar v\text{ is defined and equal to }\newchar^{-1}(\bar u)\regint_{\bar U} f(\bar v)\newchar(\bar v)\,d\bar v.
\end{equation}
Let $\GLnn^\fix$ be the stabilizer of the character $\newchar$ of $\bar U$ in $\GLnn$, i.e.:
\begin{equation}\label{eq: defM1}
\GLnn^\fix=\{m\in \GLnn: \newchar(\levi(m)\bar v\levi(m)^{-1})=\newchar(\bar v)\text{  for all }\bar v\in\bar U\}.
\end{equation}
Explicitly, $\GLnn^\fix$ consists of the matrices in $\GLnn$ whose first (resp. last) column is $(z,0,\dots,0)^t$ (resp. $(0,\dots,0,z^{-1})^t$)
for some $z\in F^*$. Let $B_\GLnn^\fix=B_\GLnn\cap\GLnn^\fix$. \index{$\GLnn^\fix$, $B_\GLnn^\fix$}
Note that $B_\GLnn$ normalizes $\bar U^{\dag}$ and $\bar U_{1,1}\bar U^{\dag}$.
By Remark \ref{rem: aut} we have
\begin{equation} \label{eq: conjborelevi}
\regint_{\bar U} f(\levi(m)\bar v\levi(m)^{-1})\newchar(\bar v)\,d\bar v=\modulus_P(\levi(m))\regint_{\bar U} f(\bar v)\newchar(\bar v)\,d\bar v
\end{equation}
for any $m\in B_\GLnn^\fix$, provided that $\regint_{\bar U} f(\bar v)\newchar(\bar v)\,d\bar v$ is well defined.

\begin{lemma}\label{L: Ureg}
Let $W\in C(Z_{\Levi}'N\bs G,\psi_N)^{K_0}$ and let $\Omega_{T_\GLnn}$ be a compact subset of $T_\GLnn$.
Then for any $t\in\Omega_{T_\GLnn}$
\[
\regint_{\bar U}W(\levi(t)\bar v)\newchar(\bar v)\,d \bar v
\]
is defined and
\[
\regint_{\bar U}W(\levi(t)\bar v)\newchar(\bar v)\,d \bar v=
\int_{\Omega_{\bar U}}W(\levi(t)\bar v)\newchar(\bar v)\,d \bar v
\]
for some $\Omega_{\bar U}\in\csgr(\bar U)$ depending only on $K_0$, $\Omega_{T_\GLnn}$ and $Z_{\Levi}'$.
\end{lemma}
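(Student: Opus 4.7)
The plan is to prove both the existence of the regularized integral and the identification with an ordinary integral over $\Omega_{\bar U}\in\csgr(\bar U)$ by analyzing the nested expression
\[
\int_{(\bar U^{\dag}\bar U_{1,1})\bs\bar U}\Bigl(\regint_{\bar U^{\dag}\bar U_{1,1}}W(\levi(t)\bar v\bar u)\newchar(\bar v)\,d\bar v\Bigr)\newchar(\bar u)\,d\bar u
\]
layer by layer along the filtration $\bar U^{\dag}\subset\bar U^{\dag}\bar U_{1,1}\subset\bar U$. At each layer the main tool is Lemma~\ref{L: elemC}, used to show that each integrand is supported in a compact-open subgroup determined by $K_0$, $\Omega_{T_\GLnn}$ and $Z_{\Levi}'$.

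First I would reduce to the case $t=e$ by the change of variable $\bar v'=\levi(t)\bar v\levi(t)^{-1}$: one has $W(\levi(t)\bar v)=W'(\bar v')$ with $W'(g):=W(g\levi(t))$ right $\levi(t)^{-1}K_0\levi(t)$-invariant and still left $(Z_{\Levi}' N,\psi_N)$-equivariant, and $\newchar$ pulls back to a character of the same type under conjugation by $\levi(t)^{-1}$. Since $t$ ranges over the compact set $\Omega_{T_\GLnn}$, the conjugated compact open subgroups and characters stay in bounded families, so any uniform compact-support conclusion obtained for such a $W'$ will yield a single $\Omega_{\bar U}$ valid for all $t$.

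Second, I would filter $\bar U^{\dag}$ into one-parameter root subgroups $C_1,\ldots,C_m$ (in an order refining the normal series inside $\bar U$), and at each stage apply Lemma~\ref{L: elemC} with $C=C_i$ and $D_i$ a complementary root subgroup chosen so that the commutator $[C_i,D_i]$ lies in $N$ and evaluates nontrivially under $\psi_N$. Then left $\psi_N$-equivariance of $W'$ furnishes the bilinear pairing $\sprod{\iota(c)}d$ of Lemma~\ref{L: elemC}, and right $K_0$-invariance (or the $K_0$-conjugate thereof) supplies $\chi$, so that $W'|_{C_i}$ is compactly supported inside a fixed compact-open subgroup. Iterating, $\int_{\bar U^{\dag}}W'(\bar u\bar v)\,d\bar u$ becomes an ordinary integral over a compact-open subgroup of $\bar U^{\dag}$. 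The same mechanism, now with $C=\bar U_{1,1}$ and $D$ chosen so the resulting pairing matches $\newchar$, shows the function on $\bar U^{\dag}\bar U_{1,1}/\bar U^{\dag}\cong F$ has compactly supported $\newchar$-Fourier transform; hence the stable integral $\regint_{\bar U^{\dag}\bar U_{1,1}}$ collapses to an ordinary integral over a compact-open subgroup of $\bar U^{\dag}\bar U_{1,1}$. A final iteration for the $2\rkn-1$ remaining root-subgroup directions in $(\bar U^{\dag}\bar U_{1,1})\bs\bar U$ yields compact support of the outermost integrand. The product of these compact-open subgroups defines $\Omega_{\bar U}\in\csgr(\bar U)$.

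The main obstacle will be the middle step: selecting the filtration of $\bar U^{\dag}$ and the partner subgroups $D_i$ so that the residual pairing at the $\bar U_{1,1}$-layer exactly matches $\newchar$ (so that the stable integral actually becomes an ordinary integral), and ensuring that all the compact open subgroups produced by the successive applications of Lemma~\ref{L: elemC} deform in a controlled way so as to be bounded uniformly as $t$ varies over $\Omega_{T_\GLnn}$.
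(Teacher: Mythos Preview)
Your layered strategy matches the paper's, and the outer layer is right: for the $2\rkn-1$ directions in $(\bar U_{1,1}\bar U^{\dag})\bs\bar U$ the paper takes $D=\{\beta(v')=\levi\sm{I_{2\rkn-1}}{v'}{}{1}:v'\in F^{2\rkn-1}\}\subset N_\Levi$, observes that $[\bar u,\beta(v')]\in\bar U_{1,1}\bar U^{\dag}$ with $\newchar([\bar u,\beta(v')])$ giving a nondegenerate pairing, and applies Lemma~\ref{L: elemC} to the already-integrated function $\phi$ via its $(\bar U_{1,1}\bar U^{\dag},\newchar^{-1})$- and $(N_\Levi,\psi_{N_\Levi}(\levi(t)\cdot\levi(t)^{-1}))$-equivariance.

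The gap is in your first layer, exactly where you flagged the obstacle. You want, for each one-parameter $C_i\subset\bar U^{\dag}$, a partner $D_i$ with $[C_i,D_i]\subset N$ and $\psi_N$ nontrivial on the commutator. This cannot be arranged: if $D_i\subset N_\Levi$ then $[C_i,D_i]\subset\bar U$ (since $\Levi$ normalizes $\bar U$), where $\psi_N$ sees nothing; if $D_i\subset U$ then for $c=\toUbar(X)$, $d=\toU(Y)$ the commutator $[c,d]$ has $\bar U$-block $-XYX$, so $[c,d]\notin N$ as soon as $XYX\ne0$, and no filtration repairs this. The paper uses a different mechanism here: write the Iwasawa decomposition $\bar v=nak$ with $a=\levi(\diag(a_1,\dots,a_{2\rkn}))$, observe $|a_1|=1$ because the first column of $X$ vanishes, and then invoke the standard support property of smooth $\psi_{N_\GLnn}$-equivariant functions on $\GLnn$ to get $|t_ia_i|\gg_{K_0}|t_1|$ for all $i$, which bounds the entries of $X$ uniformly for $t\in\Omega_{T_\GLnn}$. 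This is the missing idea. (For the middle layer: the stable integral over $\bar U_{1,1}$ collapses to an integral over a compact open set directly from Lemma~\ref{lem: genstab} together with the left $Z_{\Levi}'$-invariance of $W$; the compact domain is determined by $Z_{\Levi}'$ and $K_0$, and no separate Fourier-support argument is needed.)
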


\begin{proof}
We first show for any $g\in G$ the function $\bar v\mapsto W(\levi(t)\bar{v}g)$ on $\bar U^{\dag}$ is compactly supported
with the support contained in a compact set depending on $gK_0g^{-1}$ and $\Omega_{T_\GLnn}$ (for $t\in\Omega_{T_\GLnn}$).

Indeed, suppose that $W(\levi(t)\bar{v}g)\ne0$ and let $\bar{v}=nak$ be the Iwasawa decomposition, with $a=\levi(\diag(a_1,\ldots,a_{2\rkn}))$.
Then $\abs{a_1}=1$ since $\bar v\in \bar U^{\dag}$. From the support condition of Whittaker functions (on $\GLnn$),
we get that $\abs{t_ia_i}^{-1}\ll_{gK_0g^{-1}}\abs{t_1}^{-1}$, $i=2,3,\dots,2\rkn$.
On the other hand, writing $\bar{v}=\toUbar(x)$, the entries of $x$ are bounded by $\prod_{i=1}^{2\rkn}\abs{a_i}^{-1}$.

Thus
\[
\phi(g):=\regint_{\bar U^{\dag}\bar U_{1,1}}W(\levi(t)\bar{v}g)\newchar(\bar v)\,d\bar v
\]
is well defined. Moreover, we can write
\[
\phi(g)=\int_{\Omega'}W(\levi(t)\bar{v}g)\newchar(\bar v)\,d\bar v
\]
where $\Omega'\in\csgr(\bar U^{\dag}\bar U_{1,1})$ depends only on $\Omega_{T_\GLnn}$, $Z_{\Levi}'$ and $gK_0g^{-1}$.

We claim that $\phi$ is compactly supported on $(\bar U_{1,1}\bar U^{\dag})\bs \bar U\cong F^{2\rkn-1}$. %
Let $v'$ be a column vector in $F^{2\rkn-1}$ and let $\beta(v')=\levi(\sm{I_{2\rkn-1}}{v'}{}{1})$.
For $\bar u\in \bar U$ we have $[\bar u,\beta(v')]\in \bar U_{1,1}\bar U^{\dag}$ and
\[
\newchar([\bar u,\beta(v')])=\psi_{v'}(\bar u):=\psi(\bar u_{4\rkn,1}v'_1+\dots+\bar u_{4\rkn,2\rkn-1}v'_{2\rkn-1}).
\]
Note that $N_{\Levi}$ stabilizes the pair $(\bar U_{1,1}\bar U^{\dag},\newchar\rest_{\bar U_{1,1}\bar U^{\dag}})$ under conjugation. Thus $\phi(g)$ is left $(N_{\Levi},\psi_{N_{\Levi}}(\levi(t)\cdot\levi(t)^{-1}))$-equivariant.
Clearly $\phi(g)$ is also left $(\bar U_{1,1}\bar U^{\dag},\newchar^{-1})$-equivariant. We get
\[
\phi(\bar u \beta(v'))=\psi_{v'}(\bar u)\phi(\beta(v')\bar u)
=\psi_{N_{\Levi}}(\levi(t)\beta(v')\levi(t)^{-1})\psi_{v'}(\bar u)\phi(\bar u).
\]
Let $C$ be the image under $\toUbar$ of the subspace of $\symspace_{2\rkn}$ consisting of matrices $v$ such that
$v_{i,j}=0$ unless either $i>j=1$ or $2\rkn=i>j$. Thus $\bar U=\bar U_{1,1}\times\bar U^{\dag}\times C$ and
by Lemma~\ref{L: elemC} we get that $\phi\rest_C$ is compactly supported and the support is bounded in terms of $K_0$ and $\Omega_{T_\GLnn}$.
The lemma follows.
\end{proof}

\begin{corollary}[of proof] \label{cor: suppW}
The restriction of any $W\in C^{\smth}(N\bs G,\psi_N)$ to $\bar U^{\dag}$ is compactly supported.
Moreover, for any $K_0\in\csgr(G)$ there exists $\bar U^{\dag}_c\in\csgr(\bar U^{\dag})$ such that
the support of $W\rest_{\bar U^{\dag}}$ is contained in $\bar U^{\dag}_c$ for any $W\in C(N\bs G,\psi_N)^{K_0}$.
\end{corollary}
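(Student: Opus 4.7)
The plan is to extract this corollary from the opening paragraphs of the proof of Lemma~\ref{L: Ureg}, specialized to the case $t=e$ and $g=e$. Indeed, that proof begins by showing that for $W\in C(Z_\Levi' N\bs G,\psi_N)^{K_0}$, the map $\bar v\mapsto W(\levi(t)\bar v g)$ on $\bar U^{\dag}$ is compactly supported, with the support contained in a compact set depending only on $K_0$, $\Omega_{T_\GLnn}$ and $gK_0g^{-1}$. A quick inspection of that argument reveals that the left $Z_\Levi'$-invariance is never used in this step (it enters only later, for the stable-integral regularization on $\bar U^{\dag}\bar U_{1,1}$). So the plan is simply to isolate and restate that step for arbitrary $W\in C(N\bs G,\psi_N)^{K_0}$.

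Concretely, I would proceed as follows. Given $\bar v\in\bar U^{\dag}$ with $W(\bar v)\ne 0$, write the Iwasawa decomposition $\bar v=nak$ in $G$ with $a=\levi(\diag(a_1,\dots,a_{2\rkn}))$, $n\in N$, $k\in K$. First, the defining condition of $\bar U^{\dag}$ (vanishing of the first column of $\toUbar^{-1}(\bar v)$) makes the first column of $\bar v\in\GL_{4\rkn}$ equal to the standard basis vector $e_1$; comparing with the first column of $nak$ in the usual norm computation forces $\abs{a_1}=1$. Second, the $(N,\psi_N)$-equivariance of $W$ combined with right $K_0$-invariance yields a $K_0$-dependent upper bound on $\abs{a_i/a_{i+1}}$, $i=1,\dots,2\rkn-1$, by the standard simple-root argument: for $y\in\epsilon^m\OO$ with $\epsilon^m\mathfrak M_{\OO}\subset K_0$, the identity $W(\levi(a)(I+y\one_{i,i+1}))=\psi(ya_i/a_{i+1})W(\levi(a))$ and the equality of its two sides force $a_i/a_{i+1}\in\epsilon^{-m}\mathfrak{d}^{-1}$. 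Iterating these two pieces of information starting from $\abs{a_1}=1$ gives a positive lower bound (depending only on $K_0$) on each $\abs{a_i}$.

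Finally, invoking the bound recorded in the proof of Lemma~\ref{L: Ureg} that the entries of $\toUbar^{-1}(\bar v)$ are dominated by $\prod_i\abs{a_i}^{-1}$, we conclude that $\bar v$ lies in a bounded, hence compact, subset $\bar U^{\dag}_c\subset\bar U^{\dag}$ depending only on $K_0$. Since the whole argument is uniform in $W\in C(N\bs G,\psi_N)^{K_0}$, both assertions of the corollary follow at once.

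The only real obstacle is the verification that $\bar v\in\bar U^{\dag}$ forces $\abs{a_1}=1$; this is a clean statement but requires a careful comparison of the first column of $\bar v$ with that of $nak$, using the fact that $n$ fixes $e_1$ and $a e_1=a_1 e_1$. Everything else in the plan is a routine reduction to the argument already carried out in the proof of Lemma~\ref{L: Ureg}.
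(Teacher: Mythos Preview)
Your proposal is correct and follows essentially the same approach as the paper: the corollary is labeled ``of proof'' precisely because it is obtained by specializing the first paragraph of the proof of Lemma~\ref{L: Ureg} to $t=e$, $g=e$, and you have reproduced that argument (Iwasawa decomposition, $\abs{a_1}=1$ from the $\bar U^{\dag}$ condition, simple-root support bounds on $\abs{a_i}$, then bounding the entries of $\toUbar^{-1}(\bar v)$ by $\prod_i\abs{a_i}^{-1}$). Your observation that the $Z_\Levi'$-invariance plays no role in this step is exactly the point.
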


\subsection{}
Following Lemma \ref{L: Ureg}, for any $W\in C^{\smth}(Z_{\Levi}'N\bs G,\psi_{N})$, $t\in T_{\GLnn}$ and $m\in \GLnn$ define
\begin{equation}\label{eq: regu}
\FW_W(t,m):=
\regint_{\bar U}W\left(\levi(t)\bar v\levi(m) \right)\newchar(\bar v)\ d\bar v.
\end{equation}
\index{$\FW_W$}

We observe some properties of $\FW_W(t,m)$.
Let $N_\GLnn^\fix=N_\GLnn\cap \GLnn^\fix$ and $T_\GLnn^\fix=T_\GLnn\cap\GLnn^\fix$. \index{$N_\GLnn^\fix$, $T_\GLnn^\fix$}
\begin{lemma}\label{L: FWproperty}
For any $W\in C^{\smth}(Z_{\Levi}'N\bs G,\psi_N)$ and any $m\in\GLnn$, $t\in T_\GLnn$ we have:
\begin{enumerate}
\item $\FW_W(t,nm)=\psi_{N_\GLnn}(tnt^{-1})\FW_W(t,m)$ for any $n\in N_\GLnn^\fix$.
\item $\FW_W(tt',m)=\abs{\det t'}^{2\rkn+1}\FW_W(t,t'm)$ for any $t'\in T_\GLnn^\fix$.
\item $\FW_{W_{\bar u}}(t,m)=\FW_W(t,m)\newchar(\bar u)^{-1}$ if $m\in\GLnn^\fix$, $\bar u\in \bar U$ and $W_{\bar u}(\cdot)=W(\cdot \bar u)$.
\end{enumerate}
\end{lemma}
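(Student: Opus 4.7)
The strategy is to prove each of the three identities by unfolding the definition \eqref{eq: regu} of $\FW_W$ and performing an appropriate change of variables inside the regularized integral, using Remark \ref{rem: aut} and the translation formula \eqref{eq: regUtrans}. Lemma \ref{L: Ureg} already guarantees that all regularized integrals in question are defined, so no fresh convergence input is needed.

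For part (1), given $n\in N_\GLnn^\fix$, I would expand
\[
\FW_W(t,nm)=\regint_{\bar U}W\bigl(\levi(t)\bar v\levi(n)\levi(m)\bigr)\newchar(\bar v)\,d\bar v
\]
and perform the change of variables $\bar v\mapsto \levi(n)\bar v\levi(n)^{-1}$. The element $\levi(n)$ is unipotent, so conjugation preserves the Haar measure on $\bar U$; and $n\in N_\GLnn^\fix\subset\GLnn^\fix$ means $\newchar$ is also preserved. By Remark \ref{rem: aut} the value of the regularized integral is unchanged, and the integrand becomes $W(\levi(t)\levi(n)\bar v\levi(m))=W(\levi(tnt^{-1})\levi(t)\bar v\levi(m))$. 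Invoking the $(N,\psi_N)$-equivariance of $W$ on the left, applied to $\levi(tnt^{-1})\in N_\Levi\subset N$, produces the desired factor $\psi_{N_\GLnn}(tnt^{-1})$.

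For part (2), for $t'\in T_\GLnn^\fix$ the same conjugation trick by $\levi(t')$ applies, but now the change of variables $\bar v\mapsto \levi(t')\bar v\levi(t')^{-1}$ scales the Haar measure on $\bar U$ by $\modulus_{\bar P}(\levi(t'))=\abs{\det t'}^{-(2\rkn+1)}$, so by Remark \ref{rem: aut} we pick up a compensating factor $\abs{\det t'}^{2\rkn+1}$. The hypothesis $t'\in\GLnn^\fix$ ensures $\newchar$ is preserved, and $t'$ commutes with the central torus $Z_\Levi$ appearing in the stable integral inside \eqref{def: regint}; after moving $\levi(t')$ past $\bar v$ and recombining $\levi(t')\levi(m)=\levi(t'm)$, one obtains $\abs{\det t'}^{2\rkn+1}\FW_W(t,t'm)$.

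For part (3), when $m\in\GLnn^\fix$ the element $\bar u':=\levi(m)\bar u\levi(m)^{-1}$ lies in $\bar U$ and satisfies $\newchar(\bar u')=\newchar(\bar u)$, directly from the definition \eqref{eq: defM1}. Writing
\[
\FW_{W_{\bar u}}(t,m)=\regint_{\bar U}W\bigl(\levi(t)\bar v\bar u'\levi(m)\bigr)\newchar(\bar v)\,d\bar v
\]
and applying the translation identity \eqref{eq: regUtrans} with $\bar u_0=\bar u'$ yields the factor $\newchar(\bar u')^{-1}=\newchar(\bar u)^{-1}$. The only conceptual point to verify is that Remark \ref{rem: aut} and \eqref{eq: regUtrans} truly apply to the iterated regularization \eqref{def: regint} (a stable integral over $\bar U^{\dag}\bar U_{1,1}/\bar U^{\dag}$ followed by an ordinary integral over $(\bar U_{1,1}\bar U^{\dag})\bs\bar U$); this is straightforward because the conjugations and translations at hand all respect the filtration $\bar U^{\dag}\subset\bar U^{\dag}\bar U_{1,1}\subset\bar U$ and commute with the stabilizing torus $Z_\Levi$, so the manipulation reduces to one step at the inner stable integral and a routine change of variables at the outer one.
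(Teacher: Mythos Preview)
Your proposal is correct and follows essentially the same approach as the paper: the paper packages the conjugation step for parts (1) and (2) into the single identity \eqref{eq: conjborelevi} (valid for any $m\in B_\GLnn^\fix$, hence for $N_\GLnn^\fix$ and $T_\GLnn^\fix$), and handles part (3) exactly as you do via \eqref{eq: regUtrans}. Your explicit verification that the conjugations respect the filtration $\bar U^{\dag}\subset\bar U^{\dag}\bar U_{1,1}\subset\bar U$ and commute with $Z_\Levi$ is precisely what underlies \eqref{eq: conjborelevi}, so the arguments coincide.
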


Note that we also have $\FW_W(zt,m)=\FW_W(t,m)$ for any $z\in Z_{\GLnn}'$ but we won't use this fact.

\begin{proof}
Let $n\in N_\GLnn^\fix$.
From \eqref{eq: regu}, \eqref{eq: conjborelevi} and the equivariance of $W$, we get
\begin{multline*}
\FW_W(t,nm)=\regint_{\bar U}W\left(\levi(t)\bar v\levi(nm) \right)\newchar(\bar v)\ d\bar v
=\regint_{\bar U}W\left(\levi(tn)\bar v\levi(m) \right)\newchar(\bar v)\ d\bar v\\=
\psi_{N_\GLnn}(tnt^{-1})\regint_{\bar U}W\left(\levi(t)\bar v\levi(m) \right)\newchar(\bar v)\ d\bar v=
\psi_{N_\GLnn}(tnt^{-1})\FW_W(t,m).
\end{multline*}
Hence, the first claim.
For the second claim, we get from \eqref{eq: conjborelevi}:
\begin{multline*}
\FW_W(t,t'm)=\regint_{\bar U}W\left(\levi(t)\bar v\levi(t'm) \right)\newchar(\bar v)\ d\bar v\\=
\abs{\det t'}^{-(2\rkn+1)}\regint_{\bar U}W\left(\levi(tt')\bar v\levi(m) \right)\newchar(\bar v)\ d\bar v=\abs{\det t'}^{-(2\rkn+1)}\FW_W(tt',m).
\end{multline*}
The last claim follows from \eqref{eq: regUtrans}. Indeed, for any $m\in\GLnn$
$$
\FW_{W_{\bar u}}(t,m)=\regint_{\bar U}W\left(\levi(t)\bar v\levi(m)\bar u \right)\newchar(\bar v)\ d\bar v=\newchar^{-1}(\levi(m)\bar u\levi(m^{-1}))
\FW_W(t,m). \qedhere
$$
\end{proof}

\subsection{Some classes of functions}
Motivated by the above properties of $\FW_W$, we define some classes of function on $T_\GLnn\times \GLnn$.

We first introduce some notations and auxiliary unipotent subgroups of $\GLnn$.

Let $\alg{X_\GLnn}$ be the closed subvariety $\alg{N_\GLnn}\alg{N}_\GLnn^t$ of $\GLnn$.
\index{$X_\GLnn$}
Thus, any element of $X_\GLnn$ can be written uniquely as $n \bar n$ with $n\in N_\GLnn$ and $\bar n\in N_\GLnn^t$.
For $t\in T_\GLnn$, define a function $\chixmt$ on $X_\GLnn$ by \index{$\chixmt$}
\begin{equation}\label{eq: defchixmt}
\chixmt(n\bar n)=\psi_{N_\GLnn}(tnt^{-1})\psi_{N_\GLnn}^{-1} (\bar n^t),\,\, n\in N_\GLnn, \,\bar n\in N_\GLnn^t.
\end{equation}

For $k=0,\ldots,4\rkn-1$, let $\LL^k$ be the unipotent subgroup of $N_\GLnn\subset\GLnn$ generated by
the commuting one-parameter root subgroups $\langle N^\GLnn_{i,j}:i<j, i+j=k+1\rangle$, that is,
\[
\LL^k=\{I_{2\rkn}+u:\, u_{i,j}=0\text{   if   }i+j\not=k+1 \text{  or  }i\geq j\}\simeq F^{\min([\frac{k}{2}],2\rkn-[\frac{k+1}{2}])}.
\]
Let $\overline\LL^k=\wnn \LL^k\wnn\subset N_\GLnn^t$ so that
\[
\overline\LL^k=\{I_{2\rkn}+u:\,  u_{i,j}=0\text{   if   }i+j\not=4\rkn+1-k \text{  or  }i\leq j\}.
\]
\index{$\LL^k$, $\overline\LL^k$, $\hat\LL^k$}
Note that $\LL^0=\overline\LL^0=\LL^1=\overline\LL^1=1$. It is clear $\chixmt\rest_{\LL^k}=\chixmt\rest_{\overline\LL^k}\equiv1$ when $k$ is odd.
Define
$$\hat\LL^k=\langle \LL^0,\dots, \LL^{4\rkn-k-1},\overline\LL^0,\dots,\overline\LL^{k-1}\rangle.$$

We illustrate the groups in the case $\rkn=4$. In the template below:
\begin{equation} \label{eq: template1}
\left(\begin{smallmatrix}
1&2&3&4&5&6&7& 8\\
 &1&4&5&6&7&8&9\\
 & &1&6&7&8&9&10\\
 & &&1&8&9&10&11\\
 & & & &1&10&11&12\\
 & & &7&6&1&12& 13\\
 & &7&6&5&4&1&14\\
 &7&6&5&4&3&2&1
\end{smallmatrix}\right)
\end{equation}
the numbers $>1$ above and below the principal diagonal mark the non-constant coordinates of $\LL^k$ and $\overline\LL^k$ respectively.
(The groups $\overline\LL^k$ with $k\ge2\rkn$ will not be used.)
Meanwhile for $k\le 8$,
each group $\hat\LL^k$ corresponds to the numbers $<k$ below the principal diagonal and the numbers $<16-k$ above it.

Let $\hat\M^k=\hat\LL^k\cap \GLnn^\fix$. \index{$\hat\M^k$} We observe that $\hat\LL^k\subset X_\GLnn$ and
\begin{equation}\label{eq: specialMk}
\hat\LL^0=N_\GLnn;\,\,\hat\LL^{2\rkn}=\hat\M^{2\rkn}=\zigzag;\,\,\hat\M^2=N_\GLnn^\fix; \,\,\chixmt\rest_{\zigzag}=\charzigzag \ \
\text{for }t\in\soment
\end{equation}
where $\soment=\{\diag(t_1,\dots,t_{2\rkn}):t_1=\dots=t_{\rkn}\}$. \index{$\soment$}
For $i=1,\dots,\rkn$ let $\Imk_i$ be the one-dimensional torus \index{$\Imk_i$}
\[
\Imk_i:=\{\diag(\overbrace{z^{-1},\ldots,z^{-1}}^{2\rkn-i},\overbrace{z,\ldots,z}^i):z\in F^*\}\subset T_\GLnn^\fix\cap\soment.
\]
Note that
as algebraic groups (but not at the level of $F$-points) we have $\alg{\soment}=\alg{Z_{\GLnn}}\cdot\prod_{i=1}^{\rkn}\alg{\Imk_i}$.

For any
$k=1,\dots,\rkn$ (resp. $k=1,\dots,\rkn-1$) consider the classes
$\evencls_k(T_\GLnn\times\GLnn)^{K_0}$ (resp. $\oddcls_k(T_\GLnn\times\GLnn)^{K_0}$) of functions $\phi$ on
$T_\GLnn\times \GLnn$ satisfying the following properties
for any $t\in T_\GLnn$, $m\in\GLnn$: \index{$\evencls_k(T_\GLnn\times\GLnn)^{K_0}$, $\oddcls_k(T_\GLnn\times\GLnn)^{K_0}$}
\begin{subequations}
\begin{gather}
\label{p: K0inv} \phi(t,mk)=\phi(t,m)\text{ for any }k\in \levi^{-1}(K_0\cap M),\\
\label{p: equim} \phi(t,nm)=\chixmt(n)\phi(t,m)\text{ for any }n\in \hat\M^{2k}\text{ (resp. $\hat\M^{2k+1}$)},\\
\label{p: switcht} \phi(tt',m)=\abs{\det t'}^{2\rkn+1}\modulus_{T_\GLnn;\overline\LL^1\cdots\overline\LL^{2k-1}}^{-1}(t')
\phi(t,t'm)\text{ for any }t'\in\prod_{i=k}^{\rkn-1}\Imk_i
\\\nonumber(\text{resp., }\phi(tt',m)=\abs{\det t'}^{2\rkn+1}\modulus_{T_\GLnn;\overline\LL^1\cdots\overline\LL^{2k}}^{-1}(t')
\phi(t,t'm)\text{ for any }t'\in\prod_{i=k+1}^{\rkn-1}\Imk_i).
\end{gather}
\end{subequations}
For instance, by Lemma~\ref{L: FWproperty}, if $W\in C^{\smth}(Z_{\Levi}'N\bs G,\psi_N)^{K_0}$ then
$\FW_W(t,m)\in\evencls_1(T_\GLnn\times \GLnn)^{K_0}$.

\subsection{Definition of $\tranWAv^{t,\auxf}(W)$}
Define
\begin{equation}\label{eq: defvs}
\vs=\zigzag\cap\overline{N_\GLnn}=\overline\LL^{2\rkn-1}\rtimes(\overline\LL^{2\rkn-2}\rtimes(\cdots\rtimes\overline\LL^1)).
\end{equation}

The group $\vs$ corresponds to the lower unitriangular part in the template \eqref{eq: template1}.
Define $\psi_{\vs}=\charzigzag\rest_{\vs}^{-1}$, i.e. \index{$\vs$, $\psi_{\vs}$}
$$\psi_{\vs}(\bar v)=\psi_{N_\GLnn}(\bar v^t).$$

It will be also useful to set \index{$\oddL^k$, $\evenL^k$}
\begin{gather}
\oddL^k=\overline\LL^{2k+1}, \evenL^k=\overline\LL^{2k}, \ \ k=0,\dots,\rkn-1.
\end{gather}
Note that $\dim\oddL^k=\dim\evenL^k=k$ and in particular $\oddL^0=\evenL^0=1$.  Also $\psi_{\vs}\rest_{\oddL^k}=1$.

Denote the simple roots of $\GLnn$ by $\alpha_1,\dots,\alpha_{2\rkn-1}$, so that the corresponding one-parameter root groups are
$N_{\alpha_i}=N^\GLnn_{i,i+1}$ and $N_{-\alpha_i}=N^\GLnn_{i+1,i}$.
Write $\evenL^k=\evenL_\circ^k\times N_{-\alpha_{2\rkn-k}}$ where
\[
\evenL_\circ^k=\{I_{2\rkn}+u:\,  u_{i,j}=0\text{   if   }i+j\not=4\rkn+1-2k \text{  or  }i\leq j+1\}.
\]
For a two-variable function such as $\phi(t,m)$ there are two possible meanings for the operation $\avg$, namely either in the $m$ variable or
in the $t$ variable. We use $\avg$ to denote the integration in the $m$ variable.

For any subtorus $T'$ of $T_{\GLnn}$ and $f'\in\cc{T'}$ we will write
\[
\avgt{T'}[f']\circ\phi:=\int_{T'}f'(t')\phi(\cdot t';\cdot)\ dt'
\]
(if defined).
The following Lemma will be proved in the next section.
\begin{lemma}\label{L: Mreg2}
Fix a compact subset $\Omega_{T_\GLnn}$ of $T_\GLnn$ and $k=1,\dots,\rkn-1$.
\begin{enumerate}
\item Let $\phi(t,m)\in\evencls_k(T_\GLnn\times \GLnn)^{K_0}$ and $f_k\in \cc{\Imk_k}$. Then
\begin{enumerate}
\item $\avgt{\Imk_k}[f_k]\circ\phi$ is a convergent integral. Moreover $\avgt{\Imk_k}[f_k]\circ\phi\in\evencls_k(T_\GLnn\times \GLnn)^{K_0}$.
\item $\avg^{reg}_{\evenL^k, \psi_{\vs}}\circ\avgt{\Imk_k}[f_k]\circ\phi$
is defined in the sense of Lemma~\ref{lem: genstab}, with $\T=\Imk_k$, $U_0=\evenL^k$, $U_1=\evenL_\circ^k$, $U_2=1$ and $\psi_{U_0}=\psi_{\vs}^{-1}$. Moreover $\avg^{reg}_{\evenL^k, \psi_{\vs}}\circ\avgt{\Imk_k}[f_k]\circ\phi$ belongs to $\oddcls_k(T_\GLnn\times \GLnn)^{K_0}$, and there exists $\Omega\in \csgr(\evenL^k)$
depending on $\Omega_{T_\GLnn}$, $K_0$ and $f_k$ such that
\[
\avg^{reg}_{\evenL^k, \psi_{\vs}}\circ\avgt{\Imk_k}[f_k]\circ \phi(t,e)=
\avg_{\Omega, \psi_{\vs}}\circ\avgt{\Imk_k}[f_k]\circ \phi(t,e)
\]
for all $t\in\Omega_{T_\GLnn}$.
\end{enumerate}
\item Let $\phi(t,m)\in\oddcls_k(T_\GLnn\times \GLnn)^{K_0}$. Then for all $t\in\Omega_{T_\GLnn}$, as a function of $m\in\oddL^k$, $\phi(t,m)$ is supported on
some $\Omega\in \csgr(\oddL^k)$ determined by $\Omega_{T_\GLnn}$ and $K_0$. Moreover
$\avg_{\oddL^k}\circ \phi$  belongs to $\evencls_{k+1}(T_\GLnn\times \GLnn)^{K_0}$.
\end{enumerate}
\end{lemma}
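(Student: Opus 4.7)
The plan is a stepwise verification of the three defining conditions (p: K0inv), (p: equim), (p: switcht) for each class, combined with checking the hypotheses of Lemma~\ref{lem: genstab} for the regularized integral. Throughout I exploit that $f_k$ has compact support in $\Imk_k$, so $\avgt{\Imk_k}[f_k]\circ\phi$ is an absolutely convergent ordinary integral whose behavior can be checked pointwise in $(t,m)$.

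For part 1(a), right-$K_0$-invariance is immediate since the integration occurs in the $t$-variable. The equivariance condition (p: equim) reduces to $\altpsi_{X_\GLnn,ts}(n)=\chixmt(n)$ for $s\in\Imk_k$ and $n\in\hat\M^{2k}$, which is a direct root-weight calculation: $\Imk_k$ acts non-trivially via $\psi_{N_\GLnn}$ only on the simple-root subgroup $N^\Levi_{\alpha_{2\rkn-k}}$ (and symmetrically on the lower-triangular side), and these are precisely the subgroups excluded from the generating set $\LL^0,\dots,\LL^{4\rkn-2k-1},\overline\LL^0,\dots,\overline\LL^{2k-1}$ of $\hat\LL^{2k}$. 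Condition (p: switcht) follows from the change of variables $s\mapsto(t')^{-1}s$ in the $\Imk_k$-integration combined with $\phi$'s own switch-property; since $\Imk_k$ is already contained in the parameter set $\prod_{i=k}^{\rkn-1}\Imk_i$, the formula applies uniformly for all $t'$ in that product.

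For part 1(b), I apply Lemma~\ref{lem: genstab} with data $(\T,U_0,U_1,U_2,\psi_{U_0})=(\Imk_k,\evenL^k,\evenL_\circ^k,1,\psi_{\vs}^{-1}\rest_{\evenL^k})$. Genericity holds because $\evenL^k/\evenL_\circ^k\cong N_{-\alpha_{2\rkn-k}}$ is one-dimensional, $\psi_{\vs}$ restricts to the standard additive character on it, and $\Imk_k$ rescales the parameter by $z^{-2}$, yielding an open orbit map into $\widehat F$. Triviality of $\psi_{\vs}\rest_{\evenL_\circ^k}$ is automatic: the transposed entries of $\evenL_\circ^k$ satisfy $i\ge j+2$ and thus avoid the super-diagonal. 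The required $L^1$ bound on $\evenL_\circ^k$ follows from a compact-support estimate on $\phi(t,\cdot)$ of the same type as Corollary~\ref{cor: suppW}, using the equivariance of $\phi$ and Lemma~\ref{L: elemC}. Lemma~\ref{lem: genstab} and formula~\eqref{eq: stableconv} then realize the stable integral as an ordinary integral over a compact $\Omega\in\csgr(\evenL^k)$ whose size depends only on $K_0$, $\Omega_{T_\GLnn}$ and $f_k$. Membership in $\oddcls_k$ is the final verification: equivariance under the enlarged group $\hat\M^{2k+1}$ absorbs the new generator $N_{-\alpha_{2\rkn-k}}$ precisely because the regularized integration against $\psi_{\vs}$ kills it; and (p: switcht) acquires the additional factor $\modulus_{T_\GLnn;\evenL^k}^{-1}$ matching the enlarged list of subgroups in the definition of $\oddcls_k$.

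For part 2, the compactness of $\phi(t,\cdot)\rest_{\oddL^k}$ is an instance of Lemma~\ref{L: elemC}: the commutator pairing between $\oddL^k$ and an appropriate abelian subgroup $D\subset\hat\M^{2k+1}\cap N_\GLnn$ (the simple-root line crossing $\oddL^k$ in the relevant Heisenberg block) is non-degenerate and gives a closed embedding of $\oddL^k$ into $\widehat D$, so the $(\hat\M^{2k+1},\chixmt)$-equivariance of $\phi$ forces compact support with a bound depending only on $K_0$ and $\Omega_{T_\GLnn}$. The ordinary integration $\avg_{\oddL^k}\circ\phi$ then yields a function satisfying (p: K0inv), (p: equim) for $\hat\M^{2(k+1)}=\langle\hat\M^{2k+1},\oddL^k\rangle\cap\GLnn^\fix$ (the new generator is absorbed without twist because $\psi_{\vs}\rest_{\oddL^k}\equiv 1$), and (p: switcht) for $t'\in\prod_{i=k+1}^{\rkn-1}\Imk_i$ via a change of variables $\bar u\mapsto t'\bar u(t')^{-1}$ whose Jacobian is exactly the ratio of the old and new modulus characters. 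The main obstacle will be the bookkeeping in part 1(b): simultaneously tracking how the generating set of $\hat\M$ grows by precisely one root, verifying that this root is the one moved non-trivially by $\Imk_k$, and confirming that the added modulus factor matches the change between the two switch-formula weights—but all should align once the root combinatorics encoded in the template~\eqref{eq: template1} has been laid out.
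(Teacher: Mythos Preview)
Your approach matches the paper's: verify the three class conditions directly, obtain compact support via Lemma~\ref{L: elemC} applied to commutator pairings, and invoke Lemma~\ref{lem: genstab} for the regularized integral. Two points need sharpening. First, the pairing partner for $\oddL^k$ in part~2 is not a one-dimensional ``simple-root line'' but the $k$-dimensional group $\M^{4\rkn-2k-2}$, paired with $\oddL^k$ through the explicit formula~\eqref{eq: chixmtequiv}; likewise in 1(b) it is $\M^{4\rkn-2k-1}$ (of dimension $k-1$) that pairs with $\evenL^k_\circ$ via~\eqref{eq: chixmtequiv2}. Second, your verification of (p:~equim) after integration is incomplete: beyond absorbing the new generator $\evenL^k$ (resp.\ $\oddL^k$), one must check that equivariance under the remaining piece $\check\M^{2k}$ (resp.\ $\check\M^{2k+1}$) of the semidirect decomposition $\hat\M^{2k+1}=\evenL^k\ltimes\check\M^{2k}$ (resp.\ $\hat\M^{2k+2}=\oddL^k\ltimes\check\M^{2k+1}$) survives the integration, and this requires the commutator identity $\chixmt([c,n])=1$ for $c$ in the integration group and $n$ in $\check\M$, which follows from Lemma~\ref{lem: Lproperty} part~\ref{claim: charL} since both elements lie in the same $\hat\M$.
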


For $t=\diag(t_1,\dots,t_{2\rkn})\in T_\GLnn$, set
$$\factor(t)=\abs{t_1}^{-\rkn}\modulus_B^{\frac12}(\levi(t)).$$
\index{$\factor$}
Let $\spcltrs$ be the torus $\prod_{i=1}^{\rkn-1}\Imk_i$.
We can then define the following integral transforms:

\begin{definition}\label{def: tranWAv}
Identify $\cc{\spcltrs}$ with $\auxF$ by $(f_1\otimes\dots\otimes f_{\rkn-1})(t_1\dots t_{\rkn-1})=\prod_if_i(t_i)$.
For $\auxf=f_1\otimes\ldots\otimes f_{\rkn-1}\in\auxF$ we define $\tranY$ on $\evencls_1(T_\GLnn\times \GLnn)^{K_0}$ by
\[
\tranY\circ\phi=\avg_{\oddL^{\rkn-1}}\circ\avg^{reg}_{\evenL^{\rkn-1},\psi_{\vs}}\circ\avgt{\Imk_{\rkn-1}}[f_{\rkn-1}]\circ\cdots
\circ\avg_{\oddL^1}\circ\avg^{reg}_{\evenL^1,\psi_{\vs}}\circ\avgt{\Imk_1}[f_1]\circ\phi
\]
(and extend $\tranY$ to $\auxF$ by multi-linearity).
\index{$\tranY$, $\tranWAv^{t,\auxf}$} Also define for $W\in C^{\smth}(Z_{\Levi}'N\bs G,\psi_{N})$
\[
\tranWAv^{t,\auxf}(W)(g):=\factor(t)^{-1} \tranY\circ\FW_{W_g}(t,e)
\]
where $W_g(\cdot)=W(\cdot g)$.
\end{definition}

\begin{corollary} \label{cor: abcde}
Notations as in Lemma~\ref{L: Mreg2}.
\begin{enumerate}
\item There exists $\Omega\in\csgr(\vs)$ depending only on $K_0$, $\Omega_{T_\GLnn}$ and $\auxf$, such that for all $t\in \Omega_{T_\GLnn}$ we have
\[
\tranY\circ\phi(t;e)=\avg_{\Omega,\psi_{\vs}}\circ \avgt{\spcltrs}[\auxf]\circ\phi(t;e).
\]
\item There exists $\Omega'\in\csgr(\levi(\vs)\ltimes\bar U)$
depending only on $K_0$, $\Omega_{T_\GLnn}$ and $\auxf$ such that
\[
\tranY\circ\FW_{W_g}(t,e)=\int_{\spcltrs}\int_{\Omega'}\auxf(t')W(\levi(tt')vg)\charEtwon^{-1}(v)\ d v \ dt'
\]
for all $t\in\Omega_{T_\GLnn}$.
\item We have
\[
\tranY\circ\FW_{W_g}(t,e)=\int_{\spcltrs}\int_{\levi(\vs)\ltimes\bar U}
\auxf(t')W(\levi(tt')vg)\charEtwon^{-1}(v)\ d v \ dt'
\]
if the right-hand side is absolutely convergent.
\item If $t\in\soment$ then $\tranWAv^{t,\auxf}(W)\in C^{\smth}(\Etwon\bs G, \charEtwon)$.
\end{enumerate}
\end{corollary}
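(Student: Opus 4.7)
\emph{Part (1)} is a direct iteration of Lemma \ref{L: Mreg2} for $k = 1, \ldots, \rkn-1$. At each step, $\avgt{\Imk_k}[f_k]$ preserves the class (part 1(a)), $\avg^{reg}_{\evenL^k,\psi_\vs}$ reduces on $\Omega_{T_\GLnn}$ to integration over some $\Omega^{(e)}_k \in \csgr(\evenL^k)$ and lands in $\oddcls_k$ (part 1(b)), and $\avg_{\oddL^k}$ integrates a compactly supported function on $\oddL^k$ and lands in $\evencls_{k+1}$ (part 2). After the $\rkn - 1$ iterations, the compact unipotent sets assemble into a single $\Omega \in \csgr(\vs)$ via the semidirect decomposition \eqref{eq: defvs}, the torus integrations combine into $\avgt{\spcltrs}[\auxf]$ since $\spcltrs = \prod_{k=1}^{\rkn-1}\Imk_k$, and the character on $\vs$ is precisely $\psi_\vs$ because $\psi_\vs$ is trivial on each $\oddL^k$.

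\emph{Part (2)} combines part (1) with Lemma \ref{L: Ureg}. By Lemma \ref{L: FWproperty}, $\FW_{W_g} \in \evencls_1(T_\GLnn\times\GLnn)^{K_0}$, so part (1) applies. Expand $\FW_{W_g}(tt';u)$ via \eqref{eq: regu} and invoke Lemma \ref{L: Ureg} to replace the regularized $\bar U$-integral by an ordinary integral over a compact open subgroup $\Omega_{\bar U} \in \csgr(\bar U)$, uniformly for $u \in \Omega$ and $t' \in \mathrm{supp}\,\auxf$ (both compact). Swap orders, then apply the measure-preserving change of variable $\bar v \mapsto \levi(u)\bar v\levi(u)^{-1}$, which preserves $\newchar$ because $u \in \vs \subset \GLnn^\fix$. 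Using $\charEtwon^{-1}(\levi(u)\bar v) = \psi_\vs(u)\newchar(\bar v)$, the integrand becomes $W(\levi(tt')vg)\charEtwon^{-1}(v)$ with $v = \levi(u)\bar v$; one chooses $\Omega' \in \csgr(\levi(\vs)\ltimes\bar U)$ containing $\Omega$ together with all conjugates $\levi(u)^{-1}\Omega_{\bar U}\levi(u)$ for $u \in \Omega$. For \emph{part (3)}, part (2) says $\int_{\Omega'}$ yields the correct value for any $\Omega'$ in a cofinal family of compact open subgroups; under the absolute-convergence hypothesis, dominated convergence identifies this common value with the integral over all of $\levi(\vs)\ltimes\bar U$.

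\emph{Part (4)} is the most delicate. Smoothness is immediate from part (2) and right $K_0$-invariance of $W$. For $(\Etwon,\charEtwon)$-equivariance, decompose $\Etwon = \levi(\zigzag)\ltimes\bar U$. Under $\bar U$, Lemma \ref{L: FWproperty}(3) (applicable since $\vs \subset \GLnn^\fix$) combined with $\charEtwon(\bar v_0) = \newchar(\bar v_0)^{-1}$ yields the equivariance. Under $\levi(u_0)$ with $u_0 \in \vs$, use $\FW_{W_{\levi(u_0)g}}(t,u) = \FW_{W_g}(t,uu_0)$ and translate $u \mapsto uu_0^{-1}$ in the $\vs$-integration (the stable invariance of $\tranY$ accommodates the shifted compact domain), obtaining the factor $\psi_\vs(u_0)^{-1} = \charEtwon(\levi(u_0))$. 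The genuinely subtle case is $u_0 \in \zigzag \cap N_\GLnn$: inspection of the zigzag pattern shows $\zigzag \cap N_\GLnn \subset N_\GLnn^\fix$, so Lemma \ref{L: FWproperty}(1) applies after commuting $u_0$ past the $\vs$-integration variable. The resulting scalar $\psi_{N_\GLnn}(tt'u_0(tt')^{-1})$ equals $\charzigzag(u_0) = \charEtwon(\levi(u_0))$ because for $t \in \soment$ and any $t' \in \spcltrs$ one has $tt' \in \soment$, while the simple-root entries contributing to $\psi_{N_\GLnn}$ on $\zigzag \cap N_\GLnn$ correspond to $\alpha_1,\ldots,\alpha_{\rkn-1}$, which are stabilized by $\soment$. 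The principal obstacle is this final case, where the conjugation $u \mapsto u_0^{-1}uu_0$ may leave $\vs$ and must be handled by repeated application of parts (1) and (2) of Lemma \ref{L: FWproperty}.
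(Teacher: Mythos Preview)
Your treatment of parts (1)--(3) is correct and matches the paper's approach; the paper simply says these are ``immediate from Lemmas~\ref{L: Ureg} and~\ref{L: Mreg2}'', and you have unpacked what that means.

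For part (4), however, you are working much harder than necessary and leave an acknowledged gap. You split $\zigzag$ into $\vs$ and $\zigzag\cap N_\GLnn$ and try to verify equivariance under each piece by hand, and in the $\zigzag\cap N_\GLnn$ case you yourself flag an unresolved ``principal obstacle'' involving commutators leaving $\vs$. The paper bypasses all of this. The point you are missing is that the iteration in Lemma~\ref{L: Mreg2} does not merely give you compact domains of integration: it shows that $\tranY\circ\FW_{W_g}\in\evencls_\rkn(T_\GLnn\times\GLnn)^{K_0}$. Property~\eqref{p: equim} of that class with $k=\rkn$ says precisely that this function is $(\hat\M^{2\rkn},\chixmt)$-equivariant in the $m$-variable. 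Since right translation of $g$ by $\levi(u_0)$ becomes right translation of $m$ by $u_0$ (the operators in $\tranY$ act by left translation in $m$), and since by~\eqref{eq: specialMk} one has $\hat\M^{2\rkn}=\zigzag$ and $\chixmt\rest_\zigzag=\charzigzag$ for $t\in\soment$, the full $\levi(\zigzag)$-equivariance with the correct character drops out in one line. Combined with the $\bar U$-equivariance from Lemma~\ref{L: FWproperty}(3) (which you did identify correctly), this finishes part (4) without any commutator bookkeeping.
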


\begin{proof}
All parts except the last one are immediate from Lemmas~\ref{L: Ureg} and \ref{L: Mreg2}. It follows from Lemma~\ref{L: Mreg2} and
\eqref{eq: specialMk}  that
$\tranWAv^{t,\auxf}(W)\in C^{\smth}(\levi(\zigzag)\bs G, \charzigzag\circ\levi^{-1})$, while from Lemma~\ref{L: FWproperty},
$\tranWAv^{t,\auxf}(W)\in C^{\smth}(\bar U\bs G, \newchar^{-1})$.
\end{proof}

Recall that by Lemma~\ref{L: stablemodel}, for any $L\in\tclass(H\bs G)$,
we have $\tranLW(L)\in C^{\smth}(Z_{\Levi}N\bs G,\modulus_P^{\frac12}\nu^{-1/2}\psi_{N})$.
We will prove the following functional equation in \S\ref{sec: prf of prop: main}:
\begin{proposition}\label{prop: main}
Let $c(\auxf)=\int_{\spcltrs}\auxf(t')\factor(t')\ dt'$.
Then for any $L\in\tclass(H\bs G)$ we have
\begin{equation}\label{eq: equalt}
\tranWAv^{t,\auxf}\circ\tranLW(L)=c(\auxf)\tranLE(L)
\end{equation}
for all $t\in \soment$ and $\auxf\in \auxF$.
Consequently, by Theorem \ref{thm: MWL} we have $\tranWAv^{t,\auxf}=c(\auxf)\tranLE\circ\tranMWL$ on $\model^{(N,\psi_N)}\LQ{\pi}$.
\end{proposition}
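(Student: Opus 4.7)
The plan is to reduce to compactly supported $L$, unfold both sides as absolutely convergent iterated integrals, and match them via a sequence of root exchanges (Appendix A) and Fourier inversions.

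By Lemma \ref{L: stablemodel}(1) and continuity of both sides in $L$ (the LHS via Corollary \ref{cor: abcde} combined with Lemma \ref{L: stablemodel}(2), and the RHS via Lemmas \ref{lem: genstab} and \ref{L: stableprop}), it suffices to prove the identity for $L \in \cc{H\bs G}$. For such $L$ all the regularizations collapse to honest absolutely convergent integrals, which legitimizes Fubini and change-of-variables.

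For such $L$, Corollary \ref{cor: abcde}(3) unfolds the left-hand side as
\[
\factor(t)^{-1}\int_{\spcltrs}\int_{\levi(\vs)\ltimes\bar U}\auxf(t')\,\tranLW(L)(\levi(tt')vg)\,\charEtwon^{-1}(v)\,dv\,dt',
\]
and one then replaces $\tranLW(L)(h)$ by its defining integral against $\psi_N^{-1}$ over $H_N\bs N$. The right-hand side unfolds as a regularized integral of $L$ against $\charEtwon^{-1}$ over $H_\Etwon\bs\Etwon$, times $c(\auxf)$. Using $\zigzag = (\zigzag\cap N_\GLnn)\cdot\vs$, the group $\Etwon = \levi(\zigzag)\ltimes\bar U$ factors as $\levi(\zigzag\cap N_\GLnn)\cdot\levi(\vs)\cdot\bar U$, with $\levi(\zigzag\cap N_\GLnn) \subset N$. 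The core of the argument is to transform the combined $(N,\levi(\vs),\spcltrs,\bar U)$-integration on the LHS into an $\Etwon$-integration plus orthogonal directions that collapse. The portion of $N$ along $\levi(\zigzag\cap N_\GLnn)$, together with $\levi(\vs)$ and $\bar U$, assembles into the $\Etwon$-integration: here the character $\chixmt$ restricts to $\charzigzag$ on $\zigzag$ \emph{precisely} for $t\in\soment$ (cf.\ \eqref{eq: specialMk}), which is why that hypothesis is essential. The remaining directions of $N$ pair, via root exchange, with the complementary directions of $\bar U$ and with $\auxf \in \cc{\spcltrs}$, and collapse by Fourier inversion to the scalar $\int_{\spcltrs}\auxf(t')\factor(t')\,dt' = c(\auxf)$; the prefactor $\factor(t)^{-1}$ is absorbed via the equivariance in Lemma \ref{L: FWproperty}(2).

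The main obstacle is executing the root exchanges in the proper order while respecting the regularizations. Several of the intermediate integrals exist only in the stable sense, so one cannot exchange orders freely; instead one must insert compact cutoffs at each stage (in the spirit of Lemma \ref{lem: genstab} and the root-exchange lemma \ref{L: Fourier} of Appendix A), perform the exchange, and only then pass to the limit. This is presumably the content of \S\ref{sec: whitdesc}, which builds the iterated root-exchange machinery tailored to the geometry of $(\vs,\bar U,\Etwon)$; \S\ref{sec: prf of prop: main} then assembles the pieces. A secondary consistency check worth highlighting is the $t$-independence of the LHS for $t\in\soment$, despite the explicit appearance of $t$ in its definition: this is a nontrivial reflection of the fact that $\soment$ stabilizes the relevant characters, together with the normalization built into $\factor$.
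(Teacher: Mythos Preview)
Your high-level outline is correct and matches the paper's: reduce to $L\in\cc{H\bs G}$ by continuity (Remark~\ref{rem: compactonly}), then perform a sequence of root exchanges and Fourier inversions. However, the sentence ``For such $L$ all the regularizations collapse to honest absolutely convergent integrals, which legitimizes Fubini and change-of-variables'' is where the proposal stops being a proof. Even for compactly supported $L$, the function $W=\tranLW(L)\in C^{\smth}(Z_\Levi N\bs G,\psi_N)$ is \emph{not} compactly supported modulo $N$, and you have not verified the hypothesis of Corollary~\ref{cor: abcde}(3) that the integral over $\levi(\vs)\ltimes\bar U$ converges absolutely. The paper never establishes (or uses) such a blanket absolute-convergence statement; instead it works throughout with the iterated definition and only obtains compact support after a nontrivial reorganization.

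Concretely, the paper proceeds in two stages that your sketch does not identify. First (Lemma~\ref{L: NUresult}) it shows
\[
\MTF(L;t,m)=\FW_{\tranLW(L)}(t,m)=\factor(t)\,\avg^{H}_{N_\Levi^\fix\ltimes\bar U,\chinu^{-1}}\circ L(\levi(m)),
\]
by first moving $\levi(t)$ across $\bar U^\dag$, then applying the root-exchange chain \eqref{eq: iter22} of \S\ref{sec: whitdesc} with $k=4\rkn-2$ to collapse the $N$-integral of $\tranLW$ together with the $\bar U^\dag$-part of $\regint_{\bar U}$ into an $\hat\N^{4\rkn-1}$-integral, and finally performing a second chain of exchanges \eqref{eq: barU_k1} to absorb the remaining column $\bar U_{2,1},\dots,\bar U_{2\rkn,1}$ of $\bar U$. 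This is where the factor $\factor(t)$ actually arises, as the Jacobian of conjugation by $\levi(t)$ on $\hat\N^{4\rkn-1}\cap H\bs\hat\N^{4\rkn-1}$; it is not, as you suggest, ``absorbed via Lemma~\ref{L: FWproperty}(2)''. Second, one writes $\MTF(L;t,m)=\factor(t)\,\avg^{H_\GLnn^\fix}_{N_\GLnn^\fix,\chixmt^{-1}}\circ\Lambda(L;m)$ where $\Lambda(L;m)=\avg^H_{\bar U,\newchar}\circ L(\levi(m))$, and the crucial point---missing from your sketch---is Lemma~\ref{lem: fixcompact}: $\Lambda(L;\cdot)\rest_{\GLnn^\fix}\in\cc{H_\GLnn^\fix\bs\GLnn^\fix,\chi_0}$. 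Only with this compact support in hand can one apply the Appendix~A lemmas to perform the final chain of exchanges (Lemma~\ref{L: iter4}) over the groups $\evenL^k,\oddL^k,\Imk_k$ inside $\GLnn^\fix$, yielding \eqref{eq: Efinal} and hence the Proposition. Your description ``the remaining directions of $N$ pair with complementary directions of $\bar U$ and with $\auxf$'' is too coarse: the $\Imk_k$-integrals against $f_k$ produce the scalar $c(\auxf)$ because $\chixmt\rest_{\hat\M^{2k}}$ is \emph{independent of the $\Imk_k$-component of $t$} (Lemma~\ref{lem: Lproperty}\,\ref{claim: chitomd}), not via any Fourier pairing with $\bar U$.

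Finally, a minor misattribution: \S\ref{sec: whitdesc} proves Proposition~\ref{prop: 14} (the $\tranLA$ identity), not Proposition~\ref{prop: main}; its role here is only to supply the intermediate relation \eqref{eq: iter22} used inside the proof of Lemma~\ref{L: NUresult}.
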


\begin{remark}\label{rem: compactonly}
Note that by Corollary \ref{cor: uniqueEtwon}, $\tranLE\circ\tranMWL$ and $\tranWAv^{t,\auxf}$ (for $t\in\soment$) are proportional
on $\model^{(N,\psi_N)}\LQ{\pi}$ with the constant of proportionality depending a priori on $t$, $\auxf$ and $\pi$.

Also note that for fixed $t$ and $\auxf$, $L\mapsto\tranWAv^{t,\auxf}\circ\tranLW(L)(e)$ is a continuous functional on $\tclass(H\bs G)$,
being equal to a (finite) linear combination (depending on $K_0$) of the values of the function $\int_{H_N\bs \uder}L(u\cdot)\ du$.
On the other hand  $\tranLE$ is also a continuous functional on $\tclass(H\bs G)$.
Thus we only need to prove the identity \eqref{eq: equalt} for $L\in \cc{H\bs G}$ by Lemma~\ref{L: stablemodel} part \eqref{mod: dense}.
\end{remark}

\section{Proof of Lemma \ref{L: Mreg2}}\label{sec: lemma}
\subsection{}\label{sec: defLk}
We first introduce a few more families of unipotent groups.

For $k=0,\dots,4\rkn-1$,
define
\[
\acutee\LL^k=\langle \LL^0,\dots, \LL^{4\rkn-k-1},\overline\LL^0,\dots,\overline\LL^{k}\rangle,
\]
so that $\hat\LL^k=\acutee\LL^k\cap\acutee\LL^{k-1}$.
Also set
\[
\check\LL^k=\hat\LL^k\cap\hat\LL^{k+1}=\acutee\LL^{k+1}\cap\acutee\LL^{k-1}=\langle \LL^0,\dots, \LL^{4\rkn-k-2},\overline\LL^0,\dots,\overline\LL^{k-1}\rangle.
\]
\index{$\acutee\LL^k$, $\check\LL^k$}
For instance, in the case $\rkn=4$ and $k\le 8$ in the diagram \eqref{eq: template1} above
$\check\LL^k$ corresponds to the numbers $<k$ below the principal diagonal and the numbers $<15-k$ above it;
$\acutee\LL^k$ (for $k<8$) corresponds to the numbers $\le k$ below the principal diagonal and the numbers $<16-k$ above it.

Observe that $\check\LL^k\subset\hat\LL^k\subset\acutee\LL^k$, $\acutee\LL^0=\acutee\LL^1=N_\GLnn$ and
\begin{equation} \label{eq: k4n-k}
\acutee\LL^k=\wnn \acutee\LL^{4\rkn-1-k}\wnn, \ \ k=0,\dots,4\rkn-1.
\end{equation}
Note that $\acutee\LL^k$ is a maximal unipotent subgroup of $\GLnn$. In fact for $k\leq 2\rkn$,
$\acutee\LL^k=\sigma_k^{-1}N_\GLnn\sigma_k$ where $\sigma_k$ are the permutation matrices given by
\[
\sigma_ke_i=\begin{cases} e_i & i\leq 2\rkn-k,\\
e_{2i+k-2\rkn}& 2\rkn-k< i\le 2\rkn-[\frac{k+1}2],\\ e_{6\rkn-k-2i+1} &i> 2\rkn-[\frac{k+1}2],\end{cases}\\
\]
where $e_1,\dots,e_{2\rkn}$ is the standard basis and $l=1,\dots,\rkn$.

\begin{remark}\label{r: Lshape}
For $k=0,\dots,2\rkn$, consider the isomorphism $\Sigma_k: m\mapsto \sigma_k m\sigma_k^{-1}$ from $\acutee\LL^k$ to $N_\GLnn$.
Then we have
\[
\Sigma_k(N^\GLnn_{i,4\rkn-k-i})=N_{\alpha_{2i-2\rkn+k}}, \ \ 4\rkn-2k\leq 2i<4\rkn-k,
\]
and
\[
\Sigma_k(N^\GLnn_{2\rkn-i,i+1+2\rkn-k})=N_{\alpha_{2i+1+2\rkn-k}}, \ \ 0\leq 2i<k-1,
\]
so that,
\[
\Sigma_k(\LL^{4\rkn-k-1})=\prod_{i=0}^{[\frac{k-1}{2}]} N_{\alpha_{2i+2\rkn-k}},\,\,
\Sigma_k(\overline\LL^k)=\prod_{i=0}^{[\frac{k}{2}]-1}N_{\alpha_{2i+1+2\rkn-k}}.
\]
If we denote by $c_i(u)=u_{i,i+1}$, $i=1,\dots,2\rkn-1$ the entries of $u\in N_\GLnn$ in the simple roots then
\begin{gather*}
\Sigma_k(\hat\LL^k)=\{u\in N_\GLnn:c_{2i+1+2\rkn-k}(u)=0, 0\leq 2i<k-1\},\\
\Sigma_k(\check\LL^k)=\{u\in N_\GLnn:c_{i+2\rkn-k}(u)=0, 0\leq i<k\}.
\end{gather*}
\end{remark}

\begin{lemma}
For $k=1,\dots,2\rkn$, $\chixmt\rest_{\check\LL^k}$ is a character on $\check\LL^k$.
\end{lemma}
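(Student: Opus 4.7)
The plan is to compute $\chixmt|_{\check\LL^k}$ explicitly in matrix coordinates and verify multiplicativity by a direct expansion.

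First, I would establish that $\check\LL^k\subset X_\GLnn = N_\GLnn N_\GLnn^t$ and admits a unique product decomposition $\check\LL^k = (\check\LL^k\cap N_\GLnn)\cdot(\check\LL^k\cap N_\GLnn^t)$, with the two factors being $\langle\LL^0,\dots,\LL^{4\rkn-k-2}\rangle$ and $\langle\overline\LL^0,\dots,\overline\LL^{k-1}\rangle$ respectively. By Remark~\ref{r: Lshape}, $\check\LL^k\subset\acutee\LL^{k+1}$ is a subgroup of $\GLnn$, and a straightforward index manipulation confirms that each of the two factors consists of exactly the matrices supported on the prescribed root positions. Injectivity of the multiplication map $(n,\bar n)\mapsto n\bar n$ is immediate from uniqueness of the $N_\GLnn\cdot N_\GLnn^t$ factorization in $X_\GLnn$, and a dimension count gives surjectivity onto $\check\LL^k$.

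Next, for $u = n\bar n\in\check\LL^k$, expanding $u_{i,i+1}=\sum_l n_{i,l}\bar n_{l,i+1}$ leaves only the $l=i+1$ term surviving (since $n$ is upper and $\bar n$ is lower unitriangular), so $u_{i,i+1}=n_{i,i+1}$; likewise $u_{i+1,i}=\bar n_{i+1,i}$. Combined with the observation that $u_{i,i+1}$ vanishes unless $(i,i+1)$ lies in the upper part of $\check\LL^k$ (equivalently $2i+1\le 4\rkn-k-1$), and analogously for the sub-diagonal, this yields the closed formula
\[
\chixmt(u)=\psi\Big(\sum_{i:\,2i+1\le 4\rkn-k-1}\frac{t_i}{t_{i+1}}u_{i,i+1}\,-\sum_{i:\,2i+1\ge 4\rkn-k+2}u_{i+1,i}\Big).
\]

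Finally, to verify multiplicativity, I would show that for $u_1,u_2\in\check\LL^k$ and each relevant $i$, $(u_1u_2)_{i,i+1}=(u_1)_{i,i+1}+(u_2)_{i,i+1}$, and analogously on the sub-diagonal. Expanding $(u_1u_2)_{i,i+1}=\sum_l(u_1)_{i,l}(u_2)_{l,i+1}$, the $l\in\{i,i+1\}$ terms already provide the desired sum, so the remaining task is to verify that every cross term with $l\notin\{i,i+1\}$ vanishes. The main bookkeeping obstacle lies here: a nonzero cross term would require both $(i,l)$ and $(l,i+1)$ to lie in the support of $\check\LL^k$, and a short case analysis across the four upper/lower combinations shows each possibility produces incompatible inequalities on $i+l$ and $l+i+1$, using the gap $\{4\rkn-k,4\rkn-k+1\}$ separating the allowed sums in the upper and lower parts together with $2i+1\le 4\rkn-k-1$. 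The sub-diagonal argument is entirely analogous, completing the proof that $\chixmt|_{\check\LL^k}$ is a character.
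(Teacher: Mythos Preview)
Your approach is correct and takes a genuinely different route from the paper. The paper conjugates $\check\LL^k$ into $N_\GLnn$ via the isomorphism $\Sigma_k$ of Remark~\ref{r: Lshape} and observes that $\chixmt\circ\Sigma_k^{-1}$ on $\Sigma_k(\check\LL^k)$ depends only on simple root entries $u_{i,i+1}$ with $i<2\rkn-k$ and almost-simple root entries $u_{j,j+2}$ with $j\ge 2\rkn-k$; the former are always additive in $N_\GLnn$, and the latter become additive on $\Sigma_k(\check\LL^k)$ precisely because the intervening simple root entries $u_{j,j+1}$ vanish there. You instead remain in the original coordinates and exploit the gap between the upper support (anti-diagonal sum $\le 4\rkn-k-1$) and the lower support (sum $\ge 4\rkn-k+2$) to kill the cross terms directly. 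The underlying combinatorics is the same; the paper's conjugation makes the structure more transparent, while your argument is more elementary and self-contained.

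Two small points worth tightening. First, the justification ``since $n$ is upper and $\bar n$ is lower unitriangular'' for $u_{i,i+1}=n_{i,i+1}$ is not sufficient as written: unitriangularity alone leaves the terms $n_{i,l}\bar n_{l,i+1}$ with $l>i+1$, and one needs $i+l\le 4\rkn-k-1$ together with $l+i+1\ge 4\rkn-k+2$ (the same support-gap inequalities you invoke later) to see these vanish. Second, ``a dimension count gives surjectivity'' for the product decomposition is a bit quick over a $p$-adic field; a cleaner route is to note that the root set of $\check\LL^k$ is closed and invoke the standard fact that for any ordering of root subgroups in a closed root set the product map is a bijection onto the corresponding unipotent group.
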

\begin{proof}
For $k=1,\dots,2\rkn$, $N_{\GLnn, k, I}:=\Sigma_k(N_\GLnn^t\cap \acutee\LL^k)$ is the subgroup of $N_\GLnn$ consisting of the matrices whose
off-diagonal entries are zero at rows $1,\dots,2\rkn-k$ and $2\rkn-k+2i$, $i=1,\dots,\frac{[k]}{2}$.
In particular, for the negative simple root groups $N_{-\alpha_{2\rkn-i}}\subset N_\GLnn^t\cap \acutee\LL^k$, $i=1,\dots,[\frac{k}{2}]$
we have $\Sigma_k(N_{-\alpha_{2\rkn-i}})=N^\GLnn_{2i+2\rkn-k-1,2i+2\rkn-k+1}$, which are \emph{almost simple} root group.
Hence, $\chixmt\circ\Sigma_k^{-1}\rest_{N_{\GLnn, k, I}}$ is a character of $N_{\GLnn, k, I}$ that is supported on the almost simple root groups.

Similarly, $N_{\GLnn, k, II}:=\Sigma_k(N_\GLnn\cap \acutee\LL^k)$ is the subgroup of $N_\GLnn$ consisting of the matrices whose
off-diagonal entries are zero at rows $2\rkn-k+2i+1$, $i=0,\dots,\frac{[k-1]}{2}$.
The simple root groups $N_{\alpha_i}$ are stable under $\Sigma_k$ when $i<2\rkn-k$ and $\Sigma_k(N_{\alpha_i})=N^\GLnn_{2i+k-2\rkn,2i+k-2\rkn+2}$
(almost simple root group) when $2\rkn-k\leq i<2\rkn-[\frac{k+1}{2}]$.
Thus, $\chixmt\circ\Sigma_k^{-1}\rest_{N_{\GLnn, k, II}}$ is a character supported on the simple root groups $N_{\alpha_i}$ ($i<2\rkn-k$) and the almost simple root groups $N^\GLnn_{2\rkn-k+2i,2\rkn-k+2i+2}$ ($i\geq 0$).

It is now clear that $\chixmt\circ\Sigma_k^{-1}$ is a character on $\Sigma_k(\check\LL^k)$ which is supported on the simple root groups
$N_{\alpha_i}$ ($i<2\rkn-k$) and the almost simple root groups $N^\GLnn_{2\rkn-k+i,2\rkn-k+i+2}$ ($i\geq 0$).
\end{proof}

\subsection{Key facts used in the proof}

\begin{lemma}\label{lem: Lproperty}
\begin{enumerate}
\item \label{claim: normalL} For $k=0,\dots,4\rkn-2$,
$\hat\LL^k= \LL^{4\rkn-k-1}\ltimes\check\LL^k$ and $\hat\LL^{k+1}=\overline\LL^k\ltimes \check\LL^k$.
Also $\acutee\LL^k=\overline\LL^k\ltimes\hat\LL^k=\LL^{4\rkn-k-1}\ltimes\hat\LL^{k+1}$.
\item \label{claim: equivL} For any $m\in X_\GLnn$ and $\bar n\in N_\GLnn^t$ we have $\chixmt(m\bar n)=\chixmt(m)\chixmt(\bar n)$.
\item \label{claim: charL} For $k=0,\dots,4\rkn-1$, $\chixmt\rest_{\hat\LL^k}$ is a character of $\hat\LL^k$.
\item \label{claim: chitomd} For $k=1,\ldots,2\rkn$ we have $\chixmt\rest_{\hat\LL^k}=\altpsi_{tt'}\rest_{\hat\LL^k}$ for any
$t\in T_{\GLnn}$ and $t'=\diag(t'_1,\dots,t'_{2\rkn})\in T_{\GLnn}$ such that $t'_i=t'_{i+1}$ for all $i<2\rkn-\frac k2$.
In particular, when $t\in \soment$, $\chixmt\rest_{\zigzag}=\charzigzag$.
\end{enumerate}
\end{lemma}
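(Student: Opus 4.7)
The plan is to prove the four claims in order. Claims 1 and 2 reduce to structural computations, Claim 3 is the core technical content and extends the preceding Lemma via the isomorphism $\Sigma_k$, and Claim 4 follows from Claim 3 by inspecting which root subgroups of $\hat\LL^k$ carry nontrivial character values.

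For Claim 1, one computes commutators of the defining root subgroups. The subgroup $\LL^{4\rkn-k-1}$ is generated by the abelian family $\{N^\GLnn_{i,j}:i<j,\,i+j=4\rkn-k\}$, while $\check\LL^k$ is generated by the $\LL^j$ ($j\le 4\rkn-k-2$) together with the $\overline\LL^j$ ($j\le k-1$). A short case analysis of commutators $[N^\GLnn_{i,4\rkn-k-i},N^\GLnn_{a,b}]$ with $(a,b)$ indexing a positive or negative generator of $\check\LL^k$ shows the result lies in a root subgroup already inside $\check\LL^k$; hence $\LL^{4\rkn-k-1}$ normalizes $\check\LL^k$, yielding $\hat\LL^k=\LL^{4\rkn-k-1}\ltimes\check\LL^k$. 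The decomposition $\hat\LL^{k+1}=\overline\LL^k\ltimes\check\LL^k$ follows by the same argument with positive and negative roles swapped, and since $\acutee\LL^k$ is obtained from $\hat\LL^k$ by adjoining $\overline\LL^k$ (or from $\hat\LL^{k+1}$ by adjoining $\LL^{4\rkn-k-1}$), the remaining factorizations of $\acutee\LL^k$ follow. Claim 2 is immediate from \eqref{eq: defchixmt}: writing $m=n_1\bar n_1$ with $n_1\in N_\GLnn,\,\bar n_1\in N_\GLnn^t$, we have $m\bar n=n_1(\bar n_1\bar n)$ with $\bar n_1\bar n\in N_\GLnn^t$, and the defining formula factors accordingly.

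For Claim 3 with $k\le 2\rkn$, the strategy parallels the preceding Lemma and works through $\Sigma_k$. By Remark~\ref{r: Lshape}, $\Sigma_k(\hat\LL^k)=\{u\in N_\GLnn:c_{2i+1+2\rkn-k}(u)=0,\,0\le 2i<k-1\}$ is obtained from $\Sigma_k(\check\LL^k)=\{u\in N_\GLnn:c_{i+2\rkn-k}(u)=0,\,0\le i<k\}$ by relaxing the vanishing constraint on the even-offset simple-root coordinates; the $\Sigma_k$-preimages of these additional simple-root subgroups lie in $\LL^{4\rkn-k-1}\subset N_\GLnn$, on which $\chixmt$ coincides with the character $n\mapsto\psi_{N_\GLnn}(tnt^{-1})$. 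Combined with the prior description of $\chixmt\circ\Sigma_k^{-1}\rest_{\Sigma_k(\check\LL^k)}$ as a character supported on $N_{\alpha_i}$ ($i<2\rkn-k$) and on the almost simple root groups $N^\GLnn_{2\rkn-k+i,2\rkn-k+i+2}$, this shows that $\chixmt\circ\Sigma_k^{-1}$ is a character on the full $\Sigma_k(\hat\LL^k)$. The case $k>2\rkn$ is handled by an analogous argument using the $\wnn$-conjugation symmetry to reduce to the previous range. The main obstacle is bookkeeping: verifying that the extension of $\chixmt\circ\Sigma_k^{-1}$ from $\Sigma_k(\check\LL^k)$ to the larger $\Sigma_k(\hat\LL^k)$ is compatible with the root-group structure, so that no unwanted commutator contributions spoil multiplicativity.

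Finally, Claim 4 follows by tracking the $t$-dependence. Once $\chixmt\rest_{\hat\LL^k}$ is a character, it is determined by its values on the simple root subgroups it contains: the positive simple root $N_{\alpha_i}\subset\LL^{2i}$ lies in $\hat\LL^k$ precisely for $i<2\rkn-k/2$, with value $\chixmt(I+x\one_{i,i+1})=\psi(xt_i/t_{i+1})$, while the negative simple root $N_{-\alpha_i}\subset\overline\LL^{4\rkn-2i}$ lies in $\hat\LL^k$ precisely for $i>2\rkn-k/2$, with $t$-independent value $\chixmt(I+x\one_{i+1,i})=\psi(-x)$. Under the hypothesis $t'_i=t'_{i+1}$ for $i<2\rkn-k/2$, every ratio $t_i/t_{i+1}$ on the relevant positive simple roots is preserved upon replacing $t$ by $tt'$, and the negative part is unchanged, yielding $\chixmt=\altpsi_{tt'}$ on $\hat\LL^k$. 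For the final assertion, applying this with $k=2\rkn$, $t\in\soment$ and $t'=e$ gives $\chixmt\rest_\zigzag=\altpsi_t\rest_\zigzag$; direct evaluation yields $\psi(m_{i,i+1})$ on $(i,i+1)$ for $i<\rkn$ (since $t_i/t_{i+1}=1$) and $\psi(-m_{i+1,i})$ on $(i+1,i)$ for $\rkn+1\le i\le 2\rkn-1$, matching $\charzigzag$ of \eqref{eq: defpsie}.
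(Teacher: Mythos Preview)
Your treatment of Claims 1, 2, and 4 is fine and close in spirit to the paper's; the paper dispatches Claim 1 by invoking Remark~\ref{r: Lshape} (after conjugation by $\sigma_k$ the semidirect product structure inside $N_\GLnn$ is visible from which simple-root coordinates are constrained), and it simply says Claim 4 is ``clear from the definition of $\chixmt$'', so your more explicit tracking of which $N_{\pm\alpha_i}$ lie in $\hat\LL^k$ is a harmless elaboration.

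For Claim 3, however, there is a genuine gap. Knowing that $\chixmt$ is a character on $\check\LL^k$ and separately on $\LL^{4\rkn-k-1}$ does \emph{not} by itself imply it is a character on $\hat\LL^k=\LL^{4\rkn-k-1}\ltimes\check\LL^k$: the function $\chixmt$ is defined via the $N_\GLnn\cdot N_\GLnn^t$ decomposition, not as a product of root-group values, so multiplicativity on the semidirect product is equivalent to the commutator condition $\chixmt([\ell,c])=1$ for all $\ell\in\LL^{4\rkn-k-1}$, $c\in\check\LL^k$. You acknowledge this as ``bookkeeping'' but do not carry it out, and it is precisely the content of the claim.

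The paper's argument is slightly different and makes this step transparent. It proves multiplicativity on $\hat\LL^{k+1}=\overline\LL^k\ltimes\check\LL^k$ instead (the case $\hat\LL^0=N_\GLnn$ being trivial): writing $x_j=y_jz_j$ with $y_j\in\check\LL^k$ and $z_j\in\overline\LL^k\subset N_\GLnn^t$, part~\ref{claim: equivL} gives
\[
\chixmt(x_1x_2)=\chixmt(y_1[z_1,y_2]y_2)\chixmt(z_1)\chixmt(z_2),
\]
with $[z_1,y_2]\in\check\LL^k$. The key point is then that $\Sigma_k([z_1,y_2])$ is supported neither on simple nor on almost-simple root groups, so by the support description in the preceding Lemma one has $\chixmt([z_1,y_2])=1$. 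Your decomposition via $\LL^{4\rkn-k-1}$ would work with the analogous commutator check (the same $\Sigma_k$ analysis applies, now with even-offset simple roots), but you need to actually perform it rather than defer it.
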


\begin{proof}
By \eqref{eq: k4n-k} it suffices to prove parts \ref{claim: normalL} and \ref{claim: charL} for $k<2\rkn$.
Part~\ref{claim: normalL} follow from Remark~\ref{r: Lshape}.
For part~\ref{claim: equivL}, write $m=n\bar n_1$ with $n\in N_\GLnn$ and $\bar n_1\in N_\GLnn^t$, then
$\chixmt(m\bar n)=\chixmt(n)\psi_{N_\GLnn}^{-1} (\bar n_1^t\bar n^t)=\chixmt(m)\chixmt(\bar n)$.

Let $x_1,x_2\in \hat\LL^{k+1}$.
Write $x_j=y_jz_j$ with $y_j\in \check\LL^k$ and $z_j\in \overline\LL^k$ for $j=1,2$. Then by part~\ref{claim: equivL}
\begin{multline*}
\chixmt(x_1x_2)=\chixmt(y_1z_1y_2z_2)=\chixmt(y_1[z_1,y_2]y_2z_1z_2)\\=\chixmt(y_1[z_1,y_2]y_2)\chixmt(z_1)\chixmt(z_2),
\end{multline*}
where $[z_1,y_2]=z_1y_2z_1^{-1}y_2^{-1}\in\check\LL^k$ by part~\ref{claim: normalL}.
Recall that $\chixmt$ is a character on $\check\LL^k$. By Remark~\ref{r: Lshape}, $\Sigma_k([z_1,y_2])$ is not supported
on any simple root or almost simple root of $N_\GLnn$. Thus  $\chixmt([z_1,y_2])=1$. We get from part~\ref{claim: equivL}:
\begin{multline*}
\chixmt(y_1[z_1,y_2]y_2)\chixmt(z_1)\chixmt(z_2)=\chixmt(y_1)\chixmt(y_2)\chixmt(z_1)\chixmt(z_2)\\=
\chixmt(x_1)\chixmt(x_2).
\end{multline*}
This shows that $\chixmt$ is a character on $\hat\LL^{k+1}$. It is also clear that $\chixmt$ is a character on $\hat\LL^0=N_\GLnn$.
Part \ref{claim: chitomd} is clear from the definition of $\chixmt$.
\end{proof}

Let $\acutee\M^k=\acutee\LL^k\cap \GLnn^\fix$ and define $\check\M^k$, $\M^k$ and $\overline\M^k$ similarly.
Note that for $k<2\rkn$
\begin{equation}\label{eq: compML}
\M^k= \LL^k\,; \,\overline\M^k=\overline\LL^k.
\end{equation}

Let $k<\rkn$. If $n\in\M^{4\rkn-2k-2}$ and $\bar n\in\oddL^k=\overline\M^{2k+1}$, then by Lemma~\ref{lem: Lproperty}
$$[\bar n,n]\in [\overline\M^{2k+1},\hat\M^{2k+1}]\cap[\hat\M^{2k+2},\M^{4\rkn-2k-2}]=\hat\M^{2k+1}\cap\hat\M^{2k+2}=\check\M^{2k+1}.$$
A simple calculation shows that
\begin{equation}\label{eq: chixmtequiv}
\chixmt([\bar n,n])=\psi(\sprod{\bar n}{n}_{2k+1,t})
\end{equation}
where $\sprod{\cdot}{\cdot}_{2k+1,t}$ are non-degenerate pairings
\[
\sprod{\bar n}{n}_{2k+1,t}:=-\sum_{j=1}^k\bar n_{2\rkn+1-j, 2\rkn-2k+j-1}(n_{2\rkn-2k+j-1,2\rkn-j}+d_jn_{2\rkn-2k+j-2,2\rkn+1-j}).
\]
Here $d_1=0$ and $d_2,\dots,d_k$ are immaterial constants (depending continuously on $t$).
Note that with respect to the standard bases of $\M^{4\rkn-2k-2}$ and $\oddL^k$, the determinant of $\sprod{\cdot}{\cdot}_{2k+1,t}$ is $\pm1$.

Similarly for $\bar n\in\evenL^k_{\circ}$ and $n\in  \M^{4\rkn-2k-1}$,
by Lemma~\ref{lem: Lproperty} $$[\bar n,n]\in [\overline\M^{2k},\hat\M^{2k}]\cap[\hat\M^{2k+1},\M^{4\rkn-2k-1}]\subset\hat\M^{2k}\cap\hat\M^{2k+1}=\check\M^{2k},$$ and
\begin{equation}\label{eq: chixmtequiv2}
\chixmt([\bar n,n])=\psi(\sprod{\bar n}{n}_{2k,t})
\end{equation}
where $\sprod{\cdot}{\cdot}_{2k,t}$ are non-degenerate pairings
\[
\sprod{\bar n}{n}_{2k,t}:=
-\sum_{j=1}^{k-1}\bar n_{2\rkn+1-j, 2\rkn-2k+j}(n_{2\rkn-2k+j,2\rkn-j}+d'_jn_{2\rkn-2k+j-1,2\rkn-j+1})
\]
where again $d'_1=0$ and $d'_2,\dots,d'_{k-1}$ are some immaterial constants (depending continuously on $t$).

\subsection{Proof of Lemma~\ref{L: Mreg2}}
First note that the condition \eqref{p: K0inv} is clearly preserved under
the operations $\phi\rightarrow\avg^{reg}_{\evenL^k, \psi_{\vs}}\circ\phi$, $\phi\rightarrow\avgt{\Imk_k}[f_k]\circ\phi$,
$\phi\rightarrow\avg_{\oddL^k}\circ\phi$, $k=1,\dots,\rkn-1$, if defined.

\begin{enumerate}
\item Suppose that $\phi(t,m)\in\evencls_k(T_\GLnn\times \GLnn)^{K_0}$ with $k=1,\dots,\rkn-1$.
\begin{enumerate}

\item
It follows from property \eqref{p: switcht} that
\begin{equation}\label{eq: switchavg}
\avgt{\Imk_k}[f_k] \circ \phi(t,m)=\int_{\Imk_k} \phi(t,t'm)\chi_k(t')f_k(t')\ dt'
\end{equation}
for a certain (unimportant) character $\chi_k$ of $\Imk_k$.
Thus by \eqref{p: K0inv}, the integrand on the right hand side is in $\cc{\Imk_k}$. In particular,
$\avgt{\Imk_k}[f_k]\circ \phi(t,m)$ is a convergent integral.

Property \eqref{p: switcht} for $\avgt{\Imk_k}[f_k] \circ \phi$ follows immediately.
On the other hand, property \eqref{p: equim} for $\avgt{\Imk_k}[f_k] \circ \phi$ follows immediately from
Lemma~\ref{lem: Lproperty} part \eqref{claim: chitomd}.
Thus $\avgt{\Imk_k}[f_k] \circ \phi\in\evencls_k(T_\GLnn\times \GLnn)^{K_0}$.

\item Let $\phi_1(t,m)=\avgt{\Imk_k}[f_k] \circ \phi(t,m)\in\evencls_k(T_\GLnn\times \GLnn)^{K_0}$.
By \eqref{eq: chixmtequiv2} and \eqref{p: equim} for any $c\in\evenL^k_{\circ}$ and $d\in\M^{4\rkn-2k-1}\subset\hat\M^{2k}$ we have
\[
\phi_1(t, cd)=\phi_1(t,[c,d]dc)=\psi(\sprod{c}{d}_{2k,t})\chixmt(d)\phi_1(t,c).
\]
Applying Lemma~\ref{L: elemC} we conclude that $c\mapsto\phi_1(t,c)$ is compactly supported
on $\evenL^k_{\circ}$ uniformly in $t\in\Omega_{T_\GLnn}$.
To show that the conditions of Lemma~\ref{lem: genstab} are satisfied for the definition of
$\avg^{reg}_{\evenL^k, \psi_{\vs}}\circ\avgt{\Imk_k}[f_k]\circ\phi$, we only need to observe that by \eqref{eq: switchavg},
$\avgt{\Imk_k}[f_k]\circ\phi(t;\cdot)$ is left invariant under some $T_c\in \csgr(\Imk_k)$.
Lemma~\ref{lem: genstab} also gives the existence of $\Omega\in \csgr(\evenL^k)$ with the required properties.

We show that $\phi_2:=\avg^{reg}_{\evenL^k,\psi_{\vs}}\circ \phi_1\in\oddcls_k(T_\GLnn\times \GLnn)^{K_0}$.
Recall that $\hat\M^{2k+1}=\evenL^k\ltimes\check\M^{2k}$.
We check property \eqref{p: equim} for $\phi_2$ separately for $n\in\evenL^k$ and  $n\in \check\M^{2k}$.
In the first case, \eqref{p: equim} follows from the fact that $\psi_{\vs}=\chixmt^{-1}$ on $\evenL^k$.
In the second case note that for any $c\in\evenL^k$ we have $[c,n]\in \check\M^{2k}$ and $\chixmt([c,n])=1$
(as $c,n\in\hat\M^{2k+1}$) by Lemma~\ref{lem: Lproperty}.
Thus $\phi_1(t,cnm)= \phi_1(t,ncm)$ and
\[
\phi_2(t,nm)=\regint_{\evenL^k}\phi_1(t,cnm)\psi_{\vs}(c)\,dc=
\regint_{\evenL^k}\phi_1(t,ncm)\psi_{\vs}(c)\,dc=\chixmt(n)\phi_2(t,m).
\]
Finally, the property \eqref{p: switcht} follows from the fact that
$\prod_{i=k+1}^{\rkn-1}\Imk_i$ stabilizes the character $\psi_{\vs}$ on $\evenL^k$.
\end{enumerate}

\item Suppose that $\phi(t,m)\in\oddcls_k(T_\GLnn\times \GLnn)^{K_0}$ with $k=1,\dots,\rkn-1$.

Recall that for $c\in\oddL^k$ and $d\in  \M^{4\rkn-2k-2}\subset\hat\M^{2k+1}$, we have
$[c,d]\in \hat\M^{2k+1}$ and by \eqref{eq: chixmtequiv}, $\chixmt([c,d])=\psi(\sprod{c}{d}_{2k+1,t})$.
Thus for such $c$ and $d$ we have $\phi(t, cd)=\psi(\sprod{c}{d}_{2k+1,t})\chixmt(d)\phi(t,c)$.
It follows from Lemma~\ref{L: elemC} that $c\mapsto\phi(t,c)$ is compactly supported uniformly for $t\in\Omega_{T_\GLnn}$.

We check that $\phi'(t,m):=\avg_{\oddL^k}\circ \phi(t,m)\in\evencls_{k+1}(T_\GLnn\times \GLnn)^{K_0}$.
By Lemma~\ref{lem: Lproperty}, $\hat\M^{2k+2}=\oddL^k\ltimes\check\M^{2k+1}$.
Since $\Psi_t$ is trivial on $\oddL^k$, we are left to show that $\phi'(t,nm)=\chixmt(n)\phi'(t,m)$
for $n\in \check\M^{2k+1}$. For $c\in\oddL^k$ and $n\in \check\M^{2k+1}$, as both $c$ and $n$ lie in
$\hat\M^{2k+2}$, we have $[c,n]\in \check\M^{2k+1}$ and $\chixmt([c,n])=1$ by Lemma~\ref{lem: Lproperty}.
Thus $\phi(t,cnm)= \phi(t,ncm)$ and
\[
\phi'(t,nm)=\int_{\oddL^k}\phi(t,cnm)\,dc=\int_{\oddL^k}\phi(t,ncm)\,dc=\chixmt(n)\phi'(t,m).
\]
Condition \eqref{p: switcht} is clear because $T_\GLnn$ normalizes $\oddL^k$.
\end{enumerate}
This concludes the proof of Lemma~\ref{L: Mreg2}.
\section{Model transition for Langlands quotient: III} \label{sec: whitdesc}
Before proving Proposition~\ref{prop: main}, we first turn our attention to model transition from
$\model^{(N,\psi_N)}\LQ{\pi}$ to $\model^{(\alltriangle, \charalltriangle)}\LQ{\pi}$.
We will state the analog of Proposition~\ref{prop: main} in this setting (Proposition~\ref{prop: 14}).
The proofs of the two Propositions are repeated application of Fourier inversion. As the proof of Proposition~\ref{prop: 14} is simpler, we present it first.

For $W\in C^{\smth}(N\bs G,\psi_N)$ define
\[
\tranWAu(W)=\avg_{\subsubUbar}\circ W=\int_{\subsubUbar}W(\bar u\cdot)\ d\bar u.
\]
\index{$\tranWAu$} Recall that by Corollary \ref{cor: suppW} the integrand is compactly supported, hence the integral converges.
Moreover, by $\tranWAu$ is non-vanishing on $\model^{(N,\psi_N)}\LQ{\pi}$
(and in fact on any non-trivial subrepresentation of $\Ind(\WhitML(\pi'))$ for any $\pi'\in\Irr_{\gen}\GLnn$)
\cite[Theorem in \S1.3]{MR1671452} -- cf.~\cite[Remark 4.8]{1401.0198}.

\begin{proposition} \label{prop: 14}
For any $L\in\tclass(H\bs G)$ we have $\tranWAu\circ\tranLW=\tranLA$, namely
\begin{equation} \label{eq: LHS93}
\avg_{\subsubUbar}\circ\avg^{H,reg}_{N,\psi_{N}^{-1}}\circ L=
\avg_{\alltriangle,\charalltriangle^{-1}}^{H,reg}\circ L.
\end{equation}
In particular by Theorem \ref{thm: MWL}, for $\pi\in\Irr_{\temp,\meta}\GLnn$ we have
\[
\tranWAu=\tranLA\circ\tranMWL\text{ on }\model^{(N,\psi_N)}\LQ{\pi}
\]
and hence, $\tranLA$, $\tranLE$ and $\tranLV$ are non-zero on $\model^{(H,1)}\LQ{\pi}$.
\end{proposition}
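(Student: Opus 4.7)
The plan is to reduce \eqref{eq: LHS93} to the case $L\in\cc{H\bs G}$ by a continuity/density argument and then verify it for such $L$ by an iterated Fubini followed by a single application of the root-exchange procedure of Appendix~\ref{a: int}.

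First I would check that both sides of \eqref{eq: LHS93}, evaluated at a fixed $g\in G$, are continuous linear functionals of $L\in\tclass(H\bs G)$. For the right-hand side this is (essentially) contained in the corollary following the definition of $\tranLA$ in \S\ref{sec: Etwon}. For the left-hand side, Lemma~\ref{L: stablemodel} gives that $L\mapsto\tranLW(L)$ is continuous from $\tclass(H\bs G)$ into $C^{\smth}(Z_\Levi N\bs G,\modulus_P^{1/2}\nu^{-1/2}\psi_N)$, and Corollary~\ref{cor: suppW} together with the inclusion $\subsubUbar\subset\bar U^\dag$ shows that $\bar u\mapsto\tranLW(L)(\bar u g)$ is supported on a compact subset of $\subsubUbar$ that is uniform for $L\in\tclass(H\bs G)^{K_0}$; hence the subsequent integration over $\subsubUbar$ is continuous in $L$. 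By the density statement of Lemma~\ref{L: stablemodel} it therefore suffices to verify \eqref{eq: LHS93} for $L\in\cc{H\bs G}$.

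For such $L$ all the regularized integrals collapse to absolutely convergent ordinary ones (Lemma~\ref{L: stableprop} together with Remark~\ref{rem: partint}). Using the decompositions $N=N_\Levi\ltimes U$, $H_N=H_{N_\Levi}\cdot H_U$, and on the other side $\alltriangle=(N_\Levi\ltimes\subsubU)\ltimes\subsubUbar$, $H_\alltriangle=H_N\cdot H_{\subsubUbar}$, both sides of \eqref{eq: LHS93} unwind as iterated integrals. After Fubini and cancellation of the common outer integration $\int_{H_{N_\Levi}\bs N_\Levi}\psi_{N_\Levi}^{-1}(n)\,dn$, the identity reduces to
\[
\int_{\subsubUbar}\int_{H_{U^\circ}\bs U^\circ}L(u'u^\circ\bar u\,g)\,du^\circ\,d\bar u
=\int_{H_{\subsubUbar}\bs\subsubUbar}L(u'\bar u\,g)\,d\bar u
\]
for each fixed $u'\in\subsubU$ and $g\in G$, where $U^\circ$ denotes the abelian complement in the decomposition $U=\subsubU\cdot U^\circ$. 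This last equality is then an instance of the root-exchange procedure of Appendix~\ref{a: int}, applied to the commutator pairing $U^\circ\times\subsubUbar\to G$: composed with the character $\psi_N$ on the $N$-component of the commutator it gives a non-degenerate pairing between $H_{U^\circ}\bs U^\circ$ and $H_{\subsubUbar}\bs\subsubUbar$, and root exchange converts the integration over $U^\circ$ into the integration over $\subsubUbar$ as desired.

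Granting \eqref{eq: LHS93}, the remaining assertions are immediate. By Theorem~\ref{thm: MWL}, $\tranMWL$ is inverse to $\tranLW$ on $\model^{(H,1)}\LQ{\pi}$, so restricting \eqref{eq: LHS93} to this model and composing with $\tranMWL$ yields $\tranLA\circ\tranMWL=\tranWAu$ on $\model^{(N,\psi_N)}\LQ{\pi}$. The Ginzburg--Rallis--Soudry non-vanishing result cited just before the proposition says that $\tranWAu$ is non-zero on $\model^{(N,\psi_N)}\LQ{\pi}$; hence $\tranLA$ is non-zero on $\model^{(H,1)}\LQ{\pi}$. The non-vanishing of $\tranLE$ and $\tranLV$ then follows at once from their descriptions in \S\ref{sec: Etwon} as conjugates of $\tranLA$ by $\few$ and by $x\in H$ respectively, both of which lie in the normalizer of $H$. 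The main technical obstacle lies in the Fourier-inversion step: one must verify the decompositions $U=\subsubU\cdot U^\circ$ and $H_U=H_{\subsubU}\cdot H_{U^\circ}$ and the non-degeneracy of the commutator pairing, so that the root-exchange lemma of Appendix~\ref{a: int} does apply in the stated form.
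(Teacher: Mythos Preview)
Your continuity/density reduction to $L\in\cc{H\bs G}$ is correct and matches the paper. The gap is in the second half: the attempt to strip off the $N_\Levi$ integration and then perform a \emph{single} root exchange between $U^\circ$ and $\subsubUbar$ does not go through.

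First, ``cancellation of the common outer integration'' over $H_{N_\Levi}\bs N_\Levi$ is not a legitimate move: equality of integrals against $\psi_{N_\Levi}^{-1}$ does not force equality of the integrands, so your reduction to the displayed pointwise identity in $u'\in\subsubU$ is a genuine strengthening of what must be shown. More seriously, the root-exchange lemmas of Appendix~\ref{a: int} all require that $C$ normalize $A$ (so that $\tilde A=C\ltimes A$ is a group) and that the commutator $[C,D]$ land in a fixed unipotent subgroup on which the character $\Psi$ is defined. In your setup $C=\subsubUbar\subset\bar U$ and $A$ contains $U$; but $\bar U$ does not normalize $U$ (they are opposite abelian unipotent radicals and together generate $G$), so $\subsubUbar\ltimes U$ is not a group. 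Concretely, for generic $\bar u\in\subsubUbar$ and $u^\circ\in U^\circ$ the commutator $[\bar u,u^\circ]$ has a nontrivial $\bar U$-component and a nontrivial $M$-component outside $N_\Levi$, and there is no ``$N$-component of the commutator'' on which to evaluate $\psi_N$. Thus neither Lemma~\ref{L: Fourier} nor Lemma~\ref{C: Fourier2} applies in one stroke.

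The paper handles this by an iterated procedure through a chain of intermediate unipotent groups $\hat\N^0=N\subset\hat\N^1\subset\dots\subset\hat\N^{2\rkn}=\alltriangle$. One decomposes $\subsubUbar=\prod_{k=0}^{2\rkn-1}\lowub^k$ into one-parameter layers and, at the $k$-th step, exchanges the integration over $\lowub^k$ for the integration over $\upub^{4\rkn-1-k}$ using Lemma~\ref{L: Fourier} (for $k$ odd) or Lemma~\ref{C: Fourier2} (for $k$ even). The point is that at each step $\lowub^k\ltimes\hat\N^k$ \emph{is} a unipotent group, the commutator $[\lowub^k,\upub^{4\rkn-1-k}]$ lands in $\check\N^k\subset\hat\N^k$, and the needed nondegeneracy of the pairing is the explicit formula \eqref{eq: property5}. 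After $2\rkn$ such steps one reaches $\hat\N^{2\rkn}=\alltriangle$ and obtains \eqref{eq: LHS93}. So the iterative structure is essential here; it is precisely what keeps every intermediate object inside a unipotent group where $\chix$ is a well-defined character.
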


\begin{proof}
By the remark above and Lemma \ref{L: stablemodel} the left-hand side of \eqref{eq: LHS93}
is a continuous functional on $\tclass(H\bs G)$
and it is enough to prove the proposition for $L\in \cc{H\bs G}$.

Define for $k=0,\dots,4\rkn-1$
\[
\upub^k:=\{\toU(u):u\in \symspace_{2\rkn}, u_{i,j}=0\text{ unless }2\rkn+i-j= k\}\simeq F^{\min\{[(k+1)/2],[(4\rkn-k+1)/2]\}}
\]
and
\[
\lowub^k:=w_U\upub^k w_U^{-1}\cap \bar U^{\dag}.
\]
\index{$\upub^k$, $\lowub^k$}
Thus, $\subsubUbar=\prod_{i=0}^{2\rkn-1}\lowub^i\subset U^{\dag}$.
For example when $\rkn=4$, the space $\subsubUbar$ consists of the image under $\toUbar$ of the matrices of the form
\[
\left(\begin{smallmatrix}
 &7&6&5&4&3&2&1\\
 & &7&6&5&4&3&2\\
 & & &7&6&5&4&3\\
 & & & &7&6&5&4\\
 & & & & &7&6&5\\
 & & & & & &7&6\\
 & & & & & & &7\\
 & & & & & & &
\end{smallmatrix}\right)
\]
where the digit $i$ corresponds to $\lowub^i$.
For this section, only the groups $\lowub^0,\dots,\lowub^{2\rkn-1}$ are relevant.
The other groups will be relevant for the next section.

For $k=0,\dots,4\rkn-1$ let
\begin{gather*}
\check\N^k=\langle N_\Levi, \upub^1,\dots,\upub^{4\rkn-k-2},\lowub^1,\dots,\lowub^{k-1}\rangle,\\
\hat\N^k=\langle N_\Levi, \upub^1,\dots,\upub^{4\rkn-k-1},\lowub^1,\dots,\lowub^{k-1}\rangle.
\end{gather*}
\index{$\check\N^k$, $\hat\N^k$}
These are unipotent subgroups of $G$.
In fact,
they are subgroups of
\[
\langle N_\Levi, \upub^1,\dots,\upub^{4\rkn-1-k},w_U\upub^1w_U^{-1},\dots,w_U\upub^kw_U^{-1}\rangle
\]
which is the unipotent radical of a semistandard Borel subgroup.
In particular, $\hat\N^{2\rkn}=\alltriangle$.

Let $\alg{X_0}$ be the closed subvariety $\alg{N}\alg{\bar U}$ of $G$. \index{$X_0$}
Thus $X_0=\{n\bar u\,:\, n\in N, \bar u\in\bar U\}$ and $\hat\N^k\subset X_0$ for all $k$.
Define a function $\chix$ on $X_0$ by
\[
\chix(n\bar u):=\psi_N(n)\newchar^{-1}(u),\,n\in N,\bar u\in \bar U.
\]
\index{$\chix$}
Note that $\chix\rest_{\alltriangle}=\charalltriangle$.
(For this section it is sufficient to consider $N\bar U^{\dag}$ instead of $X_0$.)

We write the left-hand side of \eqref{eq: LHS93} as
\begin{equation}\label{eq: iterwhit}
\avg_{\lowub^{2\rkn-1}}\circ
\avg_{\lowub^{2\rkn-2}}\circ\cdots\circ\avg_{\lowub^0}\circ\avg^H_{N,\psi_{N}^{-1}}\circ L.
\end{equation}

We apply Lemmas \ref{L: Fourier} and \ref{C: Fourier2} according to the parity of $k$ with $G_0=G$ , $H_0=H$ and $\chi_0=1$.
We claim that the data $A=\hat\N^k$, $A'=\hat\N^{k+1}$, $B=\check\N^k$, $C=\lowub^k$, $D=\upub^{4\rkn-1-k}$ and $\Psi=\chix^{-1}$
satisfy the conditions of Lemma \ref{L: Fourier} (resp., Lemma \ref{C: Fourier2}) if $k$ is odd (resp., even).
For that we need the following properties.

\begin{enumerate}
\item \label{claim: normalG} For $k=0,\dots,4\rkn-2$, $\hat\N^k=\upub^{4\rkn-1-k}\ltimes\check\N^k$,
$\hat\N^{k+1}=\lowub^k\ltimes\check\N^k$; both are subgroups of $\lowub^k\ltimes\hat\N^k$.
\item \label{claim: charG} For $k=0,\dots,4\rkn-1$, $\chix\rest_{\hat\N^k}$ is a character of $\hat\N^k$.
\item \label{claim: equivG} For $g\in X_0$, $c\in \bar U^{\dag}$, $\chix(gc)=\chix(g)$.
\item \label{claim: HinterG} For $k=0,\dots,2\rkn-1$, $\upub^{2k+1},\lowub^{2k+1}\subset H$ while
$\upub^{2k}\cap H=\lowub^{2k}\cap H=1$. $\chix$  is trivial on $H\cap\hat\N^k$.
\item \label{claim: prop5} For $k=1,\dots,2\rkn-1$ we have $[\lowub^{2k-1},\upub^{4\rkn-2k}]\subset \check\N^{2k-1}$.
Moreover, if $\bar u=(\bar u_{i,j})\in \lowub^{2k-1}$ and  $v=(v_{i,j})\in \upub^{4\rkn-2k}$ then
$\chix([\bar u,v])^{-1}=\psi(\sprod{\bar u}{v}'_{2k-1})$ where
\begin{equation}\label{eq: property5}
\sprod{\bar u}{v}'_{2k-1}=\begin{cases}
\sum_{l=1}^{2k-1} \bar u_{2\rkn+l,2\rkn-2k+l+1}v_{2\rkn-2k+l,2\rkn+l} &k=1,\dots,\rkn,\\
\sum_{l=1}^{4\rkn-2k-1} \bar u_{2k+l,l+1}v_{l,2k+l} &k=\rkn+1,\dots,2\rkn-1.
\end{cases}
 \end{equation}
\end{enumerate}
Identifying $\lowub^{2k-1}$ and $\upub^{4\rkn-2k}$ with $F^m$ where $m=\min\{k,2\rkn-k\}$ using the standard coordinates,
the determinant of the pairing $\sprod{\cdot}{\cdot}'_{2k-1}$ is $\pm 1$.

The proof of the first three properties is similar to that of Lemma~\ref{lem: Lproperty}.
The fourth property is clear from the definitions of $\upub^k$, $\lowub^k$ and $\chix$. An explicit calculation gives the last property.

It follows that when $k$ is even, $\lowub^k\cap H=1$ and $\upub^{4\rkn-1-k}\subset H$,
and the conditions of Lemma \ref{C: Fourier2} are satisfied.
When $k$ is odd,
the conditions of Lemma \ref{L: Fourier} are satisfied, with $\iota:C\rightarrow\widehat{D}$ given by
$\bar u\mapsto \psi^{-1}(\sprod{\bar u}{\cdot}'_{k})$ (see Remark \ref{rem: condFourier}). In both cases we get the relations
\begin{equation} \label{eq: iter2}
\avg_{\lowub^{k}}\circ\avg^H_{\hat{\N}^{k},\chix^{-1}} \circ L=\avg^H_{\hat{\N}^{k+1},\chix^{-1}}\circ L, \ \ k=0,\dots,4\rkn-2
\end{equation}
and by induction (since $N=\hat\N^0$ and $\chix\rest_N=\psi_N$)
\begin{equation} \label{eq: iter22}
\avg_{\lowub^{k}}\circ\dots\circ\avg_{\lowub^1}\circ\avg^H_{N,\psi_N^{-1}} \circ L=\avg^H_{\hat{\N}^{k+1},\chix^{-1}}\circ L.
\end{equation}
For $k=2\rkn-1$ we obtain the statement of the proposition.
\end{proof}

By \eqref{eq: relationetwon} we also conclude:

\begin{corollary}
For any $L\in\tclass^{\pm1}(H\bs G)$ we have
\[
\tranWAu\circ\tranLW(L)=
(\pm1)^{\rkn}\regint_{H_\Etwon\bs \Etwon}L(u\few   )\charEtwon^{-1}(u)\,du.
\]
\end{corollary}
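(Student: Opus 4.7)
The plan is to derive the corollary as an immediate consequence of Proposition \ref{prop: 14} combined with the already-established identity \eqref{eq: relationetwon}; no new analytic input is required.

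First I would invoke Proposition \ref{prop: 14}, which gives the identity of functions on $G$
\[
\tranWAu\circ\tranLW(L)(g)=\tranLA(L)(g)=\regint_{H_\alltriangle\bs\alltriangle}L(ug)\charalltriangle^{-1}(u)\,du
\]
for every $L\in\tclass(H\bs G)$. In particular this applies to $L\in\tclass^{\pm1}(H\bs G)$, which is a subspace of $\tclass(H\bs G)$ stable under right translation in $G$, so it suffices to establish the stated equality at $g=e$; the general $g$ version then follows by replacing $L$ by its right translate.

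Second, I would apply the identity \eqref{eq: relationetwon}, which expresses the regularized integral over $(H_\alltriangle\bs\alltriangle,\charalltriangle)$ in terms of the corresponding regularized integral over $(H_\Etwon\bs\Etwon,\charEtwon)$ weighted by $\few$:
\[
\regint_{H_\alltriangle\bs\alltriangle}L(u)\charalltriangle^{-1}(u)\,du=(\pm1)^{\rkn}\regint_{H_\Etwon\bs\Etwon}L(u\few)\charEtwon^{-1}(u)\,du
\]
for $L\in\tclass^{\pm1}(H\bs G)$. Chaining the two equalities gives the corollary.

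There is no real obstacle: the only subtlety worth flagging is the sign $(\pm1)^{\rkn}$ in \eqref{eq: relationetwon}, which encodes whether $\few$ lies in $H$ (when $\rkn$ is even) or in the nontrivial coset $\wnnM H\subset N_G(H)$ (when $\rkn$ is odd), together with the $\tclass^{\pm1}$-equivariance $L(\wnnM g)=\pm L(g)$. Since both $\tranLA$ and the regularized integral over $\Etwon$ are already known to be well-defined continuous functionals on $\tclass(H\bs G)$ (see Lemma \ref{L: stablemodel} and Corollary \ref{cor: uniqueEtwon}), the change of variables $u\mapsto\few^{-1}u\few$ relating the two integrals is justified without further analysis, and the proof is complete.
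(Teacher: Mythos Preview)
Your proposal is correct and matches the paper's own argument: the corollary is stated immediately after Proposition \ref{prop: 14} with the one-line justification ``By \eqref{eq: relationetwon} we also conclude,'' which is exactly the two-step chaining you describe. Your additional remarks on the sign and well-definedness are accurate elaborations of what the paper leaves implicit.
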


\section{Proof of Proposition~\ref{prop: main}} \label{sec: prf of prop: main}

\subsection{Functional identity: I}
For $L\in\tclass(H\bs G)$ let $\MTF(L;t,m)$ be the function on $T_{\GLnn}\times \GLnn$ given by
\begin{equation}\label{eq: defMTF}
\MTF(L;t,m):=\FW_{\tranLW(L)}(t,m)
=\regint_{\bar{U}}\big(\regint_{H_N\bs N}L(n\levi(t)\bar{v}\levi(m))\psi_N^{-1}(n)
\,dn\big)\newchar(\bar v)\ d\bar v.
\end{equation}
\index{$\MTF$}
Recall from Lemma~\ref{L: Ureg} that the above regularized integral over $\bar U$ is defined.

Define the character $\psi_{N_\GLnn,t}$ on $N_\GLnn$ by
\[
\psi_{N_\GLnn,t}(n):=\psi_{N_\GLnn}(t n t^{-1})
\]
and let $\chinu$ be the function on $N_\Levi\ltimes\bar U$ given by
\[
\chinu(\levi(n)\bar u):=\psi_{N_\GLnn,t}(n)\newchar(\bar u)^{-1},\ \ n\in N_\GLnn,\ \bar u\in \bar U.
\]
\index{$\psi_{N_\GLnn,t}$, $\chinu$}
Recall the subgroup $\GLnn^\fix$ defined in \eqref{eq: defM1} and $N_\GLnn^\fix=N_\GLnn\cap \GLnn^\fix$.
Set $T_\Levi^\fix=\levi(T_\GLnn^\fix)$ and $N_\Levi^\fix=\levi(N_\GLnn^\fix)$.
Note that the restriction of $\chinu$ to $N_\Levi^\fix\ltimes\bar U$ is a character which is
$(T,N_\Levi^\fix\ltimes\bar U,(N_\Levi^\fix\cap N_\Levi^{\der})\ltimes\bar U^\dag)$-generic (or alternatively,
$(T_\Levi^\fix,N_\Levi^\fix\ltimes\bar U,(N_\Levi^\fix\cap N_\Levi^{\der})\ltimes\bar U)$-generic)
and trivial on $(N_\Levi^\fix\ltimes\bar U)\cap H$.

We can therefore define (using Remark~\ref{rem: partint} and Lemma \ref{lem: genstab})
\[
\regint_{(N_\Levi^\fix\ltimes\bar U)\cap H\bs (N_\Levi^\fix\ltimes\bar U)}L(ng)\chinu^{-1}(n)\ dn
\]
for $L\in\tclass(H\bs G)$.

By Lemma~\ref{lem: genstab}, the functionals
$\regint_{(N_\Levi^\fix\ltimes\bar U)\cap H\bs (N_\Levi^\fix\ltimes\bar U)}L(n)\chinu^{-1}(n)\ dn$
on $\tclass(H\bs G)$ are uniformly continuous for $t$ in a compact set.

\begin{lemma}\label{L: NUresult}
For any $L\in\tclass(H\bs G)$ we have for $t\in T_\GLnn$:
\begin{equation}\label{eq: NUresult}
\MTF(L;t,m)=\factor(t)
(\avg^{H,reg}_{N_\Levi^\fix\ltimes\bar U,\chinu^{-1}}\circ L)(\levi(m)).
\end{equation}
\end{lemma}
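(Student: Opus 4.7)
The strategy is to unfold the definition
\[
\MTF(L;t,m)=\regint_{\bar U}\regint_{H_N\bs N}L(n\levi(t)\bar v\levi(m))\psi_N^{-1}(n)\,dn\,\newchar(\bar v)\,d\bar v
\]
and, through coordinated changes of variable and iterated Fourier inversion (root exchange), collapse the combined $\bar U$- and $N$-integration to the single integration over $(N_\Levi^\fix\ltimes\bar U)\cap H\bs N_\Levi^\fix\ltimes\bar U$ against $\chinu^{-1}$ on the right-hand side. The various Jacobian contributions assemble into the scalar $\factor(t)$.

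First, using \eqref{eq: HNNnHNM}, I write $\regint_{H_N\bs N}=\regint_{H_{N_\Levi}\bs N_\Levi}\int_{H_U\bs U}$ and decompose $N_\Levi=N_\Levi^1\rtimes N_\Levi^\fix$, where $N_\Levi^1$ denotes the abelian normal subgroup of $N_\Levi$ consisting of the elements $\levi(I_{2\rkn}+\sum_{i<2\rkn}c_i\one_{i,2\rkn})$, i.e.\ matrices differing from the identity only in the last column. I then perform three coordinated conjugation substitutions by $\levi(t)$---on the $U$-variable, on the $N_\Levi^1$-variable, and on the $\bar U$-variable---which bring all the $\levi(t)$-factors together, transform $\psi_N^{-1}\rest_{N_\Levi^\fix}$ into $\psi_{N_\GLnn,t}^{-1}$ (matching the $N_\Levi^\fix$-restriction of $\chinu^{-1}$), and whose combined Jacobians, together with the character rescaling induced on $\bar U_{1,1}$, assemble into $\factor(t)=\abs{t_1}^{-\rkn}\modulus_B^{1/2}(\levi(t))$.

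The crucial step is an iterated Fourier inversion in the spirit of Appendix~\ref{a: int} and of the proof of Proposition~\ref{prop: 14}. The ``extra'' integrations over $N_\Levi^1$ and $U$, present on the left-hand side but not on the right-hand side, must be collapsed against appropriate portions of the $\bar U$-integration via the non-degenerate commutator pairings $[N_\Levi^1,\bar U]\subset U$ and $[U,\bar U]\subset\Levi\supset N_\Levi$. Concretely, one performs a Bruhat-style rewriting $u\bar v=\bar v'm'u'$ in the $U\cdot\bar U$ block and changes variables $(u,\bar v)\mapsto(u',\bar v')$; the resulting Fourier delta-kernels collapse $u'$ and the $N_\Levi^1$-coordinate together, leaving precisely the $N_\Levi^\fix\ltimes\bar U$-integration with character $\chinu^{-1}$.

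The main obstacle is the rigorous bookkeeping of the regularizations: the intermediate integrals are not absolutely convergent in general and must be interpreted through the stable-integral formalism of \S\ref{sec: stable}, with the support results from Lemma~\ref{L: Ureg} and Corollary~\ref{cor: suppW} justifying each Fourier-inversion step. By continuity in $t$ of the regularized functionals (uniform on compact subsets of $T_\GLnn$), it suffices to verify the identity on a dense class of $L\in\tclass(H\bs G)$ where all intermediate integrals converge absolutely, reducing the proof to an essentially formal calculation.
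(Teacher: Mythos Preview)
Your overall strategy---reduce to $L\in\cc{H\bs G}$ by continuity, use iterated root exchange (Fourier inversion) in the sense of Appendix~\ref{a: int}, and obtain $\factor(t)$ as a Jacobian from conjugation by $\levi(t)$---is correct and matches the paper's approach. However, the specific mechanism you describe does not work as written.

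First, the commutator containments you invoke are both false. Since $\Levi$ normalizes $\bar U$, we have $[N_\Levi^1,\bar U]\subset\bar U$, not $U$; and $[U,\bar U]$ is certainly not contained in $\Levi$ (it is not even contained in $P$ or $\bar P$). Second, the ``Bruhat-style rewriting $u\bar v=\bar v'm'u'$'' you propose is only defined on an open cell and is not what is used; the paper never attempts such a global decomposition.

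The paper's proof instead proceeds in three clearly separated stages:
\begin{enumerate}
\item An iterated root exchange, reusing \eqref{eq: iter22} (from the proof of Proposition~\ref{prop: 14}) for $k=4\rkn-2$, swaps the $U$-part of the $N$-integration for $\bar U^{\dag}$, converting $\avg^H_{N,\psi_N^{-1}}$ into $\avg^H_{\hat\N^{4\rkn-1},\chix^{-1}}$ with $\hat\N^{4\rkn-1}=N_\Levi\ltimes\bar U^{\dag}$.
\item Conjugation by $\levi(t)$ (using $T_\GLnn\subset H_\GLnn$, hence $L(\levi(t)\cdot)=L(\cdot)$) converts $\chix$ into $\chinu$ and produces the scalar $\factor(t)$.
\item A second, new iterated root exchange over the one-parameter groups $\bar U_{k+1,1}$ (the first-column coordinates of $\bar U$, which are exactly those missing from $\bar U^{\dag}$) swaps each $N^\Levi_{2\rkn-k,2\rkn}\subset N_\Levi^1$ for $\bar U_{k+1,1}$. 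The relevant commutator $[\bar U_{2k,1},N^\Levi_{2\rkn-2k+1,2\rkn}]$ lands in $\bar U_{1,1}\subset\bar U$ and is detected by $\newchar$. This converts $N_\Levi\ltimes\bar U^{\dag}$ into $N_\Levi^\fix\ltimes\bar U$.
\end{enumerate}
So rather than a single $U\bar U$ manipulation, the argument consists of two distinct root-exchange sequences, with the $\levi(t)$-conjugation sandwiched between them. Your sketch has the right shape but the wrong pairings; if you rewrite it along these lines it will go through.
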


\begin{proof}
Once again, it is enough to prove it for $L\in \cc{H\bs G}$.
It is convenient to introduce the notation $\eva{x}\circ f(\cdot):=f(x\cdot)$. \index{$\eva{x}$}
From the definition \eqref{def: regint} of $\regint_{\bar{U}}$ we get
\begin{equation} \label{eq: MTFe}
\MTF(L;t,m)= (\avg^{\bar U_{1,1}\bar U^{\dag}}_{\bar U,\newchar}\circ\avg^{reg, \bar U^{\dag}}_{\bar U^{\dag}\bar U_{1,1},\newchar}\circ
\avg_{\bar{U}^{\dag}}\circ\eva{\levi(t)}\circ\avg^H_{N, \psi_N^{-1}}\circ L)(\levi(m)).
\end{equation}
For $k=1,\dots,2\rkn$, let $\bar U_{k,1}$ be the one-parameter subgroup $\{\toUbar(\lambda\cdot\one_{k,1}^{\symspace_{2\rkn}})\}$
of $\bar U$. \index{$\bar U_{k,1}$}
The integration over $(\bar U_{1,1}\bar U^{\dag})\bs \bar U$ can be written as an
iterated integral $\avg_{\bar U_{2\rkn,1}}\circ\ldots\circ\avg_{\bar U_{2,1}}$.
Recall the one-parameter root subgroups $N^\GLnn_{i,j}$, $N^\Levi_{i,j}=\levi(N^\GLnn_{i,j})$, $1\le i<j\le 2\rkn$.
For convenience we also set $N^\GLnn_{0,2\rkn}=N^\GLnn_{2\rkn,2\rkn}=N^\Levi_{0,2\rkn}=N^\Levi_{2\rkn,2\rkn}=1$.
For $k=0,\dots,2\rkn$, let
\begin{gather*}
\check\Nc^k=\langle \bar U^{\dag},\, N_\Levi^\fix,\, \bar U_{1,1},\dots,\bar U_{k,1},\,\,N^\Levi_{1,2\rkn},\dots,N^\Levi_{2\rkn-k-1,2\rkn}\rangle,\\
\hat\Nc^k=\langle \bar U^{\dag},\, N_\Levi^\fix,\, \bar U_{1,1},\dots,\bar U_{k,1},\,\,N^\Levi_{1,2\rkn},\dots,N^\Levi_{2\rkn-k,2\rkn}\rangle.
\end{gather*}
In particular,
\[
\hat\Nc^0=N_\Levi\ltimes\bar U^{\dag}=\hat\N^{4\rkn-1},\,\hat\Nc^{2\rkn}=N_\Levi^\fix\ltimes\bar U.
\]
We will prove the following statements (for $t=\diag(t_1,\dots,t_{2\rkn})$):
\begin{subequations}
\begin{gather}
\label{eq: barU_0}
\avg_{\bar{U}^{\dag}}\circ\eva{\levi(t)}\circ\avg^H_{N,\psi_N^{-1}}\circ
L=\abs{t_1}^{-2\rkn}\abs{\det t}^{2\rkn}\eva{\levi(t)}\circ\avg^H_{\hat\N^{4\rkn-1}, \chix^{-1}}\circ L,\\
\label{eq: barU_11}
\eva{\levi(t)}\circ\avg^H_{\hat\N^{4\rkn-1}, \chix^{-1}}\circ L=
\abs{t_1}^{\rkn}\abs{\det t}^{-2\rkn}\modulus_B^{\frac12}(\levi(t))\avg^H_{\hat\N^{4\rkn-1}, \chinu^{-1}}\circ L,\\
\label{eq: barU_k1}
\begin{split}&\avg_{\bar U_{k,1},\chinu^{-1}}\circ\dots\circ\avg_{\bar U_{2,1},\chinu^{-1}}\circ
\avg^{reg,\bar U^{\dag}}_{\bar U^{\dag}\bar U_{1,1},\newchar}\circ \avg^H_{\hat\N^{4\rkn-1},\chinu^{-1}}\circ L=\\
&\avg_{\bar U_{k,1},\chinu^{-1}}\circ\dots\circ
\avg_{\bar U_{1,1},\chinu^{-1}}\circ \avg^H_{\hat\N^{4\rkn-1},\chinu^{-1}}\circ L=
\avg^H_{\hat{\Nc}^k,\chinu^{-1}}\circ L,\ \ k=1,\dots,2\rkn.
\end{split}
\end{gather}
\end{subequations}
The Lemma would follow by combining the relations \eqref{eq: MTFe}, \eqref{eq: barU_0}, \eqref{eq: barU_11} and \eqref{eq: barU_k1} for $k=2\rkn$.

To prove \eqref{eq: barU_0} note that $\levi(t)$ normalizes $\bar U^{\dag}$, and the modulus function of $\levi(t)$
acting on $\bar U^{\dag}$ is $\abs{t_1}^{2\rkn}\abs{\det t}^{-2\rkn}$. Thus,
\[
\avg_{\bar{U}^{\dag}}\circ\eva{\levi(t)}=\abs{t_1}^{-2\rkn}\abs{\det t}^{2\rkn}\eva{\levi(t)}\circ\avg_{\bar{U}^{\dag}}.
\]
Recall the groups $\lowub^k$ defined in the proof of Proposition \ref{prop: 14}. Then
$\bar U^{\dag}=\prod_{k=0}^{4\rkn-2}\lowub^k$. Thus $\avg_{\bar{U}^{\dag}}=\avg_{\lowub^{4\rkn-2}}\circ\cdots\circ\avg_{\lowub^0}$.
As in the proof of Proposition \ref{prop: 14} we apply \eqref{eq: iter22}, this time for $k=4\rkn-2$, to get:
\[
\avg_{\bar U^{\dag}} \circ\avg^H_{N,\psi_N^{-1}}\circ L=\avg^H_{\hat\N^{4\rkn-1},\chix^{-1}}\circ L.
\]
The relation \eqref{eq: barU_0} follows.

To prove \eqref{eq: barU_11} we use the change of variable $n\mapsto \levi(t) n \levi(t)^{-1}$
and the fact that $L(\levi(t) g)=L(g)$.
An explicit computation shows that the Jacobian is $\abs{t_1}^{\rkn}\abs{\det t}^{-2\rkn}\modulus_B^{\frac12}(\levi(t))$.
We obtain
\begin{multline*}
\eva{\levi(t)}\circ\avg^H_{\hat\N^{4\rkn-1}, \chix^{-1}}\circ L(g)=
\int_{\hat\N^{4\rkn-1}\cap H\bs\hat\N^{4\rkn-1}}L(n\levi(t)g)\chix^{-1}(n)\,dn
\\=\abs{t_1}^{\rkn}\abs{\det t}^{-2\rkn}\modulus_B^{\frac12}(\levi(t))\int_{\hat\N^{4\rkn-1}\cap H\bs\hat\N^{4\rkn-1}}
L(ng)\chix^{-1}(\levi(t) n\levi(t)^{-1})\,dn.
\end{multline*}
Note that $\chinu\rest_{\hat\N^{4\rkn-1}}=\chix(\levi(t)\cdot \levi(t)^{-1})$.

To prove \eqref{eq: barU_k1} we will apply Lemma \ref{L: Fourier} or Lemma \ref{C: Fourier2},
according to the parity of $k$ with $G_0=G$, $H_0=H$ and $\chi_0=1$.
We claim the data $A=\hat\Nc^k$, $A'=\hat\Nc^{k+1}$, $B=\check\Nc^k$, $C=\bar U_{k+1,1}$, $D=N^\Levi_{2\rkn-k,2\rkn}$ and
$\Psi=\chinu^{-1}$ satisfy the conditions of Lemma \ref{L: Fourier} (resp. \ref{C: Fourier2})
if $k$ is odd (resp. even). This follows from the following properties.

\begin{enumerate}
\item \label{claim: normalNU} For $k=0,\dots,2\rkn-1$, $\hat\Nc^k=N^\Levi_{2\rkn-k,2\rkn}\ltimes\check\Nc^k$ and
$\hat\Nc^{k+1}=\bar U_{k+1,1}\ltimes \check\Nc^k$, both being subgroups of $\bar U_{k+1,1}\ltimes\hat\Nc^k$.
\item \label{claim: equivNU} For $g\in \bar N_\Levi\ltimes U$, $\bar u\in \bar U$, $\chinu(g\bar u)=\chinu(g)\chinu(\bar u)$.
\item \label{claim: charNU} For $k=0,\dots,2\rkn$, $\chinu\rest_{\hat\Nc^k}$ is a character of $\hat\Nc^k$ which is trivial on $H\cap \hat\Nc^k$.
\item \label{claim: HNU} When $k$ is odd, $\bar U_{k,1}\cap H=N^\Levi_{2\rkn-k,2\rkn}\cap H=1$.
When $k$ is even, $\bar U_{k,1}\subset H$ and $N^\Levi_{2\rkn-k,2\rkn}\subset H$.
\item For $k=1,\dots,\rkn$ we have $[\bar U_{2k,1},N^\Levi_{2\rkn-2k+1,2\rkn}]=\bar U_{1,1}\subset \check\Nc^{2k-1}$.
Moreover, if $x=\toUbar(\alpha\one_{2k,1}^{\symspace_{2\rkn}})\in \bar U_{2k,1}$ and
$y=\levi(I_{2\rkn}+\beta\epsilon_{2\rkn-2k+1,2\rkn})\in N^\Levi_{2\rkn-2k+1,2\rkn}$ then $\chinu([x,y])^{-1}=\psi(\alpha\beta)$.
\end{enumerate}

Indeed, the proof of the first three properties is similar to that of Lemma~\ref{lem: Lproperty}.
We omit the details.
The fourth property is clear, while the last one follows from a simple computation.

Applying Lemma \ref{L: Fourier} (resp. \ref{C: Fourier2}) if $k$ is odd (resp. even) we obtain the relation
\[
\avg_{\bar U_{k+1,1},\chinu^{-1}}\circ\avg^H_{\hat{\Nc}^k,\chinu^{-1}}\circ f=\avg^H_{\hat{\Nc}^{k+1},\chinu^{-1}}\circ f,\ \ k=0,\dots,2\rkn-1
\]
for any $f\in C_c^{\infty}(H\bs G)$. In particular, for $k=0$,
$\avg_{\bar U_{1,1},\chinu^{-1}}\circ\phi$ is absolutely convergent where $\phi=\avg^H_{\hat\N^{4\rkn-1},\chinu^{-1}}\circ f$. Thus by Lemma~\ref{L: stableprop} and the fact  $\chinu\rest_{\bar U}=\newchar^{-1}$, we get the first equation in \eqref{eq: barU_k1}.
The second relation in \eqref{eq: barU_k1} follows from the above identity by induction on $k$.
\end{proof}

\subsection{Functional identity: II}
We now prove Proposition~\ref{prop: main}.
Recall that for $t\in T_\GLnn$:
\begin{multline}\label{eq: defMTEE}
\tranWAv^{t,\auxf}\circ\tranLW(L)(e)=
\\\factor(t)^{-1}\avg_{\oddL^{\rkn-1}}\circ\avg^{reg}_{\evenL^{\rkn-1},\psi_{\vs}}
\circ \avgt{\Imk_{\rkn-1}}[f_{\rkn-1}]\circ\cdots
\circ\avg_{\oddL^1}\circ\avg^{reg}_{\evenL^1, \psi_{\vs}}\circ\avgt{\Imk_1}[f_1]\circ\MTF(L;t,e).
\end{multline}

We continue to assume that $L\in \cc{H\bs G}$.
Observe $\chix(\levi(tnt^{-1}))=\chixmt(n)$ for $n\in N_\GLnn$ and $\chix(\bar u)=\newchar^{-1}(\bar u)$ for $\bar u\in \bar U$.
Thus by Lemma~\ref{L: NUresult},
$$
\MTF(L;t,m)=\factor(t)\int_{(H\cap N_\Levi^\fix)\bs N_\Levi^\fix}\int_{(\bar U\cap H)\bs\bar U}
L(\bar u n\levi(m))\newchar(\bar u)\chixmt^{-1}( n)\ d\bar u\ dn.
$$

Let $H_\GLnn^\fix=H_\GLnn\cap \GLnn^\fix$. \index{$H_\GLnn^\fix$} Define
\begin{equation}\label{eq: defLt}
\Lambda(L;m):= \avg^H_{\bar U,\newchar}\circ L(\levi(m))=
\int_{H_{\bar U}\bs \bar U}L(\bar u \levi(m))\newchar( \bar u)\,d\bar u
\end{equation}
so that
\begin{equation}\label{eq: useLt}
\MTF(L;t,m)= \factor(t)\avg^{H_\GLnn^\fix}_{N_{\GLnn}^\fix,\chixmt^{-1}}\circ \Lambda(L;m).
\end{equation}

\begin{lemma} \label{lem: fixcompact}
The restriction of $\Lambda(L;\cdot)$ to $\GLnn^\fix$ belongs to $\cc{H_\GLnn^\fix\bs\GLnn^\fix,\chi_0}$
where $\chi_0=(\modulus_{H_P}\modulus_P^{-1})\circ\levi\rest_{H_\GLnn^\fix}$.
\end{lemma}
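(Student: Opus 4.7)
The plan is to verify three properties of $\Lambda(L;\cdot)$ for $L\in \cc{H\bs G}$: (a) left $(H_\GLnn^\fix,\chi_0)$-equivariance on $\GLnn^\fix$, (b) smoothness, and (c) compact support on $\GLnn^\fix$ modulo $H_\GLnn^\fix$.

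Property (a) comes from a direct change of variables. Fix $h\in H_\GLnn^\fix$ and substitute $\bar u = \levi(h)\bar u'\levi(h)^{-1}$ in the integral \eqref{eq: defLt}. Since $\levi(h)\in H_\Levi$ normalizes both $\bar U$ and $H_{\bar U}$, this substitution descends to an automorphism of $H_{\bar U}\bs\bar U$; the Haar measure on $\bar U$ scales by $\modulus_P(\levi(h))^{-1}$ (the inverse modulus of $\bar P$) and that on $H_{\bar U}$ by $\modulus_{H_P}(\levi(h))^{-1}$, so the quotient measure scales by $\modulus_{H_P}(\levi(h))\modulus_P(\levi(h))^{-1}=\chi_0(h)$. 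Combining this with the $H$-invariance of $L$ and the fact that $\levi(h)$ preserves $\newchar$ under conjugation (the very definition \eqref{eq: defM1} of $\GLnn^\fix$), one obtains $\Lambda(L;hm) = \chi_0(h)\Lambda(L;m)$. Smoothness (b) is inherited from the smoothness and fixed right $K_0$-invariance of $L$, once it is known (from the proof of (c)) that for $m$ in a compact subset of $\GLnn^\fix$, the integrand $\bar u\mapsto L(\bar u\levi(m))$ has support of uniformly compact image in $H_{\bar U}\bs\bar U$.

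The heart of the argument is (c). The key algebraic input is the identity
\[
H\cap \bar U\Levi = H_{\bar U}\cdot H_\Levi,
\]
which follows by writing $\bar u\levi(m) = \sm{m}{0}{xm}{m^*}$ with $\bar u = \toUbar(x)$ and demanding commutation with $\levi(E) = \diag(E,E^*)$: the diagonal blocks force $EmE^{-1}=m$, hence $m\in H_\GLnn$, while the off-diagonal block reduces (using $Em=mE$) to $E^*x=xE$, hence $\bar u\in H_{\bar U}$. The identity in turn makes the natural map
\[
(H_{\bar U}\bs\bar U)\times(H_\GLnn^\fix\bs\GLnn^\fix)\longrightarrow H\bs G,\qquad (\bar u, m)\mapsto H\bar u\levi(m)
\]
injective, and a standard argument (using that $\bar U\cdot\Levi$ is locally closed in $G$, being the preimage of the big open cell in $P\bs G$) promotes it to a topological embedding with locally closed image. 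Since $L$ has compact support modulo $H$, its pullback along this embedding is compactly supported modulo $H_{\bar U}\times H_\GLnn^\fix$, so projecting to the second factor yields that $\Lambda(L;\cdot)\rest_{\GLnn^\fix}$ has support compact modulo $H_\GLnn^\fix$.

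The main obstacle is the upgrade from injectivity to a topological embedding: the set-theoretic bijection is immediate, but properness of the map (so that a bounded image in $H\bs G$ pulls back to a bounded set in the product) requires a short additional argument, which we will supply via an Iwasawa-type decomposition of $\GLnn^\fix$ tracking how a bounded $H$-coset forces both the $\bar U$ and $\GLnn^\fix$ components to be bounded modulo their respective $H$-stabilizers.
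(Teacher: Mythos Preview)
Your equivariance and smoothness arguments are fine and more detailed than the paper's (which just says ``Clearly, $\Lambda(L;\cdot)$ is $H_\GLnn^\fix$-equivariant''). The difference lies in the compact support step.

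The paper factors the problem in two stages rather than attacking the embedding $(H_{\bar U}\bs\bar U)\times(H_\GLnn^\fix\bs\GLnn^\fix)\to H\bs G$ directly. First, since $H_{\bar P}\bs H$ is compact (it is a flag variety of $H$), the restriction of $L$ to $\bar P$ is compactly supported modulo $H_{\bar P}$: if $L(p)\ne0$ then $p\in H\Omega$ for a fixed compact $\Omega$, and writing $H=H_{\bar P}K_H$ with $K_H$ compact forces $p\in H_{\bar P}(\bar P\cap K_H\Omega)$. Projecting to the Levi factor gives that $\Lambda(L;\cdot)$ is compactly supported on all of $\GLnn$ modulo $H_\GLnn$ (not just modulo $H_\GLnn^\fix$). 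Second, the paper shows that $H_\GLnn^\fix\bs\GLnn^\fix$ is closed in $H_\GLnn\bs\GLnn$ by explicitly computing the $\GLnn^\fix$-conjugation orbit of $E=\diag(1,-1,\dots,1,-1)$: it consists of matrices $\left(\begin{smallmatrix}1&x&0\\0&A&0\\0&y&-1\end{smallmatrix}\right)$ with $A$ an involution of the right characteristic polynomial and $x(A+I)=y(A-I)=0$, a visibly closed subset. Closedness then immediately upgrades ``compact modulo $H_\GLnn$'' to ``compact modulo $H_\GLnn^\fix$'' upon restriction.

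Your direct route is not wrong, but the properness step you flag as the ``main obstacle'' is precisely what this two-stage factorization handles cleanly; the compactness of $H_{\bar P}\bs H$ is the missing ingredient that makes the first stage immediate, and the explicit orbit computation replaces your promised Iwasawa-type argument. If you want to complete your argument as stated, the simplest way is exactly this factorization.
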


\begin{proof}
 Clearly, $\Lambda(L;\cdot)$ is $H_\GLnn^\fix$-equivariant.
Since $H_{\bar P}\bs H$ is compact, the restriction of $L$ to $\bar P$ is compactly supported modulo $H_{\bar P}$.
It follows that $\Lambda(L;\cdot)$ is compactly supported modulo $H_\Levi$.
It remains to observe that $H_\GLnn^\fix\bs\GLnn^\fix$ is closed in $H_\GLnn\bs\GLnn$, or equivalently,
that the orbit of $E=\diag(1,-1,\dots,1,-1)$ under conjugation by $\GLnn^\fix$ is closed. However, it is easy to see that this orbit
consists of the matrices of the form
$\left(\begin{smallmatrix}1&x&0\\0&A&0\\0&y&-1\end{smallmatrix}\right)$
where $A\in\GL_{2(\rkn-1)}$ is an involution with characteristic polynomial $(\lambda^2-1)^{\rkn-1}$ and $x,y$ are row vectors
of size $2(\rkn-1)$ such that $x(A+I_{2(\rkn-1)})=y(A-I_{2(\rkn-1)})=0$. This is clearly a closed subset of $\GLnn$.
In particular, $H_\GLnn^\fix\bs\GLnn^\fix$ is an affine variety.
\end{proof}

For $f_k\in \cc{\Imk_k}$, we let $f'_k(t)=f_k(t)\factor(t)$. From \eqref{eq: defMTEE} and \eqref{eq: useLt},
$\tranWAv^{t,\auxf}\circ\tranLW(L)(e)$ is equal to
\[
\avg_{\oddL^{\rkn-1}}\circ\avg^{reg}_{\evenL^{\rkn-1},\psi_{\vs}}
\circ \avgt{\Imk_{\rkn-1}}[f'_{\rkn-1}]\circ\cdots
\circ\avg_{\oddL^1}\circ\avg^{reg}_{\evenL^1, \psi_{\vs}}\circ\avgt{\Imk_1}[f'_1]\circ
\avg^{H_\GLnn^\fix}_{N_{\GLnn}^\fix,\chixmt^{-1}}\circ\Lambda(L;e).
\]
(Note that $\avg^{H_\GLnn^\fix}_{N_{\GLnn}^\fix,\chixmt^{-1}}\circ\Lambda(L;m)$ is a function of two variables $(t,m)$
through the dependence of $\chixmt$ on $t$.)
We will show that for any $L\in \cc{H\bs G}$ and $t\in\soment$ we have
\begin{multline}\label{eq: Efinal}
\avg_{\oddL^{\rkn-1}}\circ\avg_{\evenL^{\rkn-1},\psi_{\vs}}\circ\avgt{\Imk_{\rkn-1}}[f'_{\rkn-1}]\circ\cdots
\circ\avg_{\oddL^1}\circ\avg_{\evenL^1, \psi_{\vs}}\circ\avgt{\Imk_1}[f'_1]\circ
\avg^{H_\GLnn^\fix}_{N_{\GLnn}^\fix,\psi_{N_\GLnn,t}^{-1}}\circ\Lambda(L;e)\\=c(\auxf)
 \avg^{H_\GLnn^\fix}_{\zigzag, \charzigzag^{-1}}\circ \Lambda(L;e).
\end{multline}
Taking into account the definition \eqref{eq: defLt} of $\Lambda(L;\cdot)$ and \eqref{eq:charEtwon} we get
\[
\tranWAv^{t,\auxf}\circ\tranLW(L)= c(\auxf)\avg^H_{\Etwon,\charEtwon^{-1}}\circ L( e)
=c(\auxf)\int_{H_\Etwon\bs \Etwon}L(u)\charEtwon^{-1}(u)\,du
\]
which is the statement of Proposition~\ref{prop: main}.

It remains to prove \eqref{eq: Efinal}.
Recall the definition of $\hat\M^k$ in \S\ref{sec: defLk}.
By Lemma~\ref{lem: Lproperty}, $\chixmt\rest_{\hat\M^k}$ is a character of $\hat\M^k$.
Also recall that $\hat\M^{2\rkn}=\zigzag$ and $\chixmt\rest_{\zigzag}=\charzigzag$ for $t\in\soment$.
Thus from Lemma \ref{lem: fixcompact}, \eqref{eq: Efinal} is the case $k=\rkn-1$ of the last part of the Lemma below.

\begin{lemma}\label{L: iter4}
Assume $f\in \cc{H_{\GLnn}^\fix\bs\GLnn^\fix,\chi_0}$. Then for any $k=1,\dots,\rkn-1$ and $t\in T_\GLnn$:
\begin{enumerate}
\item $\avg^{H_\GLnn^\fix}_{\hat{\M}^{2k},\chixmt^{-1}}\circ f$ is invariant under $t\mapsto tt'$ where $t'\in\Imk_k$.
\item $\avg_{\evenL^k,\psi_{\vs}}\circ \avg^{H_\GLnn^\fix}_{\hat{\M}^{2k},\chixmt^{-1}}\circ f=
\avg^{H_\GLnn^\fix}_{\hat{\M}^{2k+1},\chixmt^{-1}}\circ f$.
\item $\avg_{\oddL^k}\circ \avg^{H_\GLnn^\fix}_{\hat{\M}^{2k+1},\chixmt^{-1}}\circ f=
\avg^{H_\GLnn^\fix}_{\hat{\M}^{2k+2},\chixmt^{-1}}\circ f$.
\end{enumerate}
Thus,
\begin{multline*}
\avg_{\oddL^k}\circ\avg_{\evenL^k,\psi_{\vs}}\circ\avgt{\Imk_k}[f'_k]\circ\cdots
\circ\avg_{\oddL^1}\circ\avg_{\evenL^1, \psi_{\vs}}\circ\avgt{\Imk_1}[f'_1]\circ
\avg^{H_\GLnn^\fix}_{N_{\GLnn}^\fix,\chixmt^{-1}}\circ f\\=\big(\prod_{i=1}^k\int_{\Imk_i}f'_i(t')\ dt'\big)
\avg^{H_\GLnn^\fix}_{\hat{\M}^{2k+2},\chixmt^{-1}}\circ f.
\end{multline*}
\end{lemma}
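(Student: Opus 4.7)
The plan is to prove the three numbered claims first and then deduce the final iterated identity by an induction on $k$ that interleaves them. Claims (2) and (3) are of Fourier-inversion type and mirror the arguments in Proposition~\ref{prop: 14} and Lemma~\ref{L: NUresult}, while claim (1) is a direct symmetry observation based on Lemma~\ref{lem: Lproperty}~part~\eqref{claim: chitomd}.

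For claim~(1), note that $\Imk_k$ normalizes $\hat{\M}^{2k}$ (indeed it normalizes all the groups in sight) and preserves Haar measure there. By Lemma~\ref{lem: Lproperty}~part~\eqref{claim: chitomd}, $\chixmt\rest_{\hat{\M}^{2k}}$ coincides with $\altpsi_{X_\GLnn,tt'}\rest_{\hat{\M}^{2k}}$ for $t'\in\Imk_k$, since the condition $t'_i=t'_{i+1}$ for $i<2\rkn-k$ is precisely the defining condition of $\Imk_k$. Combined with $f(mt')=f(m)$ (after adjusting by $\chi_0(t')$ which equals $1$ on $\Imk_k\subset Z_\Levi$) and a change of variables $n\mapsto t'nt'^{-1}$ inside the integral over $\hat{\M}^{2k}$, the invariance follows.

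For claims~(2) and~(3), the key identities are the factorizations from Lemma~\ref{lem: Lproperty}~\eqref{claim: normalL} combined with \eqref{eq: compML}:
\[
\hat{\M}^{2k}=\M^{4\rkn-2k-1}\ltimes\check{\M}^{2k},\quad
\hat{\M}^{2k+1}=\evenL^k\ltimes\check{\M}^{2k}=\M^{4\rkn-2k-2}\ltimes\check{\M}^{2k+1},\quad
\hat{\M}^{2k+2}=\oddL^k\ltimes\check{\M}^{2k+1}.
\]
For (2), I would apply Lemma~\ref{C: Fourier2} (since $\psi_{\vs}$ is trivial on $\evenL^k\cap H_\GLnn^\fix$) with $G_0=\GLnn^\fix$, $H_0=H_\GLnn^\fix$, $A=\hat{\M}^{2k}$, $A'=\hat{\M}^{2k+1}$, $B=\check{\M}^{2k}$, $C=\evenL^k$, $D=\M^{4\rkn-2k-1}$, and the identification $\iota:C\to\widehat{D}$ given by \eqref{eq: chixmtequiv2} (which is non-degenerate after checking the pairing $\sprod{\cdot}{\cdot}_{2k,t}$ has $\pm1$ determinant). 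For (3), I would similarly apply Lemma~\ref{L: Fourier} with $A=\hat{\M}^{2k+1}$, $A'=\hat{\M}^{2k+2}$, $B=\check{\M}^{2k+1}$, $C=\oddL^k$, $D=\M^{4\rkn-2k-2}$ and the pairing \eqref{eq: chixmtequiv}. In each case what needs to be checked is: that the character $\chixmt$ (resp.\ $\chixmt^{-1}$) factors appropriately through the semidirect product, the vanishing of the relevant intersections with $H_\GLnn^\fix$, and non-degeneracy of the pairing—all of which reduce to Lemma~\ref{lem: Lproperty} and the explicit formulas \eqref{eq: chixmtequiv}, \eqref{eq: chixmtequiv2}.

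For the final displayed identity, I would induct on $k$, the base case $k=0$ being tautological ($\hat{\M}^2=N_\GLnn^\fix$). At each step $k\to k+1$, apply claim~(1) to pull out the $\Imk_k$-average as the scalar $\int_{\Imk_k}f'_k(t')\,dt'$, then apply claim~(2) to pass from $\hat{\M}^{2k}$ to $\hat{\M}^{2k+1}$, and claim~(3) to pass from $\hat{\M}^{2k+1}$ to $\hat{\M}^{2k+2}$. The only delicate point is order: the averaging operators $\avgt{\Imk_k}[f'_k]$ and $\avg_{\evenL^k,\psi_{\vs}}$ must be moved past the outer ones, but by induction the inductive integrand already lies in the appropriate equivariance class so that the invariance provided by (1) applies directly, and the convergence of the (non-regularized) integrals in (2),(3) follows from the compact support of $f$ preserved at each stage (by Lemma~\ref{lem: fixcompact} and the support-propagation built into Lemmas~\ref{L: Fourier}, \ref{C: Fourier2}). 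The main technical obstacle I anticipate is verifying that after each step the resulting function still has compact support modulo $H_\GLnn^\fix$ (so that the next application of Fourier inversion is justified without regularization)—for this I would invoke Lemma~\ref{L: elemC} combined with the non-degeneracy of the pairings $\sprod{\cdot}{\cdot}_{2k,t}$ and $\sprod{\cdot}{\cdot}_{2k+1,t}$, exactly as in the proof of Lemma~\ref{L: Mreg2}.
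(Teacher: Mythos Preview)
Your approach is essentially the same as the paper's, and the overall structure is correct: part~(1) via Lemma~\ref{lem: Lproperty}~part~\eqref{claim: chitomd}, parts~(2)--(3) via Lemmas~\ref{C: Fourier2} and~\ref{L: Fourier} respectively with exactly the data you list, and the final identity by induction starting from $\hat{\M}^2=N_\GLnn^\fix$.

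A few minor corrections. For part~(1), the function depends on $t$ only through $\chixmt$, so the invariance is immediate from $\chixmt\rest_{\hat\M^{2k}}=\altpsi_{X_\GLnn,tt'}\rest_{\hat\M^{2k}}$; your mention of $f(mt')=f(m)$ and a change of variables is unnecessary (and $\Imk_k$ is not contained in $Z_\Levi$). For part~(2), Lemma~\ref{C: Fourier2} does not use an identification $\iota:C\to\widehat D$; the relevant hypothesis is instead $D=\M^{4\rkn-2k-1}\subset H_\GLnn^\fix$ (which holds since $4\rkn-2k-1$ is odd, cf.~\eqref{eq: HMcap}) and $A'\cap H_0=B\cap H_0$ (which holds since $\evenL^k\cap H_\GLnn^\fix=1$). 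The pairing \eqref{eq: chixmtequiv2} is not needed here. Finally, your concern about support preservation at each inductive step is misplaced: parts~(2) and~(3) are identities of the form $\avg_X\circ\avg^{H_\GLnn^\fix}_{\hat\M^{j},\chixmt^{-1}}\circ f=\avg^{H_\GLnn^\fix}_{\hat\M^{j+1},\chixmt^{-1}}\circ f$ for the \emph{same} $f\in\cc{H_\GLnn^\fix\bs\GLnn^\fix,\chi_0}$ throughout, so Lemmas~\ref{L: Fourier} and~\ref{C: Fourier2} apply directly at every stage with no intermediate support argument required.
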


\begin{proof}
The first part follows from Lemma \ref{lem: Lproperty} part \ref{claim: chitomd}.
For the second and third parts we will apply Lemmas \ref{C: Fourier2} and \ref{L: Fourier} respectively
with $G_0=\GLnn^\fix$, $H_0=H_\GLnn^\fix$ and $\chi_0$ as above.
Observe that for $k=1,\dots,\rkn-1$
\begin{equation}\label{eq: HMcap}
\oddL^{k-1},\M^{2k-1}\subset H_{\GLnn}^\fix,\,\,\,
\evenL^k\cap H_{\GLnn}^\fix=\M^{2k}\cap H_{\GLnn}^\fix=1.
\end{equation}
We claim that the data  $A=\hat\M^{2k}$, $A'=\hat\M^{2k+1}$, $B=\check\M^{2k}$, $C=\evenL^k$, $D= \M^{4\rkn-1-2k}$ and $\Psi=\chixmt$
satisfy the conditions in Lemma~\ref{C: Fourier2}.
This follows from Lemma~\ref{lem: Lproperty}.
Thus, the second part follows from Lemma~\ref{C: Fourier2}.

Similarly, for the third part, we apply Lemma~\ref{L: Fourier} to the data
$A=\hat\M^{2k+1}$, $A'=\hat\M^{2k+2}$, $B=\check\M^{2k+1}$, $C=\oddL^k$, $D= \M^{4\rkn-2-2k}$ and $\Psi=\chixmt$.
In this case by \eqref{eq: chixmtequiv}, $\bar n\mapsto \psi(\sprod{\bar n}{\cdot}_{2k+1,t})$ gives an isomorphism between $C$ and
$\widehat{D}$ preserving Haar measure.

Finally, the last part follows immediately  by induction on $k$.
Note that $\hat{\M}^2=N_{\GLnn}^\fix$.
\end{proof}

This concludes the proof of Proposition~\ref{prop: main}.

\section{Variants and complementary results} \label{sec: Mfactor}

We finish the paper with some auxiliary results
(Lemma \ref{lem: spcltrans} and Corollaries \ref{cor: Mreg} and \ref{cor: fornextpaper} below) which will be used in a sequel of this paper.
They are variants of the results of the previous sections and are proved in a similar way.
The results do not seem to fit a general pattern and are hard to motivate by themselves.
The only reason to include them here is that they are simple modifications of earlier results.

\subsection{}
Recall that $\mira_{\GLnn}'$ is the second mirabolic subgroup of $\GLnn$ consisting of matrices whose first column is $(1,0,\dots,0)$.
Let $T'_{\GLnn}=\{\diag(I_\rkn,t):t\in\GL_{\rkn}\text{ diagonal}\}$.

Consider the subspace $\spclWM$ of $\cc{N_\GLnn\bs\mira_{\GLnn}',\psi_{N_\GLnn}}$
consisting of the $W$'s such that\footnote{This condition is not automatic since the set $N_\GLnn\cdot T'_\GLnn\cdot\vs$ is not closed.}
$W\rest_{T'_\GLnn\ltimes \vs}\in \cc{T'_\GLnn\ltimes \vs}$.
For $W\in \spclWM$ define
\[
\tranwa(W)= \avg_{\vs,\psi_{\vs}}\circ\avg_{T'_\GLnn,\factor^{-1}\abs{\det}^{\rkn}}\circ W.
\]
Note that $\tranwa(W)\in C^{\smth}(\zigzag\bs\mira_{\GLnn}',\charzigzag)$ since $T'_{\GLnn}$ stabilizes the character
$\charzigzag\rest_{\zigzag\cap N_\GLnn}=\psi_{N_\GLnn}\rest_{\zigzag\cap N_\GLnn}$.
Recall that by Lemma~\ref{L: cuspidal} (or rather, its analogue for $\mira_{\GLnn}'$) if $W\in \cc{N_\GLnn\bs\mira_{\GLnn}',\psi_{N_\GLnn}}$
then $\avg^{N_\GLnn}_{\mira_{\GLnn}'\cap H_\GLnn}(W)\in \cc{\mira_{\GLnn}'\cap H_\GLnn\bs \mira_{\GLnn}'}$.
Consider
\[
\avg_{N_\GLnn,\psi_{N_\GLnn}^{-1}}^{H_\GLnn}:\cc{\mira_{\GLnn}'\cap H_\GLnn\bs \mira_{\GLnn}'}\rightarrow
C^{\smth}(N_\GLnn\bs\mira_{\GLnn}',\psi_{N_\GLnn})
\]
and
\[
\avg^{N_\GLnn}_{\mira_{\GLnn}'\cap H_\GLnn}:\cc{N_\GLnn\bs\mira_{\GLnn}',\psi_{N_\GLnn}}\rightarrow \cc{\mira_{\GLnn}'\cap H_\GLnn\bs \mira_{\GLnn}'}.
\]
Then $\avg_{N_\GLnn,\psi_{N_\GLnn}^{-1}}^{H_\GLnn}\circ\avg^{N_\GLnn}_{\mira_{\GLnn}'\cap H_\GLnn}=\id$ by
(the analogue of) Lemma \ref{L: feglnn}.

In this subsection we prove the following functional identity.

\begin{proposition}\label{prop: factor}
We have
\[
\tranwa=\avg^{H_\GLnn}_{\zigzag,\charzigzag^{-1}}\circ\avg^{N_\GLnn}_{\mira_{\GLnn}'\cap H_\GLnn}\text{ on }\spclWM,
\]
or equivalently
\[
\tranwa\circ\avg_{N_\GLnn,\psi_{N_\GLnn}^{-1}}^{H_\GLnn}=\avg^{H_\GLnn}_{\zigzag,\charzigzag^{-1}}\text{ on }
\avg^{N_\GLnn}_{\mira_{\GLnn}'\cap H_\GLnn}(\spclWM).
\]
\end{proposition}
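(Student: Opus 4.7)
The plan is to follow the iterated Fourier-inversion approach developed in Lemma~\ref{L: iter4} and used to prove Proposition~\ref{prop: main}, now in the setting of $\mira_{\GLnn}'$ in place of the full group. First I would use (the analogue for $\mira_{\GLnn}'$ of) Lemma~\ref{L: feglnn} to establish that the two formulations of the identity are equivalent: $\avg_{N_\GLnn,\psi_{N_\GLnn}^{-1}}^{H_\GLnn} \circ \avg^{N_\GLnn}_{\mira_{\GLnn}' \cap H_\GLnn} = \id$ on $\cc{N_\GLnn\bs\mira_{\GLnn}',\psi_{N_\GLnn}}$. Thus it is enough, for $W \in \spclWM$ and $W' := \avg^{N_\GLnn}_{\mira_{\GLnn}' \cap H_\GLnn}(W)$ (which by the analogue of Lemma~\ref{L: cuspidal} lies in $\cc{\mira_{\GLnn}' \cap H_\GLnn \bs \mira_{\GLnn}'}$), to prove
\[
\tranwa(W) = \avg^{H_\GLnn}_{\zigzag, \charzigzag^{-1}}(W').
\]

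Starting from the right-hand side, I would decompose the integration over $\zigzag = \hat{\M}^{2\rkn}$ by stepping through the filtration $\hat{\M}^{2k} \subset \hat{\M}^{2k+1} \subset \hat{\M}^{2k+2}$ for $k = 1,\ldots,\rkn-1$, starting from $N_\GLnn^\fix = \hat{\M}^2$. Each step is a root-exchange provided by Lemma~\ref{C: Fourier2} (with $C = \evenL^k$, $D = \M^{4\rkn-1-2k}$) or Lemma~\ref{L: Fourier} (with $C = \oddL^k$, $D = \M^{4\rkn-2-2k}$), exactly as in the proof of parts (2) and (3) of Lemma~\ref{L: iter4}: the required hypotheses are the compatibility of characters encoded in Lemma~\ref{lem: Lproperty} and the non-degeneracy of the pairings \eqref{eq: chixmtequiv}, \eqref{eq: chixmtequiv2}. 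After all $\rkn - 1$ pairs of steps, the integration over $\zigzag$ is replaced by an integration over $\vs$ against $\psi_\vs$, together with a Whittaker averaging $\avg^{H_\GLnn^\fix}_{N_\GLnn^\fix,\chixmt^{-1}}$ applied to $W'$ and a tower of torus integrations.

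Finally, the Whittaker averaging on $W'$ contracts by (the analogue of) Lemma~\ref{L: feglnn} back to $W$ itself, leaving precisely the integration that appears in $\tranwa(W)$: the torus $T'_\GLnn$ arises by combining the torus factors produced at each step of the Fourier inversion with the one-parameter residual direction in $T'_\GLnn / (T'_\GLnn \cap \GLnn^\fix)$. The weight $\factor(t')^{-1}|\det t'|^\rkn$ records the accumulated Jacobians of the successive conjugations by $\levi(t')$; compare the modulus calculation \eqref{eq: useLt} and the equivariance property \eqref{p: switcht}. The main obstacle will be the careful bookkeeping of these modulus factors to arrive at the claimed weight on the nose; the compact-support condition defining $\spclWM$ guarantees that every integral in the argument converges absolutely, so no regularization enters the picture.
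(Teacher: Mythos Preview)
Your overall strategy—iterated root exchange starting from $f=\avg^{N_\GLnn}_{\mira_{\GLnn}'\cap H_\GLnn}(W)$ and building up to $\avg^{H_\GLnn}_{\zigzag,\charzigzag^{-1}}(f)$—is the paper's strategy as well. But there is a genuine gap in the mechanism you propose for producing the torus integration $\avg_{T'_\GLnn,\factor^{-1}\abs{\det}^\rkn}$, and it is tied to your choice of filtration.

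You invoke the filtration $\hat\M^{2k}$ and the steps of Lemma~\ref{L: iter4}. Those steps, however, contain no torus integration at all: in the proof of Proposition~\ref{prop: main} the operators $\avgt{\Imk_k}[f_k]$ are test-function convolutions and produce only the scalar $c(\auxf)$. Your sentence ``the torus $T'_\GLnn$ arises by combining the torus factors produced at each step of the Fourier inversion with the one-parameter residual direction in $T'_\GLnn/(T'_\GLnn\cap\GLnn^\fix)$'' cannot be right as stated: Lemmas~\ref{L: Fourier} and~\ref{C: Fourier2} exchange unipotent directions only, and $T'_\GLnn/(T'_\GLnn\cap\GLnn^\fix)$ is one-dimensional while $T'_\GLnn$ is $\rkn$-dimensional. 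The paper instead works with the larger filtration $\hat\LL^k$ (not $\hat\M^k$) inside $G_0=\mira_{\GLnn}'$, $H_0=H_\GLnn\cap\mira_{\GLnn}'$, and inserts a genuinely new step: after reaching $\hat\LL^{2k-1}$, the integration $\avg_{T^k,\tilde\Xi_k}$ over the one-parameter torus $T^k$ is converted via Lemma~\ref{L: Fourier2} (the multiplicative Fourier-inversion lemma, with $D=N_{\alpha_{2\rkn-k}}$) into passage to $\LL^{4\rkn-2k}_\circ\ltimes\check\LL^{2k-1}$. This is what absorbs each simple-root direction $N_{\alpha_{2\rkn-k}}$ present in $\hat\LL^{2k-1}$ but absent from $\hat\M^{2k-1}$; the modulus computation \eqref{eq: modmatch} pins down the weight $\tilde\Xi_k$. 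The paper also first rewrites $\tranwa$ as the iterated integral of Lemma~\ref{lem: manipIM} (reordering the $T^k$ across the $\evenL^i,\oddL^i$ and tracking Jacobians), which is where the weights $\tilde\Xi_k$ first appear. Without these two ingredients your argument does not close.
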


We first write $\tranwa$ as an iterated integral.
Let $T^k$ be the image of the co-character
\[
\tau_k(x):=\diag(1,\ldots,1,\overbrace{x,\ldots,x}^{k\text{ times}})\in T_\GLnn,
\]
so that $T'_\GLnn=\prod_{k=1}^\rkn T^k$.

\begin{lemma} \label{lem: manipIM}
For any $W\in\spclWM$,
\begin{equation*}
\tranwa(W)=\avg_{\oddL^{\rkn-1}}\circ\avg_{T^\rkn,\tilde\Xi_\rkn}\circ\avg_{\evenL^{\rkn-1},\psi_{\vs}}\circ
\dots\circ\avg_{\oddL^0}\circ\avg_{T^1,\tilde\Xi_1}\circ\avg_{\evenL^0,\psi_{\vs}}\circ W
\end{equation*}
where $\tilde\Xi_k(\tau_k(x))=\abs{x}^{\rkn k-k^2+k-1}$, $k=1,\dots,\rkn$.
\end{lemma}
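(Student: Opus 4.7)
The plan is to unfold both $\avg_{\vs,\psi_\vs}$ and $\avg_{T'_\GLnn,\factor^{-1}\abs{\det}^\rkn}$ as iterated integrals and then interleave them: each torus factor $\avg_{T^k,\cdot}$ will be pushed to the left past $2k-2$ consecutive unipotent integrations, picking up Jacobian factors that convert its initial character into $\tilde\Xi_k$.

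Using the semidirect decomposition \eqref{eq: defvs}, together with $\psi_\vs|_{\overline\LL^{2m+1}}=1$ and the fact that $\psi_\vs|_{\overline\LL^{2m}}$ is concentrated on the single subdiagonal entry at $(2\rkn-m+1,\, 2\rkn-m)$, one can rewrite
\[
\avg_{\vs,\psi_\vs} = \avg_{\oddL^{\rkn-1}}\circ\avg_{\evenL^{\rkn-1},\psi_\vs}\circ\cdots\circ\avg_{\oddL^0}\circ\avg_{\evenL^0,\psi_\vs}.
\]
Parameterizing $t\in T'_\GLnn$ by $t=\tau_1(x_1)\cdots\tau_\rkn(x_\rkn)$ (so that $t_{\rkn+i}=x_{\rkn-i+1}\cdots x_\rkn$, and Haar measure is preserved because the logarithmic change-of-variables matrix is triangular with unit diagonal) and using $\modulus_B^{1/2}(\levi(t))=\prod_i\abs{t_i}^{2\rkn+1-i}$, a direct computation gives
\[
\avg_{T'_\GLnn,\factor^{-1}\abs{\det}^\rkn} = \avg_{T^\rkn,\eta_\rkn}\circ\cdots\circ\avg_{T^1,\eta_1}, \qquad \eta_k(\tau_k(x))=\abs{x}^{k(2\rkn-k-1)/2}.
\]

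Processing $k=\rkn,\rkn-1,\dots,2$ in decreasing order, I would then push each $\avg_{T^k,\eta_k}$ leftward past exactly $2k-2$ consecutive unipotent integrations, landing it between $\avg_{\oddL^{k-1}}$ and $\avg_{\evenL^{k-1},\psi_\vs}$. Each push uses the Fubini-type identity $\avg_{U,\psi_U}\circ\avg_{T,\eta}=\avg_{T,\eta\cdot\modulus^{-1}_{T;U}}\circ\avg_{U,\psi_U}$, valid (under the left-translation convention $\avg_A f(g)=\int_A f(ag)\,da$) whenever $T$ normalizes $U$ and stabilizes $\psi_U$. The stabilization requirement is checked by noting that the only relevant piece of $\psi_\vs$ lives on the entry $(2\rkn-m+1,\,2\rkn-m)$ of $\evenL^m$, which $\tau_k(x)$ scales by $(\tau_k)_{2\rkn-m+1}/(\tau_k)_{2\rkn-m}=1$ for all $m\ne k$, and only $m\le k-1$ is encountered during the push. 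Since the previously-placed tori $T^{k+1},\dots,T^\rkn$ sit to the left of the target slot of $T^k$, they are not disturbed.

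It remains to verify that the cumulative Jacobian $\prod_{j=1}^{2k-2}\modulus_{T^k;\overline\LL^j}(\tau_k(x))$ equals $\abs{x}^{(k-1)(k-2)/2}$. An entry $(i,j')$ of $\overline\LL^j$ contributes a factor of $\abs{x}$ precisely when $i>2\rkn-k$ and $j'\le 2\rkn-k$; parameterizing $(i,j')=(2\rkn-a,\,2\rkn-k-b)$ and imposing $i>j'$ together with $i+j'\ge 4\rkn-2k+3$ reduces the count to lattice points $(a,b)$ with $a,b\ge 0$ and $a+b\le k-3$, which is $\binom{k-1}{2}$. Dividing $\eta_k(\tau_k(x))$ by $\abs{x}^{(k-1)(k-2)/2}$ therefore yields $\abs{x}^{k(2\rkn-k-1)/2-(k-1)(k-2)/2}=\abs{x}^{\rkn k-k^2+k-1}=\tilde\Xi_k(\tau_k(x))$, as required. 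The main obstacle is really just this combinatorial bookkeeping; convergence of all intermediate integrals is automatic because $W\in\spclWM$ is compactly supported on $T'_\GLnn\ltimes\vs$.
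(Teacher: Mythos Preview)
Your proof is correct and follows essentially the same route as the paper's: decompose $\avg_{\vs,\psi_\vs}$ and $\avg_{T'_\GLnn,\factor^{-1}\abs{\det}^\rkn}$ into iterated integrals, then slide each $\avg_{T^k}$ leftward through the unipotent integrations by conjugation, keeping track of the resulting modulus factors. The paper phrases the commutation identity as $\avg_{\oddL^i}\circ\avg_{T^k,\Xi}=\avg_{T^k,\Xi\Xi'_{k,i}}\circ\avg_{\oddL^i}$ (and similarly for $\evenL^i$) with $\Xi'_{k,i}(t)=\modulus_{T^k;\oddL^i}(t^{-1})$, which is exactly your $\avg_{U,\psi_U}\circ\avg_{T,\eta}=\avg_{T,\eta\,\modulus_{T;U}^{-1}}\circ\avg_{U,\psi_U}$, and it identifies the cumulative Jacobian as the modulus of $t^{-1}$ on $\hat\LL^{2k-1}\cap N_\GLnn^t$, leaving the exponent as ``easy to compute''; your lattice-point count $\binom{k-1}{2}$ makes that computation explicit. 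One small cosmetic point: your count of ``$2k-2$ unipotent integrations'' silently drops the trivial group $\evenL^0=\overline\LL^0$, so the number of formal swaps is really $2k-1$, but this does not affect the Jacobian.
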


\begin{proof}
Using \eqref{eq: defvs} we write the (absolutely convergent) integration $\tranwa(W)$ over $\vs$  as an iterated integral over
 $\evenL^0,\oddL^0,\dots,\evenL^{\rkn-1},\oddL^{\rkn-1}$, and
write the integration over $T'_{\GLnn}$ as an iterated integral over $T^k$ ($k=1,\dots,\rkn$).
Thus, $\tranwa(W)$ equals
\[\avg_{\oddL^{\rkn-1}}\circ\avg_{\evenL^{\rkn-1},\psi_{\vs}}\circ\cdots\circ\avg_{\oddL^0}
\circ\avg_{\evenL^0,\psi_{\vs}}\circ\avg_{T^\rkn,\Xi_\rkn}\circ\cdots\circ\avg_{T^1,\Xi_1}\circ W
\]
where for $t=\tau_k(x)\in T^k$
\[
\Xi_k(t):=\factor^{-1}(t)\abs{\det t}^{\rkn}=\abs{x}^{\rkn k-\frac{k+k^2}2},\ \ k=1,\dots,\rkn.
\]
(We recall that $\psi_{\vs}$ is trivial on $\oddL^i$, $i=0,\dots,\rkn-1$.)

It is easy to see that for $i=0,\dots,k-1$, $T^k$ stabilizes $(\evenL^i,\psi_{\vs}\rest_{\evenL^i})$ under conjugation.
Of course $T^k$ normalizes $\oddL^i$ for all $i$.
For $t\in T^k$ let $\Xi'_{k,i}(t)=\modulus_{T^k;\oddL^i}(t^{-1})$ and $\Xi''_{k,i}(t)=\modulus_{T^k;\evenL^i}(t^{-1})$.
By a change of variable $r\mapsto t^{-1}rt$ we have
\begin{multline*}
\avg_{\oddL^i}\circ\avg_{T^k,\Xi}\circ f(g)=
\int_{\oddL^i}\int_{T^k} f(trg)\Xi(t)\,dt\,dr
\\=\int_{\oddL^i}\int_{T^k} f(rtg)\Xi(t)\Xi'_{k,i}(t)\,dt\,dr=
\avg_{T^k,\Xi\Xi'_{k,i}}\circ\avg_{\oddL^i}\circ f(g)
\end{multline*}
and (for $i<k$)
\begin{multline*}
\avg_{\evenL^i,\psi_{\vs}}\circ\avg_{T^k,\Xi}\circ f(g)=
\int_{\evenL^i}\int_{T^k} f(trg)\psi_{\vs}(r)\Xi(t)\,dt\,dr
\\=\int_{\evenL^i}\int_{T^k} f(rtg)\psi_{\vs}(r)\Xi(t)\Xi''_{k,i}(t)\,dt\,dr=
\avg_{T^k,\Xi\Xi''_{k,i}}\circ\avg_{\evenL^i,\psi_{\vs}}\circ f(g)
\end{multline*}
for any function $f$ on $G$ and $\Xi$ on $T^k$, provided that the double integral converges absolutely.

The lemma follows by making the above changes of variables for all pairs $(i,k)$ with $i<k-1$
(first type) and $i<k$ (second type) and noting that
\[
\tilde\Xi_k=\Xi_k\prod_{i=0}^{k-2}\Xi'_{k,i}\prod_{i=0}^{k-1}\Xi''_{k,i}
\]
on $T^k$. Indeed, $\prod_{i=0}^{k-2}\Xi'_{k,i}(t)\prod_{i=0}^{k-1}\Xi''_{k,i}(t)$ is the modulus function
of $t^{-1}$ acting on the group $\hat\LL^{2k-1}\cap N_{\GLnn}^t$, which is easy to compute.
\end{proof}

Recall from definition \eqref{eq: defchixmt} (with $t=e$) that
\[
\altpsi_e(m\overline m)=\psi_{N_{\GLnn}}(m)\psi_{N_{\GLnn}}(\overline m^t)^{-1},\ \ m\in N_\GLnn,\ \overline m\in N_\GLnn^t.
\]
In particular, $\altpsi_e\rest_{N_\GLnn}=\psi_{N_{\GLnn}}$ and $\altpsi_e\rest_{\vs}=\psi_{\vs}^{-1}$.

We apply the Lemmas in Appendix \ref{a: int} for $G_0=\mira_{\GLnn}'$, $H_0=H_{\GLnn}\cap\mira_{\GLnn}'$ and $\chi_0=1$.
Observe that the modulus function of $\tau_k(x)^{-1}$ acting on $\hat\LL^{2k-1}\cap H_\GLnn\bs \hat\LL^{2k-1}$ by conjugation is
$\abs{x}^{\rkn k-k^2+k}$, i.e.
\begin{equation}\label{eq: modmatch}
\tilde\Xi_k(\tau_k(x)^{-1})=\abs{x}\modulus_{T^k; \hat\LL^{2k-1}\cap H_\GLnn\bs\hat\LL^{2k-1}}(\tau_k(x)).
\end{equation}

Write $ \LL^{2k}= \LL_\circ^{2k}\times N^{\GLnn}_{k,k+1}$ where
\[
\LL_\circ^{2k}=\{I_{2\rkn}+u:\,  u_{i,j}=0\text{   if   }i+j\not=2k+1 \text{  or  }j\leq i+1\}.
\]
\begin{lemma} \label{L: iterIM}
For any $f\in C_c^{\infty}(\mira_{\GLnn}'\cap H_\GLnn\bs \mira_{\GLnn}')$ and $k=1,\dots,\rkn$ we have
\begin{enumerate}
\item \label{claim: evencase}
$\avg_{\evenL^{k-1}, \altpsi_e^{-1}}\circ\avg^{H_\GLnn}_{\hat\LL^{2k-2},\altpsi_e^{-1}} \circ f
=\avg^{H_\GLnn}_{\hat\LL^{2k-1},\altpsi_e^{-1}} \circ f$.
\item \label{claim: tavg}  $\avg_{T^k,\tilde\Xi_k} \circ \avg^{H_\GLnn}_{\hat\LL^{2k-1},\altpsi_e^{-1}} \circ f=
\avg^{H_\GLnn}_{\LL^{4\rkn-2k}_\circ\ltimes\check\LL^{2k-1},\altpsi_e^{-1}} \circ f$.
\item \label{claim: rkodd}
$\avg_{\oddL^{k-1}}\circ \avg^{H_\GLnn}_{\LL^{4\rkn-2k}_\circ\ltimes\check\LL^{2k-1},\altpsi_e^{-1}} \circ f
=\avg^{H_\GLnn}_{\hat\LL^{2k},\altpsi_e^{-1}} \circ f$.
\end{enumerate}
Therefore for $k=0,\dots,\rkn$,
\begin{equation}\label{eq: factoriter}
\avg_{\oddL^{k-1}}\circ\avg_{T^k,\tilde\Xi_k}\circ\avg_{\evenL^{k-1},\psi_{\vs}}\circ
\dots\circ\avg_{\oddL^0}\circ\avg_{T^1,\tilde\Xi_1}\circ\avg_{\evenL^0,\psi_{\vs}}\circ \avg^{H_\GLnn}_{N_\GLnn,\psi_{N_\GLnn}^{-1}}\circ f
=\avg^{H_\GLnn}_{\hat\LL^{2k},\altpsi_e^{-1}} \circ f.
\end{equation}
\end{lemma}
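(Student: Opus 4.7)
The proof follows the iterated Fourier-inversion pattern of earlier arguments in the paper, notably Lemma~\ref{L: iter4} and the proof of Proposition~\ref{prop: 14}. Parts~\eqref{claim: evencase} and \eqref{claim: rkodd} are root-exchange identities that follow from Lemmas~\ref{L: Fourier}--\ref{C: Fourier2} of Appendix~\ref{a: int}; part~\eqref{claim: tavg} combines a change of variable with a one-dimensional Fourier inversion calibrated precisely by~\eqref{eq: modmatch}. The iterated identity~\eqref{eq: factoriter} then follows by induction on $k$.

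For parts~\eqref{claim: evencase} and~\eqref{claim: rkodd}, I would apply the appropriate appendix lemma with $G_0=\mira_{\GLnn}'$, $H_0=\mira_{\GLnn}'\cap H_\GLnn$, and $\Psi=\altpsi_e^{-1}$. For part~\eqref{claim: evencase} the data is $A=\hat\LL^{2k-2}$, $A'=\hat\LL^{2k-1}$, $B=\check\LL^{2k-2}$, $C=\evenL^{k-1}=\overline\LL^{2k-2}$, $D=\LL^{4\rkn-2k+1}$; for part~\eqref{claim: rkodd} it is $A=\LL^{4\rkn-2k}_\circ\ltimes\check\LL^{2k-1}$, $A'=\hat\LL^{2k}=\oddL^{k-1}\ltimes\check\LL^{2k-1}$, $B=\check\LL^{2k-1}$, $C=\oddL^{k-1}=\overline\LL^{2k-1}$, $D=\LL^{4\rkn-2k}_\circ$. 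In both cases the decompositions $A=D\ltimes B$, $A'=C\ltimes B$ come from Lemma~\ref{lem: Lproperty}. The conditions to check are: $\altpsi_e$ is a character on each group in sight and vanishes on the $H_\GLnn$-intersections (by Lemma~\ref{lem: Lproperty} parts~\eqref{claim: charL}--\eqref{claim: chitomd}); exactly one of $C$, $D$ lies inside $H_\GLnn$ while the other meets $H_\GLnn$ trivially, which one reads off from the parity of $i+j$ on the non-zero matrix entries (elements of $H_\GLnn$ being, among unipotents, characterized by the vanishing of entries at positions $(i,j)$ with $i+j$ odd); and the commutator pairing on $C\times D$ is non-degenerate, which is the $t=e$ specialization of~\eqref{eq: chixmtequiv2} for part~\eqref{claim: evencase} and of~\eqref{eq: chixmtequiv} for part~\eqref{claim: rkodd}. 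In part~\eqref{claim: rkodd} the pairing is restricted to the codimension-one subspace $\LL^{4\rkn-2k}_\circ\subset\LL^{4\rkn-2k}$; non-degeneracy survives because the constants $d_j$ ($j\geq 2$) in~\eqref{eq: chixmtequiv} couple every coordinate of $\oddL^{k-1}$ to a non-simple coordinate of $\LL^{4\rkn-2k}_\circ$.

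Part~\eqref{claim: tavg} is the crucial new step. Write $\hat\LL^{2k-1}=N^\GLnn_{2\rkn-k,2\rkn-k+1}\cdot(\LL^{4\rkn-2k}_\circ\ltimes\check\LL^{2k-1})$, with $N^\GLnn_{2\rkn-k,2\rkn-k+1}\cap H_\GLnn=1$. Since $T^k\subset H_\GLnn\cap\mira_{\GLnn}'$, the function $f$ is left $T^k$-invariant, so one can substitute $u\mapsto tvt^{-1}$ in the integration over $(\hat\LL^{2k-1}\cap H_\GLnn)\bs\hat\LL^{2k-1}$; the Jacobian is $\modulus_\star(t):=\modulus_{T^k;\hat\LL^{2k-1}\cap H_\GLnn\bs\hat\LL^{2k-1}}(t)$. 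The only simple-root coordinate in $\hat\LL^{2k-1}$ on which $T^k$-conjugation acts non-trivially is the $\alpha_{2\rkn-k}$-coordinate, i.e.\ exactly the $N^\GLnn_{2\rkn-k,2\rkn-k+1}$-direction. Writing $v=v_1(x)v_0v_3$ in this decomposition (so $x\in F$ parametrizes $N^\GLnn_{2\rkn-k,2\rkn-k+1}$) and observing that $\altpsi_e$ vanishes on $\LL^{4\rkn-2k}_\circ$, the left-hand side of part~\eqref{claim: tavg} becomes
\[
\int_{F^*}\tilde\Xi_k(\tau_k(y))\,\modulus_\star(\tau_k(y))\int\int\int f(v_1(x)v_0v_3 g)\psi^{-1}(x/y)\,\altpsi_e^{-1}(v_3)\,dx\,dv_0\,dv_3\,d^*y.
\]
By~\eqref{eq: modmatch} we have $\tilde\Xi_k(\tau_k(y))\modulus_\star(\tau_k(y))=|y|^{-1}$, so after the substitution $w=y^{-1}$ (which turns $d^*y/|y|$ into $dw$) the joint $(x,y)$-integral becomes the standard Fourier inversion $\int_F\int_{F^*}f(v_1(x)v_0v_3g)\psi^{-1}(xw)\,dw\,dx=f(v_0v_3g)$, leaving the integration over $\LL^{4\rkn-2k}_\circ\ltimes\check\LL^{2k-1}$ against $\altpsi_e^{-1}$, which is the right-hand side.

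The iterated identity~\eqref{eq: factoriter} is then immediate by induction on $k$: the base case $k=0$ is the tautology $\avg^{H_\GLnn}_{N_\GLnn,\psi_{N_\GLnn}^{-1}}\circ f=\avg^{H_\GLnn}_{\hat\LL^0,\altpsi_e^{-1}}\circ f$ (since $\hat\LL^0=N_\GLnn$ and $\altpsi_e\rest_{N_\GLnn}=\psi_{N_\GLnn}$), and the inductive step is the composition of parts~\eqref{claim: evencase}, \eqref{claim: tavg}, and \eqref{claim: rkodd}. The main obstacle is the precise bookkeeping in part~\eqref{claim: tavg}, where~\eqref{eq: modmatch} must be invoked to make the product of the weight $\tilde\Xi_k$, the Jacobian of $T^k$-conjugation, and the Haar measure $d^*y$ on $T^k$ agree with the Lebesgue measure on $F$ required by Fourier inversion.
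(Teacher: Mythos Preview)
Your approach is essentially the paper's: parts~\eqref{claim: evencase} and~\eqref{claim: rkodd} via the root-exchange lemmas of Appendix~\ref{a: int} with exactly the data you list, part~\eqref{claim: tavg} via a one-dimensional Fourier inversion governed by~\eqref{eq: modmatch}, and~\eqref{eq: factoriter} by induction from $\hat\LL^0=N_\GLnn$.

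Two small corrections. First, in part~\eqref{claim: evencase} you have $D=\LL^{4\rkn-2k+1}\subset H_\GLnn$ and $C=\evenL^{k-1}$ with $C\cap H_\GLnn=1$, so the relevant lemma is Lemma~\ref{C: Fourier2}, which is a pure rewriting with no commutator pairing involved; your appeal to~\eqref{eq: chixmtequiv2} here is spurious. Second, in part~\eqref{claim: rkodd} the pairing between $\oddL^{k-1}$ and $\LL^{4\rkn-2k}_\circ$ is not literally the $t=e$ specialization of~\eqref{eq: chixmtequiv} (which pairs $\oddL^{k-1}$ with $\M^{4\rkn-2k}$, a different codimension-one subgroup of $\LL^{4\rkn-2k}$): the paper computes it directly as
\[
\altpsi_e^{-1}([c,d])=\psi\big(\textstyle\sum_{i=1}^{k-1} c_{2\rkn-k+1+i,2\rkn-k-i+1}(d_{2\rkn-k-i,2\rkn-k+1+i}+d_{2\rkn-k-i+1,2\rkn-k+i})\big),
\]
with $d_{2\rkn-k,2\rkn-k+1}=0$, which is lower-triangular unipotent in the obvious coordinates and hence non-degenerate. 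Your justification via the constants $d_j$ is off; the actual mechanism is this bidiagonal structure. For part~\eqref{claim: tavg} the paper simply invokes Lemma~\ref{L: Fourier2} (with $\tau(x)=\tau_k(x^{-1})$), whose proof is precisely the change-of-variable-plus-Fourier-inversion you unwind by hand.
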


\begin{proof}
We first recall (similarly to \eqref{eq: HMcap}) that for all $k$
\begin{equation}\label{eq: HMcap2}
\oddL^k,\LL^{2k-1}\subset H_{\GLnn}\cap\mira_{\GLnn}',\,\, \evenL^k\cap H_{\GLnn}=\LL^{2k}\cap H_{\GLnn}=1.
\end{equation}

For part \ref{claim: evencase}, use Lemma \ref{C: Fourier2}, with the data $A=\hat\LL^{2k-2}$, $A'=\hat\LL^{2k-1}$,
$\tilde A=\acutee\LL^{2k-2}$, $B=\check\LL^{2k-2}$,
$C=\evenL^{k-1}$, $D= \LL^{4\rkn-2k+1}$ and $\Psi=\altpsi_e^{-1}$. From \eqref{eq: HMcap2} and Lemma \ref{lem: Lproperty},
the conditions of Lemma \ref{C: Fourier2} are satisfied.

For part \ref{claim: tavg}, use Lemma \ref{L: Fourier2} with the data $A=\hat\LL^{2k-1}$,
$B=\LL^{4\rkn-2k}_\circ\ltimes\check\LL^{2k-1}$, $C=T^k$,
$D=N^{\GLnn}_{2\rkn-k, 2\rkn-k+1}$ and $\Psi=\altpsi_e^{-1}$.
The conditions in Lemma \ref{L: Fourier2} are easy to verify in our case.
In particular, for $c=\tau_k(x^{-1})\in C$, $d=I_{2\rkn}+y\one_{2\rkn-k,2\rkn-k+1}\in D$ and $b\in B$, we have $\altpsi_e^{-1}(cbdc^{-1})=\altpsi_e^{-1}(bd)\psi((1-x)y)$.
Thus we apply Lemma \ref{L: Fourier2} with $\tau(x)=\tau_k(x^{-1})$ and $\iota(d)=y$. Taking into account \eqref{eq: modmatch}, we get part \ref{claim: tavg}.

For part \ref{claim: rkodd}, use Lemma~\ref{L: Fourier},
with the data $A= \LL^{4\rkn-2k}_\circ\ltimes\check\LL^{2k-1}$,
$A'=\hat\LL^{2k}$, $B=\check\LL^{2k-1}$, $C=\oddL^{k-1}$, $D=\LL^{4\rkn-2k}_\circ$ and $\Psi=\altpsi_e^{-1}$.
It is easy to check that $C$ normalizes $A$.
From \eqref{eq: HMcap2} and Lemma~\ref{lem: Lproperty}, to check the remaining conditions of Lemma~\ref{L: Fourier}
we  observe that for $c\in C$ and $d\in\LL^{4\rkn-2k}_\circ$ we have $[c,d]\in \check\LL^{2k-1}$ with
\[
\altpsi_e^{-1}([c,d])=\psi(\sum_{i=1}^{k-1} c_{2\rkn-k+1+i,2\rkn-k-i+1}(d_{2\rkn-k-i,2\rkn-k+1+i}+d_{2\rkn-k-i+1,2\rkn-k+i}))
\]
(where $d_{2\rkn-k,2\rkn-k+1}=0$). Thus $c\mapsto \altpsi_e^{-1}([c,\cdot])$ gives an isomorphism $C\rightarrow\widehat{D}$ preserving Haar measure.
Applying Lemma~\ref{L: Fourier}, we get part \ref{claim: rkodd}.

Finally, the last part follows by induction since $\hat\LL^0=N_{\GLnn}$.
\end{proof}

To finish the proof of Proposition~\ref{prop: factor}  we apply the above Lemma to $f=\avg^{N_\GLnn}_{\mira_{\GLnn}'\cap H_\GLnn}\circ W$ with $W\in \spclWM$. Then $W=\avg_{N_\GLnn,\psi_{N_\GLnn}^{-1}}^{H_\GLnn}\circ f$.
Since $\hat\LL^{2\rkn}=\zigzag$ and  $\altpsi_e\rest_{\zigzag}=\charzigzag$, the proposition follows from the $k=\rkn$ case of
\eqref{eq: factoriter} and Lemma~\ref{lem: manipIM}.

\label{sec: box}

\subsection{}
Next we note the following simple lemma.
\begin{lemma}\label{lem: switch}
Let $G_1$ and $ G_2$ be two subgroups of $G_3$, $\chi$ a character of $G_1$, $\Xi$ a function of $G_2$.
Let $c\in G_3$ be such that $[b,c]\in G_1$ for all $b\in G_2$. Let $\chi_c$ be the function of $G_2$
given by $\chi_c(b):=\chi([b,c])$. (It is not necessarily a character.)
Then for any $f\in C(G_1\bs G_3, \chi)$ we have $\eva{c}\circ \avg_{G_2,\Xi}\circ f=\avg_{G_2,\Xi\chi_c}\circ\eva{c}\circ f$.
\end{lemma}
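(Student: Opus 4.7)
The plan is to unwind both sides directly from the definitions and reduce the identity to the single algebraic observation that $bc = [b,c] \cdot cb$ together with the left $(G_1,\chi)$-equivariance of $f$.

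First I would write out the left-hand side explicitly: by definition of $\eva{c}$ and $\avg_{G_2,\Xi}$,
\[
\eva{c}\circ\avg_{G_2,\Xi}\circ f(g) = \int_{G_2} f(bcg)\,\Xi(b)\,db.
\]
Similarly, the right-hand side is
\[
\avg_{G_2,\Xi\chi_c}\circ\eva{c}\circ f(g) = \int_{G_2} f(cbg)\,\Xi(b)\chi_c(b)\,db.
\]

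Next, I would use the identity $bc = [b,c]\cdot cb$, valid in any group under the convention $[b,c]=bcb^{-1}c^{-1}$. Since by hypothesis $[b,c]\in G_1$ for every $b\in G_2$, and $f$ is left $(G_1,\chi)$-equivariant, we obtain
\[
f(bcg) = f\bigl([b,c]\cdot cbg\bigr) = \chi([b,c])\,f(cbg) = \chi_c(b)\,f(cbg).
\]
Plugging this into the expression for the left-hand side gives exactly the integrand of the right-hand side, which completes the identification.

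There is essentially no obstacle here beyond checking that the single substitution $f(bcg)=\chi_c(b)f(cbg)$ is legitimate, which is immediate from the hypothesis $[b,c]\in G_1$ and the equivariance property of $f$. Implicit in the statement is that the integrals in question are well-defined (absolutely convergent, say); granting that, the manipulation is purely formal and requires no measure-theoretic input since we are not changing variables.
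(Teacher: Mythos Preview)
Your proof is correct and follows exactly the same approach as the paper: unwind the definitions, use $bc=[b,c]\,cb$ and left $(G_1,\chi)$-equivariance of $f$ to rewrite the integrand. The paper's proof is literally the one-line display $\int_{G_2}f(bc\cdot)\Xi(b)\,db=\int_{G_2}f([b,c]cb\cdot)\Xi(b)\,db=\int_{G_2}f(cb\cdot)\Xi(b)\chi_c(b)\,db$.
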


\begin{proof}
This is just a restatement of the identity
\[
\int_{G_2}f(bc\cdot)\Xi(b)\ db=\int_{G_2} f([b,c] cb\cdot)\Xi(b)\ db=\int_{G_2}f(cb\cdot)\Xi(b)\chi_c(b)\ db.\qedhere
\]
\end{proof}

Let $\diagelemGLn=\diag((-1)^{i-1})\in \GL_n$. Set $E_1:=\diag(\diagelemGLn^*,\diagelemGLn)\in\GLnn$ and $\elemu=\toU(\num E_1)$ with some parameter $\num\in F$.
Let $\elemv=\sm{I_\rkn}{\num\diagelemGLn w_\rkn}{}{I_\rkn}\in\GLnn$ so that $\few\elemu \few^{-1}=\levi(\elemv)$.

\begin{lemma}\label{lem: elemv}
For any $k<2\rkn-1$, $t\in T_{\GLnn}$ and $x\in\overline\LL^k$ we have $[x,\elemv]\in\check\LL^k$ and $\chixmt([x,\elemv])=1$.
Moreover, if $k=2\rkn-1$ then for any $t\in\soment$ and $x\in\overline\LL^k$ we have $[x,\elemv]\in\check\LL^k$ and $\chixmt([x,\elemv])=1$.
\end{lemma}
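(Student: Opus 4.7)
\medskip

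The plan is to compute $[x,\elemv]$ explicitly and then read off the super-diagonal and sub-diagonal entries, since those are what determine $\chixmt$.

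First I would observe that if we write $x = I + v$ with $v \in \overline{\LL}^k$ (so $v_{i,j} = 0$ unless $i+j = 4\rkn+1-k$ and $i > j$) and $\elemv = I + \num e$ with $e = \sum_{i=1}^{\rkn}(-1)^{i-1}\one_{i,2\rkn+1-i}$, then both $v^2 = 0$ (because any product of entries on the antidiagonal $i+j = 4\rkn+1-k$ with $i > j$ forces $i = j$) and $e^2 = 0$ (since the columns where $e$ is nonzero lie in positions $\rkn+1,\dots,2\rkn$ while its nonzero rows are $1,\dots,\rkn$). Consequently $x^{-1} = I - v$, $\elemv^{-1} = I - \num e$, and a direct expansion yields
\[
[x,\elemv] - I = \num(ve - ev) - \num vev + \num^2 eve + \num^2 veve.
\]

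Next I would check support. A direct computation shows that $(ve)_{i,j} \ne 0$ forces $i-j = 2\rkn-k$ with $j \ge \rkn+1$, so $i+j \ge 4\rkn-k+2$ and the entry is below the diagonal; similarly $(ev)_{i,j} \ne 0$ forces $j-i = 2\rkn-k$ with $i \le k/2$, giving $i+j \le 2\rkn$, above the diagonal. The higher-order terms $vev,\ eve,\ veve$ live even further from the diagonal. Comparing with the defining conditions of $\check\LL^k$ (below-diagonal $i+j \ge 4\rkn+2-k$, above-diagonal $i+j \le 4\rkn-k-1$) shows that all these summands lie in $\check\LL^k$, which proves the first containment.

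Now I turn to the character computation. By definition, $\chixmt(n\bar n) = \psi_{N_\GLnn}(tnt^{-1})\psi_{N_\GLnn}(\bar n^t)^{-1}$, so $\chixmt$ depends only on the super-diagonal entries of the Gauss decomposition of $[x,\elemv]$, which coincide with the super-diagonal and sub-diagonal entries of the matrix $[x,\elemv] - I$ itself. A case analysis shows that $(ve)$ has a sub-diagonal entry exactly when $i-j = 2\rkn-k = 1$, i.e., $k = 2\rkn-1$, and similarly $(ev)$ contributes to the super-diagonal only in that case; the higher-order terms $vev,\ eve,\ veve$ contribute at antidiagonals $|i-j| \ge 2$ and so are never on the super- or sub-diagonal. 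Therefore for $k < 2\rkn-1$ we get $\chixmt([x,\elemv]) = 1$ automatically for every $t \in T_{\GLnn}$.

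The main (and only nontrivial) point is the case $k = 2\rkn-1$, $t \in \soment$. Here a direct computation gives
\[
(ev)_{i,i+1} = (-1)^{i-1}\,v_{2\rkn+1-i,\,i+1},\qquad 1 \le i \le \rkn-1,
\]
and
\[
(ve)_{j+1,j} = (-1)^{2\rkn-j}\,v_{j+1,\,2\rkn+1-j},\qquad \rkn+1 \le j \le 2\rkn-1.
\]
Applying $\chixmt$ — where the super-diagonal contribution is weighted by $t_i/t_{i+1}$ (which equals $1$ for $i \le \rkn-1$ because $t \in \soment$!) and the sub-diagonal contribution enters with an overall sign from $\psi_{N_\GLnn}(\bar n^t)^{-1}$ — I substitute $j = 2\rkn-i$ in the sub-diagonal sum. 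This identifies $v_{j+1,\,2\rkn+1-j}$ with $v_{2\rkn+1-i,\,i+1}$ and turns $(-1)^{2\rkn-j}$ into $(-1)^i$, so the two contributions become exact negatives of one another and cancel. The hardest step is this cancellation: it is precisely the purpose of the sign pattern $\diagelemGLn = \diag((-1)^{i-1})$ built into $\elemv$ and of the hypothesis $t \in \soment$ (without which the relative weights $t_i/t_{i+1}$ would spoil the cancellation). This completes the proof.
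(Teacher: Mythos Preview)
Your computational approach is essentially sound and gives a genuinely different route from the paper's. The paper argues structurally: for $k<2\rkn-1$ it simply observes that both $x$ and $\elemv$ lie in $\hat\LL^{k+1}$ (since $\elemv\in\LL^{2\rkn}\subset\hat\LL^{k+1}$ when $k\le 2\rkn-2$), so $[x,\elemv]$ is a commutator in a group on which $\chixmt$ is already known to be a character (Lemma~\ref{lem: Lproperty}), hence $\chixmt([x,\elemv])=1$ with no computation. For $k=2\rkn-1$ the paper proves the containment $[x,\elemv]\in\check\LL^k$ by intersecting the normality statements $[\overline\LL^k,\hat\LL^k]\subset\hat\LL^k$ and $[\hat\LL^{k+1},\LL^{4\rkn-k-1}]\subset\hat\LL^{k+1}$, and then also appeals to explicit computation for the character. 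Your approach is uniform over $k$ and more hands-on; the paper's is slicker for $k<2\rkn-1$ but converges with yours at $k=2\rkn-1$.

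There is, however, one step you assert without justification: that the super- and sub-diagonal entries of the Gauss factors $n,\bar n$ in $[x,\elemv]=n\bar n$ coincide with those of $[x,\elemv]$ itself. In general $(n\bar n)_{i,i+1}=n_{i,i+1}+\sum_{l>i+1}n_{i,l}\bar n_{l,i+1}$, and the correction need not vanish. What makes it vanish here is that the Gauss decomposition respects $\check\LL^k$: one has $\check\LL^k=(\check\LL^k\cap N_\GLnn)\cdot(\check\LL^k\cap N_\GLnn^t)$ (a dimension count plus the root-closure you implicitly used for the containment argument), so $n$ has support only at $i+j\le 4\rkn-k-1$ and $\bar n$ at $i+j\ge 4\rkn+2-k$; then the ranges $l\le 4\rkn-k-1-i$ and $l\ge 4\rkn+1-k-i$ in the correction term are disjoint. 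You should state and use this, or alternatively bypass the issue for $k<2\rkn-1$ by invoking the character property on $\hat\LL^{k+1}$ as the paper does. With this filled in, your cancellation computation for $k=2\rkn-1$ is correct and is exactly the ``explicit computation'' the paper alludes to.
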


\begin{proof}
If $k<2\rkn-1$ then $x,\elemv\in\hat\LL^{k+1}$ and therefore $[x,\elemv]\in\check\LL^k$.
Also, $\chixmt([x,\elemv])=1$ since $\chixmt\rest_{\hat\LL^{k+1}}$ is a character.
In the case $k=2\rkn-1$ it is no longer true that $\elemv\in\hat\LL^{k+1}$. However,
$\elemv\in\hat\LL^k$ and thus $[\overline\LL^k,\elemv]\subset [\overline\LL^k,\hat\LL^k]\subset\hat\LL^k$
by Lemma~\ref{lem: Lproperty}.
Similarly, $\elemv\in\LL^{2\rkn}=\LL^{4\rkn-k-1}$ so that $[\overline\LL^k,\elemv]\subset[\hat\LL^{k+1},\LL^{4\rkn-k-1}]\subset\hat\LL^{k+1}$.
Thus $[\overline\LL^k,\elemv]\subset\hat\LL^k\cap\hat\LL^{k+1}=\check\LL^k$.
Moreover, explicit computation (using the particular form of $\elemv$ and $\chixmt$) shows that $\chixmt([x,\elemv])=1$.
\end{proof}

Let $\GLnn^\fixx\subset\GLnn^\fix$ be the kernel of the character $m\mapsto m_{1,1}$ of $\GLnn^\fix$.
Note that $\GLnn^\fixx$ contains any unipotent subgroup of $\GLnn^\fix$ and that
\begin{equation} \label{eq:eps2normfixx}
\elemv\text{ normalizes }\GLnn^\fixx.
\end{equation}

Let $\Xi_\elemv$ be the function on $T'_{\GLnn}$ defined by
\[
\Xi_\elemv(t)=\psi_{N_\GLnn}(t\elemv t^{-1}).
\]
Explicitly if $t=\diag(1,\ldots,1,t_1,\ldots,t_\rkn)$ then $\Xi_\elemv(t)=\psi((-1)^{\rkn+1}\num t_1^{-1})$.
Let
\[
\tran^{\elemv}(W)= \avg_{\vs,\psi_{\vs}}\circ\avg_{T'_\GLnn,\Xi_\elemv\factor^{-1}\abs{\det}^{\rkn}}\circ W
\]
for $W\in \spclWM$.

\begin{lemma} \label{lem: spcltrans}
For any $W\in \spclWM$ we have
\begin{equation}\label{eq: boxfactor}
\tran^{\elemv}(W)=\eva{\elemv}\circ\avg^{H_\GLnn}_{\zigzag,\charzigzag^{-1}}\circ \avg^{N_\GLnn}_{\mira_{\GLnn}'\cap H_\GLnn}\circ W.
\end{equation}
Thus, if $\pi\in\Irr_{\gen,\meta}\GLnn$ is unitarizable, for any $W\in \WhitM(\pi)$ such that $W\rest_{\mira'_{\GLnn}}\in\spclWM$ we have
\[
\tran^{\elemv}(W)=\eva{\elemv}\circ\avg^{H_\GLnn}_{\zigzag,\charzigzag^{-1}}\circ \avg^{N_\GLnn}_{\mira_{\GLnn}\cap H_\GLnn}\circ W.
\]
\end{lemma}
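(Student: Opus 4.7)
The plan is to mirror the proof of Proposition~\ref{prop: factor}, inserting the shift by $\elemv$ at the final step of the induction. Setting $f=\avg^{N_\GLnn}_{\mira'_\GLnn\cap H_\GLnn}\circ W$, the analog of Lemma~\ref{L: feglnn} gives $W=\avg^{H_\GLnn}_{N_\GLnn,\psi_{N_\GLnn}^{-1}}\circ f$, so the first identity to be proved reads
\[
\tran^{\elemv}\circ\avg^{H_\GLnn}_{N_\GLnn,\psi_{N_\GLnn}^{-1}}\circ f=\eva{\elemv}\circ\avg^{H_\GLnn}_{\zigzag,\charzigzag^{-1}}\circ f.
\]

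First I would decompose $\tran^{\elemv}$ as an iterated integral. The weight $\Xi_\elemv(t)=\psi_{N_\GLnn}(t\elemv t^{-1})$ depends only on the $T^\rkn$-coordinate of $t\in T'_\GLnn$; explicitly $\Xi_\elemv(t)=\Xi_{\elemv,\rkn}(\tau_\rkn(t_1))$ with $\Xi_{\elemv,\rkn}(\tau_\rkn(x))=\psi((-1)^{\rkn+1}\num x^{-1})$, while $\Xi_{\elemv,k}\equiv 1$ on the remaining $T^k$. Lemma~\ref{lem: manipIM} therefore applies verbatim with $\tilde\Xi_k$ replaced by $\tilde\Xi_k\Xi_{\elemv,k}$. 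Since all iteration steps below level $\rkn$ are unaffected by the presence of $\Xi_{\elemv,\rkn}$, applying \eqref{eq: factoriter} for $k=\rkn-1$ together with Lemma~\ref{L: iterIM}(\ref{claim: evencase}) for $k=\rkn$ reduces the identity to
\[
\avg_{\oddL^{\rkn-1}}\circ\avg_{T^\rkn,\tilde\Xi_\rkn\Xi_{\elemv,\rkn}}\circ\avg^{H_\GLnn}_{\hat\LL^{2\rkn-1},\altpsi_e^{-1}}\circ f=\eva{\elemv}\circ\avg^{H_\GLnn}_{\hat\LL^{2\rkn},\altpsi_e^{-1}}\circ f,
\]
using $\hat\LL^{2\rkn}=\zigzag$ and $\altpsi_e\rest_\zigzag=\charzigzag$ from Lemma~\ref{lem: Lproperty}(\ref{claim: chitomd}) applied with $t=e\in\soment$.

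The key step is to prove this last identity, which I would do by adapting the proofs of Lemma~\ref{L: iterIM}(\ref{claim: tavg}) and (\ref{claim: rkodd}) for $k=\rkn$, now twisted by $\Xi_{\elemv,\rkn}$. In the Fourier inversion over $T^\rkn$ (implicit in Lemma~\ref{L: Fourier2}), the extra factor $\psi((-1)^{\rkn-1}\num x^{-1})$ shifts the effective evaluation in the $N^\GLnn_{\rkn,\rkn+1}$-direction from the origin to the coefficient $(-1)^{\rkn-1}\num$, which is precisely the $(\rkn,\rkn+1)$-entry of $\elemv$. The subsequent integration over $\oddL^{\rkn-1}=\overline\LL^{2\rkn-1}$ then pairs, via a second Fourier inversion, with the remaining $\LL^{2\rkn}_\circ$-component of $\elemv$, reconstructing the full evaluation $\eva{\elemv}$. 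The triviality of $\chixmt$ on the commutators $[\overline\LL^{2\rkn-1},\elemv]$, guaranteed by Lemma~\ref{lem: elemv} in the borderline case $k=2\rkn-1$ with $t\in\soment$, is exactly what ensures that these two Fourier inversions combine compatibly. The `thus' part of the lemma then follows from the first together with Proposition~\ref{prop: sametran}, which permits replacing $\mira'_\GLnn$ by $\mira_\GLnn$ in the conclusion when $W$ lies in a Whittaker model.

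The hard part will be this final key identity: tracking precisely how the twisted Fourier inversion over $T^\rkn$ and the integration over $\oddL^{\rkn-1}$ interact to produce $\eva{\elemv}$, rather than just the juxtaposition of the two unmodified identities of Lemma~\ref{L: iterIM}. The delicate commutator identities of Lemma~\ref{lem: elemv}, in particular the case $k=2\rkn-1$ where the restriction $t\in\soment$ is essential, are what ensure compatibility of the character twists; verifying this compatibility requires careful bookkeeping of the various intermediate subgroups $\check\LL^{2\rkn-1}$, $\LL^{2\rkn}_\circ$ and the commutators arising at each stage.
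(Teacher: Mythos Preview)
Your reduction to the key identity
\[
\avg_{\oddL^{\rkn-1}}\circ\avg_{T^\rkn,\tilde\Xi_\rkn\Xi_{\elemv}}\circ\avg^{H_\GLnn}_{\hat\LL^{2\rkn-1},\altpsi_e^{-1}}\circ f=\eva{\elemv}\circ\avg^{H_\GLnn}_{\hat\LL^{2\rkn},\altpsi_e^{-1}}\circ f
\]
matches the paper exactly. For this step, however, the paper takes a cleaner route via Lemma~\ref{lem: switch} rather than modifying the Fourier inversions themselves. Since $\elemv\in\LL^{2\rkn}\subset\hat\LL^{2\rkn-1}$, one may insert $\eva{\elemv}$ for free (up to the scalar $\altpsi_e(\elemv)^{-1}$) by equivariance; the commutator identity $\altpsi_e([t,\elemv])=\Xi_\elemv(t)\altpsi_e(\elemv)^{-1}$ for $t\in T^\rkn$ then gives, via Lemma~\ref{lem: switch},
\[
\avg_{T^\rkn,\Xi_\elemv\tilde\Xi_\rkn}\circ\avg^{H_\GLnn}_{\hat\LL^{2\rkn-1},\altpsi_e^{-1}}=\eva{\elemv}\circ\avg_{T^\rkn,\tilde\Xi_\rkn}\circ\avg^{H_\GLnn}_{\hat\LL^{2\rkn-1},\altpsi_e^{-1}}.
\]
After this the \emph{unmodified} parts (\ref{claim: tavg}) and (\ref{claim: rkodd}) of Lemma~\ref{L: iterIM} apply directly, and the only remaining point is the commutation $\avg_{\oddL^{\rkn-1}}\circ\eva{\elemv}=\eva{\elemv}\circ\avg_{\oddL^{\rkn-1}}$, which is Lemma~\ref{lem: switch} combined with Lemma~\ref{lem: elemv} for $k=2\rkn-1$.

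Your account of the $\oddL^{\rkn-1}$ step as a ``second Fourier inversion pairing with the $\LL^{2\rkn}_\circ$-component of $\elemv$'' misidentifies the mechanism. After your twisted $T^\rkn$-step you do obtain $\eva{d_0}$ (the $N^\GLnn_{\rkn,\rkn+1}$-part of $\elemv$) acting on $h=\avg^{H_\GLnn}_{\LL^{2\rkn}_\circ\ltimes\check\LL^{2\rkn-1},\altpsi_e^{-1}}\circ f$; but the $\LL^{2\rkn}_\circ$-part of $\elemv$ already lies in the equivariance group of $h$ with trivial character value, so $\eva{d_0}=\eva{\elemv}$ on $h$ without any further pairing. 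What is left is purely the commutation of $\eva{\elemv}$ with $\avg_{\oddL^{\rkn-1}}$, not a second shift --- and this is precisely where Lemma~\ref{lem: elemv} enters. Your approach is salvageable, but the paper's use of Lemma~\ref{lem: switch} handles both stages uniformly and avoids redoing the Fourier inversions.
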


\begin{proof}
As in the proof of Proposition~\ref{prop: factor}, let $f=\avg^{N_\GLnn}_{\mira_{\GLnn}'\cap H_\GLnn}\circ W$; then $W=\avg_{N_\GLnn,\psi_{N_\GLnn}^{-1}}^{H_\GLnn}\circ f$.
As in Lemma~\ref{lem: manipIM} we get the following iterated integral expression of $\tran^{\elemv}(W)$:
$$\avg_{\oddL^{\rkn-1}}\circ
\avg_{T^\rkn,\Xi_{\elemv}\tilde\Xi_\rkn}\circ\avg_{\evenL^{\rkn-1},\psi_{\vs}}\circ
\dots\circ\avg_{\oddL^0}\circ
\avg_{T^1,\tilde\Xi_1}\circ\avg_{\evenL^0,\psi_{\vs}}\circ \avg^{H_\GLnn}_{N_\GLnn,\psi_{N_\GLnn}^{-1}}\circ f.
$$
Here we use the fact that $\Xi_{\elemv}(\prod_{i=1}^\rkn t_i)=\Xi_{\elemv}(t_\rkn)$ when $t_i\in T^i$ for $1\leq i\leq \rkn$.

Now use \eqref{eq: factoriter} for $k=\rkn-1$ and then part \ref{claim: evencase} of Lemma~\ref{L: iterIM} (for $k=\rkn$).
Recalling that $\altpsi_e\rest_{\vs}=\psi_{\vs}^{-1}$ we get
$$\tranwa\circ\avg_{N_\GLnn,\psi_{N_\GLnn}^{-1}}^{H_\GLnn}\circ f=\avg_{\oddL^{\rkn-1}}\circ
\avg_{T^\rkn,\Xi_{\elemv}\tilde\Xi_\rkn}\circ \avg^{H_\GLnn}_{\hat\LL^{2\rkn-1},\altpsi_e^{-1}} \circ f.$$
Next note $\elemv\in \hat\LL^{2\rkn-1}$. Moreover  for $t\in T^\rkn$, we have $[t,\elemv ]\in \hat\LL^{2\rkn-1}$
with $\altpsi_e([t,\elemv ])=\Xi_{\elemv}(t)\altpsi_e(\elemv)^{-1}$. From Lemma~\ref{lem: switch} we get
\[
\avg_{T^\rkn,\Xi_{\elemv}\tilde\Xi_\rkn}\circ\avg^{H_\GLnn}_{\hat\LL^{2\rkn-1},\altpsi_e^{-1}}
=\altpsi_e(\elemv)^{-1}\avg_{T^\rkn,\Xi_{\elemv}\tilde\Xi_\rkn}\circ\eva{\elemv}\circ\avg^{H_\GLnn}_{\hat\LL^{2\rkn-1},\altpsi_e^{-1}}
=\eva{\elemv}\circ\avg_{T^\rkn, \tilde\Xi_\rkn}\circ\avg^{H_\GLnn}_{\hat\LL^{2\rkn-1},\altpsi_e^{-1}}.
\]
From parts \ref{claim: tavg} and \ref{claim: rkodd} of Lemma~\ref{L: iterIM} for $k=\rkn$, to prove \eqref{eq: boxfactor} we are left to show that
\[
\avg_{\oddL^{\rkn-1}}\circ\eva{\elemv}\circ\avg^{H_\GLnn}_{\LL^{2\rkn}_\circ\ltimes\check\LL^{2\rkn-1},\altpsi_e^{-1}}=
\eva{\elemv}\circ\avg_{\oddL^{\rkn-1}}\circ\avg^{H_\GLnn}_{\LL^{2\rkn}_\circ\ltimes\check\LL^{2\rkn-1},\altpsi_e^{-1}}.
\]
From Lemma~\ref{lem: elemv}  (for $k=2\rkn-1$) we get that for any $r\in\oddL^{\rkn-1}$,
$[r,\elemv^{-1}]\in\check\LL^{2\rkn-1}$ with $\altpsi_e([r,\elemv^{-1}])=1$. The above relation follows from Lemma~\ref{lem: switch}.

When $W\in \WhitM(\pi)$, we also have $f=\avg^{N_\GLnn}_{\mira_{\GLnn}\cap H_\GLnn}\circ W$
by Proposition~\ref{prop: sametran}. The second assertion follows.
\end{proof}

\subsection{}
Define a character $\oldsubsubUbar$ on $\subsubUbar$ by $\oldsubsubUbar(\bar u)=\psi(-\num \bar u_{3\rkn,\rkn+1})$.
We show the following variant of Proposition~\ref{prop: 14}.
\begin{lemma} \label{lem: 14var}
For any $L\in\tclass(H\bs G)$ we have
\begin{equation} \label{eq: varbarU}
\avg_{\subsubUbar,\oldsubsubUbar}\circ\avg^{H,reg}_{N,\psi_{N}^{-1}}\circ L=
\eva{\elemu}\circ\avg_{\alltriangle,\charalltriangle^{-1}}^{H,reg}\circ L.
\end{equation}
\end{lemma}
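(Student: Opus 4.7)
The plan is to mimic the iterated Fourier-inversion argument of Proposition~\ref{prop: 14}. Since both sides are continuous in $L\in\tclass(H\bs G)$ (using Lemma~\ref{L: stablemodel}), I would reduce to $L\in\cc{H\bs G}$. The twist $\oldsubsubUbar(\bar u)=\psi(-\num \bar u_{3\rkn,\rkn+1})$ depends only on the single entry $v_{\rkn,\rkn+1}$ of $v$ (where $\bar u=\toUbar(v)$). From the definition of $\lowub^k$ (entries of $v$ with $j-i=2\rkn-k$), this entry sits precisely in the $k=2\rkn-1$ slice of the filtration $\subsubUbar=\prod_{k=0}^{2\rkn-1}\lowub^k$, so $\oldsubsubUbar\rest_{\lowub^k}\equiv 1$ for every $k\le 2\rkn-2$. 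Consequently the first $2\rkn-1$ unfoldings \eqref{eq: iter22} carry over verbatim from Proposition~\ref{prop: 14} and reduce the LHS of \eqref{eq: varbarU} to
\[
\int_{\lowub^{2\rkn-1}} \bigl(\avg^H_{\hat\N^{2\rkn-1},\chix^{-1}}\circ L\bigr)(\bar u g)\, \oldsubsubUbar(\bar u)\, d\bar u .
\]

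The crux is the identity $\oldsubsubUbar(\bar u)=\chix([\bar u,\elemu^{-1}])^{-1}$ for $\bar u\in\lowub^{2\rkn-1}$. To check it, I would apply the pairing \eqref{eq: property5} at $k=\rkn$ with $v=\elemu^{-1}=\toU(-\num E_1)$, where $E_1=\diag(\diagelemGLn^*,\diagelemGLn)$ has diagonal entries $(-1)^{\rkn-l}$ for $l\le\rkn$ and $(-1)^{l-\rkn-1}$ for $l>\rkn$. The sum $\sprod{\bar u}{\elemu^{-1}}'_{2\rkn-1}=-\num\sum_{l=1}^{2\rkn-1} v_{l,l+1}E_{1,l}$ splits into pairs $\{l,2\rkn-l\}$ sharing the same $v$-entry via the $\startran$-symmetry, with coefficient $E_{1,l}+E_{1,2\rkn-l}=(-1)^{\rkn-l}+(-1)^{\rkn-l-1}=0$ whenever $l\ne\rkn$; only the self-paired $l=\rkn$ term survives and gives $-\num v_{\rkn,\rkn+1}$, matching $\oldsubsubUbar(\bar u)$.

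With this identity in hand, the final step becomes a shifted Fourier inversion. The untwisted inversion in Proposition~\ref{prop: 14} uses the pairing $\iota:\lowub^{2\rkn-1}\to\widehat{\upub^{2\rkn}}$, $\iota(\bar u)(v)=\chix([\bar u,v])^{-1}$, to equate $\int_{\lowub^{2\rkn-1}} F(\bar u g)\,d\bar u$ with $\avg^H_{\alltriangle,\charalltriangle^{-1}}(L)(g)$. Inserting the twist $\iota(\bar u)(\elemu^{-1})=\oldsubsubUbar(\bar u)$ is the Fourier-theoretic operation of evaluating the inverse transform at $\elemu^{-1}$ instead of at the origin, i.e., translating the argument in $\upub^{2\rkn}$; taking into account the sign convention $\elemu^{-1}=\toU(-\num E_1)=-\elemu$ (additively in $\upub^{2\rkn}\simeq F^{\rkn}$), this evaluates to $\avg^H_{\alltriangle,\charalltriangle^{-1}}(L)(\elemu g)$, which is the RHS.

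The main obstacle is making the last paragraph rigorous, since the appendix (Lemma~\ref{L: Fourier}) handles the untwisted case only. I would formulate a shifted analogue by running the proof of Lemma~\ref{L: Fourier} on the integrand with the extra factor $\chix([\cdot,\elemu^{-1}])^{-1}$: using $\bar u\elemu=\elemu[\elemu^{-1},\bar u]\bar u$ with $[\elemu^{-1},\bar u]\in\check\N^{2\rkn-1}$ and the $(\check\N^{2\rkn-1},\chix)$-equivariance of $\avg^H_{\hat\N^{2\rkn-1},\chix^{-1}}\circ L$, the twist is absorbed and one recognizes the result as $\avg^H_{\alltriangle,\charalltriangle^{-1}}(L)$ evaluated at $\elemu g$. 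The rest is bookkeeping identical to Proposition~\ref{prop: 14}.
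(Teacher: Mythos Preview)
Your proposal is correct and follows essentially the same route as the paper. The paper packages your final commutator manipulation as Lemma~\ref{lem: switch}: since $\elemu\in\hat\N^{2\rkn-1}$ with $\chix(\elemu)=1$ and $\chix([\bar v,\elemu])=\oldsubsubUbar(\bar v)$ for $\bar v\in\lowub^{2\rkn-1}$, one gets directly $\avg_{\lowub^{2\rkn-1},\oldsubsubUbar}\circ\avg^H_{\hat\N^{2\rkn-1},\chix^{-1}}\circ L=\eva{\elemu}\circ\avg_{\lowub^{2\rkn-1}}\circ\avg^H_{\hat\N^{2\rkn-1},\chix^{-1}}\circ L$, and then \eqref{eq: iter2} for $k=2\rkn-1$ finishes. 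Your ``shifted Fourier inversion'' framing and the identity $\bar u\elemu=\elemu[\elemu^{-1},\bar u]\bar u$ amount to exactly this lemma, so there is no need to revisit the proof of Lemma~\ref{L: Fourier}.
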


\begin{proof}
Again by continuity, we only need to prove the identity for $L\in \cc{H\bs G}$.
By Corollary \ref{cor: suppW} we can write the left-hand side as an iterated integral:
$$
\avg_{\lowub^{2\rkn-1},\oldsubsubUbar}\circ\avg_{\lowub^{2\rkn-2}}\circ\dots\circ\avg_{\lowub^1}\circ\avg^H_{N,\psi_N^{-1}} \circ L.
$$
Here we note that $\oldsubsubUbar$ is trivial on all $\lowub^k$ with $k<2\rkn-1$.
Applying \eqref{eq: iter22} for $k=2\rkn-2$ we get that the above is
$$\avg_{\lowub^{2\rkn-1},\oldsubsubUbar}\circ\avg^H_{\hat{\N}^{2\rkn-1},\chix^{-1}}\circ L.$$
Note  that $\elemu\in \hat\N^{2\rkn-1}$ and $\chix(\elemu)=1$.
From property \eqref{eq: property5} and the statements immediately preceding it, (applied in case $k=\rkn$) we get for any
$\bar v \in\lowub^{2\rkn-1}$, $[\bar v, \elemu]\in \hat{\N}^{2\rkn-1}$ with $\chix([\bar v, \elemu])=\oldsubsubUbar(\bar v)$.
Thus from Lemma~\ref{lem: switch} the above equals:
$$\avg_{\lowub^{2\rkn-1},\oldsubsubUbar}\circ\eva{\elemu}\circ\avg^H_{\hat{\N}^{2\rkn-1},\chix^{-1}}\circ L=\eva{\elemu}\circ\avg_{\lowub^{2\rkn-1}}\circ\avg^H_{\hat{\N}^{2\rkn-1},\chix^{-1}}\circ L.$$
The Lemma follows now from \eqref{eq: iter2} (with $k=2\rkn-1$).
\end{proof}

\subsection{} Let $\remn\subset\GLnn$ be the direct product of the commuting one-parameter root subgroups
$\langle N^\GLnn_{i,j}: \rkn<i\leq 2\rkn, 1<j\leq 2\rkn+1-i\rangle$.
We supplement Lemma~\ref{L: Mreg2} with the following:
\begin{lemma}\label{L: regcomp}
Let $K_0\in\csgr(G)$ and $k=1,\dots,\rkn-1$. Then
for any $\phi(t,m)\in\evencls_k (T_\GLnn\times \GLnn)^{K_0}$ we have
\begin{equation} \label{eq: eps2}
\avg^{reg}_{\evenL^k,\psi_{\vs}}\circ\eva{\elemv}\circ \phi(t,\cdot)=
\eva{\elemv}\circ \avg^{reg}_{\evenL^k, \psi_{\vs}}\circ \phi(t,\cdot)
\end{equation}
if either side is well defined.
Similarly, for any $\phi(t,m)\in\oddcls_k (T_\GLnn\times \GLnn)^{K_0}$ (and if $k=\rkn-1$ when $t\in\soment$) we have
\begin{equation} \label{eq: eps22}
\avg_{\oddL^k}\circ\eva{\elemv}\circ \phi(t,\cdot)=\eva{\elemv}\circ \avg_{\oddL^k}\circ \phi(t,\cdot).
\end{equation}
Finally, suppose that $\phi(t,m)\in\evencls_\rkn(T_\GLnn\times \GLnn)^{K_0}$.
Then for all $t\in\Omega_{T_\GLnn}$ a compact subset of $T_\GLnn$, as a function of $m\in\remn$, $\eva{\elemv}\circ\phi(t, m)$ is supported on some
$\Omega\in \csgr(\remn)$ determined by $\Omega_{T_\GLnn}$ and $K_0$.
\end{lemma}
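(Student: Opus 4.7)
The plan is to prove the three assertions in sequence, with the first two following from a pointwise identity between integrands and the third using Lemma~\ref{L: elemC}.

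For \eqref{eq: eps2}, I would first establish the stronger pointwise identity $\phi(t,\bar v\elemv m)=\phi(t,\elemv\bar v m)$ for all $\bar v\in\evenL^k$ and $m\in\GLnn$, from which the identity of both the ordinary and the regularized integrals follows immediately. Writing $\bar v\elemv=[\bar v,\elemv]\,\elemv\bar v$, the key point is that for $\bar v\in\evenL^k=\overline\LL^{2k}$ with $k\le\rkn-1$ we have $2k<2\rkn-1$, so Lemma~\ref{lem: elemv} gives $[\bar v,\elemv]\in\check\LL^{2k}$ with $\chixmt([\bar v,\elemv])=1$. Since both $\bar v$ and $\elemv$ lie in $\GLnn^\fix$, so does their commutator, yielding $[\bar v,\elemv]\in\check\LL^{2k}\cap\GLnn^\fix=\check\M^{2k}\subset\hat\M^{2k}$. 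The equivariance property \eqref{p: equim} of $\phi\in\evencls_k$ then gives the pointwise identity. The regularized integrals agree because both can be realized as the same stable integral by Lemma~\ref{lem: genstab} (the test function $\phi(t,\cdot)$ is unchanged).

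The proof of \eqref{eq: eps22} is entirely analogous, with $\oddL^k=\overline\LL^{2k+1}$ replacing $\evenL^k$. For $k<\rkn-1$ the inequality $2k+1<2\rkn-1$ holds unconditionally, while for $k=\rkn-1$ we have $2k+1=2\rkn-1$ and use the hypothesis $t\in\soment$ together with the corresponding part of Lemma~\ref{lem: elemv}. In both cases $[\bar r,\elemv]\in\check\M^{2k+1}\subset\hat\M^{2k+1}$ with $\chixmt([\bar r,\elemv])=1$, and the equivariance of $\phi\in\oddcls_k$ gives $\phi(t,\bar r\elemv m)=\phi(t,\elemv\bar r m)$.

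For the compact support assertion I would apply Lemma~\ref{L: elemC} to the function $f(m):=\phi(t,\elemv m)$ with $C=\remn$. The task is to construct a closed subgroup $D\subset\GLnn$, a character $\chi$ of $D$, and a closed embedding $\iota:\remn\to\widehat D$ such that $f(cd)=f(c)\chi(d)\sprod{\iota(c)}{d}$ for $c\in\remn$, $d\in D$. The natural candidate is to take $D$ generated by the one-parameter root subgroups $N^\GLnn_{j,i}$ at positions transposed to those defining $\remn$, i.e.\ at $(j,i)$ with $\rkn<i\le 2\rkn$, $1<j$, $i+j\le 2\rkn+1$. For $c\in\remn$ and $d\in D$, writing
\[
\elemv c d=\bigl(\elemv\, c d c^{-1}\elemv^{-1}\bigr)\,\elemv c,
\]
and expanding $cdc^{-1}=d\cdot[c,d]$ with $[c,d]=I+\sum c_{k,l}d_{i,j}(\delta_{l,i}\one_{k,j}-\delta_{j,k}\one_{i,l})+\cdots$, one checks using the explicit form of $\elemv$ and $\zigzag$ that $\elemv cdc^{-1}\elemv^{-1}$ lies in $\zigzag$. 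Its $\chixmt$-value factors as $\chi(d)\cdot\sprod{\iota(c)}{d}$, where $\chi(d)=\chixmt(\elemv d\elemv^{-1})$ is a fixed character and $\iota(c)(d)$ is the $c$-dependent pairing coming from the commutator contributions at the simple-root positions of $\zigzag$. The main obstacle will be verifying that $\iota$ is a closed embedding, i.e.\ that this pairing is non-degenerate in $c$. This reduces to an explicit linear-algebra check: one matches each coordinate $c_{k,l}$ of $c\in\remn$ against a dual coordinate of $d\in D$ via the simple-root entries in the character formula \eqref{eq: defpsie}, after tracking how conjugation by $\elemv$ shifts matrix positions by $i\mapsto 2\rkn+1-i$ in the top-right block. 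The uniformity in $t\in\Omega_{T_\GLnn}$ follows because the pairing $\iota$ depends continuously on $t$ through $\chixmt$, and the compact set provided by Lemma~\ref{L: elemC} can be chosen uniformly on $\Omega_{T_\GLnn}$ as stated in the second part of that lemma.
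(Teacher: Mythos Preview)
Your treatment of \eqref{eq: eps2} and \eqref{eq: eps22} is correct and matches the paper's argument: the pointwise identity $\phi(t,x\elemv m)=\phi(t,\elemv xm)$ follows from Lemma~\ref{lem: elemv} together with the equivariance \eqref{p: equim}, exactly as you describe.

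The compact-support argument, however, has a genuine gap. Your choice of $D$ as the transpose of $\remn$ does not work. First, since both $\elemv$ and every $d\in D$ lie in the abelian group $\{I+X:X_{a,b}=0\text{ unless }a\le\rkn<b\}$, they commute; so for $c=e$ your displayed identity gives $\elemv d\elemv^{-1}=d$, and you would need $d\in\zigzag=\hat\M^{2\rkn}$. But $D$ contains the anti-diagonal root groups $N^\GLnn_{j,2\rkn+1-j}\subset\LL^{2\rkn}$, which are \emph{not} in $\zigzag$, so the equivariance \eqref{p: equim} cannot be invoked. Second, even the naive fix of dropping these anti-diagonal positions leaves $\dim D<\dim\remn$, so $\iota$ cannot be injective. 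More fundamentally, for any single $D\subset\zigzag$ of full dimension and general $c\in\remn$, the commutator $[c,d]$ produces entries at positions $(i',i)$ with $i',i>\rkn$ and $i'<i$, which lie outside $\zigzag$ (the upper-triangular part of $\zigzag$ requires row $+$ column $\le 2\rkn$); conjugation by $\elemv$ does not remove these.

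The paper circumvents this obstruction by an inductive argument. Writing $\remn=Y_\rkn\times Y_{\rkn-1}\times\cdots\times Y_2$ where $Y_j=\langle N^\GLnn_{i,j}:\rkn<i\le 2\rkn+1-j\rangle$, one shows compact support on each layer $Y_j$ in turn, pairing it against $Y_j'=\langle N^\GLnn_{j-1,i}:\rkn<i\le 2\rkn+1-j\rangle\subset\hat\M^{2\rkn}$. The row-index shift (to $j-1$) is precisely what kills the bad commutator terms: for $c$ with column indices $\ge j$ and $d\in Y_j'$ with row index $j-1$, the term $c_{i',j'}d_{j-1,i}\,\one_{i',i}$ vanishes since $j'\ge j>j-1$, and only the harmless term at position $(j-1,j')$ survives. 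One then checks that the resulting twisted character $\chixmt'=\chixmt(\elemv\cdot\elemv^{-1})$ gives a nondegenerate pairing between $Y_j$ and $Y_j'$ (this is where $\elemv$ enters essentially), and Lemma~\ref{L: elemC} yields compact support on $Y_j$ uniformly in the remaining variables; induction on $j$ from $\rkn$ down to $2$ finishes the proof.
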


\begin{proof}
To prove \eqref{eq: eps2}, we show that for $x\in\evenL^{k}$ and  $\phi(t,m)\in\evencls_k (T_\GLnn\times \GLnn)^{K_0}$,
we have $\phi(t,\elemv xg)=\phi(t,x\elemv g)$.
This follows from \eqref{p: equim} since by  Lemma \ref{lem: elemv} and \eqref{eq:eps2normfixx},
$[x,\elemv]\in \hat\LL^{2k}\cap \GLnn^\fixx=\hat\M^{2k}$ and $\chixmt([x,\elemv])=1$.
Similarly, for $\phi(t,m)\in\oddcls_k (T_\GLnn\times \GLnn)^{K_0}$ we have $\phi(t,\elemv xg)=\phi(t,x\elemv g)$
for $x\in\oddL^{k}$, hence \eqref{eq: eps22}.

Now suppose that $\phi(t,m)\in\evencls_\rkn(T_\GLnn\times \GLnn)^{K_0}$. Note that for $n\in \hat\M^{2\rkn}$,
$[n,\elemv]\in \hat\M^{2\rkn}$ by Lemma~\ref{lem: Lproperty} part \ref{claim: normalL} and \eqref{eq:eps2normfixx}.
Thus $\eva{\elemv}\circ\phi(t,nm)=\phi(t,\elemv nm)=\chixmt'(n)\eva{\elemv}\circ\phi(t,m)$ where $\chixmt'(n)=\chixmt(\elemv n(\elemv)^{-1})$.

For fixed $j$, let $Y_j$ be  the product of $\langle N^\GLnn_{i,j}: n< i\leq 2\rkn+1-j\rangle$.
Let $N_{\triangle}^k=\prod_{j=k}^\rkn Y_j$. Then
\begin{equation}\label{eq: Ndelta}
N_{\triangle}^j=N_{\triangle}^{j+1}\times Y_j,\,\,\remn=N_{\triangle}^2,\,\,N_{\triangle}^{\rkn+1}=1.
\end{equation}

Fix $j$ with $1<j\leq \rkn$. Let $Y'_j$ be the product of $\langle N^\GLnn_{j-1,i}: \rkn< i\leq 2\rkn+1-j\rangle$.
Then $Y'_j\subset \hat\M^{2\rkn}$ and $cdc^{-1}\in \hat\M^{2\rkn}$ when $c\in N_{\triangle}^j$ and $d\in Y_j'$.
Moreover,  $\chixmt'([c,d])=\psi(\sprod{\overline c}{d}_{Y_j,t})$ where $\overline{c}$
is the projection of $c$ to $Y_j$ and $\sprod{}{}_{Y_j,t}$ is a non-degenerate pairing between $Y_j$ and $Y'_j$
depending continuously on $t$.
Thus for any  $r\in N_{\triangle}^{j+1}$, $c\in Y_j$ and $d\in Y_j'$ we have
\[
\phi(t,rcd)=\psi(\sprod{c}{d}_{Y_j,t})\chixmt'(d)\phi(t, rc)
\]
by condition \eqref{p: equim}. Lemma~\ref{L: elemC} implies that for any $t\in\Omega_{T_\GLnn}$,
$c\mapsto\phi(t, rc)$ is supported on some
$\Omega_j\in\csgr(Y_j)$ which is determined by $\Omega_{T_\GLnn}$, and $K_0$ and independent of $r\in N_{\triangle}^{j+1}$.
It follows from \eqref{eq: Ndelta}, $m\mapsto\phi(t,m)$ is compactly supported on $N_{\triangle}^j$ if and only if it is so on
$N_{\triangle}^{j+1}$. Since $N_{\triangle}^{\rkn+1}=1$, we conclude that $\phi(t,m)$ is compactly supported on
$\remn  =N_{\triangle}^2$, uniformly for $t\in\Omega_{T_\GLnn}$.
\end{proof}

\begin{corollary} \label{cor: Mreg}
Let $W\in C^{\smth}(Z_{\Levi}'N\bs G,\psi_{N})$, $t\in\soment$ and $\auxf\in\cc{\spcltrs}=\auxF$.
Then
\begin{multline}\label{eq: entire}
\tran_*^{t,\auxf}(W_s):=\factor(t)^{-1}
\avg_{\remn  }\circ
\avg_{\oddL^{\rkn-1}}\circ\avg^{reg}_{\evenL^{\rkn-1},\psi_{\vs}}\circ\avgt{\Imk_{\rkn-1}}[f_{\rkn-1}]\circ\cdots\\
\circ\avg_{\oddL^1}\circ\avg^{reg}_{\evenL^1,\psi_{\vs}}\circ\avgt{\Imk_1}[f_1]\circ\eva{\elemv}\circ\FW_{W_s}(t,e)
\end{multline}
is analytic in $s$ and locally constant in $t\in\soment$, uniformly in $s$. It equals
\begin{multline*}
\factor(t)^{-1}\avg_{\remn  }\circ\eva{\elemv}\circ
\avg_{\oddL^{\rkn-1}}\circ\avg^{reg}_{\evenL^{\rkn-1},\psi_{\vs}}\circ\avgt{\Imk_{\rkn-1}}[f_{\rkn-1}]\circ\cdots\\
\circ\avg_{\oddL^1}\circ\avg^{reg}_{\evenL^1,\psi_{\vs}}\circ\avgt{\Imk_1}[f_1]\circ\FW_{W_s}(t,e).
\end{multline*}
Moreover it equals
\begin{equation}\label{eq: defLEmult}
\factor(t)^{-1}\int_{\spcltrs}\int_{\levi(\vs)\ltimes\remn}\int_{\bar U}
\auxf(t')W\left(\levi(tt')\bar u \levi(\elemv v)\cdot \right)\psi_{\vs}(r)\newchar(\bar u)\ d\bar u\ d r \ dt'
\end{equation}
when the integrals converge. Here the character $\psi_{\vs}$ on $\vs\ltimes \remn $ factors through $\vs$.
\end{corollary}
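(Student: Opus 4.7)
The plan is to exploit the machinery of Lemmas~\ref{L: Mreg2} and \ref{L: regcomp} developed in Sections~\ref{sec: analytic} and \ref{sec: lemma}, together with Lemma~\ref{L: Ureg} for the innermost $\bar U$-integration inside $\FW_{W_s}$. The starting point is that $\FW_{W_s}(t,\cdot) \in \evencls_1(T_\GLnn \times \GLnn)^{K_0}$ by Lemma~\ref{L: FWproperty}. Iterating Lemma~\ref{L: Mreg2} as in Definition~\ref{def: tranWAv}, after each successive block $\avg_{\oddL^k} \circ \avg^{reg}_{\evenL^k,\psi_{\vs}} \circ \avgt{\Imk_k}[f_k]$ the resulting function lies in $\evencls_{k+1}$, so immediately before the outer $\avg_{\remn}$ is applied the function belongs to $\evencls_\rkn$.

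To establish the equality of the two nested forms, I would push $\eva{\elemv}$ outward from its innermost position using three commutation identities: with $\avgt{\Imk_k}[f_k]$ the operator $\eva{\elemv}$ commutes trivially (disjoint variables); with $\avg^{reg}_{\evenL^k,\psi_{\vs}}$ it commutes on $\evencls_k$ by \eqref{eq: eps2}; and with $\avg_{\oddL^k}$ it commutes on $\oddcls_k$ by \eqref{eq: eps22}. The class-tracking from the previous paragraph ensures that at each stage the intermediate function lies in the correct class to apply the relevant identity, so the equality follows by induction on the number of blocks.

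For analyticity in $s$ and local constancy in $t$, I would work with the second form and show that $\tran_*^{t,\auxf}(W_s)$ is a finite integral of $W_s$ over a fixed compact region $\Omega' \subset G$ depending only on $K_0$, $\Omega_{T_\GLnn}$, and $\auxf$. Indeed, by Lemma~\ref{L: Ureg} the $\bar U$-integration inside $\FW_{W_s}$ is over a fixed $\Omega_{\bar U} \in \csgr(\bar U)$; by Lemma~\ref{L: Mreg2} each $\avg^{reg}_{\evenL^k,\psi_{\vs}}$ reduces to an ordinary integral over a compact subset of $\evenL^k$ and each $\avg_{\oddL^k}$ integrates a compactly supported function; and by the last assertion of Lemma~\ref{L: regcomp} the outer $\avg_{\remn}$ integrand is compactly supported on $\remn$, uniformly for $t \in \Omega_{T_\GLnn}$. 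Since $W_s = W \cdot \wgt{\cdot}^s$ and $\wgt{\cdot}$ takes only finitely many values of the form $q^{n_i}$ on $\Omega'$, the expression becomes a finite $\C$-linear combination of functions $q^{n_i s}$, hence an entire (Laurent polynomial) function of $s$. Local constancy in $t$, uniform in $s$, follows from the local constancy of $W$, $\psi$, and the characters $\chixmt$, $\chinu$ in $t$, together with the $t$-uniformity of the compact supports.

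For the final equality with \eqref{eq: defLEmult}, when the multi-integral converges absolutely Lemma~\ref{L: stableprop}~(2) lets us replace each regularized integral by an ordinary one, the definition of $\FW_{W_s}$ expands the innermost layer to an integral against $\newchar$ over $\bar U$, and Fubini reassembles the iterated expression into the displayed form of \eqref{eq: defLEmult}; the presence of $\eva{\elemv}$ (now outside all but $\avg_{\remn}$) yields the factor $\levi(\elemv v)$ after the appropriate change of variables inside $\levi(\vs) \ltimes \remn$. The main obstacle is the careful class bookkeeping underlying the first two paragraphs: at every stage the intermediate function must lie in the correct class so that the commutation identities of Lemma~\ref{L: regcomp} and the uniform compact support statements of Lemma~\ref{L: Mreg2} are simultaneously applicable; once this is in place, each individual step is a direct invocation of the lemmas established earlier in the paper.
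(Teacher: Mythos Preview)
Your proposal is correct and follows essentially the same approach as the paper's own proof. The paper's argument is simply a terser version of yours: it invokes Lemma~\ref{L: Ureg}, Corollary~\ref{cor: abcde} (which packages the iterated application of Lemma~\ref{L: Mreg2} that you spell out), and Lemma~\ref{L: regcomp} to reduce \eqref{eq: entire} to an integral over fixed compact open subgroups independent of $s$, then cites Lemma~\ref{L: regcomp} together with the trivial commutativity of $\avgt{\Imk_i}[f_i]$ and $\eva{\elemv}$ for the second claim, and Lemma~\ref{L: stableprop} for the last.
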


\begin{proof}
It follows from Lemma \ref{L: Ureg}, Corollary \ref{cor: abcde}
and Lemma \ref{L: regcomp}  that, for all $t$ in a fixed compact subset
$\Omega_{\soment}$, the iterated integral \eqref{eq: entire} equals the integration \eqref{eq: defLEmult} with the domain of integration
for $\bar u$ and $r$ replaced by some  open compact subgroups $\Omega_1\in \csgr(\bar U)$ and $\Omega_2\in \csgr(\vs\ltimes \remn)$,
dependent on $W$ and $\Omega_{\soment}$, but independent of $s$. Thus we get that
\eqref{eq: entire} is analytic in $s$ and locally constant in $t\in\soment$, uniformly in $s$.
The second claim follows from  Lemmas~\ref{L: regcomp}, as clearly the operators $\avgt{\Imk_i}[f_i]$ and $\eva{\elemv}$ commute.
The last claim  follows from Lemma~\ref{L: stableprop}.
\end{proof}

From Proposition~\ref{prop: main}, we get for $L\in\tclass(H\bs G)$ and $W=\tranLW(L)$, for any $t\in\soment$ and
$\auxf\in\auxF$,
\begin{equation}\label{eq: lastcomp}
\tran_*^{t,\auxf}(W)=c(\auxf)\avg_{\levi(\remn ) }\circ\eva{\levi(\elemv)}\circ\tranLE(L).
\end{equation}

\subsection{}
Let $\remu=\few^{-1}\levi(\remn  )\few$. Then $\remu\subset \bar U$ and $\remu\cap\subsubUbar=1$. Let $\subUbar=\remu\times \subsubUbar$.
It consists of $\bar u$ with $\bar u_{2\rkn+i,j}=0$ whenever $i\geq \rkn$ and $j\leq \rkn$. We extend the character $\oldsubsubUbar$ trivially to $\subUbar$.
It is proved in \cite[Lemma 4.5]{1401.0198} that for any $W\in C^{\smth}(N\bs G,\psi_N)$,
$W\rest_{\subUbar}$ is compactly supported. In particular, the integral
\[
\tran'(W):=\avg_{\subUbar,\oldsubsubUbar }\circ W
\]
is well defined.

We have the following corollary of the previous discussion.
\begin{corollary} \label{cor: fornextpaper}
Let $\pi\in \Irr_{\temp,\meta}\GLnn$, let $W\in \Ind(\WhitM(\pi),\frac12)$, then for any $t\in\soment$ and
$\auxf\in\auxF$, $\tran_*^{t,\auxf}(\W)(\few\cdot)=c(\auxf)\fel^\rkn \tran'(\W)(\cdot)$.
Moreover $\tran'(\W)=\fel^{\rkn+1}\eva{\few}\circ \avg_{\levi(\remn ) }\circ\eva{\levi(\elemv)}\circ\tranLE(L)$.
\end{corollary}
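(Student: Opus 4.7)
The plan is to combine equation~\eqref{eq: lastcomp} (the restatement of Proposition~\ref{prop: main} for $W=\tranLW(L)$) with Lemma~\ref{lem: 14var}, inserting an $\few$-conjugation to switch between the $(\alltriangle,\charalltriangle)$ and $(\Etwon,\charEtwon)$ models.

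First I would set $L:=\tranWL(W)$. By Proposition~\ref{P: L}, $L\in\tclass^\fel(H\bs G)$ and $\tranLW(L)=\fel\, M^*W=\fel\,\W$. Linearity in $W$ together with \eqref{eq: lastcomp} then yield
\[
\tran_*^{t,\auxf}(\W) \;=\; \fel\, c(\auxf)\,\avg_{\levi(\remn)}\circ\eva{\levi(\elemv)}\circ\tranLE(L).
\]
In parallel, writing $\tran'(\W)$ as an iterated integral via $\subUbar=\remu\times\subsubUbar$ (and observing that $\oldsubsubUbar$ is trivial on $\remu$), then applying Lemma~\ref{lem: 14var} to $L$ (so that $\avg_{\subsubUbar,\oldsubsubUbar}\circ\W=\fel\,\eva{\elemu}\circ\tranLA(L)$), gives
\[
\tran'(\W)(g) \;=\; \fel\int_{\remu}\tranLA(L)(\elemu\,r'\,g)\,dr'.
\]

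To bridge $\tranLA(L)$ and $\tranLE(L)$ I would establish the pointwise identity $\tranLA(L)(g)=\fel^\rkn\,\tranLE(L)(\few g)$, which extends \eqref{eq: relationetwon} from $g=e$ to arbitrary $g$: the substitution $u=\few u'\few^{-1}$ (with $u'\in\alltriangle$) in $\tranLE(L)(\few g)$, combined with the equivariance $\charEtwon(\few u'\few^{-1})=\charalltriangle(u')$ and the identity $L(\few u'g)=\fel^\rkn L(u'g)$ (since $\few\in H$ when $\rkn$ is even and $\few\in\wnnM H$ otherwise, while $L\in\tclass^\fel$), yields the claim. Substituting this identity into the formula for $\tran'(\W)$, using $\few\elemu=\levi(\elemv)\few$, and changing variables $r=\few r'\few^{-1}\in\levi(\remn)$ produces
\[
\tran'(\W)\;=\;\fel^{\rkn+1}\,\eva{\few}\circ\avg_{\levi(\remn)}\circ\eva{\levi(\elemv)}\circ\tranLE(L),
\]
which is the ``moreover'' assertion. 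Evaluating the first displayed formula for $\tran_*^{t,\auxf}(\W)$ at $\few g$ and comparing with the above expression (using $\fel^2=1$) delivers the first equation $\tran_*^{t,\auxf}(\W)(\few g)=c(\auxf)\,\fel^\rkn\,\tran'(\W)(g)$. The argument is essentially formal given the prior results; the only real task is to keep track of the sign $\fel^\rkn$ arising from $\few$-translations and to check that the changes of variable preserve Haar measures, which is bookkeeping rather than a genuine technical obstacle.
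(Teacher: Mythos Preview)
Your proposal is correct and follows essentially the same route as the paper: set $L=\tranWL(W)$, apply Proposition~\ref{P: L} and \eqref{eq: lastcomp} for the $\tran_*^{t,\auxf}$ side, apply Lemma~\ref{lem: 14var} together with the decomposition $\subUbar=\remu\times\subsubUbar$ for the $\tran'$ side, and then pass from $\tranLA$ to $\tranLE$ via the $\few$-conjugation \eqref{eq: relationetwon} combined with $\few\elemu\few^{-1}=\levi(\elemv)$ and $\few^{-1}\levi(\remn)\few=\remu$. The only cosmetic difference is that the paper phrases the final conjugation step as an operator identity $\avg_{\remu}\circ\eva{\elemu}\circ\eva{\few}=\eva{\few}\circ\avg_{\levi(\remn)}\circ\eva{\levi(\elemv)}$, whereas you spell out the pointwise relation $\tranLA(L)(g)=\fel^\rkn\tranLE(L)(\few g)$ before substituting; these are the same computation.
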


\begin{proof}
By Proposition~\ref{P: L}, we see
$$\tran_*^{t,\auxf}(\W)=\fel \tran_*^{t,\auxf}(\tranLW(L)),\ \
\tran'(\W)=\fel \tran'(\tranLW(L))$$
 where $L=\tranWL(W)\in \tclass^{\fel}(H\bs G)$.  By \eqref{eq: lastcomp},
\[
\tran_*^{t,\auxf}(\W)=c(\auxf)\fel\avg_{\levi(\remn ) }\circ\eva{\levi(\elemv)}\circ\tranLE(L).
\]
On the other hand,
\[
\tran'(\W)=\fel
\avg_{\remu}\circ\eva{\elemu}\circ\tranLA(L)
\]
by Lemma \ref{lem: 14var}. Using \eqref{eq: relationetwon}, we get
\[
\tran'(\W)=\fel^{\rkn+1}\avg_{\remu}\circ\eva{\elemu}\circ\eva{\few}\circ\tranLE(L).
\]
The Corollary follows from the relation
\[
\avg_{\remu}\circ\eva{\elemu}\circ\eva{\few}=\avg_{\remu}\circ\eva{\few}\circ\eva{\levi(\elemv)}=
\eva{\few}\circ\avg_{\levi(\remn)}\circ\eva{\levi(\elemv)}.
\qedhere
\]
\end{proof}

\printindex

\appendix
\section{Some integration lemmas}\label{a: int}

In this appendix all groups are algebraic over a local field $F$ (or their groups of $F$-points) and all isomorphisms are
defined over $F$.
We fix a group $\alg{G_0}$, a closed subgroup $\alg{H_0}$ such that $\alg{H_0}\bs\alg{G_0}$ is affine,
and a continuous character $\chi_0:H_0\rightarrow\R_{>0}$.
The following statements are in the spirit of \cite[\S7.1]{MR2848523}. (See also \cite[Lemma 2.2]{MR1671452}.)

We fix a non-trivial character $\psi$ of $F$.
It determines a self-dual Haar measure on $F$, as well as a Haar measure on $F^*$.
Throughout this section we assume that Haar measures on groups and their semidirect products are chosen
compatibly, and all isomorphisms are compatible with Haar measures.

\begin{lemma}\label{L: Fourier}
Let $A,A',\tilde A,B,C,D$ be closed unipotent subgroups of $G_0$ satisfying the following properties.
\begin{enumerate}
\item $A=D\cdot B$, $D$ normalizes $B$ and $B\cap D\subset H_0$.
\item $\tilde A=C\ltimes A$, $A'=C\ltimes B$.
\item $A\cap H_0=(D\cap H_0)\cdot(B\cap H_0)$.
\item $D\cap H_0$ is normal in $D$ and $D\cap H_0\bs D$ is abelian.
\item $C\subset H_0$ and there is a homeomorphism $\iota$ between $C$ and the Pontryagin dual of $D\cap H_0\bs D$,
preserving Haar measures.
\end{enumerate}
Let $\Psi\in C^\infty(A\cap H_0\bs\tilde A)$ be such that for any $a\in A$, $c\in C$:
\[
\Psi(ca)=\Psi(cac^{-1})=\Psi(a)\sprod{\iota(c)}{\overline a},
\]
where
$\overline{a}$ is the image of $a$ under the projections $A\rightarrow D\cap H_0\bs D$.
Then we have $\avg_{C}\circ\avg^{H_0}_{A,\Psi}\circ f=\avg^{H_0}_{B,\Psi}\circ f=\avg^{H_0}_{A',\Psi}\circ f$
for any $f\in C_c^{\infty}(H_0\bs G_0,\chi_0)$, i.e.
\begin{equation}\label{eq: Fourier1}
\int_C\big(\int_{A\cap H_0\bs A} f(ac)\Psi(a)\,da\big)\,dc=\int_{B\cap H_0\bs B} f(b)\Psi(b)\,db=
\int_{A'\cap H_0\bs A'}f(a)\Psi(a)\ da.
\end{equation}
\end{lemma}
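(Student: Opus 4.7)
The plan is to prove the two equalities in~\eqref{eq: Fourier1} separately: the equality of the middle and right expressions is elementary, while the first equality is the substance of the lemma and amounts to Fourier inversion on the abelian group $D\cap H_0\bs D$, whose Pontryagin dual is identified with $C$ via $\iota$. A useful preliminary observation is that $\chi_0$ is automatically trivial on any unipotent subgroup of $H_0$: the $F$-points of a unipotent algebraic group over the non-archimedean field $F$ are totally disconnected, while $\R_{>0}$ is connected, so every continuous homomorphism between them vanishes. In particular $\chi_0\rest_C$, $\chi_0\rest_{D\cap H_0}$, and $\chi_0\rest_{A\cap H_0}$ are all trivial.

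For the middle-to-right equality, $C\subset H_0$ gives $A'\cap H_0=C\cdot(B\cap H_0)$, so $A'\cap H_0\bs A'\simeq B\cap H_0\bs B$ via representatives $b\in B$. For $c\in C$ and $b\in B$, the hypotheses on $\Psi$ yield $\Psi(cb)=\Psi(cbc^{-1})=\Psi(b)\sprod{\iota(c)}{\overline b}=\Psi(b)$, the last step using that $\overline b=1$ in $D\cap H_0\bs D$ (the projection $A\to D\cap H_0\bs D$ factors through $A/B\simeq D/(D\cap B)$, and $D\cap B\subset H_0$ by hypothesis). Combined with $f(cb)=\chi_0(c)f(b)=f(b)$, this yields the equality.

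For the main equality, introduce
\[
g(d):=\int_{B\cap H_0\bs B}f(bd)\Psi(bd)\,db,
\]
a compactly supported smooth function on $D\cap H_0\bs D$ (well-defined by the decomposition $A\cap H_0=(D\cap H_0)(B\cap H_0)$ together with the triviality of $\chi_0$ on $D\cap H_0$). Fubini gives $\int_{A\cap H_0\bs A}f(a)\Psi(a)\,da=\int_{D\cap H_0\bs D}g(d)\,dd$, so $g(1)$ equals the middle expression in~\eqref{eq: Fourier1}. Fourier inversion on $D\cap H_0\bs D$ with dual $C$ then yields $g(1)=\int_C\widehat g(c)\,dc$, where $\widehat g(c)=\int_{D\cap H_0\bs D}g(d)\sprod{\iota(c)}{d}\,dd$. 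It thus suffices to identify
\[
\widehat g(c)=\int_{A\cap H_0\bs A}f(ac)\Psi(a)\,da.
\]

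For this final step, combine the two integrations defining $\widehat g(c)$ via Fubini and use $\overline{bd}=\overline d$ together with the hypothesis $\Psi(a)\sprod{\iota(c)}{\overline a}=\Psi(ca)$ to obtain $\widehat g(c)=\int_{A\cap H_0\bs A}f(a)\Psi(ca)\,da$. Applying $\Psi(ca)=\Psi(cac^{-1})$ and substituting $a\mapsto cac^{-1}$ (valid because $C$ normalizes $A$ via $\tilde A=C\ltimes A$ and $C\subset H_0$ normalizes $H_0$ hence $A\cap H_0$, while conjugation by a unipotent element preserves Haar measure on a unipotent group) converts $\Psi(cac^{-1})$ into $\Psi(a')$ and $f(a)$ into $f(c^{-1}a'c)=\chi_0(c^{-1})f(a'c)=f(a'c)$, giving the claim. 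The main obstacle is the bookkeeping of modulus factors through these changes of variables and the various quotient decompositions; they all vanish by unipotence of the relevant groups together with the automatic triviality of $\chi_0$ on unipotent subgroups of $H_0$.
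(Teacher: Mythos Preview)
Your argument is essentially the paper's own proof, run in reverse: you define $g(\overline d)=\int_{B\cap H_0\bs B}f(bd)\Psi(bd)\,db$ (the paper's $\phi$), invoke Fourier inversion on $D\cap H_0\bs D$, and identify $\widehat g(c)$ with the inner integral on the left via the substitution $a\mapsto cac^{-1}$. Two small points deserve tightening. First, your justification that $\chi_0$ is trivial on unipotent subgroups is not correct as stated: total disconnectedness alone does not kill continuous homomorphisms to $\R_{>0}$ (witness $\abs{\cdot}:F^*\to\R_{>0}$); the right reason is that a unipotent group over a non-archimedean local field is the union of its compact open subgroups, and any continuous homomorphism to $\R_{>0}$ is trivial on compact subgroups. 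Second, you assert but do not argue that $g\in C_c^\infty(D\cap H_0\bs D)$; the paper supplies this by noting that $A\cap H_0\bs A$ is closed in $H_0\bs G_0$ (a unipotent orbit on an affine variety), whence $f\rest_A$ has compact support modulo $A\cap H_0$.
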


\begin{proof}
First note that the restriction of $f$ to $A$ is compactly supported modulo $A\cap H_0$
since $A\cap H_0\bs A$ is closed in $H_0\bs G_0$, as a unipotent orbit on an affine variety.
Similarly for $A'$ and $B$.
Make a change of variable $a\mapsto cac^{-1}$ on the left-hand side of \eqref{eq: Fourier1}.
By left $(H_0,\chi_0)$-equivariance of $f$, it is equal to
\[
\int_C\big(\int_{A\cap H_0\bs A} f(a)\Psi(a)\sprod{\iota(c)}{\overline a}\,da\big)\,dc.
\]
This integral equals
\[
\int_C\big(\int_{D\cap H_0\bs D}\int_{B\cap H_0\bs B} f(bd)\Psi(bd)\sprod{\iota(c)}{\bar d}\,db\,dd\big)\,dc.
\]
Let $\phi\in C_c^{\infty}(D\cap H_0\bs D)$ be the function $\phi(\bar d)=\int_{B\cap H_0\bs B} f(bd)\Psi(bd)\,db$.
Then the above is just
\[
\int_C \hat{\phi}(\iota(c))\,dc=\phi(e)=\int_{B\cap H_0\bs B} f(b)\Psi(b)\,db.
\]
The second equality of \eqref{eq: Fourier1} is clear.
\end{proof}

\begin{remark}
In most cases we will apply the lemma when $B\cap D=1$, i.e. $A=D\ltimes B$.
\end{remark}

\begin{lemma}\label{C: Fourier2}
Let $A,A',\tilde A,B,C,D$ be closed unipotent subgroups of $G_0$ satisfying the following properties.
\begin{enumerate}
\item $D\subset H_0$.
\item $A=D\ltimes B$, $A'=C\ltimes B$, $\tilde A=C\ltimes A$.
\item $A'\cap H_0=B\cap H_0$.
\end{enumerate}
Let $\Psi\in C^\infty(A\cap H_0\bs\tilde A)$ be such that $\Psi(ac)=\Psi(a)\Psi(c)$ for all $a\in A$, $c\in C$.
Then we have $\avg_{C,\Psi}\circ\avg^{H_0}_{A,\Psi}\circ f=\avg^{H_0}_{A',\Psi}\circ f$
for any $f\in C_c^{\infty}(H_0\bs G_0,\chi_0)$, i.e.
\begin{equation}\label{eq: Fourier2}
\int_C\big(\int_{A\cap H_0\bs A} f(ac)\Psi(ac)\,da\big)\,dc=
\int_{A'\cap H_0\bs A'} f(a')\Psi(a')\,da'.
\end{equation}
\end{lemma}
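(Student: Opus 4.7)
The proof will be a direct Fubini-type computation that exploits the hypothesis $D\subset H_0$ to collapse the $D$-factor of $A$ into the quotient. First I would check that the restrictions of $f$ to the closed subsets $A\cap H_0\bs A$ and $A'\cap H_0\bs A'$ of $H_0\bs G_0$ are compactly supported (using, as in the proof of Lemma \ref{L: Fourier}, that unipotent orbits on the affine variety $H_0\bs G_0$ are closed); this will justify absolute convergence of every iterated integral appearing below and the application of Fubini.

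Next, the key observation: since $A=D\ltimes B$ with $D\subset H_0$, any element of $A\cap H_0$ decomposes uniquely as $db$ with $d\in D$ and $b=d^{-1}(db)\in B\cap H_0$, so $A\cap H_0=D\ltimes(B\cap H_0)$. Consequently the projection induces a measure-preserving homeomorphism
\[
B\cap H_0\bs B\;\xrightarrow{\;\sim\;}\; A\cap H_0\bs A.
\]
Using this identification together with the factorization $\Psi(bc)=\Psi(b)\Psi(c)$ (valid since $\Psi$ is left $A\cap H_0$-invariant and in particular left $D$-invariant), the inner integral on the left-hand side of \eqref{eq: Fourier2} becomes
\[
\int_{A\cap H_0\bs A}f(ac)\Psi(ac)\,da=\int_{B\cap H_0\bs B}f(bc)\Psi(b)\Psi(c)\,db,
\]
so the full left-hand side equals
\[
\int_C\int_{B\cap H_0\bs B}f(bc)\Psi(b)\Psi(c)\,db\,dc.
\]

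For the right-hand side, I would use $A'=C\ltimes B$ and the hypothesis $A'\cap H_0=B\cap H_0$: the map $(b,c)\mapsto bc$ yields a measure-preserving homeomorphism $(B\cap H_0\bs B)\times C\;\xrightarrow{\sim}\;A'\cap H_0\bs A'$, and again $\Psi(bc)=\Psi(b)\Psi(c)$, so
\[
\int_{A'\cap H_0\bs A'}f(a')\Psi(a')\,da'=\int_C\int_{B\cap H_0\bs B}f(bc)\Psi(b)\Psi(c)\,db\,dc,
\]
which matches the expression obtained for the left-hand side.

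There is essentially no obstacle here beyond matching normalizations of Haar measures; the ``hard part'' is purely bookkeeping, namely verifying that the invariant measure on $A\cap H_0\bs A$ coming from $A$ and the invariant measure on $B\cap H_0\bs B$ coming from $B$ agree under the identification above (which follows from $A=D\ltimes B$, $A\cap H_0=D\ltimes(B\cap H_0)$, and the convention that semidirect-product measures are products), and similarly for $A'\cap H_0\bs A'$.
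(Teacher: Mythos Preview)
Your proof is correct and follows essentially the same approach as the paper's: the paper's two-line argument records exactly your two reductions, namely $\avg^{H_0}_{A,\Psi}=\avg^{H_0}_{B,\Psi}$ (from $A=D\ltimes B$, $D\subset H_0$) and $\avg_{C,\Psi}\circ\avg^{H_0}_{B,\Psi}=\avg^{H_0}_{A',\Psi}$ (from $A'=C\ltimes B$, $A'\cap H_0=B\cap H_0$). You have simply unpacked these identities in more detail.
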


\begin{proof}
Since $A=D\ltimes B$ and $D\subset H_0$ we have $\avg^{H_0}_{A,\Psi}=\avg^{H_0}_{B,\Psi}$.
Since $A'\cap H_0=B\cap H_0$ we have $\avg_{C,\Psi}\circ\avg^{H_0}_{B,\Psi}=\avg^{H_0}_{A',\Psi}$.
\end{proof}

We will also need the following variant of Lemma \ref{L: Fourier}.
This will only be used in \S\ref{sec: Mfactor}.

\begin{lemma}\label{L: Fourier2}
Let $A,B,C,D$ be closed subgroups of $G_0$ satisfying the following properties.
\begin{enumerate}
\item $A$ is a unipotent group.
\item $A=D\ltimes B$ and $A\cap H_0=B\cap H_0$.
\item There is an isomorphism $\iota:D\rightarrow F$.
\item There is an isomorphism $\tau:F^*\rightarrow C$.
\item $C$ normalizes $A$.
\item $C\subset H_0$.
\item $\chi_0\circ\tau=1$.
\end{enumerate}
Let $\Psi\in C^\infty(A\cap H_0\bs A)$ be such that for any $a\in A$, $t\in F^*$
\[
\Psi(\tau(t)a\tau(t^{-1}))=\Psi(a)\psi((1-t)\iota(\bar a)),
\]
where $\overline{a}$ is the image of $a$ under the projections $A\rightarrow D$.
Let $\Xi(c)$, $c\in C$ be the modulus function of $c^{-1}$ acting on $A\cap H_0\bs A$ and set $\Xi'(\tau(t))=\Xi(\tau(t))\abs{t}$, $t\in F^*$.
Then we have $\avg_{C,\Xi'}\circ\avg^{H_0}_{A,\Psi}=\avg^{H_0}_{B,\Psi}$
for any $f\in C_c^{\infty}(H_0\bs G_0,\chi_0)$. In other words,
\begin{equation}\label{eq: Fourier3}
\int_C\big(\int_{A\cap H_0\bs A} f(ac)\Psi(a)\,da\big)\Xi'(c)\ dc=\int_{B\cap H_0\bs B} f(b)\Psi(b)\,db.
\end{equation}
\end{lemma}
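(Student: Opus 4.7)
\textbf{Proof plan for Lemma~\ref{L: Fourier2}.}
The plan is to reduce \eqref{eq: Fourier3} to Fourier inversion on $F$ via the isomorphisms $\iota:D\rightarrow F$ and $\tau:F^*\rightarrow C$, in the same spirit as Lemma~\ref{L: Fourier}. First, in the inner integral on the left-hand side, I substitute $a\mapsto\tau(t)a\tau(t^{-1})$ (writing $c=\tau(t)$). By the definition of $\Xi$, the Jacobian of this substitution on $A\cap H_0\bs A$ equals $\Xi(\tau(t))^{-1}$, since conjugation by $\tau(t)$ is inverse to conjugation by $\tau(t^{-1})$. Since $C\subset H_0$ and $\chi_0\circ\tau=1$, left $(H_0,\chi_0)$-equivariance of $f$ gives $f(\tau(t)a\tau(t^{-1})\cdot\tau(t))=f(\tau(t)a)=f(a)$. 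Combined with the transformation property of $\Psi$, the inner integral becomes
\[
\Xi(\tau(t))^{-1}\int_{A\cap H_0\bs A}f(a)\Psi(a)\psi((1-t)\iota(\bar a))\,da.
\]
Multiplying by $\Xi'(\tau(t))=\Xi(\tau(t))\abs{t}$ and using that $\abs{t}\,d^*t$ is the additive Haar measure $dt$ on $F\supset F^*$, the left-hand side of \eqref{eq: Fourier3} becomes
\[
\int_F\int_{A\cap H_0\bs A}f(a)\Psi(a)\psi((1-t)\iota(\bar a))\,da\,dt.
\]

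Second, I use the decomposition $A=D\ltimes B$ with $\bar a\in D$ the projection of $a$. Since $A\cap H_0=B\cap H_0$ and $D\cap B=\{e\}$, we have $D\cap H_0=\{e\}$, so the integration over $A\cap H_0\bs A$ factors as an iterated integral over $D$ and $B\cap H_0\bs B$. Define
\[
h(d):=\int_{B\cap H_0\bs B}f(bd)\Psi(bd)\,db,\ \ d\in D.
\]
Because $f$ is compactly supported modulo $H_0$ and $A\cap H_0\bs A$ embeds as a closed subspace of $H_0\bs G_0$ (the unipotent orbit is closed by affineness of $H_0\bs G_0$), $f\Psi$ is compactly supported on $A\cap H_0\bs A$, so $h\in\cc{D}$ and $g:=h\circ\iota^{-1}\in\cc{F}$.

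Finally, the iterated integral takes the form $\int_F\int_F g(x)\psi((1-t)x)\,dx\,dt$. Substituting $s=1-t$ (which preserves additive Haar measure on $F$), this equals $\int_F\int_F g(x)\psi(sx)\,dx\,ds$, and by Fourier inversion for the Bruhat--Schwartz function $g$ this is $g(0)=h(e_D)=\int_{B\cap H_0\bs B}f(b)\Psi(b)\,db$, which is the right-hand side of \eqref{eq: Fourier3}. There is no serious obstacle beyond careful bookkeeping of the modulus factors; the crucial identity is
\[
\Xi(\tau(t))^{-1}\cdot\Xi'(\tau(t))\,d^*t=dt,
\]
which converts the multiplicative Haar measure on $C$ into the additive Haar measure on $F$ that the Fourier inversion argument requires.
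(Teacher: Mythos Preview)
Your proof is correct and follows essentially the same route as the paper's own proof: the conjugation change of variable $a\mapsto\tau(t)a\tau(t)^{-1}$, the factorization of the $A\cap H_0\bs A$ integral via $A=D\ltimes B$ into a compactly supported function on $D\simeq F$, and Fourier inversion after the substitution $t\mapsto 1-t$. Your write-up is in fact a bit more explicit than the paper about the modulus bookkeeping and the passage from $\abs{t}\,d^*t$ to the additive measure $dt$ (extending from $F^*$ to $F$ at a set of measure zero), but the argument is the same.
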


\begin{proof}
We make a change of variable $a\mapsto cac^{-1}$ on the left-hand side of \eqref{eq: Fourier3}.
By left $(H_0,\chi_0)$-equivariance of $f$ we get
\[
\int_{F^*}\big(\int_{A\cap H_0\bs A} f(a)\Psi(a)\psi((1-t)\iota(\overline a))\,da\big)\abs{t}\ dt.
\]
This integral equals
\[
\int_{F^*}\big(\int_D\int_{B\cap H_0\bs B} f(bd)\Psi(bd)\psi((1-t)\iota(d))\,db\,dd\big)\abs{t}\,dt.
\]
Let $\phi(\iota(d))=\int_{B\cap H_0\bs B} f(bd)\Psi(bd)\,db$. Then $\phi\in C_c^{\infty}(F)$ and the above integral is just
\[
\int_{F^*}\hat{\phi}(1-t)\abs{t}\ dt=\int_F\hat{\phi}(x)\ dx=\phi(0)=\int_{B\cap H_0\bs B} f(b)\Psi(b)\,db
\]
as required.
\end{proof}

\begin{remark} \label{rem: condFourier}
We will only use the Lemmas above in the case where
$H_0$ is the fixed point subgroup of an involution $\theta$ of $G_0$
which stabilizes $A$, $B$, $C$, $D$.
Thus, the conditions $A\cap H_0=(D\cap H_0)\cdot(B\cap H_0)$
(resp, $A'\cap H_0=B\cap H_0$, $A\cap H_0=B\cap H_0$) are automatically satisfied.

Also, in the cases at hand the restriction of $\Psi$ to $A$ and to $A'$ will be a character (and $\Psi\rest_{A\cap H_0}\equiv1$).
(Note however that $\Psi$ is not a character on $\tilde A$.)
Thus, the condition on $\Psi$ in Lemma \ref{L: Fourier} is that it is right $C$-invariant and
$\Psi([c,d])=\sprod{\iota(c)}{\overline d}$ for all $c\in C$, $d\in D$.
\end{remark}

\section{A majorization lemma}
For this section let $G=\GL_m$ and let $B=T\rtimes N$ be the standard Borel subgroup of $G$ where $N$ is the subgroup of upper
unitriangular matrices and $T$ is the diagonal torus. Let also $K$ be the standard maximal compact subgroup of $G$.
Let $w_0\in G$ be the permutation matrix corresponding to the longest Weyl element.
We denote by $\|h\|$ be the maximum of the norm of the entries of $h$ and $h^{-1}$, and let $\sigma(h)=\max(1,\log_q(\|h\|))$.
We have $\sigma(g)\ge1$, $\sigma(gh)\le\sigma(g)+\sigma(h)$ and $\sigma$ is bi-$K$-invariant.

Let $\Pi_0$ be the representation parabolically induced from the trivial representation of the $B$.
Let $\phi_0$ be the unramified vector in $\Pi_0$ such that $\phi_0(e)=1$.
For any $c\ge0$ let $N_c=\{u\in N:\val(u_{i,i+1})\ge -c, i=1,\dots,m-1\}$.
Similarly for $\bar N_c\subset \bar N=N^t$. We show the following variant of \cite[Proposition 3.2]{WaldAst3461}:
\begin{lemma} \label{lem: mainbnd018}
For any $A\ge 0$ there exists ${A'}\ge0$ such that
\[
\int_{N_c}\phi_0(w_0ug)\sigma(u)^A\ du\ll_A(\sigma(g)+c)^{A'}\phi_0(g)
\]
for all $c\ge1$ and $g\in\GL_m$.
\end{lemma}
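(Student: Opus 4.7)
The plan is to reduce to $g = t$ in the positive Weyl chamber $T^+$ by Iwasawa decomposition and bi-$K$-invariance of $\phi_0$, then conjugate $u$ past $t$ to pull out the factor $\phi_0(t)$, and finally bound the resulting truncated integral by polynomial growth in $c$.

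For the first step, write $g = n(g)a(g)k(g)$; right-$K$-invariance of $\phi_0$ discards $k(g)$, and a Weyl conjugation by an element of the normalizer of $T$ in $K$ puts $a(g)$ in $T^+$ at the cost of modifying the domain $N_c$ and the weight $\sigma(u)^A$ by factors polynomial in $\sigma(g)$. The presence of $n(g)$ is absorbed by shifting $u\mapsto un(g)^{-1}$ inside the integral and paying at most a $(\sigma(g)+c)^{O(1)}$ inflation of the weight; the set-level discrepancy between $N_c$ and $N_c\cdot n(g)$ is likewise polynomial in $\sigma(g)$ after using the decay properties of $\phi_0$ proved below.

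For the second step, with $g = t \in T^+$, substitute $u = tvt^{-1}$. Since $w_0 t = (w_0 t w_0^{-1})w_0$ and $\delta_B^{1/2}(w_0 t w_0^{-1}) = \delta_B^{-1/2}(t) = \phi_0(t)^{-1}$, while $du = \delta_B(t)\,dv = \phi_0(t)^2\,dv$, the identity $\phi_0(w_0 ut) = \phi_0(t)^{-1}\phi_0(w_0 v)$ yields
\[
I(t) \;=\; \phi_0(t)\int_{t^{-1}N_c t}\phi_0(w_0 v)\,\sigma(tvt^{-1})^A\, dv.
\]
The domain $t^{-1}N_c t$ is contained in $N_c$ (for $t \in T^+$ the simple-root entries are only tightened, while non-simple-root entries are unconstrained in both sets), and $\sigma(tvt^{-1})\le \sigma(v) + 2\sigma(t)$. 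Expanding this weight binomially, the lemma reduces to showing
\[
\int_{N_c}\phi_0(w_0 v)\,\sigma(v)^k\, dv \;\ll_k\; c^{A'_k},\qquad k = 0,\dots,A.
\]

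The main obstacle is this last estimate, which is the $g = e$ case but still nontrivial: since $N_c$ imposes no condition on the non-simple-root entries of $v$, convergence relies entirely on the decay of $\phi_0(w_0 v)$ in those directions. To obtain it, use the explicit minor-determinantal formula for $\phi_0$ on the big Bruhat cell, namely $\phi_0(w_0 v) = |\det v|^{(m-1)/2}\prod_{i=1}^{m-1}M_i(v)^{-1}$, where $M_i(v)$ is the maximum absolute value of the $i\times i$ minors of the bottom $i$ rows of $w_0 v$. These minors are polynomial in \emph{all} entries of $v$, supplying the decay needed in the non-simple-root directions. An induction on $m$ using the Levi decomposition $N = N'\ltimes V$ of the standard maximal parabolic (with $V$ abelian, identified with the first column of $N$) reduces the $\GL_m$ integral to a $\GL_{m-1}$ integral of the same type plus an elementary integral over $V$; carefully tracking the interaction of the $\sigma(v)^A$ weight with the truncation and the inductive gain yields the claimed polynomial bound in $c$, which is the Waldspurger-style estimate underlying the lemma.
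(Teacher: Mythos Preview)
Your reduction in step 1 has a concrete error: conjugating $a(g)$ into $T^+$ by a Weyl element $w\in K$ replaces $N$ by $wNw^{-1}$, which is a \emph{different} maximal unipotent subgroup, so the integral no longer lives over a subset of $N$ and cannot be compared to any $N_c$. The detour through $T^+$ is also unnecessary: your own change of variable $u=ava^{-1}$ in step 2 works for arbitrary $a\in T$ and gives $I(g)=\phi_0(g)\int_{a^{-1}N_cna}\phi_0(w_0v)\sigma(ava^{-1}n^{-1})^A\,dv$, and for \emph{any} $a$ one has $a^{-1}N_cna\subset N_{c+O(\sigma(g))}$ since only the simple-root entries are constrained. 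Once you drop the Weyl conjugation, your reduction coincides with the paper's, so steps 1--2 are salvageable.

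The real gap is the $g=e$ case. Your sentence ``carefully tracking the interaction of the $\sigma(v)^A$ weight with the truncation and the inductive gain yields the claimed polynomial bound'' is the entire content of the lemma, and you have not supplied an argument. The difficulty is that under $v=v_1v_2$ (radical times Levi part) neither $\phi_0(w_0v)$ nor $\sigma(v)^A$ factors, and $\sigma(v)$ may be dominated by coordinates of $v_1$ that $N_c$ does not truncate. The paper's inductive step is not an elementary integral over $V$: it splits the $\bar U_c$-integral according to whether $\sigma(\bar u_1\bar u_2)\le C_1\sigma(\bar u_2)$, then further according to the valuation of the single simple-root entry of $\bar u_1$, and bounds the pieces using two nontrivial inputs from Waldspurger --- the exponential decay $\int_{\bar U_\natural}1_{\sigma\ge C}\,\phi_0\,\sigma^{C'}\,d\bar u\ll q^{-A_2C}$ and the convergence of $\int_{\bar U}\phi_0\,\sigma^{-A_3}\,d\bar u$. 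The minor formula for $\phi_0$ does not by itself provide a mechanism to decouple $\phi_0$ from $\sigma^A$; without an analogue of this splitting your induction does not close.
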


\begin{proof}
First, we reduce to the case $g=1$, i.e., to the inequality
\begin{equation} \label{eq: g=1bnd}
\int_{N_c}\phi_0(w_0u)\sigma(u)^A\ du=\int_{\bar N_c}\phi_0(\bar u)\sigma(\bar u)^A\ d\bar u\ll_Ac^{A'}, \ \ \ c\ge1.
\end{equation}
Indeed, write $g=nak$ where $n\in N$, $a\in T$ and $k\in K$. Then by a change of variable we get
\begin{multline*}
\int_{N_c}\phi_0(w_0ug)\sigma(u)^A\ du=
\int_{N_cn}\phi_0(w_0uak)\sigma(un^{-1})^A\ du\\=
\delta_0(a)^{\frac12}\int_{a^{-1}N_cna}\phi_0(w_0uk)\sigma(aua^{-1}n^{-1})^A\ du=
\delta_0(a)^{\frac12}\int_{a^{-1}N_cna}\phi_0(w_0u)\sigma(aua^{-1}n^{-1})^A\ du.
\end{multline*}
By matrix multiplication $\sigma(g)=\sigma(na)\ge\sigma(a)$. Therefore, $\sigma(g)\ge\frac13(\sigma(n)+\sigma(a))$ and
the above is
\[
\ll_A\delta_0(a)^{\frac12}\int_{a^{-1}N_cna}\phi_0(w_0u)(\sigma(u)^A+\sigma(g)^A)\ du.
\]
Note that $a^{-1}N_cna\subset N_{c+\sigma(n)+\max_i\log_q\abs{a_{i+1}/a_i}}\subset N_{c+4\sigma(g)}$.
Thus, the estimate \eqref{eq: g=1bnd} will give
\[
\ll_A\delta_0(a)^{\frac12}(c+\sigma(g))^{A'}=\phi_0(g)(c+\sigma(g))^{A'}
\]
(possibly for a larger ${A'}$).

Let $\bar P=M\rtimes\bar U$ be the parabolic subgroup stabilizing the line $(0,\dots,0,*)^t$.
Let $\bar U_c=\bar U\cap \bar N_c$.
We will prove \eqref{eq: g=1bnd} by induction on $m$. For the induction step it suffices to prove that for any $A\ge0$ there exists ${A'}\ge0$ such that
\begin{equation} \label{eq: indbnd24}
\int_{\bar U_c}\phi_0(\bar u_1\bar u_2)\sigma(\bar u_1\bar u_2)^A\ d\bar u_1\ll_A\phi_0(\bar u_2)(\sigma(\bar u_2)+c)^{A'}
\end{equation}
for all $u_2\in\bar N\cap M$ and $c\ge1$.
(Note that $\phi_0(\bar u_2)$ and $\sigma(\bar u_2)$ are unchanged if we view $\bar u_2$
as an element of $\GL_{m-1}$.)

Clearly there exists $A_1>0$ such that
\begin{equation}\label{bound: fact}
\phi_0(gh)\ll \phi_0(g) q^{A_1\sigma(h)}
\end{equation}
for any $g,h\in\GL_m$.
Let $\bar U_\natural$ denote the subgroup of $\bar U$ consisting of elements with $u_{m,m-1}=0$.
On \cite[p. 196-7]{WaldAst3461} it is proved that for any $C'$ there exists $A_2>0$ such that
\begin{equation} \label{ref: Wald2bnd}
\int_{\bar U_\natural} 1_{\sigma\ge C}(u)\phi_0(u)\sigma(u)^{C'}\,du\ll_{C'} q^{-A_2C}
\end{equation}
for all $C\ge 0$.

We will use the following fact (\cite[Lemme II.4.2]{MR1989693}): there exists $A_3>0$ such that
\[
\int_{\bar U}\phi_0(\bar u)\sigma(\bar u)^{-A_3}\,d\bar u<\infty.
\]
This implies for all $C$ and $m\in M$
\begin{equation} \label{eq: bndshell}
\int_{\bar U}1_{\sigma=C}(\bar u_1)\phi_0(m\bar u_1)\ d\bar u_1\ll C^{A_3} \phi_0(m).
\end{equation}
Indeed, writing $m=bk$ where $b\in B\cap M$ and $k\in K\cap M$ then
by a change of variable $\bar u_1\mapsto k^{-1}\bar u_1 k$ the left-hand side of \eqref{eq: bndshell} is
\begin{multline*}
\int_{\bar U}1_{\sigma=C}(\bar u_1)\phi_0(b\bar u_1)d\bar u_1=\int_{\bar U}1_{\sigma=C}(\bar u_1)\phi_0(b)\phi_0(\bar u_1)d\bar u_1
\\\le C^{A_3}\phi_0(m)\int_{\bar U}\phi_0(\bar u_1)\sigma(\bar u_1)^{-A_3}d\bar u_1 \ll C^{A_3} \phi_0(m).
\end{multline*}
By a similar reasoning
\begin{equation} \label{eq: bnd192}
\int_{\bar U}\phi_0(m\bar u_1)\sigma(\bar u_1)^{-A_3}\ d\bar u_1\ll \phi_0(m).
\end{equation}

We break the integral in \eqref{eq: indbnd24} according to the regions
$\sigma(\bar u_1\bar u_2)\le C_1\sigma(\bar u_2)$ and
$\sigma(\bar u_1\bar u_2)>C_1\sigma(\bar u_2)$ where $C_1>1$ is a constant which will be determined below.
We write $I_1$ and $I_2$ for the corresponding integrals so that
\begin{equation} \label{eq: I1I2}
\int_{\bar U_c}\phi_0(\bar u_1\bar u_2)\sigma(\bar u_1\bar u_2)^A\ d\bar u_1=I_1+I_2.
\end{equation}

Write $\bar u_1\bar u_2=\bar u_2\bar u_1'$.
In the domain of $I_1$ we have $\sigma(\bar u_1')\leq (C_1+1)\sigma(\bar u_2)$.
Thus
\begin{multline} \label{eq: bndI1}
I_1\le \int_{\bar U} (C_1\sigma(\bar u_2))^A 1_{\sigma\le(C_1+1)\sigma(\bar u_2)}(\bar u_1')\phi_0(\bar u_2\bar u_1')\,d\bar u_1'\\
\le \int_{\bar U} ((C_1+1)\sigma(\bar u_2))^{A+A_3} \phi_0(\bar u_2\bar u_1')\sigma(\bar u_1')^{-A_3}\,d\bar u_1
\ll((C_1+1)\sigma(\bar u_2))^{A+A_3}\phi_0(\bar u_2)
\end{multline}
by \eqref{eq: bnd192}.

Consider now $I_2$. On the domain of $I_2$ we have $\sigma(\bar u_1)> (C_1-1)\sigma(\bar u_2)$.
Also $\sigma(\bar u_1\bar u_2)\le\sigma(\bar u_1)+\sigma(\bar u_2)<\sigma(\bar u_1)+C_1^{-1}\sigma(\bar u_1\bar u_2)$
so that $\sigma(\bar u_1\bar u_2)<(1-C_1^{-1})^{-1}\sigma(\bar u_1)$.
We write $\bar u_1=\bar v_1\bar v_2$ where $\bar v_1\in U_\natural$ and $\bar v_2=I+y\epsilon_{m,m-1}$.
Once again, we break the domain of integration of $I_2$ to $\val y\ge -C_2\sigma(\bar u_1)$ and
$-c\le\val y<-C_2\sigma(\bar u_1)$ where $0<C_2<1$ is a small constant which will be fixed later.
We write $I_3$ and $I_4$ for the corresponding integrals so that
\begin{equation} \label{eq: I2I3I4}
I_2=I_3+I_4.
\end{equation}
In domain of $I_3$ we have $\sigma(\bar u_1)=\sigma(\bar v_1)$ and $\sigma(\bar v_2)\le 1+C_2\sigma(\bar u_1)$.
Thus, by \eqref{bound: fact}
\begin{multline*}
\phi_0(\bar u_1\bar u_2)=\phi_0(\bar v_1\bar v_2\bar u_2)\ll \phi_0(\bar v_1)q^{A_1(\sigma(\bar v_2)+\sigma(\bar u_2))}\\\le
\phi_0(\bar v_1)q^{A_1(C_2\sigma(\bar u_1)+\sigma(\bar u_2))}=
\phi_0(\bar v_1)q^{A_1(C_2\sigma(\bar v_1)+\sigma(\bar u_2)+1)}.
\end{multline*}
Together with \eqref{ref: Wald2bnd} it follows that (here $V(x)=\vol(\{y\in F:\val y\ge x\})$)
\begin{multline*}
I_3\ll \sum_{C>(C_1-1)\sigma(\bar u_2)}q^{A_1(\sigma(\bar u_2)+C_2C+1)}\int_{\bar U_\natural}(1-C_1^{-1})^{-A} 1_{\sigma=
C}(\bar v_1)\phi_0(\bar v_1)\sigma(\bar v_1)^A V(-C_2\sigma(\bar v_1))\,d\bar v_1\\
\ll_A \sum_{C>(C_1-1)\sigma(\bar u_2)}q^{A_1\sigma(\bar u_2)+A_1+C_2C(A_1+1)}(1-C_1^{-1})^{-A}q^{-CA_2}.
\end{multline*}
Taking $C_2=\frac12 A_2/(A_1+1)$ and $C_1=1+4A_1/A_2$ we get
\begin{equation} \label{eq: bndI3}
I_3\ll_A q^{-A_1\sigma(\bar u_2)}\ll\phi_0(\bar u_2)
\end{equation}
by \eqref{bound: fact}.

For $I_4$, once again write $\bar u_1\bar u_2=\bar u_2\bar u_1'$.
By the same reasoning as above, $\sigma(\bar u_2\bar u_1')<(1-C_1^{-1})^{-1}\sigma(\bar u_1')$.
On the domain of integration of $I_4$ we have $\sigma(\bar u_1)<c/C_2$ and hence
$\sigma(\bar u_1')\le\sigma(\bar u_1)+2\sigma(\bar u_2)<cC_2^{-1}+2\sigma(\bar u_2)$.
Therefore for suitable $A_4$ we have
\begin{equation} \label{eq: bndI4}
I_4\ll \int_{\bar u_1'\in\bar U:\sigma(\bar u_1')<cC_2^{-1}+2\sigma(\bar u_2)} \phi_0(\bar u_2\bar u_1')(1-C_1^{-1})^{-A} \sigma(\bar u_1')^A\,d\bar u_1'
\ll_A
\phi_0(\bar u_2)(c+\sigma(\bar u_2))^{A_4}
\end{equation}
by \eqref{eq: bndshell}.
Combining \eqref{eq: I1I2}, \eqref{eq: bndI1}, \eqref{eq: I2I3I4}, \eqref{eq: bndI3} and \eqref{eq: bndI4} we get \eqref{eq: indbnd24}.
The lemma follows.
\end{proof}

\section{Uniqueness of degenerate Whittaker models for $\LQ{\pi}$\\by Marko Tadi\'c} \label{sec: Tadic}
\newcommand{\JGL}{J_{\GL}}
\newcommand{\sGL}{s_{\GL}}
\newcommand{\sGLgen}{s_{\GL,\gen}}
\newcommand{\MGL}{M_{\GL}^*}
\newcommand{\MGLgen}{M_{\GL,\gen}^*}
\newcommand{\Speh}{\mathcal{SP}}
\newcommand{\Groth}{R}
\newcommand{\reps}{\mathcal{R}}

As in the body of the paper for $\pi\in\Irr_{\temp}\GL_{2\rkn}$ we denote by $\LQ{\pi}$ the Langlands quotient
of the induced representation $I(\pi,\frac12)=\Ind_P^G(\pi\abs{\det}^{\frac12})$ where $G=\Sp_{2\rkn}$ and
$P=M\rtimes U$ is the (upper triangular) Siegel parabolic subgroup of $G$ (with $M=\GL_{2\rkn}$).
Also recall that $N$ is the group of upper unitriangular matrices in $G$ and $\psi_N$ is a character on $N$
which is trivial on $U$ and non-degenerate on $M\cap N$.
The goal of this section is to prove the following uniqueness result.

\begin{proposition} \label{prop: Tadic}
Let $\pi\in\Irr_{\temp,\meta}\GL_{2\rkn}$. Then
\[
\dim\Hom_N(\LQ{\pi},\psi_N)=1.
\]
In other words, (since $\Hom_N(\LQ{\pi},\psi_N)=\Hom_{N_M}(\JGL(\LQ{\pi}),\psi_N)$) the Jacquet module $\JGL(\LQ{\pi})$ of $\LQ{\pi}$
with respect to $P$ (a finite length representation of $\Levi$) has a unique irreducible generic subquotient.
\end{proposition}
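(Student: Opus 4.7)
The plan is to reduce the problem to a Jacquet-module computation and then analyze carefully which irreducible constituents survive in the Langlands quotient. By Frobenius reciprocity,
\[
\Hom_N(\LQ{\pi},\psi_N)=\Hom_{N_M}\!\bigl(\JGL(\LQ{\pi}),\psi_{N_M}\bigr),
\]
where $N_M=N\cap M$ is the standard maximal unipotent of $M=\GL_{2\rkn}$ and $\psi_{N_M}=\psi_N|_{N_M}$ is non-degenerate. Since the twisted Jacquet functor $\tau\mapsto \tau_{N_M,\psi_{N_M}}$ is exact on finite-length smooth representations, and by uniqueness of the Whittaker model each irreducible subquotient of $\JGL(\LQ{\pi})$ contributes at most one to the dimension, the problem reduces to counting (with multiplicity in a composition series) the $\psi_{N_M}$-generic irreducible subquotients of $\JGL(\LQ{\pi})$, and showing this number equals exactly one.

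Next, I would analyze $\JGL(I(\pi,\tfrac12))$ via the Bernstein--Zelevinsky geometric lemma applied to the Siegel parabolic $P$ in $G=\Sp_{2\rkn}$. The double coset set $W_M\backslash W_G/W_M$ is parametrized by $k=0,1,\dots,2\rkn$, and the associated graded of $\JGL(I(\pi,\tfrac12))$ consists of pieces of the form
\[
\Ind_{Q_k}^M\!\bigl(\tau_k\abs{\det\cdot}^{1/2}\otimes\tau_k'^{*}\abs{\det\cdot}^{-1/2}\bigr),
\]
where $Q_k\subset M$ has Levi $\GL_k\times\GL_{2\rkn-k}$ and $\tau_k\otimes\tau_k'$ runs over constituents of the Jacquet module of $\pi$ along the parabolic of type $(k,2\rkn-k)$ (self-duality of $\pi$ is used here). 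Since $\pi$ is tempered, these Jacquet modules are semisimple with known description via Zelevinsky segments. One then computes the generic irreducible constituents of each $\Ind_{Q_k}^M(\cdots)$ using the standard criterion for genericity of a parabolic induction on $\GL$ (via uniqueness of the Whittaker functional). This produces a finite collection of $\psi_{N_M}$-generic subquotients of $\JGL(I(\pi,\tfrac12))$, and the strategy is to pair them off so that exactly one survives in $\JGL(\LQ{\pi})$.

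To separate $\JGL(\LQ{\pi})$ from $\JGL(K)$ where $K=\ker\bigl(I(\pi,\tfrac12)\twoheadrightarrow\LQ{\pi}\bigr)$, I would use Casselman's characterization of the Langlands quotient: the central exponent $\pi\abs{\det\cdot}^{1/2}$ (the $k=0$ subquotient in the filtration above) appears as a quotient of $\JGL(\LQ{\pi})$ and does not appear in $\JGL(K)$. More generally, one identifies which exponents are dominant for the Langlands data and which are not; by Casselman's theory the non-dominant exponents all lie in $\JGL(K)$. The crucial mechanism is that each geometric-lemma piece $k\ge 1$ comes paired with a piece of strictly smaller exponent (roughly $k\leftrightarrow 2\rkn-k$ after applying self-duality of $\pi$), and the genericity counts on matched pairs cancel modulo the Langlands filtration, leaving a single surviving generic constituent coming from the $k=0$ Langlands datum.

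The main obstacle is the last step: proving that the generic constituents are paired up correctly between kernel and quotient, and that the residual generic constituent really sits inside $\JGL(\LQ{\pi})$ and not $\JGL(K)$. This requires the precise structure of the metaplectic-type classification of tempered $\pi$ (so that $\pi$ decomposes as a parabolic induction of discrete series whose Jacquet modules can be listed explicitly), together with a careful matching of segments. A secondary difficulty is handling the self-pairing case ($k=\rkn$, roughly), where one cannot match with a partner and must instead argue directly that the resulting generic constituent is already accounted for. Once these bookkeeping steps are carried out, the count reduces to $1$, establishing $\dim\Hom_N(\LQ{\pi},\psi_N)=1$.
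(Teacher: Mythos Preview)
Your reduction to counting generic constituents of $\JGL(\LQ{\pi})$ is correct, and computing $\sGL(I(\pi,\tfrac12))$ via the geometric lemma (which amounts to the operator $\MGL$) is a reasonable starting point. However, there is a concrete error and a genuine gap.

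The error: you assert that the $k=0$ piece $\pi\nu^{1/2}$ ``appears as a quotient of $\JGL(\LQ{\pi})$''. This is backwards. Since $\LQ{\pi}\hookrightarrow I(\pi,-\tfrac12)$, Frobenius reciprocity gives $\pi\nu^{-1/2}$ as a quotient of $\JGL(\LQ{\pi})$, and in fact the paper shows $\sGLgen(\LQ{\pi})=\pi\nu^{-1/2}$. The constituent $\pi\nu^{1/2}$ lies in $\JGL(K)$, not in $\JGL(\LQ{\pi})$ (already in the rank-one case $\pi=\Delta([-a,a]_\rho)$ it sits inside a proper subrepresentation $\delta_1$ of $I(\pi,\tfrac12)$). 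Your invocation of ``Casselman's characterization'' here conflates the Jacquet modules with respect to $P$ and $\bar P$; Casselman's leading-exponent argument gives $\pi\nu^{1/2}$ as a sub of $r_{\bar P}(\LQ{\pi})$, not of $r_P(\LQ{\pi})=\JGL(\LQ{\pi})$.

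The gap: your proposed ``pairing mechanism'' $k\leftrightarrow 2\rkn-k$ to cancel generic constituents between $\JGL(K)$ and $\JGL(\LQ{\pi})$ is not a proof strategy but a hope. There is no general exponent argument that distinguishes, among the many generic constituents of $\MGL(\pi\nu^{1/2})$, which land in the kernel and which in the quotient. The paper avoids this entirely. Instead of trying to split $\sGL(I(\pi,\tfrac12))$, it bounds $\sGLgen(\LQ{\pi})$ from above by realizing $\LQ{\pi}$ as a quotient or subquotient of a smaller, more tractable representation. For $\pi=\pi_1\times\dots\times\pi_k$ with distinct square-integrable $\pi_i\in\Irr_{\meta}\GL$, the argument is by induction on $k$: the base case $k=1$ uses the explicit description of the composition series of $I(\pi,\tfrac12)$ and its Jacquet modules from Tadi\'c's earlier work; the induction step uses that $\LQ{\pi}$ is a quotient of $\pi_{k+1}\nu^{1/2}\rtimes\LQ{\pi'}$, hence $\sGLgen(\LQ{\pi})\le\MGL(\pi_{k+1}\nu^{1/2})\times\pi'\nu^{-1/2}$, and then a supercuspidal-support comparison (using two different orderings of the $\pi_i$) pins down the answer. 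For general tempered $\pi$ of metaplectic type, Matringe's classification writes $\pi=\tau\times\delta\times\dual\tau$; factoring the long intertwining operator shows $\LQ{\pi}$ is a subquotient of $\Speh(\tau)\rtimes\LQ{\delta}$, and the computation $\MGLgen(\Speh(\tau))=\tau\nu^{-1/2}\times\dual\tau\nu^{-1/2}$ finishes the job. None of these steps involves analyzing $\JGL(K)$ directly.
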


In the case where $\pi$ is parabolically induced from distinct supercuspidal representations,
the proposition was proved in \cite[Proposition 5.12]{MR1954940}.

We first recall some standard facts and notation about Jacquet modules of general linear groups and symplectic groups.
\begin{itemize}
\item For any $\ell$-group $Q$ we denote by $\Groth(Q)$ the Grothendieck group of the category $\reps(Q)$ of smooth
(complex) representations of finite length of $Q$. It is an ordered abelian group.
\item We denote by $1$ the one-dimensional representation of the trivial group ($\GL_0$ or $\Sp_0$ depending on the context).
\item We write $\reps(\GL)=\oplus_{m\ge0}\reps(\GL_m)$ and $\Irr\GL=\cup_{m\ge0}\Irr\GL_m$.
\item For $\delta_i\in\reps(\GL_{m_i})$, $i=1,2$ we denote by $\delta_1\times\delta_2\in\reps(\GL_{m_1+m_2})$
the representation parabolically induced from $\delta_1\otimes\delta_2$ (with respect to the upper triangular parabolic subgroup).
\item For $\delta\in\reps(\GL_k)$ and $\sigma\in\reps(\Sp_m)$ we denote by $\delta\rtimes\sigma\in\reps(\Sp_{k+m})$
the representation parabolically induced from $\delta\otimes\sigma$ (once again, with respect to the upper triangular parabolic subgroup).
\item If $\delta\in\reps(\GL_m)$ we denote by $\delta\nu^s$, $s\in\C$ the twist of $\delta$ by the character $\abs{\det\cdot}^s$ of $\GL_m$.
\item If $a,b\in\R$ with $b-a\in\Z_{\ge0}$ we write $[a,b]=\{a,\dots,b\}$. By a segment $[a,b]_\rho$ we mean a set
of the form $\{\rho\nu^a,\dots,\rho\nu^b\}$ where $\rho$ is an irreducible supercuspidal representation of some $\GL_m$.
To such a segment one associates an essentially square-integrable representation $\Delta([a,b]_\rho)$ of $\GL_{m(b-a+1)}$.
All essentially square-integrable representations of $\GL_n$ are obtained this way, for a unique segment.
By convention the empty segment corresponds to $1$, i.e. $\Delta([a,a-1]_{\rho})=1$.
\item We write $[a_1,b_1]_{\rho_1}\mapsto[a_2,b_2]_{\rho_2}$ if
\begin{enumerate}
\item $[a_1,b_1]_{\rho_1}\cup[a_2,b_2]_{\rho_2}$
forms a segment which properly contains $[a_1,b_1]_{\rho_1}$ and $[a_2,b_2]_{\rho_2}$, and,
\item  $\rho_2\nu^{a_2}=\rho_1\nu^{a_1+l}$ with $l\in\Z_{>0}$.
\end{enumerate}
\item More generally, for a multisegment $[a_1,b_1]_{\rho_1},\dots,[a_k,b_k]_{\rho_k}$ ordered such that
$[a_i,b_i]_{\rho_i}\not\mapsto[a_j,b_j]_{\rho_j}$ for all $i<j$, we denote by
\[
\Delta([a_1,b_1]_{\rho_1},\dots,[a_k,b_k]_{\rho_k})
\]
the Langlands quotient (i.e., the unique irreducible quotient) of $\Delta([a_1,b_1]_{\rho_1})\times\dots\times\Delta([a_k,b_k]_{\rho_k})$.
This gives a bijection between irreducible representations of $\GL_n$ and multisegments.
The supercuspidal support of the representation above is $\sum_{i=1}^k[a_i,b_i]_{\rho_i}$ (multiset union).
\item For $\delta\in\Irr_{\temp}\GL_m$ let $\Speh(\delta)\in\Irr\GL_{2m}$ be the Langlands quotient of $\delta\nu^{\frac12}\times\delta\nu^{-\frac12}$.
\item We denote by $\sGL(\sigma)\in\Groth(\GL_m)$ semisimplification of the Jacquet module $\JGL(\sigma)$ of $\sigma\in\reps(\Sp_m)$
with respect to the Siegel parabolic
and by $\sGLgen(\sigma)\in\Groth(\GL_m)$ the generic part of $\sGL(\sigma)$ (that is, we retain only the generic
representations in the composition series of $\JGL(\sigma)$, with the same multiplicities).
\item We have
\[
\sGL(\delta\rtimes\sigma)=\MGL(\delta)\times\sGL(\sigma)
\]
where $\MGL:\Groth(\GL_m)\rightarrow\Groth(\GL_m)$ is defined in \cite[\S2]{MR2504024}.
It satisfies $\MGL(\delta_1\times\delta_2)=\MGL(\delta_1)\times\MGL(\delta_2)$.
\item We denote by $\MGLgen(\delta)$ the generic part of $\MGL(\delta)$.
\item For our purposes we only need to know $\MGL(\delta)$ and $\MGLgen(\Speh(\delta))$ for essentially square-integrable $\delta$.
We have
\begin{equation} \label{eq: MGLsqr}
\MGL(\Delta([a,b]_\rho))=\sum_{c\in [a-1,b]}\Delta([c+1,b]_\rho)\times\Delta([-c,-a]_{\dual\rho})
\end{equation}
while it follows from \cite{MR2996769} that if $\delta=\Delta([-a,a]_\rho)$ then $\MGL(\Speh(\delta))$ equals
\[
\sum\Delta([c+\frac12,a+\frac12]_\rho,[d+\frac12,a-\frac12]_\rho)\times
\Delta([\frac12-d,a+\frac12]_{\dual\rho},[\frac12-c,a-\frac12]_{\dual\rho}).
\]
where the sum is over $c\in [-a,a+1]$ and $d\in [-a-1,a]$ such that $d<c$. In particular,
\begin{equation} \label{eq: JGLSpeh}
\MGLgen(\Speh(\delta))=\delta\nu^{-\frac12}\times\dual\delta\nu^{-\frac12}.
\end{equation}
\item If $\delta\in\Irr_{\temp}\GL_m$ then we can write $\delta=\delta_1\times\dots\times\delta_k$
with $\delta_i$ square-integrable. Then $\Speh(\delta)=\Speh(\delta_1)\times\dots\times\Speh(\delta_k)$.
It follows that \eqref{eq: JGLSpeh} holds for $\delta\in\Irr_{\temp}\GL_m$ as well.
\end{itemize}

We are now ready to prove Proposition \ref{prop: Tadic}.
Since $\LQ{\pi}\hookrightarrow I(\pi,-\frac12)$ it follows from Frobenius reciprocity that $\pi\nu^{-\frac12}$
is a quotient of $\JGL(\LQ{\pi})$. In particular,
\[
\Hom_N(\LQ{\pi},\psi_N)\ne0.
\]
We will show that $\pi\nu^{-\frac12}$ is the only generic irreducible subquotient of $\JGL(\LQ{\pi})$, i.e.
that $\sGLgen(\LQ{\pi})=\pi\nu^{-\frac12}$.
Consider first the case where $\pi=\pi_1\times\dots\times\pi_k$ where $\pi_i\in\Irr_{\meta}\GL$
are square-integrable and distinct. We will prove the statement by induction on $k$.
The case $k=1$ follows from the results of \cite{MR2127173}.
More precisely, suppose that $\pi=\Delta([-a,a]_\rho)$ with $\dual\rho=\rho$.
Let $\epsilon\ge0$ be such that $\rho\nu^\epsilon\rtimes1$ is reducible.
Since $\pi\in\Irr_{\meta}\GL_{2\rkn}$, $I(\pi,\frac12)$ is reducible and therefore $\epsilon\in[\frac12-a,a+\frac12]$
\cite{MR1658535}.
Also $\epsilon\in\{0,\frac12,1\}$ by the results of Shahidi \cite{MR1070599}.
By \cite{MR2127173} there exist two inequivalent subrepresentations $\delta_1$, $\delta_2$ of $I(\pi,\frac12)$
(with $\delta_2=0$ if $a=\epsilon-\frac12$) such that
\begin{gather*}
\sGL(\delta_1)=\sum_{l\in [-a-\frac12,\epsilon-1]}\Delta([-l,a-\frac12]_\rho)\times\Delta([l+1,a+\frac12]_\rho),\\
\sGL(\delta_2)=\sum_{l\in [\epsilon,a-\frac12]}\Delta([-l,a+\frac12]_\rho)\times\Delta([l+1,a-\frac12]_\rho).
\end{gather*}
Since
\[
\sGL(I(\pi,\frac12))=\MGL(\pi)=\sum_{l\in [-a-\frac12,a+\frac12]}\Delta([-l,a-\frac12]_\rho)\times\Delta([l+1,a+\frac12]_\rho)
\]
we infer that\footnote{In fact, one can easily prove equality although
this is not explicitly mentioned in [ibid.]. We will not need it.}
\[
\sGL(\LQ{\pi})\le\oplus_{l\in [\epsilon,a+\frac12]}\Delta([l+1,a+\frac12]_\rho,[-l,a-\frac12]_\rho).
\]
In particular,
\[
\sGLgen(\LQ{\pi})\le\pi\nu^{-\frac12}.
\]

For the induction step, assuming that the statement holds for $k$ and we will prove it for $k+1$.
Write $\pi_i=\Delta([-a_i,a_i]_{\rho_i})$, $i=1,\dots,k+1$ with $\dual{\rho_i}=\rho_i$.
Let $\pi'=\pi_1\times\dots\times\pi_k$. Since $\LQ{\pi}$ is the unique irreducible quotient of $\pi\nu^{\frac12}\rtimes1$,
it is also a quotient of $\pi_{k+1}\nu^{\frac12}\rtimes\LQ{\pi'}$ (which is a quotient of $\pi\nu^{\frac12}\rtimes1=
\pi_{k+1}\nu^{\frac12}\rtimes(\pi'\nu^{\frac12}\rtimes1)$).
Taking Jacquet module we infer that
\[
\sGL(\LQ{\pi})\le\MGL(\pi_{k+1}\nu^{\frac12})\times\sGL(\LQ{\pi'}).
\]
In particular, by induction hypothesis and \eqref{eq: MGLsqr}
\begin{multline} \label{eq: sglbnd}
\sGLgen(\LQ{\pi})\le \MGL(\pi_{k+1}\nu^{\frac12})\times\pi'\nu^{-\frac12}=
\\\sum_{c\in [-a_{k+1},a_{k+1}+1]}\Delta([c+\frac12,a_{k+1}+\frac12]_{\rho_{k+1}})\times\Delta([\frac12-c,a_{k+1}-\frac12]_{\rho_{k+1}})
\times\pi'\nu^{-\frac12}.
\end{multline}
In a similar vein,
\[
\sGLgen(\LQ{\pi})\le \MGL(\pi_1)\nu^{\frac12}\times\pi''\nu^{-\frac12}
\]
where $\pi''=\pi_2\times\dots\times\pi_{k+1}$.
We may assume without loss of generality that $[-a_{k+1},a_{k+1}]_{\rho_{k+1}}\not\subseteq[-a_i,a_i]_{\rho_i}$
(and hence, $\rho_{k+1}\nu^{a_{k+1}}\notin [-a_i,a_i]_{\rho_i}$) for all $i=1,\dots,k$.
Comparing supercuspidal supports, we observe that there are no common irreducible subrepresentations of
\[
\sum_{c\in [-a_{k+1},a_{k+1}]}\Delta([c+\frac12,a_{k+1}+\frac12]_{\rho_{k+1}})\times\Delta([\frac12-c,a_{k+1}-\frac12]_{\rho_{k+1}})
\times\pi'\nu^{-\frac12}
\]
and $\MGL(\pi_1)\nu^{\frac12}\times\pi''\nu^{-\frac12}$ since $\rho_{k+1}\nu^{a_{k+1}+\frac12}$ occurs in the support of the supercuspidal support
of all summands in the former expression, but not in the latter.
Therefore, by \eqref{eq: sglbnd} $\sGLgen(\LQ{\pi})\le \pi_{k+1}\nu^{-\frac12}\times\pi'\nu^{-\frac12}=\pi\nu^{-\frac12}$ which gives
the induction step.

Consider the general case.
By the result of Matringe \cite{1301.0350}, we can write $\pi=\tau\times\delta\times\dual\tau$
where $\tau$ is tempered and $\delta$ is of the form considered above.
(In fact, $\delta$ is uniquely determined by $\pi$.)
The intertwining operator $\pi\nu^{\frac12}\rtimes1\rightarrow\pi\nu^{-\frac12}\times1$ is the composition of the intertwining operators
\begin{align*}
\tau\nu^{\frac12}\times\delta\nu^{\frac12}\times\dual\tau\nu^{\frac12}\rtimes1\rightarrow\\
\tau\nu^{\frac12}\times\delta\nu^{\frac12}\times\tau\nu^{-\frac12}\rtimes1\rightarrow\\
\tau\nu^{\frac12}\times\tau\nu^{-\frac12}\times\delta\nu^{\frac12}\rtimes1\xrightarrow*\\
\tau\nu^{-\frac12}\times\tau\nu^{\frac12}\times\delta\nu^{-\frac12}\rtimes1\rightarrow\\
\tau\nu^{-\frac12}\times\delta\nu^{-\frac12}\times\tau\nu^{\frac12}\rtimes1\rightarrow\\
\tau\nu^{-\frac12}\times\delta\nu^{-\frac12}\times\dual\tau\nu^{-\frac12}\rtimes1
\end{align*}
Thus, the image $\LQ{\pi}$ is a subquotient of the image of the middle one (denoted $*$) which is
$\Speh(\tau)\rtimes\LQ{\delta}$. (In fact, it is a direct summand since the latter is unitarziable.)
Thus,
\[
\sGL(\LQ{\pi})\le\MGL(\Speh(\tau))\times\sGL(\LQ{\delta})
\]
and hence by the previous part and \eqref{eq: JGLSpeh}
\[
\sGLgen(\LQ{\pi})\le\MGLgen(\Speh(\tau))\times\delta\nu^{-\frac12}=\pi\nu^{-\frac12}
\]
as required. This concludes the proof of Proposition \ref{prop: Tadic}.


\begin{thebibliography}{AGRS10}

\bibitem[AG09]{MR2553879}
Avraham Aizenbud and Dmitry Gourevitch, \emph{Generalized {H}arish-{C}handra
  descent, {G}elfand pairs, and an {A}rchimedean analog of {J}acquet-{R}allis's
  theorem}, Duke Math. J. \textbf{149} (2009), no.~3, 509--567, With an
  appendix by the authors and Eitan Sayag. \MR{2553879 (2011c:22026)}

\bibitem[AGRS10]{MR2680495}
Avraham Aizenbud, Dmitry Gourevitch, Stephen Rallis, and G{\'e}rard Schiffmann,
  \emph{Multiplicity one theorems}, Ann. of Math. (2) \textbf{172} (2010),
  no.~2, 1407--1434. \MR{2680495 (2011g:22024)}

\bibitem[AGS08]{MR2474319}
Avraham Aizenbud, Dmitry Gourevitch, and Eitan Sayag, \emph{{$({\rm
  GL}_{n+1}(F),{\rm GL}_n(F))$} is a {G}elfand pair for any local field {$F$}},
  Compos. Math. \textbf{144} (2008), no.~6, 1504--1524. \MR{2474319
  (2009k:22022)}

\bibitem[BR]{1312.2898}
Joseph Bernstein and Andre Reznikov, \emph{Action on periods of automorphic
  representations}, arXiv:1312.2898.

\bibitem[CHH88]{MR946351}
M.~Cowling, U.~Haagerup, and R.~Howe, \emph{Almost {$L^2$} matrix
  coefficients}, J. Reine Angew. Math. \textbf{387} (1988), 97--110. \MR{946351
  (89i:22008)}

\bibitem[Fli91]{MR1111204}
Yuval~Z. Flicker, \emph{On distinguished representations}, J. Reine Angew.
  Math. \textbf{418} (1991), 139--172. \MR{1111204 (92i:22019)}

\bibitem[FLO12]{MR2930996}
Brooke Feigon, Erez Lapid, and Omer Offen, \emph{On representations
  distinguished by unitary groups}, Publ. Math. Inst. Hautes \'Etudes Sci.
  (2012), 185--323. \MR{2930996}

\bibitem[GK75]{MR0404534}
I.~M. Gel{\cprime}fand and D.~A. Kajdan, \emph{Representations of the group
  {${\rm GL}(n,K)$} where {$K$} is a local field}, Lie groups and their
  representations ({P}roc. {S}ummer {S}chool, {B}olyai {J}\'anos {M}ath.
  {S}oc., {B}udapest, 1971), Halsted, New York, 1975, pp.~95--118. \MR{0404534
  (53 \#8334)}

\bibitem[GRS99a]{MR1671452}
David Ginzburg, Stephen Rallis, and David Soudry, \emph{On a correspondence
  between cuspidal representations of {${\rm GL}_{2n}$} and {$\widetilde{\rm
  Sp}_{2n}$}}, J. Amer. Math. Soc. \textbf{12} (1999), no.~3, 849--907.
  \MR{1671452 (2000b:22018)}

\bibitem[GRS99b]{MR1740991}
\bysame, \emph{On explicit lifts of cusp forms from {${\rm GL}_m$} to classical
  groups}, Ann. of Math. (2) \textbf{150} (1999), no.~3, 807--866. \MR{1740991
  (2001b:11040)}

\bibitem[GRS02]{MR1954940}
\bysame, \emph{Endoscopic representations of {${\widetilde{\rm Sp}}_{2n}$}}, J.
  Inst. Math. Jussieu \textbf{1} (2002), no.~1, 77--123. \MR{1954940
  (2004c:11074)}

\bibitem[GRS11]{MR2848523}
\bysame, \emph{The descent map from automorphic representations of {${\rm
  GL}(n)$} to classical groups}, World Scientific Publishing Co. Pte. Ltd.,
  Hackensack, NJ, 2011. \MR{2848523 (2012g:22020)}

\bibitem[HR90]{MR1078382}
Michael~J. Heumos and Stephen Rallis, \emph{Symplectic-{W}hittaker models for
  {${\rm Gl}_n$}}, Pacific J. Math. \textbf{146} (1990), no.~2, 247--279.
  \MR{1078382 (91k:22036)}

\bibitem[JR96]{MR1394521}
Herv{\'e} Jacquet and Stephen Rallis, \emph{Uniqueness of linear periods},
  Compositio Math. \textbf{102} (1996), no.~1, 65--123. \MR{1394521
  (97k:22025)}

\bibitem[JS90]{MR1044830}
Herv{\'e} Jacquet and Joseph Shalika, \emph{Exterior square {$L$}-functions},
  Automorphic forms, {S}himura varieties, and {$L$}-functions, {V}ol.\ {II}
  ({A}nn {A}rbor, {MI}, 1988), Perspect. Math., vol.~11, Academic Press,
  Boston, MA, 1990, pp.~143--226. \MR{1044830 (91g:11050)}

\bibitem[KL12]{MR2996769}
Arno Kret and Erez Lapid, \emph{Jacquet modules of ladder representations}, C.
  R. Math. Acad. Sci. Paris \textbf{350} (2012), no.~21-22, 937--940.
  \MR{2996769}

\bibitem[Kne67]{MR0217077}
M.~Kneser, \emph{Semi-simple algebraic groups}, Algebraic {N}umber. {T}heory
  ({P}roc. {I}nstructional {C}onf., {B}righton, 1965), Thompson, Washington,
  D.C., 1967, pp.~250--265. \MR{0217077 (36 \#171)}

\bibitem[Lag08]{MR2381204}
Nathalie Lagier, \emph{Terme constant de fonctions sur un espace sym\'etrique
  r\'eductif {$p$}-adique}, J. Funct. Anal. \textbf{254} (2008), no.~4,
  1088--1145. \MR{MR2381204 (2009d:22013)}

\bibitem[LM13]{1401.0198}
Erez Lapid and Zhengyu Mao, \emph{Whittaker-{F}ourier coefficients of cusp
  forms on $\widetilde{{S}p}_n$: reduction to a local statement}, 2013,
  arXiv:1401.0198.

\bibitem[LM14a]{LMao5}
Erez Lapid and Zhengyu Mao, \emph{A conjecture on {W}hittaker--{F}ourier
  coefficients of cusp forms}, J. Number Theory \textbf{to appear} (2014),
  Special Issue in honor of Professor Steve Rallis.

\bibitem[LM14b]{LMao4}
\bysame, \emph{On a new functional equation for local integrals}, Automorphic
  Forms and Related Geometry: Assessing the Legacy of I.I. Piatetski-Shapiro,
  Contemp. Math., vol. 614, Amer. Math. Soc., Providence, RI, 2014,
  pp.~261--294.

\bibitem[Mat12]{1211.1241}
Nadir Matringe, \emph{A specialisation of the {B}ump-{F}riedberg
  {$L$}-function}, 2012, arXiv:1211.1241.

\bibitem[Mat]{1301.0350}
Nadir Matringe, \emph{On the local {B}ump-{F}riedberg {$L$}-function}, J. Reine
  Angew. Math. \textbf{to appear}, arXiv:1301.0350.

\bibitem[Off04]{MR2060496}
Omer Offen, \emph{Relative spherical functions on {$\wp$}-adic symmetric spaces
  (three cases)}, Pacific J. Math. \textbf{215} (2004), no.~1, 97--149.
  \MR{2060496 (2005f:11103)}

\bibitem[Off11]{MR2787356}
\bysame, \emph{On local root numbers and distinction}, J. Reine Angew. Math.
  \textbf{652} (2011), 165--205. \MR{2787356}

\bibitem[Sha74]{MR0348047}
J.~A. Shalika, \emph{The multiplicity one theorem for {${\rm GL}_{n}$}}, Ann.
  of Math. (2) \textbf{100} (1974), 171--193. \MR{0348047 (50 \#545)}

\bibitem[Sha90]{MR1070599}
Freydoon Shahidi, \emph{A proof of {L}anglands' conjecture on {P}lancherel
  measures; complementary series for {$p$}-adic groups}, Ann. of Math. (2)
  \textbf{132} (1990), no.~2, 273--330. \MR{1070599 (91m:11095)}

\bibitem[SV12]{1203.0039}
Yiannis Sakellaridis and Akshay Venkatesh, \emph{Periods and harmonic analysis
  on spherical varieties}, 2012, arXiv:1203.0039.

\bibitem[Tad98]{MR1658535}
Marko Tadi{\'c}, \emph{On reducibility of parabolic induction}, Israel J. Math.
  \textbf{107} (1998), 29--91. \MR{1658535 (2001d:22012)}

\bibitem[Tad04]{MR2127173}
\bysame, \emph{Square integrable representations of segment type (generic
  reducibilities)}, Functional analysis {VIII}, Various Publ. Ser. (Aarhus),
  vol.~47, Aarhus Univ., Aarhus, 2004, pp.~168--204. \MR{2127173 (2005m:22019)}

\bibitem[Tad09]{MR2504024}
\bysame, \emph{On reducibility and unitarizability for classical {$p$}-adic
  groups, some general results}, Canad. J. Math. \textbf{61} (2009), no.~2,
  427--450. \MR{2504024 (2010a:22026)}

\bibitem[Wal03]{MR1989693}
J.-L. Waldspurger, \emph{La formule de {P}lancherel pour les groupes
  {$p$}-adiques (d'apr\`es {H}arish-{C}handra)}, J. Inst. Math. Jussieu
  \textbf{2} (2003), no.~2, 235--333. \MR{1989693 (2004d:22009)}

\bibitem[Wal12]{WaldAst3461}
\bysame, \emph{Une formule int\'egrale reli\'ee \`a la conjecture locale de
  {G}ross-{P}rasad, 2\`eme partie: extension aux repr\'esentations
  temp\'er\'ees}, Ast\'erisque (2012), no.~346, 171--312.

\bibitem[Zel80]{MR584084}
A.~V. Zelevinsky, \emph{Induced representations of reductive {${\germ p}$}-adic
  groups. {II}. {O}n irreducible representations of {${\rm GL}(n)$}}, Ann. Sci.
  \'Ecole Norm. Sup. (4) \textbf{13} (1980), no.~2, 165--210. \MR{584084
  (83g:22012)}

\bibitem[Zha10]{MR2678856}
Lei Zhang, \emph{Gelfand pairs {$({\rm Sp}_{4n}(F),{\rm Sp}_{2n}(E))$}}, J.
  Number Theory \textbf{130} (2010), no.~11, 2428--2441. \MR{2678856
  (2011f:22021)}

\end{thebibliography}
\providecommand{\bysame}{\leavevmode\hbox to3em{\hrulefill}\thinspace}
\providecommand{\MR}{\relax\ifhmode\unskip\space\fi MR }
\providecommand{\MRhref}[2]{%
  \href{http://www.ams.org/mathscinet-getitem?mr=#1}{#2}
}
\providecommand{\href}[2]{#2}

\end{document}